% =============================================================================

%\input macros-AL.tex
% =========================================
%
% File:   macros-foam.tex
% Date:   2012/07/05
% Author: A. Lauda, M. Khovanov
%
% Macros for the paper on foams and skew howe duality
%
% =========================================

\documentclass[10pt]{amsart}

\usepackage{latexsym,exscale,enumerate}
\usepackage{amsmath,amsthm,amsfonts,amssymb,amscd, stmaryrd}
\usepackage{hyperref}
\usepackage{cite}

\addtolength{\hoffset}{-1.6cm}
\addtolength{\textwidth}{3cm}

%\makeindex

%%%%%%%%%%%%%%%%% DIAGRAM PACKAGES %%%%%%%%%%%%%%%%%%%%%%%%%%%%%%%%%%%%%%%%%%%%%%

% xypic
\usepackage[all]{xy}
\SelectTips{cm}{}

% including eps files
\usepackage{graphicx}
%%
%%% tikzS
\usepackage{tikz}
\usetikzlibrary{decorations.markings}
\usetikzlibrary{decorations.pathreplacing}
\usepackage[outline]{contour}
\contourlength{1.2pt}

\tikzset{->-/.style={decoration={
  markings,
  mark=at position #1 with {\arrow{>}}},postaction={decorate}}}

\tikzset{middlearrow/.style={
        decoration={markings,
            mark= at position 0.5 with {\arrow{#1}} ,
        },
        postaction={decorate}
    }
}
\newcommand{\bbullet}{
\begin{tikzpicture}
  \draw[fill=black] circle (0.55ex);
\end{tikzpicture}
}

\newcommand{\hackcenter}[1]{
 \xy (0,0)*{#1}; \endxy}

%%%%%%%%%%%%%%%%%%%%%%%%%%%%%%%%%

\normalfont\upshape

%%%%%%%%%%%%%%%%% NEW one's %%%%%%%%%%%%%%%%%%%%%%%%%%%%%%%%%%%%%%%%%%%%%%
\def\one{1}
\def\onel{1_\lambda}
\def\oneltwo{1_{\lambda+2}}
\def\oneb{\mathbf{1}}
\def\onebl{\mathbf{1}_{\lambda}}
\def\onebltwo{\mathbf{1}_{\lambda+2}}
\def\onebb{\mathbbm{1}}
\def\onebbl{\mathbbm{1}_{\lambda}}
\def\onebbltwo{\mathbbm{1}_{\lambda+2}}

%%%%%%%%%%%%%%%%% NEW U's %%%%%%%%%%%%%%%%%%%%%%%%%%%%%%%%%%%%%%%%%%%%%%
\def\U{U_q}
\def\Upi{U_{q,\pi}}
\def\AU{_{\Ac}U_q}
\def\AUpi{_{\Ac}U_{q,\pi}}
\def\Udot{\dot{U}_q}
\def\Udotpi{\dot{U}_{q,\pi}}
\def\AUdot{_{\Ac}\dot{U}_q}
\def\AUdotpi{_{\Ac}\dot{U}_{q,\pi}}
\def\Uc{\mathcal{U}_{\pi}}
\def\Ucev{\mathcal{U}}
\def\Udotc{\dot{\mathcal{U}}_{\pi}}
\def\Udotcev{\dot{\mathcal{U}}}

\newcommand{\Zt}{\Z_{2}}
%\newcommand{\Zt}{\Z/(2)}

% =======

\newcommand{\Ett}{\mathtt{E}}
\newcommand{\Ftt}{\mathtt{F}}

\newcommand{\ds}[1]{\ensuremath{\la{#1}\ra}}
\newcommand{\ads}[1]{\ensuremath{\langle{#1}\rangle}}

\newcommand{\ONHc}{\mathcal{ONH}}
\newcommand{\SCat}{\mathcal{SC}at}
\newcommand{\SBim}{\mathcal{SB}im}
\def\Cc{\mathcal{C}}
\def\Dc{\mathcal{D}}

\def\Ec{\mathcal{E}}
\def\Fc{\mathcal{F}}
\def\Gc{\mathcal{G}}

\newcommand{\Mat}{\mathrm{Mat}}
\newcommand{\Gr}{\mathrm{Gr}}

\hfuzz=6pc
\def\nn{\notag}
\def\la{\langle}
\def\ra{\rangle}

% xypic
\usepackage[all]{xy}
\SelectTips{cm}{}

\theoremstyle{definition}
\newtheorem{thm}{Theorem}[section]
\newtheorem{cor}[thm]{Corollary}
\newtheorem{conj}[thm]{Conjecture}
\newtheorem{lem}[thm]{Lemma}
\newtheorem{rem}[thm]{Remark}
\newtheorem{prop}[thm]{Proposition}
\newtheorem{defn}[thm]{Definition}
\newtheorem{example}[thm]{Example}

% Equation numbering:

\numberwithin{equation}{section}

\usepackage{bbm}
\def\C{{\mathbbm C}}
\def\N{{\mathbbm N}}
\def\R{{\mathbbm R}}
\def\Z{{\mathbbm Z}}
\def\Q{{\mathbbm Q}}

\def\k{\Bbbk}

\newcommand{\Ac}{\mathcal{A}}

\newcommand{\Hom}{{\rm Hom}}
\newcommand{\HOM}{{\rm HOM}}
\renewcommand{\to}{\rightarrow}
\newcommand{\maps}{\colon}

\newcommand{\del}{\partial}
\newcommand{\Res}{{\rm Res}}
\newcommand{\End}{{\rm End}}
\newcommand{\END}{{\rm END}}

\def\sltwo{\mathfrak{sl}_2}

 %% Mike's macros

\def\Res{{\mathrm{Res}}}
\def\Ind{{\mathrm{Ind}}}
\def\lra{{\longrightarrow}}
   %% finitely-generated modules
   %% finite-dimensional modules
  %% fin-gen projective modules

\def\Id{\mathrm{Id}}
\def\mf{\mathfrak}
    %% integral lattice in f
    %%  Lusztig's algebra
\def\shuffle{\,\raise 1pt\hbox{$\scriptscriptstyle\cup{\mskip
               -4mu}\cup$}\,}
%\newcommand{\define}{\stackrel{\mbox{\scriptsize{def}}}{=}}

% Equal sign with a reference on top
\newcommand{\refequal}[1]{\xy {\ar@{=}^{#1}
(-1,0)*{};(1,0)*{}};
\endxy}

\newcommand{\osym}{\mathrm{O\Lambda}}

\newcommand{\spol}{\mathrm{SPol}}
\newcommand{\onh}{\mathrm{ONH}}

%\newcommand\E{{\cal{E}}}
%\newcommand\F{{\cal{F}}}

%%#####################################################################################

\newcommand{\Udown}{
 \xy (0,0)*{\begin{tikzpicture}[scale=0.75]
  \draw[semithick, <-] (0,0) -- (0,1);
\end{tikzpicture}};
% add nice spacing
(1.5,0)*{};(-1.5,0)*{}; \endxy
    }

\newcommand{\Uupdots}{
 \xy (0,0)*{\begin{tikzpicture}[scale=0.75]
  \draw[semithick, ->] (0,0) -- (0,1);
  \draw[color=blue, thick, dashed, double distance=1pt] (0,.4) to [out=180, in=-90] (-0.5,1);
  \node at (0,.4) {$\scs \bullet$};
\end{tikzpicture}};
% add nice spacing
(1.5,0)*{};(-2,0)*{}; \endxy
    }

\newcommand{\Ucapl}{\;\;
\begin{tikzpicture}[scale=0.5]
  \draw[semithick, ->] (0.5,0) .. controls (0.5,0.8) and (-0.5,0.8) .. (-0.5,0);
\end{tikzpicture}\;\; }

\newcommand{\Ucapr}{\;\;
\begin{tikzpicture}[scale=0.5]
  \draw[semithick, ->] (-0.5,0) .. controls (-0.5,0.65) and (0.5,0.65) .. (0.5,0)
      node[pos=0.5, shape=coordinate](X){};
  \draw[color=blue, thick, double distance=1pt, dashed] (X) -- (0,1);
\end{tikzpicture}\;\; }

\newcommand{\Ucupl}{\;\;
\xy
(0,0)*{
\begin{tikzpicture}[scale=0.5]
  \draw[semithick, ->] (0.5,1) .. controls (0.5,0.2) and (-0.5,0.2) .. (-0.5,1);
  %\draw[color=blue, thick, double distance=1pt] (X) -- (0,1);
\end{tikzpicture}}; \endxy
\;\; }

\newcommand{\Ucupr}{\;\;
\xy
(0,0)*{
\begin{tikzpicture}[scale=0.5]
  \draw[semithick, ->] (-0.5,1) .. controls (-0.5,0.4) and (0.5,0.4) .. (0.5,1)
  node[pos=0.5, shape=coordinate](X){};;
  \draw[color=blue, thick, double distance=1pt, dashed] (X) -- (0,0);
\end{tikzpicture}}; \endxy
\;\; }

\newcommand{\Ucuprm}{\;\;
\xy
(0,0)*{
\begin{tikzpicture}[scale=0.5]
  \draw[semithick, ->] (-0.5,1) .. controls (-0.5,0.2) and (0.5,0.2) .. (0.5,1)
  node[pos=0.5, shape=coordinate](X){};;
  \draw[color=blue, thick, double distance=1pt, dashed] (X) -- (0,1.1);
\end{tikzpicture}}; \endxy
\;\; }

\newcommand{\Ucrossr}{\;\;
    \vcenter{\xy (0,0)*{\begin{tikzpicture}[scale=0.5]
  \draw[semithick, ->] (-0.5,0) .. controls (-0.5,0.5) and (0.5,0.5) .. (0.5,1)
      node[pos=0.5, shape=coordinate](X){};
    \draw[semithick, <-] (0.5,0) .. controls (0.5,0.5) and (-0.5,0.5) .. (-0.5,1);
  \draw[color=blue,  thick, dashed] (X) to (0,1);
\end{tikzpicture}}; \endxy} \;\; }

\newcommand{\Ucrossl}{\;\;
    \vcenter{\xy (0,0)*{\begin{tikzpicture}[scale=0.5]
  \draw[semithick, <-] (-0.5,0) .. controls (-0.5,0.5) and (0.5,0.5) .. (0.5,1)
      node[pos=0.5, shape=coordinate](X){};
  \draw[semithick, ->] (0.5,0) .. controls (0.5,0.5) and (-0.5,0.5) .. (-0.5,1);
  \draw[color=blue,  thick, dashed] (X) to (0,0);
\end{tikzpicture}}; \endxy} \;\; }

\newcommand{\sUup}{
 \xy (0,0)*{\begin{tikzpicture}[scale=0.45]
  \draw[semithick, ->] (0,0) -- (0,1);
\end{tikzpicture}};
% add nice spacing
(1.5,0)*{};(-1.5,0)*{}; \endxy
    }

\newcommand{\sUupb}{
 \xy (0,0)*{\begin{tikzpicture}[scale=0.45]
  \draw[color=blue, thick, double distance=1pt, dashed] (0,0) -- (0,1);
\end{tikzpicture}};
% add nice spacing
(1.5,0)*{};(-1.5,0)*{}; \endxy
    }

\newcommand{\sUupbb}{
 \xy (0,0)*{\begin{tikzpicture}[scale=0.45]
  \draw[color=blue, thick,  dashed] (0,0) -- (0,1);
\end{tikzpicture}};
% add nice spacing
(1.5,0)*{};(-1.5,0)*{}; \endxy
    }

\newcommand{\sUdown}{
 \xy (0,0)*{\begin{tikzpicture}[scale=0.45]
  \draw[semithick, <-] (0,0) -- (0,1);
\end{tikzpicture}};
% add nice spacing
(1.5,0)*{};(-1.5,0)*{}; \endxy
    }

\newcommand{\sUupdot}{
 \xy (0,0)*{\begin{tikzpicture}[scale=0.45]
  \draw[semithick, ->] (0,0) -- (0,1);
  \node at (0,.4) {$\scs \bullet$};
  \draw[color=blue, thick, dashed] (0,.4) to [out=180, in=-90] (-0.5,1.1);
\end{tikzpicture}};
% add nice spacing
(1.5,0)*{};(-2,0)*{}; \endxy
    }

% A clockwise oriented bubble with dots and an arbitrary label

% A counterclockwise oriented bubble with dots and an arbitrary label

\def\l{\lambda}

\newcommand{\scs}{\scriptstyle}
%=======

% sl2 macros

%%%%%%%%%%%%%%%%%%%%%%%%%%%%%%%%%%%%%%%%%%%%%%%%%%%%%%%%%%%%%%%%%%%%%%%%%%%%%%%

%%%%%%%%%%%%%%%%% PAGE FORMATTING %%%%%%%%%%%%%%%%%%%%%%%%%%%%%%%%%%%%%%%%%%%%%

\hfuzz=6pc

% Equation numbering:

\numberwithin{equation}{section}

% maintenance

%
%

%

% abbreviations in italics

\def\emph#1{{\sl #1\/}}

%
%

% hat and tilde

\let\tilde=\widetilde

% some Greek letters

\let\phi=\varphi
\let\theta=\vartheta
\let\epsilon=\varepsilon

% some sets

\usepackage{bbm}
\def\C{{\mathbbm C}}
\def\N{{\mathbbm N}}
\def\R{{\mathbbm R}}
\def\Z{{\mathbbm Z}}
\def\Q{{\mathbbm Q}}

% maths macros

%\def\cal#1{\mathcal{#1}}%
%
%
%
%
%
\def\nn{\notag}

\def\la{\langle}
\def\ra{\rangle}

\newcommand{\ep}{\epsilon}

\newcommand{\qbin}[2]{
\left[
 \begin{array}{c}
 #1 \\
 #2 \\
 \end{array}
 \right]
}
\newcommand{\qbins}[2]{
\left[
 \begin{array}{c}
 \scs #1 \\
 \scs #2 \\
 \end{array}
 \right]
}

% ===================================
%
% End of file.
%
% ===================================

%% This lets the compiler make page breaks where they naturally fall. [1] means allow page breaks, but avoid them as much as possible;
%values of 2,3,4 mean increasing permissiveness.
\allowdisplaybreaks

\title{An odd categorification of $U_q(\sltwo)$}

\author{Alexander P.\ Ellis}
\address{Department of Mathematics, Columbia University, New
York, NY 10027, USA}
\email{ellis@math.columbia.edu}

      \author{Aaron D.\ Lauda}
  \address{Department of Mathematics, University of Southern California, Los Angeles, CA 90089, USA}
  \email{lauda@usc.edu}

%\address[ellis@math.columbia.edu]{Alexander P.\ Ellis, Department of Mathematics, Columbia University, New
%York, NY 10027, USA}
%
%\address[lauda@usc.edu]{Aaron D.\ Lauda, Department of Mathematics, University of Southern California, Los Angeles, CA 90089, USA}

\date{February 27, 2014}

% ==============================================================================
%
\begin{document}
%
% ==============================================================================

\maketitle
\begin{abstract}
We define a 2-category that categorifies the covering Kac-Moody algebra for $\mf{sl}_2$ introduced by Clark and Wang.  This categorification forms the structure of a super-2-category as formulated by Kang, Kashiwara, and Oh.   The super-2-category structure introduces a $\Z\times\Zt$-grading
giving its Grothendieck group the structure of a free module over the group algebra of $\Z\times\Zt$.  By specializing the $\Zt$-action to $+1$ or to $-1$, the construction specializes to an ``odd'' categorification of $\mf{sl}_2$ and to a supercategorification of $\mf{osp}_{1|2}$, respectively.
\end{abstract}

\setcounter{tocdepth}{1}
\tableofcontents

%\begin{classification}
%	20G42, 17B37, 17A70, 57M27.
%\end{classification}
%
%\begin{keywords}
%Covering algebras, categorified quantum groups, cycltomic quotients, odd nilHecke algebra, odd Khovanov homology.
%\end{keywords}

%#####################################################################
%
\section{Introduction}
%
%####################################################################

Categorical representation theory studies actions of Lie algebras and their associated quantum groups on categories, with generators acting by functors, and equations between elements lifting to isomorphisms of functors.  Natural transformations between these functors exemplify the higher structure in 2-representation theory that cannot be accessed in traditional representation theory.
 Solving a conjecture of Igor Frenkel, the second author introduced a 2-category $\Ucev=\Ucev(\mf{sl}_2)$ that categorifies the integral idempotented version $\AUdot$ of the quantum enveloping algebra of $\mf{sl}_2$~\cite{Lau1,Lau4}.  This 2-category governs the higher structure in categorical actions of $\Udot=\Udot(\mf{sl}_2)$.

The categorification $\Ucev$ of $\Udot$ is ubiquitous in 2-representation theory.
In many of the known categorical representations of $\Udot$ one can identify the higher structure needed to lift these actions to full 2-representations of the 2-category $\Ucev$.

Examples of such 2-representations include modules over cyclotomic KLR-algebras~\cite{KK,Kash,CL,Web}, blocks of parabolic category $\mathcal{O}$~\cite{CR,FKS,Sussan,HS}, foam categories~\cite{Mac,MPT,LQR}, and  derived categories of coherent sheaves on cotangent bundles to Grassmannians~\cite{CKL2,CKL3,CKL4,CL}.  This 2-category also leads to a categorification of quantum $\mf{sl}_2$ at prime roots of unity ~\cite{KQ,EQ} using the theory of Hopfological algebra introduced by Khovanov~\cite{KhHopf} and further developed by Qi in \cite{QYHopf}.

The 2-category $\Ucev$ categorifying quantum $\mf{sl}_2$  also plays a fundamental role in link homology theories.  One of the original motivations for categorifying quantum groups was to provide a unified representation theoretic explanation of the link homology theories that categorify various quantum link invariants.  Various steps in this direction have already been achieved~\cite{Web2,Cautis,LQR}.

Khovanov homology is the simplest of these link homology theories, categorifying a certain normalization of the Jones polynomial~\cite{Kh1,Kh2}. Surrounding Khovanov homology is an intricate system of related combinatorial and geometric ideas. Everything from extended 2-dimensional TQFTs~\cite{Kh1,Kh2,LP3,Cap4,CMW}, planar algebras~\cite{BN1,BN2},  category $\mathcal{O}$~\cite{Strop1,Strop2,BrSt3,BFK}, coherent sheaves on  quiver varieties~\cite{CK01}, matrix factorizations~\cite{KhR,KhR2}, homological mirror symmetry~\cite{SeSm,CK01}, arc algebras~\cite{Kh2,ChK,BrSt1,BrSt2,BrSt3,BrSt4}, Springer varieties~\cite{KhSp,Strop1,SW}, and 5-dimensional gauge theories~\cite{Witten,Witten2} appear in descriptions of Khovanov homology. The categorification $\Ucev(\mf{sl}_2)$ of the idempotent version $\Udot$ of the quantum enveloping algebra of $\mf{sl}_2$ demonstrates commonalities between these approaches revealed in terms of higher representation theory.

Given the many connections between Khovanov homology and the sophisticated structures described above, it is  surprising to discover that there exists a distinct categorification of the Jones polynomial. Ozsv\'{a}th, Rasmussen, Szab\'{o} found an {\em odd} analogue of Khovanov homology~\cite{ORS}. This odd homology theory for links agrees with the original Khovanov homology when coefficients are taken modulo 2.  Both of these theories categorify the Jones polynomial, and results of Shumakovitch~\cite{Shum} show that these categorified link invariants are not equivalent. Both can distinguish knots that are indistinguishable in the other theory.

Motivated by the problem of defining odd categorified quantum groups to provide a higher representation theoretic explanation for odd Khovanov homology, the authors in collaboration with Mikhail Khovanov introduced an odd analogue of the nilHecke algebra. The nilHecke algebra plays a central role in the theory of categorified quantum groups, giving rise to an integral categorification of the negative half of $\U(\mf{sl}_2)$~\cite{Lau1,KL3,Rou2}.  In the categorification $\Ucev$ of the entire quantum group $\Udot$, the nilHecke algebra describes 2-endomorphisms of 1-morphisms $\Fc^n\onebb_{\l}$, respectively $\Ec^n\onebb_{\l}$.  This algebra is also closely connected to the geometry of flag varieties and the combinatorics of symmetric functions.   The purpose of this article is to extend the categorification of the negative half of $\U(\mf{sl}_2)$ via the odd nilHecke algebra to an odd categorification of all of $\Udot(\mf{sl}_2)$.

\subsubsection*{Covering Kac-Moody algebras}

The odd nilHecke algebra appears to have numerous connections to other areas of representation theory.
It was independently introduced by Kang, Kashiwara and Tsuchioka~\cite{KKT}
starting from the different perspective of trying to develop super analogues of KLR algebras.  Their quiver Hecke superalgebras become isomorphic to affine Hecke-Clifford superalgebras or affine Sergeev superalgebras after a suitable completion, and the $\sltwo$ case of their construction is isomorphic to the odd nilHecke algebra.   Cyclotomic quotients of quiver Hecke superalgebras supercategorify certain irreducible representations of Kac-Moody algebras~\cite{KKO,KKO2}. A closely related spin Hecke algebra associated to the affine Hecke-Clifford superalgebra appeared in earlier work of Wang~\cite{Wang} and many of the essential features of the odd nilHecke algebra including skew-polynomials appears in this and related works~\cite{Wang2,KW1,KW2,KW4}.

The odd nilHecke algebra has led to a number of surprising new structures including an odd analogue of the ring of symmetric functions~\cite{EK,EKL,EOddLR} and odd analogues of the cohomology groups of Grassmannians~\cite{EKL} and of Springer varieties~\cite{LR}.  These structures possess combinatorics quite similar to those of their even counterparts.  When coefficients are reduced modulo two the theories become identical, but the odd analogues possess an inherent non-commutativity making them distinct from the classical theory.

Clark, Hill, and Wang have supplied convincing evidence that the odd nilHecke algebra and its generalizations are closely related to super Lie theory~\cite{HillWang,ClarkWang, CHW,CHW2,CFLW}.  A key insight of their construction was the introduction of a parameter $\pi$ with $\pi^2=1$.  They introduce the notion of a covering Kac-Moody algebra defined over $\Q(q)[\pi]/(\pi^2-1)$ for certain Kac-Moody Lie algebras.  A list of those finite and affine type Kac-Moody algebras admitting a covering algebra is shown in \cite[Table1]{HillWang}. The specialization to $\pi=1$ gives the quantum enveloping algebra of a Kac-Moody algebra and the specialization to $\pi=-1$ gives a quantum enveloping algebra of a Kac-Moody superalgebra. This idea led Hill and Wang to a novel bar involution $\overline{q} =\pi q^{-1}$ allowing the first construction of canonical bases for positive parts of Lie superalgebras. The canonical basis for the entire quantum supergroup $\U(\mathfrak{osp}_{1|2})$ was constructed by Clark and Wang in \cite{ClarkWang} where the rank one case was fully developed. The covering algebra $\Upi$ can be seen as a simultaneous generalization of the quantum group $\U(\mf{sl}_2)$ and the Lie superalgebra $\U(\mathfrak{osp}_{1|2})$.  This relationship is illustrated below.
\[\xy
  (0,10)*+{\Upi}="t";%
  (-15,-5)*+{\U(\mathfrak{sl}_2)}="bl";
  (15,-5)*+{\U(\mathfrak{osp}_{1|2})}="br";
  {\ar_{\pi \to 1} "t";"bl"};
  {\ar^{\pi \to -1} "t";"br"};
 \endxy\]
The existence of a canonical basis for the covering algebra $\Upi$ led Clark and Wang to conjecture the existence of a categorification of this algebra.  The main theorem of this paper (Theorem~\ref{thm_Groth}) confirms their conjecture.

\subsubsection*{Covering link homologies}
Given the interaction between categorified quantum groups and link homology, the existence of a categorification of covering Kac-Moody algebras hints at the possible existence of ``covering" link homology theories carrying an additional $\Z_2$-grading.  If the conjectured connection between categorified covering Kac-Moody algebras and odd link homologies exists, this would guide the search for new odd link homology theories.  In particular, the covering Kac-Moody perspective suggests that there will not be an odd analog of $\mf{sl}_n$-link homologies for $n>2$.  Indeed, the only covering Kac-Moody algebras that exist in finite type correspond to the Lie algebras $\mf{so}_{2n+1}$.  The covering Kac-Moody explanation of odd Khovanov homology would then be the result of the Lie algebra coincidence $\mf{sl}_2 = \mf{so}_3$.

The existence of odd link homologies associated to $\mf{so}_{2n+1}$ appears to be further corroborated by the topological field theory construction of link homologies due to Witten.  In a forthcoming article,  Mikhaylov and Witten describe an extension of this work in which candidates for odd link homologies associated $\mf{so}_{2n+1}$ naturally arise~\cite{MW}.  This work utilizes the orthosymplectic supergroup suggesting a close interplay with covering Kac-Moody algebras.

A covering homology was recently defined by Putyra \cite{Putyra} using a 2-category of chronological cobordisms.

\subsubsection*{Diagrammatics for super-2-categories}

The previously introduced graphical calculus for the odd nilHecke algebra required certain generators to skew commute.  This requires one to keep careful track of the relative heights of generators in a diagram making graphical computations rather cumbersome.
In Section \ref{sec-prelim} we translate Kang, Kashiwara, and Oh's theory of super-2-categories into a new diagrammatic framework.  This formulation has the advantage that it allows us to recast the odd nilHecke algebra in a graphical calculus with full isotopy invariance at the cost of adding a new type of strand to the graphical calculus.   From the perspective of these diagrammatics for super-2-categories, the skew commutativity of certain odd 2-morphisms is quite natural, allowing for a categorical manifestation of odd nilHecke algebras.

\subsubsection*{Super-2-representation theory of quantum Kac-Moody superalgebras}

As mentioned above, one of the main theorems of this paper is the construction of a super-2-category $\Udotc$ which categorifies Clark and Wang's quantum covering $\sltwo$.  We also prove a 2-representation theoretic structure theorem analogous to a theorem of Cautis and the second author \cite{CL}: that a strong supercategorical action of $\sltwo$ gives rise to a 2-representation of our super-2-category $\Udotc$, the latter notion being \emph{a priori} stronger.  This allows us to upgrade the Kang-Kashiwara-Oh action on cyclotomic quotients of odd nilHecke algebras to a 2-representation of $\Udotc$.

In \cite{Lau1}, the categorification $\Udotcev$ of quantum $\sltwo$ has generating 1-morphisms $\Ec\onebbl$ and $\onebbl\Fc$.  These 1-morphisms are biadjoint up to grading shifts.  It should come as no surprise, then, that the corresponding generating 1-morphisms of $\Udotc$ are biadjoint up to grading \emph{and parity} shifts.  Unlike shifts of the $\Z$-grading, parity shifts are visible in our diagrammatics.  Hence our biadjointness and cyclicity relations \eqref{eq_biadjoint1},\eqref{eq_biadjoint2},\eqref{eqn-dot-cyclicity},\eqref{eqn-crossing-cyclicity} are more involved than those in \cite{Lau1}.

We expect that all the constructions of this paper---the definition of strong supercategorical actions, the definition of the super-2-category, and the theorem on upgrading the former to the latter---can be extended to all Kac-Moody types for which Kang, Kashiwara, and Tsuchioka \cite{KKT} have defined quiver Hecke superalgebras.

\bigskip
%%%%%%%%%%%%%%%%%%%%%%%%%%%%%%%%%%%%%%%%%%%%%%%%%%%%%%%%%%%%%%%%%%
\noindent {\bf Acknowledgments:}
A.P.E was supported by the NSF Graduate Research Fellowship Program and thanks Masaki Kashiwara for a helpful discussion on super-2-categories. A.D.L  was partially supported by NSF grant DMS-1255334,  the Alfred P. Sloan foundation, and by the John Templeton Foundation. A.D.L is grateful to Sabin Cautis for teaching him the methods used in Section~\ref{sec-formal} and to Edward Witten for discussions about gauge theoretic formulations of odd Khovanov homology. Both the authors would like to acknowledge partial support from Columbia University's RTG grant DMS-0739392.  This paper builds directly on joint work with Mikhail Khovanov \cite{EKL}.

%#####################################################################
%
\section{Preliminaries} \label{sec-prelim}
%
%#####################################################################

%---------------------------------------------------------------------
\subsection{Conventions}\label{subsec-conventions}
%---------------------------------------------------------------------

We will work with $(\Z\times\Zt)$-graded algebras.  The $\Z$-degree of an element $x$ we will simply call \emph{degree} $|x|$ and the $\Zt$-degree we will call \emph{parity} $p(x)$.  The braiding on the base category (graded $\Bbbk$-modules) is governed by parity; for instance, if $A$ is an algebra in this category, then the canonical multiplication on $A\otimes A$ is
\begin{equation*}
(a\otimes b)(c\otimes d)=(-1)^{p(b)p(c)}(ac)\otimes(bd)
\end{equation*}
for parity-homogeneous elements $a,b,c,d$.  If $M$ is an $A$-module, we write $\Pi(M)$ for the its parity shift with the left action twisted by the parity involution on $A$,
\begin{equation}\label{eqn-parity-involution}
\iota_A(a)=(-1)^{p(a)}a.
\end{equation}
We write $M\ds{k}$ for the module obtained from $M$ by shifting all degrees down by $k$.  Hom-spaces always refer to maps which preserve both degree and parity; if we write $\Hom^k$ for the space of maps of degree $k$ (so that $\Hom^0$ is the actual hom-space), then
\begin{equation*}
\Hom^k(M,N)=\Hom(M\ds{-k},N)=\Hom(M,N\ds{k}).
\end{equation*}

%---------------------------------------------------------------------
\subsection{Covering Kac-Moody algebras}\label{subsec-covering-sl2}
%---------------------------------------------------------------------

In the subsection, we present a modification of the constructions of \cite{CHW}.  This ``covering'' form of $\U=U_q(\sltwo)$, which we denote by $\Upi$, is the object we will categorify as the main result of this paper.  Define the $(q,\pi)$-analogues of integers, factorials, and binomial coefficients by
\[
[n]=\frac{(\pi q)^n-q^{-n}}{\pi q-q^{-1}},\qquad
[a]!= \prod_{i=1}^{a}[i], \qquad
\left[\!\!
 \begin{array}{c}
   n \\
   a
 \end{array}
 \!\!\right]
 =
 \frac{\prod_{i=1}^a[n+i-a]}{[a]!}.
\]
Note as in \cite{CHW} that
$\left[\!\!
 \begin{array}{c}
   n \\
   a
 \end{array}
 \!\!\right] = \frac{[n]!}{[a]![n-a]!}$ for $n \geq a \geq 0$ and $[-n]=-\pi^n[n]$.  Let $\Ac=\Z[q,q^{-1}]$, $\Ac_{\pi}=\Z[q,q^{-1},\pi]/(\pi^2-1)$, and $\Q(q)^\pi=\Q(q)[\pi]/(\pi^2-1)$.

\begin{defn}\label{defn-covering-sl2} The \emph{idempotented form} of \emph{quantum covering $\sltwo$} is the (non-unital) $\Q(q)^\pi$-algebra $\Udotpi$ generated by orthogonal idempotents $\lbrace\onel:\lambda\in\Z\rbrace$ and elements
\begin{equation}
\oneltwo E\onel=E\onel=\oneltwo E,\qquad\onel F\oneltwo=F\oneltwo=\onel F\qquad\lambda\in\Z
\end{equation}
subject to the \emph{covering $\sltwo$ relation},
\begin{equation}\label{eqn-covering-sl2-relation}
EF\onel-\pi FE\onel=[\l]\onel.
\end{equation}
The \emph{integral idempotented form} is the $\Ac_\pi$-subalgebra $\AUdotpi\subset\Udotpi$ generated by the divided powers
\begin{equation}
E^{(a)}\onel=\frac{E^a\onel}{[a]!},\quad\onel F^{(a)}=\frac{\onel F^a}{[a]!}
\end{equation}
and the (idempotented) $q$-binomial coefficients
\begin{equation}
\left[\!\!
 \begin{array}{c}
   n \\
   a
 \end{array}
 \!\!\right]\onel
\end{equation}
(we are making the shorthand identifications $E^2\onel=E\oneltwo E\onel$, etc.).
\end{defn}

\begin{rem} There is also a non-idempotented form of covering quantum $\sltwo$, but we will not need it; see \cite{CHW}.\end{rem}

The \emph{Clark-Hill-Wang bar involution} on $\Udotpi$ is defined by
\begin{equation*}
\overline{q}=\pi q^{-1},\quad\overline{\pi}=\pi^{-1}=\pi
\end{equation*}
(it does nothing to $\onel$, $E$, and $F$).  ``Anti-linear'' will be meant with respect to the bar involution.  Note that $(q,\pi)$-integers $[\l]$ are bar-invariant.

Define a linear anti-automorphism $\rho$ of $\Udotpi$ by
\begin{equation}\label{eqn-defn-rho}
\rho(q)=\pi q,\quad\rho(\pi)=\pi,\quad\rho(\onel)=\onel,\quad\rho(E\onel)=q^{\l+1}\onel F,\quad\rho(\onel F)=\pi^{\l+1}q^{-\l-1}E\onel
\end{equation}
and an anti-linear anti-automorphism $\tau=\overline{\rho}$ by
\begin{equation}\label{eqn-defn-tau}
\tau(q)=q^{-1},\quad\tau(\pi)=\pi,\quad\tau(\onel)=\onel,\quad\tau(E\onel)=\pi^{\l+1}q^{-\l-1}\onel F,\quad\tau(\onel F)=q^{\l+1}E\onel.
\end{equation}

Define the bilinear form $(\cdot,\cdot)$ on $\Udotpi$ by the properties
\begin{eqnarray}
&\text{different idempotented parts are orthogonal}\\
&(fx,y)=f(x,y)=(x,fy)\\
&(ux,y)=(x,\rho(u)y)\\
&(x,y)=(y,x)\\
&(F^{(a)}\onel,F^{(a)}\onel)=\prod_{s=1}^a(1-\pi^sq^{-2s})^{-1}
\end{eqnarray}
and the sesquilinear form $\la\cdot,\cdot\ra$ by
\begin{equation*}
\la x,y\ra=\overline{(x,\overline{y})}.
\end{equation*}
The following properties of $\la\cdot,\cdot\ra$ follow:
\begin{eqnarray}
&\text{different idempotented parts are orthogonal}\\
&\la\overline{f}x,y\ra=f\la x,y\ra=\la x,fy\ra\\
\label{eqn-tau-adjoint}&\la ux,y\ra=\la x,\tau(u)y\ra\\
&\la x,y\ra=\la\overline{y},\overline{x}\ra\\
\label{eqn-form-on-F}&\la F^{(a)}\onel,F^{(a)}\onel\ra=\prod_{s=1}^a(1-\pi^sq^{2s})^{-1}
\end{eqnarray}
This agrees with the bilinear form introduced by Lusztig~\cite[26.1.1]{Lus4} when $\pi=1$.

Note that $\la\onel F,\onel F\ra=(1-\pi q^2)^{-1}$.
\begin{lem} $\la E\onel,E\onel\ra=(1-\pi q^2)^{-1}$.\end{lem}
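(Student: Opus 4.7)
The plan is to reduce $\la E\onel,E\onel\ra$ to the known value $\la F\onel,F\onel\ra=(1-\pi q^2)^{-1}$ furnished by \eqref{eqn-form-on-F} (case $a=1$), by moving $E$'s across the form via the $\tau$-adjointness \eqref{eqn-tau-adjoint} and using the covering relation \eqref{eqn-covering-sl2-relation} to exchange an $FE$ for an $EF$.

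First, I apply \eqref{eqn-tau-adjoint} with $u=E$, $x=\onel$, $y=E\onel$. Since $E\onel$ sits at weight $\l+2$, the component of $\tau(E)$ that acts on it is $\tau(E\onel)=\pi^{\l+1}q^{-\l-1}\onel F$, which gives $\tau(E)\cdot E\onel=\pi^{\l+1}q^{-\l-1}FE\onel$. Thus
\[
\la E\onel,E\onel\ra \;=\; \pi^{\l+1}q^{-\l-1}\,\la\onel,FE\onel\ra.
\]

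Second, I use the covering $\sltwo$ relation \eqref{eqn-covering-sl2-relation} to replace $FE\onel$ by $\pi EF\onel-\pi[\l]\onel$. Together with $\la\onel,\onel\ra=1$ (from the $a=0$ case of the last defining property of $(\cdot,\cdot)$, followed by bar), this converts the problem into one of computing $\la\onel,EF\onel\ra$ up to an explicit correction term involving $[\l]$.

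Third, I bring $EF\onel$ to the left of the form via bar-symmetry $\la x,y\ra=\la\overline{y},\overline{x}\ra$ (using that $\onel,E,F$ are bar-invariant), then apply \eqref{eqn-tau-adjoint} once more with $u=E$ to obtain $\la\onel,EF\onel\ra=\la F\onel,\tau(E)\onel\ra$. The component of $\tau(E)$ now relevant is computed from $\tau(E\mathbf{1}_{\l-2})=\pi^{\l-1}q^{1-\l}\mathbf{1}_{\l-2}F=\pi^{\l+1}q^{1-\l}F\onel$, so the right-hand side becomes $\pi^{\l+1}q^{1-\l}\,\la F\onel,F\onel\ra$. Plugging in $\la F\onel,F\onel\ra=(1-\pi q^2)^{-1}$ from \eqref{eqn-form-on-F} expresses $\la\onel,EF\onel\ra$ explicitly. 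Substituting back completes the argument.

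The main obstacle is not conceptual but bookkeeping: each use of $\tau$ and of the covering relation contributes weight-dependent scalars $\pi^?q^?$, and one has to verify, using $\pi^2=1$ and the explicit formula $[\l]=((\pi q)^\l-q^{-\l})/(\pi q-q^{-1})$, that the $\l$-dependence in the scalar prefactor $\pi^{\l+1}q^{-\l-1}$ of step~one combines with that in $[\l]$ from step~two and in $\tau(E)\onel$ from step~three to leave the advertised weight-independent answer $(1-\pi q^2)^{-1}$.
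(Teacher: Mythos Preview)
Your approach is correct and essentially the same as the paper's: both move one $E$ across via $\tau$-adjointness, invoke the covering relation \eqref{eqn-covering-sl2-relation}, then move the remaining $E$ back to reduce to $\la F\onel,F\onel\ra$. The only cosmetic difference is that in your third step you use the bar-symmetry $\la x,y\ra=\la\overline{y},\overline{x}\ra$ followed by $\tau$, whereas the paper applies $\tau^{-1}$ directly; these are equivalent since $\tau^{-1}(E\one_{\l-2})$ and $\tau(E\one_{\l-2})$ yield the same scalar contribution after accounting for the conjugate-linearity in the first slot.
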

\begin{proof}
\begin{equation*}\begin{split}
\la E\onel,E\onel\ra&\refequal{\eqref{eqn-tau-adjoint}}\la\onel,\tau(E\onel)E\onel\ra\\
&\refequal{\eqref{eqn-defn-tau}}\la\onel,\pi^{\l+1}q^{-\l-1}FE\onel\ra\\
&\refequal{\eqref{eqn-covering-sl2-relation}}\la\onel,\pi^\l q^{-\l-1}EF\onel-\pi^\l q^{-\l-1}[\l]\onel\ra\\
&=\la\tau^{-1}(E\one_{\l-2})\onel,\pi^\l q^{-\l-1}F\onel\ra-\pi^\l q^{-\l-1}[\l]\la\onel,\onel\ra\\
&\refequal{\eqref{eqn-defn-tau}}\la q^{-\l-1}\one_{\l-2}F,\pi^\l q^{-\l-1}\one_{\l-2}F\ra-\pi^\l q^{-\l-1}[\l]\\
&\refequal{\eqref{eqn-form-on-F}}\frac{\pi^\l q^{-2\l}}{1-\pi q^2}-\pi^\l q^{-\l-1}\frac{\pi^\l q^\l-q^{-\l}}{\pi q-q^{-1}}\\
&=\frac{1}{1-\pi q^2}.
\end{split}\end{equation*}
\end{proof}

\begin{rem}
There are some rescaling degrees of freedom for the automorphisms and bilinear form defined above.
Our automorphisms and bilinear form differ from those of \cite{CHW}. The specific choice of scaling given would have been difficult to fix without knowing the categorification of $\Udotpi$.  In particular, our
automorphism $\tau$ is the decategorification of ``take the right adjoint''.\end{rem}

%---------------------------------------------------------------------
\subsection{Odd nilHecke algebras and cyclotomic quotients}
%---------------------------------------------------------------------

\subsubsection{Odd nilHecke algebras categorify $\AU^+(\sltwo)$}

The main theorem of this paper is the construction of a categorification of the algebra $\Udotpi$ of Definition \ref{defn-covering-sl2}.  One of the main ingredients in this construction is the categorification of $\U^+$ by the \emph{odd nilHecke algebras} \cite{EKL,KKT}.  This subsection will review these algebras as well as their \emph{cyclotomic quotients}, which categorify simple modules for $\Udot$ \cite{KKO,KKO2}.

Let $\Bbbk$ be a commutative ring (usually we take $\Bbbk=\Z$ or a field) and let
\begin{equation*}
\spol_n=\Bbbk\langle x_1,\ldots,x_n\rangle/(x_ix_j+x_jx_i\text{ if }i\neq j)
\end{equation*}
be the $(\Z\times\Zt)$-graded superalgebra of \emph{skew polynomials} in $n$ variables.  Its generators are given degrees $|x_i|=2$, $p(x_i)=1$.  The symmetric group $S_n$ acts on $\spol_n$ by
\begin{equation*}
w(x_i)=x_{w(i)},\quad w(fg)=w(f)w(g).
\end{equation*}
For $i=1,\dots,n-1$, let $s_i=(i\quad i+1)$ be the $i$-th simple transposition in $S_n$ and define the $i$-th \emph{odd divided difference operator} $\del_i$ to be the map $\spol_n\to\spol_n$ defined by
\begin{eqnarray}
&\del_i(x_j)=\begin{cases}1&j=i,i+1\\0&\text{otherwise,}\end{cases}\\
&\del_i(fg)=\del_i(f)g+(-1)^{|f|}s_i(f)\del_i(g).
\end{eqnarray}
(It is straightforward to check this is well defined.)  For example,
\begin{equation*}
\del_1(x_1^2x_2)=\del_1(x_1)x_1x_2-x_2\del_1(x_1)x_2+x_2^2\del_1(x_2)=x_1x_2.
\end{equation*}
More generally, for any $f\in \spol_n$ the action of the odd divided difference operator is given by the formula
\[
 \partial_i f = \frac{(x_{i+1} - x_i)f - (-1)^{|f|}s_i(f) (x_{i+1} - x_i) }{x_{i+1}^2-x_i^2},
\]
see \cite[equation 4.19]{KKO}.
These operators play a role analogous to that of the divided difference operators of Kostant-Kumar.

\begin{defn}\label{defn-osym} The sub-superalgebra
\begin{equation}
\osym_n=\bigcap_{i=1}^{n-1}\ker(\del_i)
\end{equation}
of $\spol_n$ is called the superalgebra of \emph{odd symmetric polynomials} in $n$ variables.
\end{defn}

\begin{defn}\label{defn-onh} The \emph{odd nilHecke algebra} $\onh_n$ in $n$ variables (or ``on $n$ strands'') is the sub-superalgebra of $\End_\Bbbk(\spol_n)$ generated by the operators $x_1,\ldots,x_n$ (left multiplication by $x_i$) and $\del_1,\ldots,\del_{n-1}$.
\end{defn}
The generators above have degrees $|x_i|=2$, $|\del_i|=-2$, $p(x_i)=p(\del_i)=1$.  Fix a reduced expression $w=i_1\cdots i_r$ for each $w\in S_n$ and define
\begin{equation*}
\del_w=\del_{i_1}\cdots\del_{i_r}.
\end{equation*}
This is independent of choice of reduced expression up to sign only (see the following Proposition).  If $\alpha=(\alpha_1,\ldots,\alpha_n)$ is an $n$-tuple, write $x^\alpha$ for $x_1^{\alpha_1}\cdots x_n^{\alpha_n}$.

The basic properties of $\onh_n$ are as follows.
\begin{prop}[\cite{EKL}] \hfill
\begin{enumerate}
\item Changing the choice of reduced expression only changes $\del_w$ by a possible factor of $-1$, and
\begin{equation*}
\del_w\del_{w'}=\begin{cases}\pm\del_{ww'}&\ell(w)+\ell(w')=\ell(ww'),\\0&\text{otherwise.}\end{cases}
\end{equation*}
\item The algebra $\onh_n$ is a free $\Bbbk$-module.  Either of the sets $\lbrace\del_wx^\alpha:w\in S_n,\alpha_i\in\Z_{\geq0}\rbrace$, $\lbrace x^\alpha\del_w:w\in S_n,\alpha_i\in\Z_{\geq0}\rbrace$ is a basis.
\item The relations
\begin{equation}\label{eqn-onh-relations}\begin{split}
&x_ix_j+x_jx_i=0\text{ if }i\neq j,\qquad
\del_i\del_j+\del_j\del_i=0\text{ if }|i-j|>1,\\
&\del_i^2=0,\qquad
\del_i\del_{i+1}\del_i=\del_{i+1}\del_i\del_{i+1},\\
&\del_ix_j+x_j\del_i=0,\text{ if }j\neq i,i+1,\\
&x_i\del_i+\del_ix_{i+1}=\del_ix_i+x_{i+1}\del_i=1
\end{split}\end{equation}
hold and give a presentation of $\onh_n$.
\item There is an isomorphism of superalgebras
\begin{equation}
\onh_n\cong\End_{\osym_n}(\spol_n).
\end{equation}
\end{enumerate}\end{prop}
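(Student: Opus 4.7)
The plan is to deduce everything from the faithful action of $\onh_n$ on $\spol_n$ together with a careful analysis of the relations on the generating operators.

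First I would verify directly that the operators $x_i$ (left multiplication) and $\partial_i$ satisfy the relations \eqref{eqn-onh-relations} as elements of $\End_\Bbbk(\spol_n)$. The mixed relations $\partial_i x_j + x_j \partial_i = 0$ for $j \neq i, i+1$ and $\partial_i x_i + x_{i+1}\partial_i = 1 = x_i \partial_i + \partial_i x_{i+1}$ fall out of the super-Leibniz rule and the specified values of $\partial_i$ on generators. The relation $\partial_i^2 = 0$ is an immediate calculation. The braid relation $\partial_i \partial_{i+1}\partial_i = \partial_{i+1}\partial_i \partial_{i+1}$ and the far-commutation $\partial_i \partial_j + \partial_j \partial_i = 0$ for $|i-j|>1$ are best checked by applying both sides to the generators $x_1,\ldots,x_n$ and invoking the super-derivation property to extend to all of $\spol_n$.

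For part (1), given these relations I would invoke Matsumoto's theorem: any two reduced expressions for $w\in S_n$ are related by a sequence of braid and far-commutation moves. Each such move changes $\partial_w$ by at most a sign, so $\partial_w$ is well-defined up to $\pm 1$. If $\ell(w)+\ell(w')=\ell(ww')$ then concatenating reduced expressions gives a reduced expression for $ww'$; if not, some $\partial_i^2$ appears after braid manipulation and the product vanishes. For part (2), the mixed relations let me rewrite any word in the generators in the form $\sum x^\alpha \partial_w$ (and symmetrically in the other order), so these elements span. To see linear independence, apply a purported relation $\sum c_{w,\alpha} x^\alpha \partial_w$ to a monomial $x^\beta$ chosen so that only the top-length $w$ can produce a nonzero output (e.g.\ $x^{\rho}$ where $\rho = (n-1, n-2, \ldots, 0)$), extract the leading term, and iterate. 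Part (3) follows: the relations hold by the first paragraph, they reduce any word to an element of the claimed basis by the rewriting procedure, and the faithful action forces the map from the abstract algebra presented by these relations to $\onh_n$ to be injective.

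For part (4), the action of $\onh_n$ on $\spol_n$ is by $\osym_n$-linear maps (since $\osym_n$ is by definition the joint kernel of the $\partial_i$, and the mixed relations then imply $\osym_n$-linearity), giving a superalgebra homomorphism $\onh_n \to \End_{\osym_n}(\spol_n)$. The injectivity follows from part (2) together with the faithfulness of the action on $\spol_n$. For surjectivity, I would use the result from \cite{EKL} that $\spol_n$ is a free graded $\osym_n$-module of rank $n!$, with a Schubert-like basis $\{\partial_{w_0 w^{-1}}(x^\rho)\}_{w\in S_n}$ dual (up to units and signs) to $\{\partial_w\}$ under the pairing $(f,g)\mapsto\text{top coefficient of }fg$. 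This duality identifies $\onh_n$ with $\End_{\osym_n}(\spol_n)$ by matching the Schubert basis on one side with the divided difference basis on the other.

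The main obstacle I anticipate is bookkeeping of signs throughout: the super-Leibniz rule injects sign twists at every step, and a naive argument breaks down. In particular, checking the braid relation and proving the rewriting procedure terminates in a well-defined element of the spanning set both require care. The duality in part (4) is also a delicate point where signs must be tracked, and is where one genuinely uses the structure results on $\osym_n$ from \cite{EKL}.
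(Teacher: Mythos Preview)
The paper does not prove this proposition; it is quoted verbatim from \cite{EKL} and stated without proof. There is therefore nothing in the present paper to compare your proposal against.

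That said, your outline is broadly consistent with the argument in \cite{EKL}: verify the relations on the polynomial representation, use Matsumoto's theorem for part (1), use the relations to reduce to a spanning set and the faithful action for independence in part (2), and deduce (3) and (4) from these. One point deserves more care: in your argument for part (4), $\osym_n$-linearity of $\partial_i$ does not follow merely from $\osym_n = \bigcap\ker(\partial_i)$. The super-Leibniz rule gives $\partial_i(fg) = (-1)^{|f|}s_i(f)\partial_i(g)$ when $f\in\osym_n$, so you need to know that elements of $\osym_n$ are $S_n$-fixed up to the relevant sign, which in the odd setting is a nontrivial fact established separately in \cite{EKL}. Your closing caveat about sign bookkeeping is well placed; this is exactly where the argument requires the structural results on $\osym_n$ rather than formal manipulation.
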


In the super-2-category we will construct in Subsection \ref{subsec-defn-odd-udot}, the odd nilHecke algebra $\onh_n$ will play an important role in
governing the endomorphisms of $\Ec^n\onebbl$ and $\onebbl\Fc^n$.
It is often more convenient to work in a diagrammatic notation (which first arose, in fact, from the 2-categorical setting).  Elements of $\onh_n$ are $\Bbbk$-linear combinations of diagrams involving crossings and dots on $n$ strands with fixed endpoints, considered up to isotopies rel their boundaries that do not change the \emph{relative} heights of dots and crossings.  For example,
\begin{equation*}
\hackcenter{\begin{tikzpicture}[scale=0.5]
	\draw[thick] (0,0) [out=90, in=-90] to (2,2);
	\draw[thick] (1,0) [out=90, in=-90] to (0,2);
	\draw[thick] (2,0) .. controls (2,1) and (1,1) .. (1,2)
		node[pos=.2] () {\bbullet};
\end{tikzpicture}}
\;\; -\;\;2\;
\hackcenter{\begin{tikzpicture}[scale=0.5]
	\draw[thick] (0,0) -- (0,2)
		node[pos=.25] () {\bbullet}
		node[pos=.75] () {\bbullet};
	\draw[thick] (1,0) [out=90, in=-90] to (1,2);
	\draw[thick] (2,0) [out=90, in=-90] to (2,2);
\end{tikzpicture}}
\;\;
\in \;\;
\onh_3.
\end{equation*}
For a more discussion of diagrammatic algebra, see any of \cite{KL1,KhDiagrammatics,Lau4}.

In graphical notation, the relations \eqref{eqn-onh-relations} read
\begin{equation}\label{eqn-onh-relations-graphical}\begin{split}
&\hackcenter{\begin{tikzpicture}[scale=0.5]
	\draw[thick] (0,0) -- (0,2)
		node[pos=.75] () {\bbullet};
	\node () at (1,1) {$\cdots$};
	\draw[thick] (2,0) -- (2,2)
		node[pos=.25] () {\bbullet};
\end{tikzpicture}}
\quad+\quad
\hackcenter{\begin{tikzpicture}[scale=0.5]
	\draw[thick] (0,0) -- (0,2)
		node[pos=.25] () {\bbullet};
	\node () at (1,1) {$\cdots$};
	\draw[thick] (2,0) -- (2,2)
		node[pos=.75] () {\bbullet};
\end{tikzpicture}}
\quad=\;\;0,\qquad
\hackcenter{\begin{tikzpicture}[scale=0.5]
	\draw[thick] (0,0) -- (0,1) [out=90, in=-90] to (1,2);
	\draw[thick] (1,0) -- (1,1) [out=90, in=-90] to (0,2);
	\node () at (2,1) {$\cdots$};
	\draw[thick] (3,0) [out=90, in=-90] to (4,1) -- (4,2);
	\draw[thick] (4,0) [out=90, in=-90] to (3,1) -- (3,2);
\end{tikzpicture}}
\quad+\quad
\hackcenter{\begin{tikzpicture}[scale=0.5]
	\draw[thick] (3,0) -- (3,1) [out=90, in=-90] to (4,2);
	\draw[thick] (4,0) -- (4,1) [out=90, in=-90] to (3,2);
	\node () at (2,1) {$\cdots$};
	\draw[thick] (0,0) [out=90, in=-90] to (1,1) -- (1,2);
	\draw[thick] (1,0) [out=90, in=-90] to (0,1) -- (0,2);
\end{tikzpicture}}
\quad= \;\; 0,\\
&\hackcenter{\begin{tikzpicture}[scale=0.5]
	\draw[thick] (0,0) [out=90, in=-90] to (1,1) [out=90, in=-90] to (0,2);
	\draw[thick] (1,0) [out=90, in=-90] to (0,1) [out=90, in=-90] to (1,2);
\end{tikzpicture}}
\quad=\;\; 0,\qquad
\hackcenter{\begin{tikzpicture}[scale=0.5]
	\draw[thick] (0,0) [out=90, in=-90] to (2,2);
	\draw[thick] (1,0) [out=90, in=-90] to (0,1) [out=90, in=-90] to (1,2);
	\draw[thick] (2,0) [out=90, in=-90] to (0,2);
\end{tikzpicture}}
\quad=\quad
\hackcenter{\begin{tikzpicture}[scale=0.5]
	\draw[thick] (0,0) [out=90, in=-90] to (2,2);
	\draw[thick] (1,0) [out=90, in=-90] to (2,1) [out=90, in=-90] to (1,2);
	\draw[thick] (2,0) [out=90, in=-90] to (0,2);
\end{tikzpicture}}
\quad,\\
&\hackcenter{\begin{tikzpicture}[scale=0.5]
	\draw[thick] (0,0) -- (0,1) [out=90, in=-90] to (1,2);
	\draw[thick] (1,0) -- (1,1) [out=90, in=-90] to (0,2);
	\node () at (2,1) {$\cdots$};
	\draw[thick] (3,0) -- (3,2)
		node[pos=.25] () {\bbullet};
\end{tikzpicture}}
\quad+\quad
\hackcenter{\begin{tikzpicture}[scale=0.5]
	\draw[thick] (0,0) [out=90, in=-90] to (1,1) -- (1,2);
	\draw[thick] (1,0) [out=90, in=-90] to (0,1) -- (0,2);
	\node () at (2,1) {$\cdots$};
	\draw[thick] (3,0) -- (3,2)
		node[pos=.75] () {\bbullet};
\end{tikzpicture}}
= \;\; 0,\qquad
\hackcenter{\begin{tikzpicture}[scale=0.5]
	\draw[thick] (1,0) -- (1,2)
		node[pos=.75] () {\bbullet};
	\node () at (2,1) {$\cdots$};
	\draw[thick] (3,0) [out=90, in=-90] to (4,1) -- (4,2);
	\draw[thick] (4,0) [out=90, in=-90] to (3,1) -- (3,2);
\end{tikzpicture}}
\quad+\quad
\hackcenter{\begin{tikzpicture}[scale=0.5]
	\draw[thick] (1,0) -- (1,2)
		node[pos=.25] () {\bbullet};
	\node () at (2,1) {$\cdots$};
	\draw[thick] (3,0) -- (3,1) [out=90, in=-90] to (4,2);
	\draw[thick] (4,0) -- (4,1) [out=90, in=-90] to (3,2);
\end{tikzpicture}}
=\;\;0,\\
&\hackcenter{\begin{tikzpicture}[scale=0.75]
	\draw[thick] (0,0) .. controls (0,.5) and (1,.5) .. (1,1);
	\draw[thick] (1,0) .. controls (1,.5) and (0,.5) .. (0,1)
		node[pos=.75] () {\bbullet};
\end{tikzpicture}}
\quad+\quad
\hackcenter{\begin{tikzpicture}[scale=0.75]
	\draw[thick] (0,0) .. controls (0,.5) and (1,.5) .. (1,1);
	\draw[thick] (1,0) .. controls (1,.5) and (0,.5) .. (0,1)
		node[pos=.25] () {\bbullet};
\end{tikzpicture}}
\quad=\quad
\hackcenter{\begin{tikzpicture}[scale=0.75]
	\draw[thick] (0,0) -- (0,1);
	\draw[thick] (1,0) -- (1,1);
\end{tikzpicture}}
\quad=\quad
\hackcenter{\begin{tikzpicture}[scale=0.75]
	\draw[thick] (0,0) .. controls (0,.5) and (1,.5) .. (1,1)
		node[pos=.25] () {\bbullet};
	\draw[thick] (1,0) .. controls (1,.5) and (0,.5) .. (0,1);
\end{tikzpicture}}
\quad+\quad
\hackcenter{\begin{tikzpicture}[scale=0.75]
	\draw[thick] (0,0) .. controls (0,.5) and (1,.5) .. (1,1)
		node[pos=.75] () {\bbullet};
	\draw[thick] (1,0) .. controls (1,.5) and (0,.5) .. (0,1);
\end{tikzpicture}}
\quad.
\end{split}\end{equation}
One pleasant outcome of the super-2-categorical setting we will introduce in Subsection \ref{subsec-super-2-categories} is that we will be able to use diagrams which are equivalent modulo \emph{all} isotopies rel boundary, not just those which preserve relative heights of generators.

There is a map of superalgebras $\iota_{a,b}:\onh_a\otimes\onh_b\hookrightarrow\onh_{a+b}$ given by juxtaposing diagrams horizontally, placing the diagram on the left above the one on the right.  On generators,
\begin{equation}\label{eqn-iota-onh-ab}
\iota_{a,b}(x_i\otimes1)=x_i,\quad\iota_{a,b}(\del_i\otimes1)=\del_i,\quad\iota_{a,b}(1\otimes x_i)=x_{a+i},\quad\iota_{a,b}(1\otimes\del_i)=\del_{a+i}.
\end{equation}
\begin{prop}[\cite{EKL}]\label{prop-ekl-2} \hfill
\begin{enumerate}
\item Let $\delta=(n-1,n-2,\ldots,1,0)$ and $e_n=\del_{w_0}x^\delta$.  Then $e_n$ is an idempotent in $\onh_n$ and the left module $P_n=\onh_ne_n$ is, up to grading shifts, the unique indecomposable projective $\onh_n$-module.
\item The regular representation of $\onh_n$ decomposes as
\begin{equation}
\onh_n\cong\bigoplus_{[n]!}(\onh_ne_n)\la\qbin{n}{2}\ra.
\end{equation}
Here, for a Laurent polynomial $f=\sum_jf_jq^j$, the notation $\bigoplus_fM$ means the direct sum $\bigoplus_jM\la-j\ra^{\oplus f_j}$.
\item Induction and restriction along the map $\iota_{a,b}$ send finitely generated graded projectives to finitely generated graded projectives.  In particular,
\begin{equation}
\Ind_{a,b}^{a+b}(P_a\otimes P_b)\cong\bigoplus_{\qbins{a}{b}}P_{a+b},
\end{equation}
where we are using the shorthand notations $a$ for $\onh_a$ and $a,b$ for $\onh_a\otimes\onh_b$.
\end{enumerate}\end{prop}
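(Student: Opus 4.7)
The natural plan is to leverage the isomorphism $\onh_n \cong \End_{\osym_n}(\spol_n)$ recorded in the previous proposition: this says that $\onh_n$ is a ``matrix algebra'' over the odd symmetric functions, with $\spol_n$ being a distinguished left $\onh_n$-module (and right $\osym_n$-module). Every structural question about projectives, the regular representation, and induction/restriction can then be reduced to a question about the bimodule $\spol_n$ and the base ring $\osym_n$.

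For Part (1), I would verify directly that $e_n = \del_{w_0} x^\delta$ is idempotent. The key computation is an odd analogue of the classical identity $\partial_{w_0}(x^\delta) = \pm 1$: applying $\del_{w_0}$ to $x^\delta$ yields a unit in $\Bbbk$ up to sign, since $x^\delta$ has the minimal degree for which $\del_{w_0}$ is nonzero, and the output of $\del_{w_0}$ lies in $\osym_n$. Combined with the Leibniz-type rule for $\del_i$ from Definition~\ref{defn-onh}, one obtains $\del_{w_0}(x^\delta\cdot f) = \pm f$ for $f \in \osym_n$, which gives $e_n^2 = e_n$ after carefully tracking signs. To see $P_n = \onh_n e_n$ is the unique indecomposable projective up to grading shift, I would observe that $\osym_n$ is graded-local (its unique homogeneous maximal ideal being generated by the elementary odd symmetric polynomials of positive degree), so it has a unique indecomposable graded projective, namely the free module of rank one. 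Morita equivalence via $\spol_n$ transports this to the unique (up to shift) indecomposable projective $P_n$ for $\onh_n$.

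For Part (2), the graded rank of $\spol_n$ as a right $\osym_n$-module is $[n]!$ (where $[n]!$ here is the graded count of Schubert-type monomial bases indexed by $S_n$, with $\pi$-shifts reflecting parity); this is the odd version of the classical fact that the polynomial ring is free over the symmetric functions. Translating through $\onh_n \cong \End_{\osym_n}(\spol_n)$ then gives the isomorphism of left $\onh_n$-modules $\onh_n \cong \bigoplus_{[n]!} P_n\langle \binom{n}{2}\rangle$, the shift $\binom{n}{2}$ accounting for the normalization built into $e_n = \del_{w_0} x^\delta$. For Part (3), I would decompose $\onh_{a+b}$ as a right $\onh_a \otimes \onh_b$-module by choosing representatives of minimal length for the cosets $S_{a+b}/(S_a \times S_b)$; the graded-rank count of this set of shuffles is exactly $\qbins{a+b}{a}$. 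This decomposition sends $e_a \otimes e_b$-isotypic summands to a direct sum of $e_{a+b}$-isotypic summands, yielding the claimed formula $\Ind_{a,b}^{a+b}(P_a \otimes P_b) \cong \bigoplus_{\qbins{a}{b}} P_{a+b}$.

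The main obstacle throughout is sign-bookkeeping: unlike the even case, the odd Leibniz rule in Definition~\ref{defn-onh} and the skew-commutativity of the $x_i$ introduce signs at every step, so both the verification $e_n^2 = e_n$ and the shuffle decomposition in Part (3) require careful induction on reduced expressions to show that the signs are consistent and produce the claimed $\pi$-powers in the graded multiplicities. The payoff of working inside $\End_{\osym_n}(\spol_n)$ is that one can check sign identities by evaluating operators on monomials, rather than reasoning abstractly inside $\onh_n$.
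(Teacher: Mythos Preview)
The paper does not give a proof of this proposition; it is quoted verbatim from \cite{EKL} with only a citation. Your outline is essentially the strategy carried out there: exploit the isomorphism $\onh_n \cong \End_{\osym_n}(\spol_n)$, use that $\osym_n$ is graded local, and obtain the decomposition of the regular representation from the fact that $\spol_n$ is free of graded rank $[n]!$ over $\osym_n$. One small correction: in Part~(3) you write $\qbins{a+b}{a}$, which is indeed the correct multiplicity (and matches the later equation \eqref{eq_EaEb} in the paper), whereas the statement as printed has $\qbins{a}{b}$; this appears to be a typo in the paper rather than an error on your part. Your acknowledgment that the sign-bookkeeping is the genuine content is apt---in \cite{EKL} this is handled by working with explicit Schubert-type bases for $\spol_n$ over $\osym_n$ and verifying the signs on those basis elements, which is precisely the ``evaluate operators on monomials'' strategy you describe.
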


For any graded algebra $A$, consider the free $\Ac$-module with basis consisting of isomorphism classes of finitely generated indecomposable projective modules.  The version of Grothenieck group with which we will concern ourselves is the quotient of this free module by the relations
\begin{itemize}
\item $[B]=[A]+[C]$ if $B\cong A\oplus C$ as graded modules,
\item $[A\la-1\ra]=q[A]$.
\end{itemize}
If $A$ is a graded superalgebra, we use $\Ac_\pi=\Z[q,q^{-1},\pi]/(\pi^2-1)$ instead of $\Ac$.  The parity shift functor $\Pi$ acts on the Grothendieck group by the relation $[\Pi M]=\pi[M]$.  Then $K_0(A)$ is naturally an $\Ac_\pi$-module spanned by all indecomposable projectives (considered up to isomorphism and grading shifts).

Writing $K_0(\onh_\bullet)=\bigoplus_{n\geq0}K_0(\onh_n)$, statement (3) of Proposition \ref{prop-ekl-2} implies that $[V]\otimes[W]\mapsto[\Ind_{a,b}^{a+b}(V\otimes W)]$ determines a homomorphism $\Ac$-modules $K_0(\onh_a)\otimes_{\Ac}K_0(\onh_b)\to K_0(\onh_{a+b})$.  By Proposition \ref{prop-ekl-2} (3), this map takes
\begin{equation*}
[P_a]\otimes[P_b]\mapsto \qbin{a}{b}[P_{a+b}].
\end{equation*}
Restriction determines a map in the other direction.  This nearly immediately implies the following.
\begin{thm}[\cite{EKL}] The map taking $E^{(a)}$ to $[P_a]$ is an isomorphism of $q$-Hopf algebras
\begin{equation}
\xymatrix{\U^+(\sltwo)\ar[r]^-\cong&K_0(\onh_\bullet)\otimes_{\Ac}\Q(q).}
\end{equation}
\end{thm}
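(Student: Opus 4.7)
The plan is to exhibit $\phi\colon E^{(n)}\mapsto[P_n]$ as a $\Q(q)$-linear bijection and then check separately that it preserves the product and the coproduct. The multiplication on $\bigoplus_n K_0(\onh_n)$ is induced by the functors $\Ind_{a,b}^{a+b}$, and its dual coproduct by restriction along the inclusion $\iota_{a,b}\colon\onh_a\otimes\onh_b\hookrightarrow\onh_{a+b}$ from \eqref{eqn-iota-onh-ab}.

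For bijectivity, Proposition \ref{prop-ekl-2}(1) identifies $P_n$ as the unique indecomposable projective $\onh_n$-module up to grading shift, so $K_0(\onh_n)$ is a free $\Ac_\pi$-module of rank one generated by $[P_n]$. After the base change $\otimes_\Ac\Q(q)$ (which kills any $\pi$-dependence since $\Ac=\Z[q,q^{-1}]$ contains no $\pi$), the target has $\Q(q)$-basis $\{[P_n]\}_{n\geq 0}$. This matches the divided-power basis $\{E^{(n)}\}_{n\geq 0}$ of $\U^+(\sltwo)\otimes_\Ac\Q(q)$, so $\phi$ is an $\Q(q)$-linear isomorphism of the underlying graded modules.

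To verify that $\phi$ respects multiplication, apply Proposition \ref{prop-ekl-2}(3): the induction isomorphism
\begin{equation*}
\Ind_{a,b}^{a+b}(P_a\otimes P_b)\cong\bigoplus_{\qbins{a+b}{a}}P_{a+b}
\end{equation*}
descends to $[P_a]\cdot[P_b]=\qbin{a+b}{a}[P_{a+b}]$ in $K_0(\onh_\bullet)$, matching the defining relation $E^{(a)}E^{(b)}=\qbin{a+b}{a}E^{(a+b)}$ of the divided-power algebra $\U^+(\sltwo)$. For the coproduct, the analogous computation of $[\Res_{a,b}^{a+b}(P_{a+b})]$ (with the appropriate grading shifts) should recover the Hopf coproduct $\Delta(E^{(n)})$ of the twisted-bialgebra structure on $\U^+(\sltwo)$.

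The main obstacle is this last restriction computation. Unlike the induction formula, it does not follow directly from the adjoint basis theorem stated in Proposition \ref{prop-ekl-2}; it requires either a Mackey-type decomposition of $\onh_{a+b}$ as a bimodule over $\onh_a\otimes\onh_b$, or an explicit filtration of $P_{a+b}$ compatible with $\iota_{a,b}$. In the even nilHecke setting such an analysis yields the standard coproduct on $\U^+(\sltwo)$; in the odd setting one must additionally track signs arising from the parity involution $\iota_A$ of \eqref{eqn-parity-involution}, but since these signs trivialize after base change to $\Q(q)$, the term-by-term match with $\Delta(E^{(n)})$ goes through, completing the proof.
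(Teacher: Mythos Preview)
Your approach is essentially the same as the paper's. The paper does not give a self-contained proof of this statement either: it simply points to Proposition~\ref{prop-ekl-2} for the rank-one Grothendieck groups and the induction formula, remarks that ``restriction determines a map in the other direction,'' and then cites \cite{EKL} for the full result. Your bijectivity and multiplication arguments are exactly this, and your caution about the restriction computation is appropriate --- that step is not verified in this paper either and is deferred to the reference.

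One minor point: your remark that base change ``kills any $\pi$-dependence'' is a bit muddled. In the theorem as stated, $K_0(\onh_\bullet)$ is being treated as an $\Ac$-module (the non-super Grothendieck group), so there is no $\pi$ present to begin with; the parity shift only enters in the later $\Ac_\pi$-version. This does not affect the correctness of your argument, but the parenthetical justification is off.
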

With more work (the ``thick calculus'' of \cite{EKL}), the above theorem can be strengthened.
\begin{thm}[\cite{EKL}] The map taking $E^{(a)}$ to $[P_a]$ is an isomorphism of $q$-Hopf algebras
\begin{equation}
\xymatrix{\AU^+(\sltwo)\ar[r]^-\cong&K_0(\onh_\bullet).}
\end{equation}
\end{thm}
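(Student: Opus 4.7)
The plan is to upgrade the preceding rational isomorphism to an integral statement by identifying the $\Ac$-bases $\{E^{(n)}\}_{n\geq 0}$ of $\AU^+(\sltwo)$ and $\{[P_n]\}_{n\geq 0}$ of $K_0(\onh_\bullet)$, then checking the ring and coring structures match. The key computational input for both is Proposition \ref{prop-ekl-2}, which supplies (1) the unique indecomposable projective $P_n$ on $n$ strands, (2) the regular representation decomposition giving the correct graded rank of $\onh_n$, and (3) the induction formula for $P_a \otimes P_b$ along $\iota_{a,b}$.

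First, I would verify that $E^{(a)} \mapsto [P_a]$ extends to a well-defined ring homomorphism. Multiplication in $K_0(\onh_\bullet)$ is induction along $\iota_{a,b}\colon \onh_a \otimes \onh_b \hookrightarrow \onh_{a+b}$, and Proposition \ref{prop-ekl-2}(3) yields $[P_a]\cdot [P_b] = \qbin{a+b}{a}[P_{a+b}]$, which is exactly the divided power relation $E^{(a)} E^{(b)} = \qbin{a+b}{a} E^{(a+b)}$ in $\AU^+(\sltwo)$. This simultaneously shows that the assignment respects the defining relations and that the induced multiplication on $K_0(\onh_\bullet)$ is associative with the expected structure constants.

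Second, I would prove that the map is bijective on the level of $\Ac$-modules. Surjectivity follows immediately from Proposition \ref{prop-ekl-2}(1): since $P_n$ is, up to grading shift, the unique indecomposable projective over $\onh_n$, the class $[P_n]$ generates the $n$th graded piece of $K_0(\onh_\bullet)$. Injectivity follows either from the rational isomorphism (via base change to $\Q(q)$) or directly by matching graded ranks using Proposition \ref{prop-ekl-2}(2): the decomposition $\onh_n \cong \bigoplus_{[n]!} P_n\ads{\qbins{n}{2}}$ pins down the class $[\onh_n] = [n]! \cdot q^{-\binom{n}{2}}[P_n]$, and comparing with the known $\Ac$-basis of $\AU^+(\sltwo)$ shows the map is a bijection on divided-power basis vectors.

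Finally, I would verify the $q$-Hopf structure. The coproduct on $K_0(\onh_\bullet)$ is induced by restriction along $\iota_{a,b}$, and an analogue of the induction formula in Proposition \ref{prop-ekl-2}(3) for restriction yields the standard coproduct $\Delta(E^{(n)}) = \sum_{a+b=n} q^{ab}E^{(a)} \otimes E^{(b)}$ (with the appropriate $q$-powers arising from the grading shifts in the splitting). The main obstacle in executing this program is handling the integral multiplicities in the induction/restriction formula: establishing the precise isomorphism $\Ind_{a,b}^{a+b}(P_a\otimes P_b) \cong \bigoplus_{\qbins{a}{b}} P_{a+b}$ requires the full \emph{thick calculus} of \cite{EKL}, which provides explicit idempotents refining the $e_n = \partial_{w_0}x^\delta$ of Proposition \ref{prop-ekl-2}(1) and tracks how they split inductions of tensor products. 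A further subtlety is ensuring the parity shift $\Pi$ does not introduce spurious $\pi$-dependence in $K_0(\onh_\bullet)$: one needs $\Pi P_n \cong P_n\ads{s_n}$ for a suitable integer $s_n$, so that $K_0(\onh_\bullet)$ is genuinely an $\Ac$-module rather than an $\Ac_\pi$-module.
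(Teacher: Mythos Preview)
The paper does not provide its own proof of this statement; it merely cites \cite{EKL} and remarks that ``more work (the `thick calculus')'' is needed beyond what suffices for the rational version. Your outline is consistent with that remark: once Proposition~\ref{prop-ekl-2} is available, steps one and two of your argument already establish the integral ring isomorphism, since part~(1) forces $K_0(\onh_n)$ to be free of rank one on $[P_n]$ and part~(3) gives the divided-power multiplication.

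There is, however, a logical wrinkle in your presentation. You invoke Proposition~\ref{prop-ekl-2}(3) in step one to verify the multiplication, then in your final paragraph declare that ``establishing the precise isomorphism $\Ind_{a,b}^{a+b}(P_a\otimes P_b)\cong\bigoplus_{\qbins{a}{b}}P_{a+b}$'' is the main obstacle requiring the thick calculus. But that isomorphism \emph{is} Proposition~\ref{prop-ekl-2}(3), which you have already used. If what you mean is that the \emph{proof} of Proposition~\ref{prop-ekl-2} in \cite{EKL} relies on the thick calculus, say so directly; as written it reads as though you are treating the same statement as both an input and an unresolved obstacle.

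Your final remark about parity is perceptive but slightly misdirected. The theorem as stated is over $\Ac$, not $\Ac_\pi$, so the cleanest reading is that the super structure is being suppressed here (as in the original \cite{EKL}). If you do want to work $\Ac_\pi$-linearly, then yes, one needs $\Pi P_n\cong P_n\ads{s_n}$; this follows from the uniqueness in Proposition~\ref{prop-ekl-2}(1), since $\Pi P_n$ is again an indecomposable projective and hence a shift of $P_n$. Either way this is not where the difficulty lies.
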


\subsubsection{Cyclotomic quotients categorify $_{\Ac_\pi}V^\Lambda$}

\begin{defn} Let $\Lambda\geq0$ be a dominant integral weight for $\sltwo$ (thought of as a non-negative integer).  The corresponding \emph{cyclotomic quotient} $\onh_n^\Lambda$ of $\onh_n$ is defined to be the superalgebra
\begin{equation}
\onh_n^\Lambda=\onh_n/(x_1^\Lambda).
\end{equation}
\end{defn}

While explicit calculations in cyclotomic quotients are generally difficult, there is an isomorphism with a matrix algebra over a less complicated ring.
\begin{equation*}
\onh_n^\Lambda\cong\Mat_{[n]!}(OH_{n,\Lambda}).
\end{equation*}
The notation $\Mat_{[n]!}$ means the algebra of endomorphisms of $OH_{n,\Lambda}^{\oplus[n]!}$ as a $OH_{n,\Lambda}$-module.  The algebra $OH_{n,\Lambda}$ is the \emph{odd Grassmannian algebra} of \cite{EKL}; it is graded local and serves as an odd analogue of the singular cohomology ring of the complex Grassmannian $\Gr(n,\C^\Lambda)$.

The maps $\iota_{a,b}$ are still well-defined on cyclotomic quotients, yielding analogous induction and restriction functors.  These functors have been studied in depth by Kang-Kashiwara-Oh \cite{KKO,KKO2}.  When $a=n$ and $b=1$, the resulting ``right-strand'' induction and restriction functors between $\onh_n^\Lambda$ and $\onh_{n+1}^\Lambda$ are related by natural isomorphisms that give a ``strong supercategorical action'' (see Subsection \ref{subsec-strong-supercat-action} or the even analogue in \cite{CL}) of $\U(\sltwo)$ on the supermodule categories for $\onh_n^\Lambda$.  Kang-Kashiwara-Oh prove this in both a super and a non-super setting.
\begin{thm}[\cite{KKO,KKO2}] Let $_{\Ac}V^\Lambda$, $_{\Ac_\pi}V^\Lambda$ be the simple modules of highest weight $q^\Lambda$ for $\AU$, $\AUpi$ respectively.
\begin{enumerate}
\item There is an isomorphism of $\Ac$-modules
\begin{equation}
\xymatrix{_{\Ac}V^\Lambda\ar[r]^-\cong&K_0(\onh^\Lambda_\bullet)}
\end{equation}
Under this isomorphism, right-strand induction and restriction between the algebras $\onh_n^\Lambda$ decategorify to the action of $F$ and $E$ on, respectively.
\item Let $K_0^{\text{super}}$ be $K_0$ considered as an $\Ac_\pi$-module, where $\pi$ is the decategorification of parity shift.  Then there is an isomorphism of $\Ac_\pi$-modules
\begin{equation}
\xymatrix{_{\Ac_\pi}V^\Lambda\ar[r]^-\cong&K_0^{\text{super}}(\onh^\Lambda_\bullet)}
\end{equation}
Under this isomorphism, right-strand induction and restriction between the superalgebras $\onh_n^\Lambda$ decategorify to the action of $F$ and $E$ on, respectively.
\end{enumerate}\end{thm}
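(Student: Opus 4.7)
The plan is to prove both statements simultaneously, since statement (1) is obtained from statement (2) by specializing the parity parameter $\pi \mapsto 1$. The argument decomposes into three pieces: (a) computing $K_0^{\text{super}}(\onh_n^\Lambda)$ as a graded $\Ac_\pi$-module, (b) showing right-strand induction and restriction decategorify to operators satisfying the covering $\sltwo$ relation \eqref{eqn-covering-sl2-relation}, and (c) identifying the resulting module with $_{\Ac_\pi}V^\Lambda$.

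For step (a), I would first verify that $\onh_n^\Lambda = 0$ for $n > \Lambda$: the cyclotomic relation $x_1^\Lambda = 0$, together with the divided-difference and skew-polynomial relations \eqref{eqn-onh-relations}, propagates across strands to force the unit to vanish once the number of strands exceeds $\Lambda$. For $0 \leq n \leq \Lambda$, the Morita equivalence $\onh_n^\Lambda \cong \Mat_{[n]!}(OH_{n,\Lambda})$ cited in the excerpt identifies $K_0^{\text{super}}(\onh_n^\Lambda) \cong K_0^{\text{super}}(OH_{n,\Lambda})$. Since $OH_{n,\Lambda}$ is graded local, this $K_0^{\text{super}}$ is a free $\Ac_\pi$-module of rank one generated by the class of the unique indecomposable projective, matching the rank of the $(\Lambda - 2n)$-weight space of $V^\Lambda$.

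Step (b) is the technical heart of the argument. Using the inclusion $\iota_{n,1}$ from \eqref{eqn-iota-onh-ab}, I would define right-strand induction and restriction functors between graded supermodule categories of $\onh_n^\Lambda$ and $\onh_{n+1}^\Lambda$; by Proposition \ref{prop-ekl-2}(3) these are biadjoint up to grading and parity shifts, so they descend to $\Ac_\pi$-linear operators $[F]$ and $[E]$ on $\bigoplus_n K_0^{\text{super}}(\onh_n^\Lambda)$. To verify the covering $\sltwo$ relation \eqref{eqn-covering-sl2-relation}, the central ingredient is a Mackey-type short exact sequence of $\onh_n^\Lambda$-bimodules describing $\Res \circ \Ind$: one summand corresponds to $EF$, a second to $FE$ twisted by a sign $\pi$ that arises from commuting two odd-parity crossings past one another, and a cokernel term that, upon imposing the cyclotomic relation, reads off as scalar multiplication by $[\lambda]$ on the $\lambda$-weight space.

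Finally, for step (c), the algebra $\onh_0^\Lambda = \Bbbk$ provides a canonical generator $[\Bbbk]$ of $K_0^{\text{super}}(\onh_\bullet^\Lambda)$ which is annihilated by $[E]$ (no strand to restrict) and sits in weight $\Lambda$. The universal property of the simple highest-weight module yields a surjective $\AUpi$-linear map $_{\Ac_\pi}V^\Lambda \twoheadrightarrow K_0^{\text{super}}(\onh_\bullet^\Lambda)$, which is an isomorphism because the weight-space ranks already match by step (a). Specializing $\pi = 1$ recovers statement (1). I expect the main obstacle to be the careful sign bookkeeping in step (b): precisely which parity shifts appear in the Mackey decomposition, and verifying that they assemble into the $\pi$-twisted commutator rather than an ordinary or naive super commutator, is exactly the place where the covering parameter is essential and where the proof differs most substantially from the even case.
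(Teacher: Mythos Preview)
The paper does not contain its own proof of this theorem: it is stated with the attribution \cite{KKO,KKO2} and is quoted as a known result of Kang--Kashiwara--Oh, with no proof environment following it. So there is no in-paper argument to compare your proposal against.

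That said, your outline is a faithful sketch of how such results are established, and it matches the structure of the Kang--Kashiwara--Oh argument: step~(a) uses exactly the Morita equivalence $\onh_n^\Lambda\cong\Mat_{[n]!}(OH_{n,\Lambda})$ and graded-locality of $OH_{n,\Lambda}$ that the paper records; step~(b), the Mackey-type bimodule filtration yielding the covering commutator, is indeed the substantive content of \cite{KKO,KKO2} and is where the parity bookkeeping enters; step~(c) is the standard highest-weight identification. One small correction: in step~(b) you cite Proposition~\ref{prop-ekl-2}(3) for biadjointness of induction and restriction on cyclotomic quotients, but that proposition concerns the full odd nilHecke algebras, not the quotients --- biadjointness at the cyclotomic level is itself one of the nontrivial outputs of \cite{KKO,KKO2} (and is later reproved in this paper as part of Theorem~\ref{thm:lradj}). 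Otherwise your plan is sound and your diagnosis of where the difficulty lies is accurate.
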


A natural question raised by this theorem is whether the Kang-Kashiwara-Oh strong supercategorical action can be replaced by a genuine super-2-representation---that is, whether there exists a super-2-category $\Udotc$ such that:
\begin{enumerate}
\item $\Udotc$ categorifies the algebra $\AUdotpi(\sltwo)$;
\item there is compatibility with the results of \cite{EKL}: the odd nilHecke algebra $\onh_n$ naturally appears in the 2-hom-spaces between 1-morphisms lifting the elements $E^n\onel$ or $F^n\onel$ of $\AUdotpi(\sltwo)$;
\item there is compatibility with the results of \cite{KKO,KKO2}: supermodules for $\onh_n^\Lambda$ form a 2-representation of $\Udotc$, and the 1-morphisms lifting $E$ and $F$ act as right-strand restriction and induction, respectively;
\item the indecomposable 1-morphisms in $\Udotc$ categorify the Clark-Wang canonical basis for $\AUdot(\sltwo)$.
\end{enumerate}
The rest of this paper is devoted to answering this question in the affirmative.  But first we must develop the setting of super-2-representation theory.

%---------------------------------------------------------------------
\subsection{Super-2-categories}\label{subsec-super-2-categories}
%---------------------------------------------------------------------

In \cite{Lau1}, the second author constructed a 2-category which categorifies $\Udot(\sltwo)$.  The proper framework for categorifying $\Udotpi(\sltwo)$ is that of super-2-categories.  Our definition of super-2-category is slightly different than but equivalent to that of \cite{KKO2}.  Although the example of super-bimodules is actually a bicategory and not a (strict) 2-category, we will treat all bicategories as strict by the appropriate analogue of Mac Lane's coherence theorem.

\subsubsection{Supercategories, superfunctors, and supernatural transformations}

In order to motivate the general definition of a super-2-category, we first work out an example in detail: that of supercategories, superfunctors, and supernatural transformations.

\begin{defn} A \emph{supercategory} is a category $\Cc$ equipped with a strong categorical action of $\Zt$.  A \textit{superfunctor} is a morphism of strong categorical $\Zt$-actions.\end{defn}

Unpacking the above, the data of a supercategory consists of (see \cite{KKO,KKO2}):
\begin{itemize}
\item a category $\Cc$,
\item a functor $\Psi_\Cc:\Cc\to\Cc$,
\item and a natural isomorphism $\xi_\Cc:\Psi_\Cc^2\to\onebb_\Cc$.
\end{itemize}
The only condition is that
\begin{itemize}
\item $\xi_\Cc\otimes\onebb_{\Psi_\Cc}=\onebb_{\Psi_\Cc}\otimes\xi_\Cc$ as natural isomorphisms $\Psi_\Cc^3\to\Psi_\Cc$.
\end{itemize}
Here we are using $\otimes$ for horizontal composition (sometimes we omit the $\otimes$).  The data of a superfunctor $(F,\alpha_F):(\Cc,\Psi_\Cc)\to(\Dc,\Psi_\Dc)$ is:
\begin{itemize}
\item a functor $F:\Cc\to\Dc$ and
\item a natural isomorphism $\alpha_F:F\otimes\Psi_\Cc\to\Psi_\Dc\otimes F$,
\end{itemize}
and the only condition is that
\begin{itemize}
\item $\onebb_F\otimes\xi_\Cc=(\xi_\Dc\otimes\onebb_F)(\onebb_{\Psi_\Dc}\otimes\alpha_F)(\alpha_F\otimes\onebb_{\Psi_\Cc})$.
\end{itemize}
Diagrammatically, we can draw $\Cc,\Dc$ as regions, $\Psi_\Cc,\Psi_\Dc$ as blue dashed lines, $F$ as a solid line, and $\alpha_F$ as a dashed-solid crossing:
\begin{equation*}
\alpha_F=\quad
\hackcenter{\begin{tikzpicture}
    \draw[thick] (0,0) [out=90, in=-90] to (1,1);
    \draw[thick, color=blue, dashed] (1,0) [out=90, in=-90] to (0,1);
\end{tikzpicture}},\qquad
\alpha_F^{-1}=\quad
\hackcenter{\begin{tikzpicture}
    \draw[thick] (1,0) [out=90, in=-90] to (0,1);
    \draw[thick, color=blue, dashed] (0,0) [out=90, in=-90] to (1,1);
\end{tikzpicture}}.
\end{equation*}
Then the relations in the definition of a supercategory can be expressed diagrammatically as follows: We draw a dashed cap for $\xi_\Cc$ and a dashed cup for $\xi_\Cc^{-1}$.  Then the equation $\xi_\Cc\otimes\onebb_{\Psi_\Cc}=\onebb_{\Psi_\Cc}\otimes\xi_\Cc$ reads
\begin{equation}
\label{eqn-psi-3-psi}
\hackcenter{\begin{tikzpicture}
    \draw[thick, color=blue, dashed] (0,0) -- (0,1.5);
    \draw[thick, color=blue, dashed] (.5,0) .. controls (.5,.8) and (1.5,.8) .. (1.5,0);
\end{tikzpicture}}
\quad=\quad
\hackcenter{\begin{tikzpicture}
    \draw[thick, color=blue, dashed] (0,0) .. controls (0,.8) and (1,.8) .. (1,0);
    \draw[thick, color=blue, dashed] (1.5,0) -- (1.5,1.5);
\end{tikzpicture}}
\quad,
\end{equation}
the fact that $\xi_\Cc$ and $\xi_\Cc^{-1}$ are inverses is encoded by
\begin{equation}
\label{eqn-dashed-bubble}
\hackcenter{\begin{tikzpicture}
    \draw[thick, color=blue, dashed] (0,1) .. controls (0,1.8) and (1,1.8) .. (1,1);
    \draw[thick, color=blue, dashed] (0,1) .. controls (0,.2) and (1,.2) .. (1,1);
\end{tikzpicture}}
\quad=\varnothing
\end{equation}
and
\begin{equation}
\label{eqn-dashed-cup-cap}
\hackcenter{\begin{tikzpicture}
    \draw[thick, color=blue, dashed] (0,0) .. controls (0,.8) and (1,.8) .. (1,0);
    \draw[thick, color=blue, dashed] (0,2) .. controls (0,1.2) and (1,1.2) .. (1,2);
\end{tikzpicture}}
\quad=\quad
\hackcenter{\begin{tikzpicture}
    \draw[thick, color=blue, dashed] (0,0) -- (0, 2);
    \draw[thick, color=blue, dashed] (1,0) -- (1,2);
\end{tikzpicture}}
\quad,
\end{equation}
and the fact that $\alpha_F$ and $\alpha_F^{-1}$ are inverses is encoded by
\begin{equation}
\label{eqn-alpha-alpha-inverse}
\hackcenter{\begin{tikzpicture}
    \draw[thick] (0,0) [out=90, in=-90] to (1,1) [out=90, in=-90] to (0,2);
    \draw[thick, color=blue, dashed] (1,0) [out=90, in=-90] to (0,1) [out=90, in=-90] to (1,2);
\end{tikzpicture}}
\quad=\quad
\hackcenter{\begin{tikzpicture}
    \draw[thick] (0,0) -- (0,2);
    \draw[thick, color=blue, dashed] (1,0) -- (1,2);
\end{tikzpicture}}
\qquad\text{and}\qquad
\hackcenter{\begin{tikzpicture}
    \draw[thick, color=blue, dashed] (0,0) [out=90, in=-90] to (1,1) [out=90, in=-90] to (0,2);
    \draw[thick] (1,0) [out=90, in=-90] to (0,1) [out=90, in=-90] to (1,2);
\end{tikzpicture}}
\quad=\quad
\hackcenter{\begin{tikzpicture}
    \draw[thick, color=blue, dashed] (0,0) -- (0,2);
    \draw[thick] (1,0) -- (1,2);
\end{tikzpicture}}\quad.
\end{equation}
These relations already imply as well
\begin{equation}
\hackcenter{\begin{tikzpicture}
    \draw[thick, color=blue, dashed] (0,0) -- (0,1.5);
    \draw[thick, color=blue, dashed] (.5,1.5) .. controls (.5,.7) and (1.5,.7) .. (1.5,1.5);
\end{tikzpicture}}
\quad=\quad
\hackcenter{\begin{tikzpicture}
    \draw[thick, color=blue, dashed] (0,1.5) .. controls (0,.7) and (1,.7) .. (1,1.5);
    \draw[thick, color=blue, dashed] (1.5,0) -- (1.5,1.5);
\end{tikzpicture}}
\quad,\qquad
\hackcenter{\begin{tikzpicture}
    \draw[thick, color=blue, dashed] (0,0) -- (0,.75) .. controls (0,1.15) and (.5,1.15) .. (.5,.75) .. controls (.5,.35) and (1,.35) .. (1,.75) -- (1,1.5);
\end{tikzpicture}}
\quad=\quad
\hackcenter{\begin{tikzpicture}
    \draw[thick, color=blue, dashed] (0,0) -- (0,1.5);
\end{tikzpicture}}
\quad=\quad
\hackcenter{\begin{tikzpicture}
    \draw[thick, color=blue, dashed] (1,0) -- (1,.75) .. controls (1,1.15) and (.5,1.15) .. (.5,.75) .. controls (.5,.35) and (0,.35) .. (0,.75) -- (0,1.5);
\end{tikzpicture}}
\quad.
\end{equation}
In particular, $\xi,\xi^{-1}$ are a biadjoint pair. Then the relation $\onebb_F\otimes\xi_\Cc=(\xi_\Dc\otimes\onebb_F)(\onebb_{\Psi_\Dc}\otimes\alpha_F)(\alpha_F\otimes\onebb_{\Psi_\Cc})$ in the definition of a superfunctor is equivalent to:
\begin{equation}\label{eqn-defn-superfunctor}
\hackcenter{\begin{tikzpicture}
    \draw[thick] (0,0) -- (0,1.5);
    \draw[thick, color=blue, dashed] (.5,0) .. controls (.5,.8) and (1.5,.8) .. (1.5,0);
\end{tikzpicture}}
\quad=\quad
\hackcenter{\begin{tikzpicture}
    \draw[thick] (0,0) [out=90, in=-90] to (1.5,1.5);
    \draw[thick, color=blue, dashed] (.75,0) [out=90, in=-90] to (0,1) .. controls (0,1.8) and (1,1.8) .. (1.5,.5) -- (1.5,0);
\end{tikzpicture}}\quad.
\end{equation}

\begin{lem}\label{lem-dashed-cyclicity} Any superfunctor $(F,\alpha_F):(\Cc,\Psi_\Cc)\to(\Dc,\Psi_\Dc)$ satisfies pitchfork-cyclicity with respect to the biadjunction $\xi\dashv\xi^{-1}\dashv\xi$.\end{lem}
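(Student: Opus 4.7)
The superfunctor defining equation \eqref{eqn-defn-superfunctor} is itself one instance of a pitchfork relation---it expresses how the solid $F$-strand slides past a dashed cap placed to its right. My plan is to derive the remaining three pitchfork instances (cap on the left; cup on the right; cup on the left) from this single axiom by combining it with the biadjointness data for $\xi\dashv\xi^{-1}\dashv\xi$ encoded in \eqref{eqn-psi-3-psi}--\eqref{eqn-dashed-cup-cap} and the invertibility of $\alpha_F$ expressed by \eqref{eqn-alpha-alpha-inverse}.

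First I would exploit the symmetry relation \eqref{eqn-psi-3-psi}, which says that $\xi_\Cc$ is indifferent to which of its two $\Psi$-inputs is the ``outer'' one, to move the cap from the right of the solid strand to its left. Composing both sides of \eqref{eqn-defn-superfunctor} with appropriate identity strands on the outside and inserting \eqref{eqn-psi-3-psi} yields the left-cap pitchfork identity directly, with the two occurrences of $\alpha_F$ appearing on the other side of the bent solid strand.

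Next, to convert cap identities into cup identities, I would compose both sides of each with $\xi^{-1}$ from above, using \eqref{eqn-dashed-cup-cap} to collapse adjacent cap-cup pairs on one side. This is the standard ``rotation trick'' for biadjunctions: a pitchfork-with-cap identity becomes a pitchfork-with-cup identity, possibly involving $\alpha_F^{-1}$ instead of $\alpha_F$; the invertibility \eqref{eqn-alpha-alpha-inverse} then rewrites everything back in terms of $\alpha_F$ whenever desired. The symmetric versions follow by performing the rotation on the left-cap identity obtained in the previous step.

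The main obstacle is purely bookkeeping: at each step one must verify that the assembled cups, caps, crossings, and labelled regions form the correct configuration, with the appropriate choice of $\alpha_F$ versus $\alpha_F^{-1}$. There is no substantive mathematical difficulty, since every relevant 2-morphism is invertible and the zigzag relations are explicit. Once all four pitchfork identities are established, $\alpha_F$ admits four equivalent diagrammatic presentations related by bending its $\Psi$-strands through units and counits of $\xi$, which is precisely pitchfork-cyclicity for the biadjunction $\xi\dashv\xi^{-1}\dashv\xi$.
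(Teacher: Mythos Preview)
Your overall strategy---derive the pitchfork identities from the single axiom \eqref{eqn-defn-superfunctor} together with invertibility of $\alpha_F$ and the zigzag relations---is correct, and your rotation trick for passing from the cap identity to the cup identity is exactly right. But your first step is off: relation \eqref{eqn-psi-3-psi} concerns three \emph{dashed} strands only, and there is no way to ``insert'' it once the solid $F$-strand sits between the dashed legs. You never actually need it. The cap pitchfork
\[
(\onebb_F\otimes\xi_\Cc)(\alpha_F^{-1}\otimes\onebb_{\Psi_\Cc})=(\xi_\Dc\otimes\onebb_F)(\onebb_{\Psi_\Dc}\otimes\alpha_F)
\]
is obtained from \eqref{eqn-defn-superfunctor} in one move: pre-compose both sides with $\alpha_F^{-1}\otimes\onebb_{\Psi_\Cc}$ and use \eqref{eqn-alpha-alpha-inverse} to cancel $(\alpha_F\otimes\onebb_{\Psi_\Cc})(\alpha_F^{-1}\otimes\onebb_{\Psi_\Cc})$ on the right-hand side. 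This also shows that what you are calling ``cap on the right'' (the axiom) and ``cap on the left'' are not separate identities to be proven---they are the same equation, just with one $\alpha_F$ moved to the other side. So there are two pitchfork identities, not four.

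By contrast, the paper argues diagrammatically: it starts from the left-exit picture, inserts a dashed bubble via \eqref{eqn-dashed-bubble}, applies the axiom \eqref{eqn-defn-superfunctor} to the portion of the diagram involving the bubble's cup, uses \eqref{eqn-dashed-cup-cap} to straighten the resulting dashed strands, and finally cancels a pair of crossings with \eqref{eqn-alpha-alpha-inverse}. This is more elaborate than the one-line algebraic argument above, but has the virtue of being entirely pictorial. Your approach, once the spurious appeal to \eqref{eqn-psi-3-psi} is dropped, is cleaner.
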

\begin{proof} In equations, the statement of the lemma is that the following both hold:
\begin{equation*}\begin{split}
&(\onebb_F\otimes\xi_\Cc)(\alpha_F^{-1}\otimes\onebb_{\Psi_\Cc})=(\xi_\Dc\otimes\onebb_F)(\onebb_{\Psi_\Dc}\otimes\alpha_F),\\
&(\alpha_F\otimes\onebb_{\Psi_\Cc})(\onebb_F\otimes\xi^{-1})=(\onebb_{\Psi_\Dc}\otimes\alpha_F^{-1})(\xi^{-1}\otimes\onebb_F).
\end{split}\end{equation*}
Or, diagrammatically,
\begin{equation*}
\hackcenter{\begin{tikzpicture}
    \draw[thick, color=blue, dashed] (0,0) .. controls (0,.8) and (1,.8) .. (1,0);
    \draw[thick] (.5,0) [out=90, in=-90] to (0,1) -- (0,1.5);
\end{tikzpicture}}
\quad=\quad
\hackcenter{\begin{tikzpicture}
    \draw[thick, color=blue, dashed] (0,0) .. controls (0,.8) and (1,.8) .. (1,0);
    \draw[thick] (.5,0) [out=90, in=-90] to (1,1) -- (1,1.5);
\end{tikzpicture}}
\quad,\qquad
\hackcenter{\begin{tikzpicture}
    \draw[thick, color=blue, dashed] (0,1.5) .. controls (0,.7) and (1,.7) .. (1,1.5);
    \draw[thick] (.5,1.5) [out=-90, in=90] to (0, .5) -- (0,0);
\end{tikzpicture}}
\quad=\quad
\hackcenter{\begin{tikzpicture}
    \draw[thick, color=blue, dashed] (0,1.5) .. controls (0,.7) and (1,.7) .. (1,1.5);
    \draw[thick] (.5,1.5) [out=-90, in=90] to (1,.5) -- (1,0);
\end{tikzpicture}}
\quad.
\end{equation*}
The first one follows from a diagrammatic calculation:
\begin{equation*}
\hackcenter{\begin{tikzpicture}
    \draw[thick, color=blue, dashed] (0,0) .. controls (0,.75) and (1,.75) .. (1,0);
    \draw[thick] (.5,0) [out=90, in=-90] to (0,1) -- (0,3);
\end{tikzpicture}}
\quad\refequal{\eqref{eqn-dashed-bubble}}\quad
\hackcenter{\begin{tikzpicture}
    \draw[thick, color=blue, dashed] (0,0) .. controls (0,.8) and (1,.8) .. (1,0);
    \draw[thick] (.5,0) [out=90, in=-90] to (0,1) -- (0,3);
    \draw[thick, color=blue, dashed] (.5,1.5) .. controls (.5,2.25) and (1.5,2.25) .. (1.5,1.5) .. controls (1.5,.75) and (.5,.75) .. (.5,1.5);
\end{tikzpicture}}
\quad\refequal{\eqref{eqn-defn-superfunctor}}\quad
\hackcenter{\begin{tikzpicture}
    \draw[thick, color=blue, dashed] (0,0) .. controls (0,.8) and (1,.8) .. (1,0);
    \draw[thick] (.5,0) [out=90, in=-90] to (0,1) -- (0,1) [out=90, in=-90] to (1.5,3);
    \draw[thick, color=blue, dashed] (0,2.5) .. controls (0,3.3) and (1,3.3) .. (1,2.8);
    \draw[thick, color=blue, dashed] (1.5,1.5) .. controls (1.5,.75) and (.5,.75) .. (.5,1.2);
    \draw[thick, color=blue, dashed] (.5,1.2) -- (0,2.5);
    \draw[thick, color=blue, dashed] (1.5,1.5) -- (1,2.8);
\end{tikzpicture}}
\quad\refequal{\eqref{eqn-dashed-cup-cap}}\quad
\hackcenter{\begin{tikzpicture}
    \draw[thick, color=blue, dashed] (0,0) [out=90, in=-90] to (.5,1.2);
    \draw[thick, color=blue, dashed] (1,0) [out=90, in=-90] to (1.5,1.5);
    \draw[thick] (.5,0) [out=90, in=-90] to (0,1) -- (0,1) [out=90, in=-90] to (1.5,3);
    \draw[thick, color=blue, dashed] (0,2.5) .. controls (0,3.3) and (1,3.3) .. (1,2.8);
    \draw[thick, color=blue, dashed] (.5,1.2) -- (0,2.5);
    \draw[thick, color=blue, dashed] (1.5,1.5) -- (1,2.8);
\end{tikzpicture}}
\quad\refequal{\eqref{eqn-alpha-alpha-inverse}}\quad
\hackcenter{\begin{tikzpicture}
    \draw[thick, color=blue, dashed] (0,0) .. controls (0,.8) and (1,.8) .. (1,0);
    \draw[thick] (.5,0) [out=90, in=-90] to (1,1) -- (1,3);
\end{tikzpicture}}
\quad.
\end{equation*}
The second is proved similarly.
\end{proof}

The added utility in explicitly drawing the superfunctor $\Psi$ is that, in later examples, $\Psi$ will act nontrivially in a way we will want to keep visual track of.  See the discussion of the super-bimodule super-2-category below.

\begin{example} Setting $\alpha_\Psi=-\onebb_{\Psi_\Cc^2}$, the pair $(\Psi_\Cc,\alpha_\Psi)$ is a superfunctor for any supercategory $(\Cc,\Psi_\Cc)$.  We draw this as
\begin{equation}
\hackcenter{\begin{tikzpicture}
    \draw[thick, color=blue, dashed] (0,0) [out=90, in=-90] to (1,1);
    \draw[thick, color=blue, dashed] (1,0) [out=90, in=-90] to (0,1);
\end{tikzpicture}}
\quad=\quad-\quad
\hackcenter{\begin{tikzpicture}
    \draw[thick, color=blue, dashed] (0,0) -- (0,1);
    \draw[thick, color=blue, dashed] (.5,0) -- (.5,1);
\end{tikzpicture}}
\quad.
\end{equation}\end{example}

\begin{defn}\label{defn-supernatural-trans} A \emph{supernatural transformation} between the superfunctors $(F,\alpha_F),(G,\alpha_G):(\Cc,\Psi_\Cc)\to(\Dc,\Psi_\Dc)$ is a natural transformation $\varphi:F\to G$ which commutes with the natural isomorphisms $\alpha_F,\alpha_G$ in the sense that the diagram
\begin{equation}
\xymatrix{
F\otimes\Psi_\Cc\ar[rr]^-{\varphi\otimes\onebb_\Psi}\ar[d]^-{\alpha_F}&&G\otimes\Psi_\Cc\ar[d]^-{\alpha_G}\\
\Psi_\Dc\otimes F\ar[rr]^-{\onebb_\Psi\otimes\varphi}&&\Psi_\Dc\otimes G
}
\end{equation}
commutes.\end{defn}

Diagrammatically, if we draw the functors $F,G$ as solid lines, this means:
\begin{equation}
\hackcenter{\begin{tikzpicture}
    \draw[thick] (0,0) -- (0,.75) [out=90, in=-90] to (.5,1.5);
    \draw[thick, color=blue, dashed] (.5,0) -- (.5,.75) [out=90, in=-90] to (0,1.5);
    \node[draw, thick, fill=blue!20,rounded corners=4pt,inner sep=3pt] () at (0,.5) {\small$\varphi$};
\end{tikzpicture}}
\quad=\quad
\hackcenter{\begin{tikzpicture}
    \draw[thick] (0,0) [out=90, in=-90] to (.5,.75) -- (.5,1.5);
    \draw[thick, color=blue, dashed] (.5,0) [out=90, in=-90] to (0,.75) -- (0,1.5);
    \node[draw, thick, fill=blue!20,rounded corners=4pt,inner sep=3pt] () at (.5,1) {\small$\varphi$};
\end{tikzpicture}}
\quad.
\end{equation}
This sort of diagram occurs when $\varphi$ is an \emph{even} supernatural transformation.  If $\vartheta:F\to\Psi G$ is an \emph{odd} supernatural transformation, the diagrammatic presentation of Definition \ref{defn-supernatural-trans} is:
\begin{equation}
\hackcenter{\begin{tikzpicture}
    \draw[thick] (0,0) -- (0,.75) [out=90, in=-90] to (.5,1.5) -- (.5,2);
    \draw[thick, color=blue, dashed] (.5,0) -- (.5,.75) [out=90, in=-90] to (0,1.5) -- (0,2);
    \draw[thick, color=blue, dashed] (0,.8) [out=135, in=-90] to (-.5,1.5) -- (-.5,2);
    \node[draw, thick, fill=blue!20,rounded corners=4pt,inner sep=3pt] () at (0,.5) {\small$\varphi$};
\end{tikzpicture}}
\quad=\quad
\hackcenter{\begin{tikzpicture}
    \draw[thick] (0,0) [out=90, in=-90] to (.5,.75) -- (.5,2);
    \draw[thick, color=blue, dashed] (.5,0) [out=90, in=-90] to (0,.75) -- (0,2);
    \draw[thick, color=blue, dashed] (.5,1.3) [out=135, in=-90] to (-.5,2);
    \node[draw, thick, fill=blue!20,rounded corners=4pt,inner sep=3pt] () at (.5,1) {\small$\varphi$};
\end{tikzpicture}}
\quad=\quad-\quad
\hackcenter{\begin{tikzpicture}
    \draw[thick] (0,0) [out=90, in=-90] to (.5,.75) -- (.5,2);
    \draw[thick, color=blue, dashed] (.5,0) [out=90, in=-90] to (-.5,1) -- (-.5,2);
    \draw[thick, color=blue, dashed] (.5,1.3) [out=135, in=-90] to (0,2);
    \node[draw, thick, fill=blue!20,rounded corners=4pt,inner sep=3pt] () at (.5,1) {\small$\varphi$};
\end{tikzpicture}}
\quad.
\end{equation}
In particular, if $\vartheta:F\to\Psi F'$, $\varphi:G\to\Psi G'$ are two odd supernatural transformations, then
\begin{equation}\label{eqn-odd-commute-1}
(\vartheta\otimes\onebb_{G'})(\onebb_F\otimes\varphi)=(\onebb_{F'}\otimes\varphi)(\vartheta\otimes\onebb_G)
\end{equation}
as maps $FG\to\Psi F'\Psi G'$, but
\begin{equation}\label{eqn-odd-commute-2}\begin{split}
&(\xi\otimes\onebb_{F'G'})(\onebb_\Psi\otimes\vartheta\otimes\onebb_{G'})(\alpha_F\otimes\onebb_{G'})(\onebb_F\otimes\varphi)\\
&\qquad=(\xi\otimes\onebb_{F'G'})(\alpha_\Psi\otimes\onebb_{F'G'})(\onebb_\Psi\otimes\alpha_{F'}\otimes\onebb_{G'})(\onebb_{\Psi F'}\otimes\varphi)(\vartheta\otimes\onebb_G)\\
&\qquad=-(\xi\otimes\onebb_{F'G'})(\onebb_\Psi\otimes\alpha_{F'}\otimes\onebb_{G'})(\onebb_{\Psi F'}\otimes\varphi)(\vartheta\otimes\onebb_G)
\end{split}\end{equation}
as maps $FG\to F'G'$.  Diagrammatically, equations \eqref{eqn-odd-commute-1} and \eqref{eqn-odd-commute-2} are expressed as:
\begin{equation}\label{eqn-s2c-commute-1}
\hackcenter{\begin{tikzpicture}
    \draw[thick] (0,0) -- (0,2);
    \draw[thick, color=blue, dashed] (0,1) [out=135, in=-90] to (-.5,1.5) -- (-.5,2);
    \node[draw, thick, fill=blue!20,rounded corners=4pt,inner sep=3pt] () at (0,1) {\small$\vartheta$};
    \draw[thick] (1,0) -- (1,2);
    \draw[thick, color=blue, dashed] (1,.5) [out=135, in=-90] to (.5, 1) -- (.5,2);
    \node[draw, thick, fill=blue!20,rounded corners=4pt,inner sep=3pt] () at (1,.5) {\small$\varphi$};
\end{tikzpicture}}
\quad=\quad
\hackcenter{\begin{tikzpicture}
    \draw[thick] (0,0) -- (0,2);
    \draw[thick, color=blue, dashed] (0,.5) [out=135, in=-90] to (-.5,1) -- (-.5,2);
    \node[draw, thick, fill=blue!20,rounded corners=4pt,inner sep=3pt] () at (0,.5) {\small$\vartheta$};
    \draw[thick] (1,0) -- (1,2);
    \draw[thick, color=blue, dashed] (1,1) [out=135, in=-90] to (.5, 1.5) -- (.5,2);
    \node[draw, thick, fill=blue!20,rounded corners=4pt,inner sep=3pt] () at (1,1) {\small$\varphi$};
\end{tikzpicture}}\quad,
\end{equation}
\begin{equation}\label{eqn-s2c-commute-2}
\hackcenter{\begin{tikzpicture}
    \draw[thick] (0,0) -- (0,2);
    \draw[thick, color=blue, dashed] (0,1.2) [out=135, in=-90] to (-.5,1.7);
    \node[draw, thick, fill=blue!20,rounded corners=4pt,inner sep=3pt] () at (0,1.2) {\small$\vartheta$};
    \draw[thick] (1,0) -- (1,2);
    \draw[thick, color=blue, dashed] (1,.5) [out=135, in=-90] to (-1,1.5) -- (-1,1.7);
    \node[draw, thick, fill=blue!20,rounded corners=4pt,inner sep=3pt] () at (1,.5) {\small$\varphi$};
    \draw[thick, color=blue, dashed] (-.5,1.7) [out=90, in=90] to (-1,1.7);
\end{tikzpicture}}
\quad=\quad
\hackcenter{\begin{tikzpicture}
    \draw[thick] (0,0) -- (0,2);
    \draw[thick, color=blue, dashed] (0,.5) [out=135, in=-90] to (-.5,1) -- (-.5,1.75);
    \node[draw, thick, fill=blue!20,rounded corners=4pt,inner sep=3pt] () at (0,.5) {\small$\vartheta$};
    \draw[thick] (1,0) -- (1,2);
    \draw[thick, color=blue, dashed] (1,1) [out=135, in=-90] to (-1,1.75);
    \node[draw, thick, fill=blue!20,rounded corners=4pt,inner sep=3pt] () at (1,1) {\small$\varphi$};
    \draw[thick, color=blue, dashed] (-.5,1.75) [out=90, in=90] to (-1,1.75);
\end{tikzpicture}}
\quad=\quad-\quad
\hackcenter{\begin{tikzpicture}
    \draw[thick] (0,0) -- (0,2);
    \draw[thick, color=blue, dashed] (0,.5) [out=135, in=-90] to (-1,1.5) -- (-1,1.75);
    \node[draw, thick, fill=blue!20,rounded corners=4pt,inner sep=3pt] () at (0,.5) {\small$\vartheta$};
    \draw[thick] (1,0) -- (1,2);
    \draw[thick, color=blue, dashed] (1,1) [out=135, in=-90] to (-.5,1.75);
    \node[draw, thick, fill=blue!20,rounded corners=4pt,inner sep=3pt] () at (1,1) {\small$\varphi$};
    \draw[thick, color=blue, dashed] (-.5,1.75) [out=90, in=90] to (-1,1.75);
\end{tikzpicture}}
\quad.
\end{equation}

\subsubsection{Super-2-categories in general}\label{subsubsec-super-2-cat-general}

Having treated the example of supercategories, superfunctors, and supernatural transformations in detail both algebraically and diagrammatically, we now recall the definition of a super-2-category from~\cite[Definition 7.9]{KKO2}.

\begin{defn}\label{defn-super-2-cat-second} A \emph{super-2-category} $\Cc$ consists of:
\begin{itemize}
\item a collection of \emph{objects} (or \emph{0-morphisms}),
\item for each pair of objects $X,Y$ a collection of \emph{1-morphisms} denoted $\Hom(X,Y)$, and
\item for each pair of 1-morphisms with the same domain and codomain $F,G:X\to Y$ a collection of \emph{2-morphisms} denoted $\Hom(F,G)$;
\item for objects $X,Y,Z$, compositions maps for 1-morphisms $\Hom(Y,Z)\times\Hom(X,Y)\to\Hom(X,Z)$ which we write as $(G,F)\mapsto G\otimes F$ or simply $(G,F)\mapsto GF$,
\item for 1-morphisms $F,G,H\in\Hom(X,Y)$, a vertical composition map $\Hom(G,H)\times\Hom(F,G)\to\Hom(F,H)$ which we write as $(\vartheta,\varphi)\mapsto\vartheta\varphi$,
\item for 1-morphisms $F,F'\in\Hom(X,Y)$ and $G,G'\in\Hom(Y,Z)$, a horizontal composition map $\Hom(G\otimes F',G'\otimes F')\times\Hom(G\otimes F,G\otimes F')$ denoted $(\vartheta\otimes\onebb_{F'},\onebb_G\otimes\varphi)\mapsto\vartheta\otimes\varphi$,
\item for each object $X$ an \emph{identity 1-morphism} $\onebb_X\in\Hom(X,X)$,
\item for each 1-morphism $F$ an \emph{identity 2-morphism} $\onebb_F\in\Hom(F,F)$,
\item for each object $X$ a \emph{parity shift 1-morphism} $\Psi_X\in\Hom(X,X)$ and an invertible 2-morphism $\xi\in\Hom(\Psi_X^2,\onebb_X)$, and
\item for each 1-morphism $F:X\to Y$ a 2-morphism $\alpha_F\in\Hom(F\Psi_X,\Psi_YF)$.
\end{itemize}
These data are subject to the following conditions:
\begin{itemize}
\item horizontal composition is associative, and $\onebb_X$ is a unit for each object $X$;
\item vertical composition is associative, and $\onebb_F$ is a unit for each 1-morphism $F$;
\item interchange law: if $F,F',F'',G,G',G''$ are 1-morphisms and $\alpha,\alpha',\beta,\beta'$ are 2-morphisms such that the compositions
\begin{equation*}
(\alpha'\otimes\beta')(\alpha\otimes\beta),\quad(\alpha'\alpha)\otimes(\beta'\beta)
\end{equation*}
both make sense, then these two expressions are equal;
\item for all objects $X$, the 2-morphism $\xi_X:\Psi_X^2\to\onebb_X$ is invertible;
\item if $GF$ is the composition of the 1-morphisms $F$ and $G$, then $\alpha_{G\otimes F}=(\alpha_G\otimes\onebb_F)(\onebb_G\otimes\alpha_F$);
\item for all 1-morphisms $F:X\to Y$, the 2-morphism $\alpha_F:F\otimes\Psi_X\to\Psi_Y\otimes F$ is invertible;
\item for all objects $X$, there is an equality $\xi_X\otimes\onebb_{\Psi_X}=\onebb_{\Psi_X}\otimes\xi_X$ of 2-morphisms $\Psi_X^3\to\Psi_X$;
\item for all 1-morphisms $F:X\to Y$,
\begin{equation}
\onebb_F\otimes\xi_X=(\xi_Y\otimes\onebb_F)(\onebb_{\Psi_Y}\otimes\alpha_F)(\alpha_F\otimes\onebb_{\Psi_X});
\end{equation}
\item for all objects $X$, $\alpha_{\xi_X}=-\onebb_{\xi_X^2}$;
\item for all 2-morphisms $\vartheta:F\to G$, $\alpha_G\vartheta=\vartheta\alpha_F$.
\end{itemize}
\end{defn}

\begin{example} There is a super-2-category $\SCat$ whose objects are supercategories, 1-morphisms are superfunctors, and 2-morphisms are supernatural transformations.\end{example}

The general diagrammatics for super-2-categories are exactly as described above for the example $\SCat$.

\begin{example} There is a super-2-category $\SBim$ whose objects are superalgebras, 1-morphisms are super-bimodules, and 2-morphisms are (even) homomorphisms of super-bimodules.  The parity shift 1-morphism is denoted $\Pi$; if $M$ is an $(A,B)$-super-bimodule, then $\Pi M$ is the $(A,B)$-super-bimodule whose underlying space is $M$ with the $\Zt$ grading reversed and the left $A$-action twisted by the parity involution on $A$.  That is,
\begin{equation*}
a\in A\text{ acts on }\Pi M\text{ by sending }m\text{ to }\iota_A(a)m=(-1)^{p(a)}am.
\end{equation*}
Horizontal compositions of 1-morphisms is given by the tensor product of super-bimodules.  There is also the obvious $\Z$-graded variant of this super-2-category.\end{example}

In the formalism of super-2-categories, the generator of ``the $\Z_2$-action'' is the family of 1-morphisms $\lbrace\Psi_X\rbrace$.  As equations \eqref{eqn-s2c-commute-1}, \eqref{eqn-s2c-commute-2} show, this leads to nontrivial signs between 2-morphisms.  These signs are a more general behavior than the ordinary commutativity resulting from the interchange law for 2-categories.  A basic example: for any object $X$ of a super-2-category, the commutative algebra $\End(\onebb_X)$ is now the even part of a supercommutative superalgebra $\Pi\End(\onebb_X):=\End(\onebb_X\oplus\Psi_X)$.

%---------------------------------------------------------------------
\subsection{Super-2-functors}
%---------------------------------------------------------------------

\begin{defn} A super-2-functor $\Gc:\Cc\to\Dc$ consists of the following data:
\begin{itemize}
\item a function $\Gc:\text{Ob}(\Cc)\to\text{Ob}(\Dc)$,
\item for each pair $X,Y\in\text{Ob}(\Cc)$, a functor $\Hom(X,Y)\to\Hom(\Gc(X),\Gc(Y))$
\end{itemize}
such that:
\begin{itemize}
\item for $X\in\text{Ob}(\Cc)$, the functor $\Hom(X,X)\to\Hom(\Gc(X),\Gc(X))$ takes $\Psi_X$ to $\Psi_{\Gc(X)}$ and $\xi_X$ to $\xi_{\Gc(X)}$,
\item for $X,Y\in\text{Ob}(\Cc)$, the functor $\Hom(X,Y)\to\Hom(\Gc(X),\Gc(Y))$ takes $\alpha_F$ to $\alpha_{\Gc(F)}$.
\end{itemize}
\end{defn}

Or, in other words, a super-2-functor maps objects to objects, 1-morphisms to 1-morphisms, and 2-morphisms, compatibly with horizontal composition, vertical composition, parity shift 1-morphisms, and ``dashed-solid'' crossings.

%---------------------------------------------------------------------
\subsection{Super diagram conventions} \label{subsec-superconventions}
%---------------------------------------------------------------------

A consequence of the axioms
\begin{equation*}\begin{split}
&\hackcenter{\begin{tikzpicture}
    \draw[thick, color=blue, dashed] (0,0) -- (0,1.5);
    \draw[thick, color=blue, dashed] (.5,0) .. controls (.5,.8) and (1.5,.8) .. (1.5,0);
\end{tikzpicture}}
\quad=\quad
\hackcenter{\begin{tikzpicture}
    \draw[thick, color=blue, dashed] (0,0) .. controls (0,.8) and (1,.8) .. (1,0);
    \draw[thick, color=blue, dashed] (1.5,0) -- (1.5,1.5);
\end{tikzpicture}}
\quad,\qquad
\hackcenter{\begin{tikzpicture}
    \draw[thick, color=blue, dashed] (0,1) .. controls (0,1.8) and (1,1.8) .. (1,1);
    \draw[thick, color=blue, dashed] (0,1) .. controls (0,.2) and (1,.2) .. (1,1);
\end{tikzpicture}}
\quad=\varnothing,\\
&\hackcenter{\begin{tikzpicture}
    \draw[thick, color=blue, dashed] (0,0) [out=90, in=-90] to (1,2);
    \draw[thick, color=blue, dashed] (1,0) [out=90, in=-90] to (0,2);
\end{tikzpicture}}
\quad=\quad-\quad
\hackcenter{\begin{tikzpicture}
    \draw[thick, color=blue, dashed] (0,0) -- (0,2);
    \draw[thick, color=blue, dashed] (1,0) -- (1,2);
\end{tikzpicture}}
\quad,\qquad
\hackcenter{\begin{tikzpicture}
    \draw[thick, color=blue, dashed] (0,0) .. controls (0,.8) and (1,.8) .. (1,0);
    \draw[thick, color=blue, dashed] (0,2) .. controls (0,1.2) and (1,1.2) .. (1,2);
\end{tikzpicture}}
\quad=\quad
\hackcenter{\begin{tikzpicture}
    \draw[thick, color=blue, dashed] (0,0) -- (0, 2);
    \draw[thick, color=blue, dashed] (1,0) -- (1,2);
\end{tikzpicture}}
\quad,
\end{split}\end{equation*}
is that for each object $X$ in a super-2-category $\mathcal{C}$, the space of 2-morphisms between 1-morphisms $\Psi^a_X$ and $\Psi^b_X$ with $a$ congruent to $b$ modulo two consisting only of dashed lines is one dimensional.  The relations imply that any diagram pairing the $a+b$ endpoints with no intersecting strands represents the same 2-morphism in $\mathcal{C}$.  Choosing any such diagram we obtain a canonical isomorphism $\Psi^a_X\to\Psi^b_X$ whenever $a$ is congruent to $b$ modulo two.  In this case we introduce a shorthand to simplify our graphical calculus.  We express this isomorphism using a thickened strand
\begin{equation*}
\hackcenter{\begin{tikzpicture}
    \draw[thick, color=blue, double distance=1pt, dashed] (0,0) -- (0,1.2)
        node[above, blue](){$\scs b$};
    \node[blue] at (0,-.25){$\scs a$};
\end{tikzpicture}}\quad \text{representing} \quad
\xy
 (0,-8)*+{\Psi_X^a}="1" ; (0,8)*+{\Psi_X^b}="2";
 {\ar@{->} "1"; "2"};
\endxy
\qquad\text{ for }a\equiv b\text{ mod }2.
\end{equation*}

In most cases the labelling of the source and target will be clear from the context and we will often omit these labels.  It will also be convenient to allow negative labels for these thick strands with the interpretation that a thick strand with source labeled $a$ and target labelled $b$ represents the canonical isomorphism between $\Psi_X^{|a|}$ and $\Psi_X^{|b|}$.

%#####################################################################
%
\section{Odd categorified structures and their actions}
%
%#####################################################################

%---------------------------------------------------------------------
\subsection{Strong supercategorical actions}\label{subsec-strong-supercat-action}
%---------------------------------------------------------------------

We will concern ourselves with two ``strong'' notion of categorification of $\Udotpi$, the covering version of Lusztig's idempotented form of quantum $\sltwo$ at a generic parameter $q$.  The first is that of a strong supercategorical action, which we define in this section.  The second, which is \emph{a priori} stronger, is a 2-functor from a certain super-2-category $\Udotc$.  The first main result of this paper is to prove that a strong supercategorical action always extends to such a 2-functor.

\begin{defn} \label{def_strong}
Let $\Cc$ be a graded idempotent complete $\Bbbk$-linear 2-category such that
\begin{itemize}
\item The objects of $\Cc$ are indexed by the integral weights of $\sltwo$ (which we identify with $\Z$); write $\oneb_\lambda$ for the identity 1-morphism of the object $\lambda$;
\item For each weight $\lambda$ there are 1-morphisms
\begin{equation*}
\Ett\onebl:\lambda\to\lambda+2,\qquad
\onebl\Ftt:\lambda+2\to\lambda,\qquad
\Pi\onebl:\lambda\to\lambda.
\end{equation*}
We also assume that a right adjoint to $\Ett\onebl$ exists and that there are fixed adjunctions
\begin{equation}
\onebl\Ftt\ads{-\l-1}\dashv\Ett\onebl,\qquad
\Pi\onebl\dashv\Pi\onebl,
\end{equation}
and natural isomorphisms
\begin{equation}
\alpha_\Ett:\Ett\Pi\to\Pi\Ett,\qquad\alpha_\Ftt:\Ftt\Pi\to\Pi\Ftt.
\end{equation}
\end{itemize}
All 1-morphisms of $\Cc$ are generated from $\Ett,\Ftt,\Pi$ by taking direct sums, compositions, and grading shifts.  A \emph{strong supercategorical action} of $\Udotpi$ on $\Cc$ consists of the following data and conditions:
\begin{enumerate}
\item (Integrability) The object $\lambda+2r$ is isomorphic to the zero object for $r\ll0$ and for $r\gg0$.
\item   \label{co:hom} (Brick condition) $\Hom_\Cc(\onebl,\Pi^k\onebl\ads{\ell})=0$ if $\ell<0$ and is one-dimensional if $\ell=0$ and $k=0$.  Moreover, the space of 2-morphisms between any two 1-morphisms if finite dimensional.
\item (Covering isomorphisms) We are given isomorphisms in $\Cc$:
\begin{align} \label{eq:EF-rel}
&\Ftt\Ett\onebl\cong\Ett\Pi\Ftt\onebl\oplus
\bigoplus_{k=0}^{-\lambda-1}\Pi^{\l+1+k}\onebl\ads{-\lambda-1-2k} &\text{if }\lambda\leq0,\\
&\Ett\Ftt\onebl\cong\Ftt\Pi\Ett\onebl\oplus
\bigoplus_{k=0}^{\lambda-1}\Pi^k \onebl\ads{\lambda-1-2k} &\text{if }\lambda\geq0.\label{eq:FE-rel}
\end{align}
\item \label{co:oddNil} (Odd nilHecke action)There are 2-morphisms $X:\Ett\to\Pi\Ett$ and $T:\Ett^2\to\Pi\Ett^2$ such that for each $n\geq1$, the 2-morphisms
\begin{equation}\begin{split}
&X_i=\alpha_\Ett^{-i+1}(\oneb^{i-1}\otimes X\otimes\oneb^{n-i}):\Ett^n\to\Pi\Ett^n,\\
&T_i=\alpha_\Ett^{-i+1}(\oneb^{i-1}\otimes T\otimes\oneb^{n-i-1}):\Ett^n\to\Pi\Ett^n.
\end{split}\end{equation}
generate an action of $\onh_n$ on $\Pi\END(\Ett^n)$.
\end{enumerate}
\end{defn}
This last piece of data, the odd nilHecke action, is the key ingredient that makes the action ``strong''; the idea behind this observation goes back to the pioneering work of Chuang and Rouquier \cite{CR}.  The relations ensuring an action of the odd nilHecke algebra are given diagrammatically in equations \eqref{eq:oddnilquad}--\eqref{eq:onil-dot}.

{\bf{Important convention.}} The integrability condition above implies that ``most'' objects are isomorphic to the zero object. If $\l$ is the zero object then, by definition, $\Hom_{\Cc}(\onebl,\Pi^k\onebl \la l \ra) = 0$ for all $l$. So, to be precise, condition (\ref{co:hom}) above should say that $\Hom_{\Cc}(\onebl,\onebl)$ is one-dimensional if $\l$ is non-zero. There are many other such instances later in this paper. The convention is that any statement about a certain Hom being non-zero assumes that all objects involved are non-zero (otherwise the Hom space is automatically zero).
\medskip

For a 1-morphism $u$, let $u^L$ (respectively $u^R$) denote its left (respectively right) adjoint.  The requirement that $\onebl\Ftt \ads{-\l-1} \dashv\Ett\onebl$ implies that $(\Ett \onebl)^L = \onebl \Ftt \la -\lambda -1 \ra$ and $\left(\Ftt\onebl\right)^R =  \Ett\oneb_{\l-2} \la -\l+1\ra$.
In what follows we make use of the fact that  $(u^L)^R=u$ and $(v^R)^L=v$ for all 1-morphisms $u,v$ and that the adjunctions give rise to isomorphisms
\begin{alignat}{3}
 \Hom(ux,y) &\; \cong \;& \Hom (x, u^R y),
 &\qquad \Hom(x,uy)&\; \cong \;&  \Hom (u^Lx,  y), \nn\\
 \Hom(xv,y) &\; \cong \;& \Hom(x,yv^L),  &\qquad \Hom(x,yv) &\; \cong \;& \Hom(xv^R,y).
\end{alignat}

% -------------------------------------------------------------------------------
%
\subsubsection{Cancellation property}
%
% -------------------------------------------------------------------------------

The fact that the space of maps between any two 1-morphisms in a strong supercategorical action is finite dimensional means that the Krull-Schmidt property holds.   This means that any 1-morphism has a unique direct sum decomposition (see Section 2.2 of \cite{Rin}). In particular, this means that if $A,B,C$ are morphisms and $V$ is a $\Z$-graded vector space then we have the following cancellation laws (see Section 4 of \cite{CK3}):
\begin{eqnarray*}
A \oplus B \cong A \oplus C &\Rightarrow& B \cong C \\
A \otimes_\k V \cong B \otimes_\k V &\Rightarrow& A \cong B.
\end{eqnarray*}
A brick in a (graded) category is an indecomposable object $A$ such that $\End^k(A)=0$ for $k<0$ and  $\End(A)=\End^0(A)\cong \Bbbk$.
For example, by Lemma \ref{lem:E} below, $\Ett \oneb_{\mu}$ is a brick.

%---------------------------------------------------------------------
\subsection{Definition of the super-2-category $\Udotc$}\label{subsec-defn-odd-udot}
%---------------------------------------------------------------------

The whole of this subsection is a definition of the super-2-category $\dot{\mathcal{U}}_{q,\pi}(\sltwo)$, or $\Udotc$ for short.  This is entirely analogous to the definition in Section 5.2 of \cite{Lau1}.

\subsubsection{Objects and 1-morphisms}

The super-2-category $\Udotc$ will be the idempotent completion (Karoubi envelope) of a combinatorially defined graded $\Bbbk$-linear super-2-category $\Uc$.  The objects of $\Uc$ are indexed by integral weights for $\sltwo$ (as usual, identified with $\Z$) and the 1-morphisms are direct sums, compositions, and degree shifts of the following generating 1-morphisms:
\begin{equation}
\Ec\onebbl:\lambda\to\lambda+2,\qquad
\onebbl\Fc:\lambda+2\to\lambda,\qquad
\Pi\onebbl:\lambda\to\lambda,
\end{equation}
where $\Pi\onebbl=\Psi\onebbl$ is the parity shift 1-morphisms for the object $\lambda$.  These generating 1-morphisms are expressed diagrammatically as strands:
\begin{equation}\begin{split}
\hackcenter{% [inline block 0: 85 envs, 54009 chars -> data_tex | \begin{tikzpicture}     \draw[thick, ->] (0,0) -- (0,2)...]
}
\]

At this point, the definition of $\Uc$ is complete.

%---------------------------------------------------------------------
%
\subsection{Implicit relations among generating 2-morphisms} \label{subsec:additional}
%
%---------------------------------------------------------------------

% - - - - - - - - - - - - - - - - - - - - - - - - - - - - - - - - - - - - - - - -
%
\subsubsection{Relations involving dashed strands}
%
% - - - - - - - - - - - - - - - - - - - - - - - - - - - - - - - - - - - - - - - -

The following relations are already present from the definition of a super-2-category, but we spell them out for completeness:
\begin{itemize}
\item $\xi$ and $\xi^{-1}$ are biadjoint:
\begin{equation}
\hackcenter{\begin{tikzpicture}
    \draw[thick, color=blue, dashed] (0,0) -- (0,.75) .. controls (0,1.15) and (.5,1.15) .. (.5,.75) .. controls (.5,.35) and (1,.35) .. (1,.75) -- (1,1.5);
\end{tikzpicture}}
\quad=\quad
\hackcenter{\begin{tikzpicture}
    \draw[thick, color=blue, dashed] (0,0) -- (0,1.5);
\end{tikzpicture}}
\quad=\quad
\hackcenter{\begin{tikzpicture}
    \draw[thick, color=blue, dashed] (1,0) -- (1,.75) .. controls (1,1.15) and (.5,1.15) .. (.5,.75) .. controls (.5,.35) and (0,.35) .. (0,.75) -- (0,1.5);
\end{tikzpicture}}
\end{equation}
\item the biadjunction $\xi\dashv\xi^{-1}\dashv\xi$ enjoys pitchfork-cyclicity for all 1-morphisms (by Lemma \ref{lem-dashed-cyclicity}, which is stated for $\SCat$ but holds in any super-2-category):
\begin{equation}
\hackcenter{\begin{tikzpicture}
    \draw[thick, color=blue, dashed] (0,0) .. controls (0,.8) and (1,.8) .. (1,0);
    \draw[thick] (.5,0) [out=90, in=-90] to (0,1) -- (0,1.5);
\end{tikzpicture}}
\quad=\quad
\hackcenter{\begin{tikzpicture}
    \draw[thick, color=blue, dashed] (0,0) .. controls (0,.8) and (1,.8) .. (1,0);
    \draw[thick] (.5,0) [out=90, in=-90] to (1,1) -- (1,1.5);
\end{tikzpicture}}
\quad,\qquad
\hackcenter{\begin{tikzpicture}
    \draw[thick, color=blue, dashed] (0,1.5) .. controls (0,.7) and (1,.7) .. (1,1.5);
    \draw[thick] (.5,1.5) [out=-90, in=90] to (0, .5) -- (0,0);
\end{tikzpicture}}
\quad=\quad
\hackcenter{\begin{tikzpicture}
    \draw[thick, color=blue, dashed] (0,1.5) .. controls (0,.7) and (1,.7) .. (1,1.5);
    \draw[thick] (.5,1.5) [out=-90, in=90] to (1,.5) -- (1,0);
\end{tikzpicture}}
\end{equation}
where in either equation both strands can be upwards-oriented or both strands can be downwards-oriented.
\item $\xi^{-1}$ is, in fact, the inverse of $\xi$:
\begin{equation}
\hackcenter{\begin{tikzpicture}
    \draw[thick, color=blue, dashed] (0,0) .. controls (0,.8) and (1,.8) .. (1,0);
    \draw[thick, color=blue, dashed] (0,2) .. controls (0,1.2) and (1,1.2) .. (1,2);
\end{tikzpicture}}
\quad=\quad
\hackcenter{\begin{tikzpicture}
    \draw[thick, color=blue, dashed] (0,0) -- (0, 2);
    \draw[thick, color=blue, dashed] (1,0) -- (1,2);
\end{tikzpicture}}
\quad,\qquad
\hackcenter{\begin{tikzpicture}
    \draw[thick, color=blue, dashed] (0,1) .. controls (0,1.8) and (1,1.8) .. (1,1);
    \draw[thick, color=blue, dashed] (0,1) .. controls (0,.2) and (1,.2) .. (1,1);
\end{tikzpicture}}
\quad=\varnothing.
\end{equation}
\item $\alpha_\Ec^{-1}$ is, in fact, the inverse of $\alpha_\Ec$:
\begin{equation}
\hackcenter{\begin{tikzpicture}
    \draw[thick, ->] (0,0) [out=90, in=-90] to (1,1) [out=90, in=-90] to (0,2);
    \draw[thick, color=blue, dashed] (1,0) [out=90, in=-90] to (0,1) [out=90, in=-90] to (1,2);
\end{tikzpicture}}
\quad=\quad
\hackcenter{\begin{tikzpicture}
    \draw[thick, ->] (0,0) -- (0,2);
    \draw[thick, color=blue, dashed] (1,0) -- (1,2);
\end{tikzpicture}}
\quad,\qquad
\hackcenter{\begin{tikzpicture}
    \draw[thick, color=blue, dashed] (0,0) [out=90, in=-90] to (1,1) [out=90, in=-90] to (0,2);
    \draw[thick, ->] (1,0) [out=90, in=-90] to (0,1) [out=90, in=-90] to (1,2);
\end{tikzpicture}}
\quad=\quad
\hackcenter{\begin{tikzpicture}
    \draw[thick, color=blue, dashed] (0,0) -- (0,2);
    \draw[thick, ->] (1,0) -- (1,2);
\end{tikzpicture}}\quad.
\end{equation}
And likewise for $\alpha_\Fc^{\pm1}$ (reflect the above equations about a horizontal axis).
\item Relations that involves pulling $\alpha$'s through generating 2-morphisms:
\begin{equation}
\hackcenter{\begin{tikzpicture}
    \draw[thick, ->] (0,0) .. controls (0,1) and (1,1) .. (1,2)
        node[pos=.5, shape=coordinate](DOT){};
    \draw[thick, color=blue, dashed] (DOT) -- (.5,2)
        node[pos=0](){\bbullet};
    \draw[thick, color=blue, dashed] (.5,0) .. controls (.5,.5) and (0,.5) .. (0,1) -- (0,2);
\end{tikzpicture}}
\quad=\quad
\hackcenter{\begin{tikzpicture}
    \draw[thick, ->] (0,0) .. controls (0,1) and (1,1) .. (1,2)
        node[pos=.5, shape=coordinate](DOT){};
    \draw[thick, color=blue, dashed] (DOT) .. controls ++(-0.75,.1) and ++(0,-.5).. (.5,2)
        node[pos=0](){\bbullet};
    \draw[thick, color=blue, dashed] (1,0) -- (1,.65) ..
       controls ++(0,1) and ++(0,-.5) .. (0,2);
\end{tikzpicture}}
\quad,\qquad
\hackcenter{\begin{tikzpicture}
  \draw[thick, ->] (0,0) .. controls ++(0,1) and ++(0,-0.5) ..(2,2);
    \draw[thick, ->] (2,1) .. controls ++(0,0.5) and ++(0,-0.5) .. (1,2)
        node[pos=.5, shape=coordinate](CROSSING){};
    \draw[thick, color=blue, dashed] (CROSSING) to[out=180, in=-90] (0.25,2);
   \draw[thick] (2,1) .. controls ++(0,-0.5) and ++(0,0.5) ..(1,0);
    \draw[thick, color=blue, dashed]
   (2,0) .. controls ++(0,.75) and ++(0,-.75) .. (-0.5,2);
\end{tikzpicture}}
\quad = \quad
\hackcenter{\begin{tikzpicture}
  \draw[thick, ->] (0,0) .. controls ++(0,0.5) and ++(0,-1) ..(2,2);
  \draw[thick, ->] (0,1) .. controls ++(0,0.5) and ++(0,-0.5) .. (1,2);
  \draw[thick] (0,1) .. controls ++(0,-0.5) and ++(0,0.5) ..(1,0)
        node[pos=.5, shape=coordinate](CROSSING){};
    \draw[thick, color=blue, dashed]
  (CROSSING) .. controls ++(-0.5,-.1) and ++(0,-.2) ..
    (-0.5,1) .. controls ++(0,.4) and ++(0,-.5) ..(0.25,2);
    \draw[thick, color=blue, dashed]
   (2,0) .. controls ++(0,.75) and ++(0,-.75) .. (-0.5,2);
\end{tikzpicture}}
\end{equation}
the analogues of the previous two for $\Fc$ (reflect about a horizontal axis),
\begin{equation}
\hackcenter{\begin{tikzpicture}
    \draw[thick, ->] (.5,0) -- (.5,.5) .. controls (.5,.85) and (0,.85) .. (0,.5) -- (0,0);
    \draw[thick, color=blue, dashed] (1,0) -- (1,1) .. controls (1,1.25) and (-.5,1.25) .. (-.5,1.5);
\end{tikzpicture}}
\quad=\quad
\hackcenter{\begin{tikzpicture}
    \draw[thick, ->] (.5,0) -- (.5,.5) .. controls (.5,.85) and (0,.85) .. (0,.5) -- (0,0);
    \draw[thick, color=blue, dashed] (1,0) .. controls (1,.5) and (-.5,.5) .. (-.5,1) -- (-.5,1.5);
\end{tikzpicture}}
\quad,\qquad
\hackcenter{\begin{tikzpicture}
    \draw[thick, <-] (.5,0) -- (.5,.5) .. controls (.5,.85) and (0,.85) .. (0,.5)
        node[pos=.5, shape=coordinate](TOPCAP){}
    -- (0,0);
    \draw[thick, color=blue, dashed] (1,0) -- (1,1) .. controls (1,1.25) and (-.5,1.25) .. (-.5,1.5);
    \draw[thick, color=blue, double distance=1pt, dashed] (TOPCAP) -- (.25,1.5)
        node[pos=1, above](){$\scs \lambda+1$};
\end{tikzpicture}}
\quad=\quad
\hackcenter{\begin{tikzpicture}
    \draw[thick, <-] (.5,0) -- (.5,.5) .. controls (.5,.85) and (0,.85) .. (0,.5)
        node[pos=.5, shape=coordinate](TOPCAP){}
    -- (0,0);
    \draw[thick, color=blue, dashed] (1,0) .. controls (1,.5) and (-.5,.5) .. (-.5,1) -- (-.5,1.5);
    \draw[thick, color=blue, double distance=1pt, dashed] (TOPCAP) -- (.25,1.5)
        node[pos=1, above](){$\scs \lambda+1$};
\end{tikzpicture}}
\quad,
\end{equation}
as well as the apparent analogues of these diagrams with $\alpha^{-1}$ in place of $\alpha$, with cups in place of caps, and both.
\end{itemize}

% - - - - - - - - - - - - - - - - - - - - - - - - - - - - - - - - - - - - - - - -
%
\subsubsection{Inductive dot slide }
%
% - - - - - - - - - - - - - - - - - - - - - - - - - - - - - - - - - - - - - - - -
We now deduce some helpful relations in $\Uc$, cf. Section 5.4 of \cite{Lau1}.

We refer to the equation
\begin{align} \label{eq:inductive-dot}
\hackcenter{\begin{tikzpicture}
    \draw[thick, ->] (0,0) .. controls (0,.75) and (.5,.75) .. (.5,1.5)
        node[pos=.5, shape=coordinate](CROSSING){}
        node[pos=.25, shape=coordinate](DOT){};
    \draw[thick, ->] (.5,0) .. controls (.5,.75) and (0,.75) .. (0,1.5);
    \draw[thick, color=blue, double distance=1pt, dashed] (DOT) [out=180, in=-90] to (-1,1.5);
    \draw[thick, color=blue, dashed] (CROSSING) [out=180, in=-90] to (-.5,1.5);
    \node() at (DOT) {\bbullet};
\node[blue] at (-1.3,1.35) {$\scs m$};
\end{tikzpicture}}
\quad-\;\;
\hackcenter{\begin{tikzpicture}
    \draw[thick, ->] (0,0) .. controls (0,.75) and (.5,.75) .. (.5,1.5)
        node[pos=.5, shape=coordinate](CROSSING){}
        node[pos=.75, shape=coordinate](DOT){};
    \draw[thick, ->] (.5,0) .. controls (.5,.75) and (0,.75) .. (0,1.5);
    \draw[thick, color=blue, double distance=1pt, dashed] (DOT) [out=180, in=-90] to (-1,1.5);
    \draw[thick, color=blue, dashed] (CROSSING) [out=180, in=-90] to (-.5,1.5);
    \node at (DOT) {\bbullet};
    \node[blue] at (-1.3,1.35) {$\scs m$};
\end{tikzpicture}}
&\quad =\quad
\sum_{f+g=m-1}
\hackcenter{\begin{tikzpicture}
    \draw[thick, ->] (0,0) -- (0,1.5)
      node[pos=.3, shape=coordinate](LD){};
    \draw[thick, ->] (.5,0) -- (.5,1.5)
        node[pos=.5, shape=coordinate](RD){};;
    \draw[thick, color=blue, dashed] (-1.25,1.5) .. controls ++(-0,-.4) and ++(0,-.4) .. (-.25,1.5);
    \draw[thick, color=blue, double distance=1pt, dashed]
        (LD) .. controls ++(-.5,.2) and ++(0,-1) .. (-1.5,1.5);
    \draw[thick, color=blue, double distance=1pt, dashed]
        (RD) .. controls ++(-.5,.2) and ++(0,-.7) .. (-.75,1.5);
    \node at (LD) {\bbullet};\node at (RD) {\bbullet};
    \node[blue] at (-1.75,1.35) {$\scs f$};
    \node[blue] at (-.8,.9) {$\scs g$};
\end{tikzpicture}}
 \\ \nn
\hackcenter{\begin{tikzpicture}
    \draw[thick, ->] (0,0) .. controls (0,.75) and (.5,.75) .. (.5,1.5);
    \draw[thick, ->] (.5,0) .. controls (.5,.75) and (0,.75) .. (0,1.5)
        node[pos=.5, shape=coordinate](CROSSING){}
        node[pos=.75, shape=coordinate](DOT){};
    \draw[thick, color=blue, double distance=1pt,dashed] (DOT) [out=180, in=-90] to (-1,1.5);
    \draw[thick, color=blue, dashed] (CROSSING) [out=180, in=-90] to (-.5,1.5);
    \node() at (DOT) {\bbullet};
\node[blue] at (-1.3,1.35) {$\scs m$};
\end{tikzpicture}}
\quad-\quad
\hackcenter{\begin{tikzpicture}
    \draw[thick, ->] (0,0) .. controls (0,.75) and (.5,.75) .. (.5,1.5);
    \draw[thick, ->] (.5,0) .. controls (.5,.75) and (0,.75) .. (0,1.5)
        node[pos=.5, shape=coordinate](CROSSING){}
        node[pos=.25, shape=coordinate](DOT){};
    \draw[thick, color=blue, double distance=1pt,dashed] (DOT) [out=180, in=-90] to (-1,1.5);
    \draw[thick, color=blue, dashed] (CROSSING) [out=180, in=-90] to (-.5,1.5);
    \node() at (DOT) {\bbullet};
\node[blue] at (-1.3,1.35) {$\scs m$};
\end{tikzpicture}}
&\quad=\quad
\sum_{f+g=m-1}
\hackcenter{\begin{tikzpicture}
    \draw[thick, ->] (0,0) -- (0,1.5)
      node[pos=.6, shape=coordinate](LD){};
    \draw[thick, ->] (.5,0) -- (.5,1.5)
        node[pos=.25, shape=coordinate](RD){};;
    \draw[thick, color=blue, dashed] (-1.25,1.5) .. controls ++(-0,-.4) and ++(0,-.4) .. (-.25,1.5);
    \draw[thick, color=blue, double distance=1pt, dashed]
        (LD) .. controls ++(-.5,.2) and ++(0,-.7) .. (-.75,1.5);
    \draw[thick, color=blue, double distance=1pt, dashed]
        (RD) .. controls ++(-.5,.2) and ++(0,-1) .. (-1.5,1.5);
    \node at (LD) {\bbullet};\node at (RD) {\bbullet};
    \node[blue] at (-1.75,1.35) {$\scs f$};
    \node[blue] at (-.8,.9) {$\scs g$};
\end{tikzpicture}}
\end{align}
as the inductive dot slide formula.   These equations follow by induction from \eqref{eq:onil-dot}.

% - - - - - - - - - - - - - - - - - - - - - - - - - - - - - - - - - - - - - - - -
%
%
\subsubsection{Deriving the other curl relation}
%
% - - - - - - - - - - - - - - - - - - - - - - - - - - - - - - - - - - - - - - - -

Notice that relations \eqref{eq:lgz-curl} and \eqref{eq:llz-curl} above only specify the value for one orientation of the curl depending on wether $\l$ is positive or negative.  Here we show that the other curl relation can be derived from the relations above.  Note that the relations below utilize fake bubbles.

\begin{prop}[Other curls] \label{prop:othercurl}
The following curl relations
\begin{align}
  \hackcenter{\begin{tikzpicture} [scale=0.8]
  \draw[thick] (0.5,1) -- (0.5,2);
  \draw[thick] (0.5,-.5) -- (0.5,0);
  \draw[thick] (-1.5,0) -- (-1.5,1);
  \draw[thick] (0.5,0) .. controls ++(-0,0.5) and ++(0,-0.5) .. (-0.5,1)
      node[pos=0.5, shape=coordinate](X){};
  \draw[thick, ->] (-0.5,0) .. controls ++(0,0.5) and ++(0,-0.5) .. (0.5,1);
  \draw[thick, ->] (-0.5,1) .. controls ++(0,0.6) and ++(0,0.6) .. (-1.5,1);
  \draw[thick, ->] (-1.5,0) .. controls ++(0,-0.6) and ++(0,-0.6) .. (-0.5,0)
     node[pos=0.5, shape=coordinate](CUP){};
  \draw[color=blue,  thick, dashed]
   (X) .. controls ++(-1.2,0) and ++(0,-.9) ..(0,2);
    \draw[color=blue,  thick, double distance=1pt,dashed]
   (CUP) .. controls ++(-0,1.5) and ++(0,-1) ..(-.5,2);
   \node at (0,-0.25) {$\l$};
   \node[blue] at (-1,1.8) {$\scs \l-1$};
\end{tikzpicture}}
&\;\; = \;\;
\sum_{f+g=\l} (-1)^{g} \;\;
\hackcenter{\begin{tikzpicture}
  \draw[thick, ->] (2.25,-1) -- (2.25,1.25)
        node[pos=0.5, shape=coordinate](D){};
  \draw[color=blue, thick,double distance=1pt,  dashed] (D) to[out=150,in=-90] (1.75,1.25);
  \draw[thick, ->] (1.1,0) .. controls ++(0,0.6) and ++(0,0.6) .. (.3,0)
      node[pos=0.05, shape=coordinate](C){};
  \draw[thick] (1.1,0) .. controls ++(0,-0.6) and ++(-0,-0.6) .. (.3,0)
      node[pos=0.5, shape=coordinate](A){}
      node[pos=0.2, shape=coordinate](B){};
  \draw[color=blue, thick, double distance=1pt, dashed]
    (A) .. controls++(-.1,.5) and ++(-.2,.3) .. (B)
         node[pos=0.9,right]{$\scs -\l-1$\;};
   \draw[color=blue, thick,double distance=1pt,  dashed]
    (C) .. controls ++(-.7,.2) and ++(0,-.8) .. (1.25,1.25)
    node[pos=0.9,left]{$\scs f$\;};
     \node at (B) {\bbullet};
     \node at (D) {\bbullet};
     \node at (C) {\bbullet};
     \node[blue] at (1.95,1) {$\scs g$};
  \node at (-0,-.5) {$\lambda$};
\end{tikzpicture} }
& \text{for $\l\geq 0$,}
\\
  \hackcenter{\begin{tikzpicture} [scale=0.8]
  \draw[thick] (-0.5,1) -- (-0.5,2);
  \draw[thick] (-0.5,-.5) -- (-0.5,0);
  \draw[thick] (1.5,0) -- (1.5,1);
  \draw[thick] (-0.5,0) .. controls ++(-0,0.5) and ++(0,-0.5) .. (0.5,1)
      node[pos=0.5, shape=coordinate](X){};
  \draw[thick, ->] (0.5,0) .. controls ++(0,0.5) and ++(0,-0.5) .. (-0.5,1);
  \draw[thick, ->] (0.5,1) .. controls ++(0,0.5) and ++(0,0.5) .. (1.5,1)
     node[pos=0.5, shape=coordinate](CUP){};
  \draw[thick, ->] (1.5,0) .. controls ++(0,-0.5) and ++(0,-0.5) .. (0.5,0);
  \draw[color=blue,  thick, dashed]
   (X) .. controls ++(-1,0) and ++(0,-.9) ..(-1,2);
    \draw[color=blue,  thick, double distance=1pt,dashed]
   (CUP) -- (1,2);
   \node at (0,-0.25) {$\l$};
   \node[blue] at (.5,1.8) {$\scs \l-1$};
\end{tikzpicture}}
&\;\; = \;\;
\sum_{f+g=\l} (-1)^{f} \;\;
\hackcenter{\begin{tikzpicture}
  \draw[thick, ->] (-1.75,-1) -- (-1.75,1.25)
        node[pos=0.5, shape=coordinate](D){};
  \draw[color=blue, thick,double distance=1pt,  dashed] (D) to[out=150,in=-90] (-2.25,1.25);
  \draw[thick, ->] (-0.4,0) .. controls ++(-0,0.6) and ++(0,0.6) .. (0.4,0)
      node[pos=0.5, shape=coordinate](X){}
      node[pos=0.1, shape=coordinate](Y){};
  \draw[thick] (-0.4,0) .. controls ++(0,-0.6) and ++(0,-0.6) .. (0.4,0)
      node[pos=0.1, shape=coordinate](Z){};
  \draw[color=blue, thick, double distance=1pt, dashed]
    (X) .. controls++(0,.65) and ++(-.65,.3) .. (Y) node[pos=0.15,right]{$\scs \l-1$\;};
  \draw[color=blue, thick, double distance=1pt, dashed] (Z) to[bend left] (-1,1.25);
  \node at (Y) {\bbullet};
  \node at (Z) {\bbullet};
  \node at (D) {\bbullet};
     \node[blue] at (-2.5,1) {$\scs f$};
     \node[blue] at (-1.25,1) {$\scs g$};
  \node at (-1.25,-.5) {$\lambda$};
\end{tikzpicture} }
& \text{for $\l\leq 0$,}
\end{align}
hold in $\Uc$.
\end{prop}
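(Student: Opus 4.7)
The plan is to derive both curl identities from the mixed R2 relation (equation \eqref{eqn-mixed-R2-relation-1} for $\lambda > 0$, and its analog just below for $\lambda < 0$), combined with the vanishing curl relations \eqref{eq:lgz-curl}--\eqref{eq:llz-curl}, the biadjunction relations \eqref{eq_biadjoint1}--\eqref{eq_biadjoint2}, and the inductive defining relations \eqref{eq:fake-bubble} for fake bubbles. I will work out the case $\lambda \geq 0$ in detail; the case $\lambda \leq 0$ is symmetric by reflection.

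For $\lambda > 0$, I would apply \eqref{eqn-mixed-R2-relation-1} locally to the oppositely oriented pair of strands appearing inside the LHS curl. This rewrites it as the sum of two contributions: minus a configuration containing two successive mixed crossings, and a sum indexed by $f_1+f_2+f_3=\lambda-1$ of diagrams consisting of a cap, a cup, an interior counter-clockwise bubble with $f_1+f_2$ dots, and $f_3$ extra dots on the outer strand, each term carrying the sign $(-1)^{f_3}$. In the first contribution, the two successive crossings can be rearranged, using biadjointness \eqref{eq_biadjoint1}--\eqref{eq_biadjoint2} and pitchfork-cyclicity for the dashed strands (Lemma~\ref{lem-dashed-cyclicity}), into a diagram that contains, as a local subdiagram, the curl on the LHS of \eqref{eq:lgz-curl}; since that curl vanishes for $\lambda > 0$, this entire contribution drops out. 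The $\lambda = 0$ case is handled identically using the $\lambda=0$ curl-vanishing consequence of \eqref{eq:lgz-curl}.

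What remains is the sum over $f_1+f_2+f_3 = \lambda-1$. In each summand the cap and cup collapse under biadjointness to an identity strand; combining this strand with the external pieces of the original curl, and using the odd cyclicity for dots \eqref{eqn-dot-cyclicity} to move the $f_3$ dots past the cap at the top, one obtains a strand carrying $g := f_3$ dots (with an appropriate sign coming from the combined odd-cyclicity and dashed-strand signs) together with a single counter-clockwise dotted bubble of label $f := \lambda - (f_1+f_2) - 1$. The contributions with $f \leq \lambda-1$ come directly from honest bubbles, while the contributions needed to extend the summation to the full range $f+g=\lambda$ (in particular $f = \lambda$) are supplied by the defining relation \eqref{eq:fake-bubble} for fake bubbles, which allows the fake symbol to be expressed in terms of the same honest bubbles that already appear. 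Reindexing then produces exactly the RHS of the proposition. For $\lambda \leq 0$, the mirror-reflected argument using the $\lambda < 0$ analog of the mixed R2 relation and \eqref{eq:llz-curl} yields the second identity; the sign factor becomes $(-1)^f$ because the dots now appear on the counter-clockwise side of the surviving bubble, and the reindexing swaps the roles of $f$ and $g$.

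The main obstacle will be keeping track of signs and parity shifts. In the super-2-category $\Uc$, every isotopy that crosses a solid strand past a dashed strand introduces a factor via $\alpha_\Ec^{\pm 1}$ or $\alpha_\Fc^{\pm 1}$ (see \eqref{eq-dashed-solid-cross}), and the odd cyclicity relations \eqref{eqn-dot-cyclicity}--\eqref{eqn-crossing-cyclicity} produce factors of $(-1)^{\lambda}$ when dots or crossings are slid around cups and caps. Confirming that all of these combine with the signs $(-1)^{f_3}$ from \eqref{eqn-mixed-R2-relation-1} to give precisely the signs $(-1)^g$ (respectively $(-1)^f$) stated in the proposition, rather than some unintended sign on each summand, is the bulk of the verification and is the principal departure from the even calculation of \cite{Lau1}.
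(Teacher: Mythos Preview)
Your list of ingredients is right, but the first step does not work as written. The crossing in the LHS curl is an up--up crossing $\partial_\Ec$: tracing the single strand through the loop, both strands meeting at the vertex are $\Ec$-strands. Consequently there is no local $\Ec\Fc\onebbl$ (or $\Fc\Ec\onebbl$) identity sub-diagram inside the curl to which \eqref{eqn-mixed-R2-relation-1} can be applied; the only down-oriented piece is the far-left leg of the loop, and it is not adjacent to an $\Ec$ in the order required by \eqref{eqn-mixed-R2-relation-1}. So the decomposition into ``minus a double mixed crossing plus a bubble sum'' that you describe never gets off the ground, and in particular your claim that the double-crossing piece contains a \eqref{eq:lgz-curl}-type curl as a local subdiagram has no diagram to refer to.

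The paper's argument runs in the opposite direction. One takes \eqref{eqn-mixed-R2-relation-1} as an identity of endomorphisms of $\Ec\Fc\onebbl$ and post-composes it from below with the left cup $\eta$ carrying $\lambda$ dots, obtaining an auxiliary identity of maps $\onebbl\to\Pi^{\bullet}\Ec\Fc\onebbl$ (this is \eqref{eq:lcurl1}). The double-crossing term of that auxiliary identity is then simplified by sliding one dot through a crossing via the odd nilHecke relation: one of the two resulting pieces is killed using bubble positivity \eqref{eq:positivity} together with \eqref{eq:lgz-curl}, and the surviving piece is, up to sign, exactly the curl shape. The bubble-sum term is rewritten using \eqref{eq:fake-bubble}, and finally biadjointness converts the identity into the form stated in the proposition. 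In short, the curl is \emph{produced} from the double-crossing term after a dot slide; it is not a pre-existing container inside which the double-crossing term is made to vanish.
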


\begin{proof}
For $\l=0$ the above equations are just the definition of the fake bubbles in weight $\l=0$.
For $\l > 0$ we can add a cup with $\l$ dots to the bottom of relation \eqref{eqn-mixed-R2-relation-1} to get
 \begin{equation} \label{eq:lcurl1}
\hackcenter{\begin{tikzpicture} [scale=0.8]
  \draw[thick, ->] (-0.5,0) to (-0.5,2);
  \draw[thick, <-] (0.5,0) to (0.5,2);
  \node at (1,1.5) {$\l$};
  \draw[thick] (-0.5,0) .. controls ++(0,-0.6) and ++(0,-0.6) .. (0.5,0)
     node[pos=0, shape=coordinate](D){};
  \draw[color=blue, thick, double distance=1pt, dashed]
    (D) to[out=160, in=-90] (-1,2);
  \node at (D) {\bbullet};
    \node[blue] at (-1.3,1.8){$\scs \l$};
\end{tikzpicture}}
\quad = \quad -\;\;
\hackcenter{\begin{tikzpicture} [scale=0.8]
  \draw[thick, <-] (0.5,0) .. controls (0.5,0.4) and (-0.5,0.6) .. (-0.5,1)
      node[pos=0.5, shape=coordinate](X){};
    \draw[thick, ->] (-0.5,0) .. controls (-0.5,0.4) and (0.5,0.6) .. (0.5,1);
  \draw[thick, ->] (0.5,1) .. controls (0.5,1.4) and (-0.5,1.6) .. (-0.5,2)
      node[pos=0.5, shape=coordinate](Y){};
    \draw[thick, <-] (-0.5,1) .. controls (-0.5,1.4) and (0.5,1.6) .. (0.5,2);
  \draw[color=blue,  thick, dashed] (Y) -- (X);
  \node at (1,1.5) {$\l$};
    \draw[thick] (-0.5,0) .. controls ++(0,-0.6) and ++(0,-0.6) .. (0.5,0)
     node[pos=0, shape=coordinate](D){};
  \draw[color=blue, thick, double distance=1pt, dashed]
    (D) to[out=160, in=-90] (-1,2);
  \node at (D) {\bbullet};
    \node[blue] at (-1.3,1.8){$\scs \l$};
\end{tikzpicture} }
\quad + \quad
\sum_{
\xy (0,2)*{\scs f_1+f_2+f_3}; (0,-1)*{\scs = \l-1}; \endxy} (-1)^{f_3}
 \hackcenter{
\begin{tikzpicture} [scale=0.8]
  \draw[thick, ->] (-0.5,0) .. controls (-0.5,0.8) and (0.5,0.8) .. (0.5,0)
      node[pos=0.1, shape=coordinate](DOT){}
      node[pos=0.42, shape=coordinate](L){}
      node[pos=0.5, shape=coordinate](M){}
      node[pos=0.58, shape=coordinate](R){};
  \draw[thick, ->]
  (1.9,1) .. controls ++(0,0.6) and ++(0,0.6) .. (1.1,1)
      node[pos=0.05, shape=coordinate](Z){};
  \draw[thick] (1.9,1) .. controls ++(0,-0.6) and ++(-0,-0.6) .. (1.1,1)
      node[pos=0.5, shape=coordinate](X){}
      node[pos=0.2, shape=coordinate](Y){};
  \draw[color=blue, thick, double distance=1pt, dashed]
    (X) .. controls++(-.1,.5) and ++(-.2,.3) .. (Y)
         node[pos=0.9,right]{$\scs -\l-1$\;};
   \draw[color=blue, thick, double distance=1pt, dashed]
    (Z) .. controls ++(-.5,.4) and ++(.2,.8) .. (R) ;
   \node[blue] at (1.25,0.8){$\scs $\;};
     \node at (Y) {\bbullet};
     \node at (Z) {\bbullet};
 \draw[thick, <-] (-0.5,2.25) .. controls ++(0,-.8) and ++(0,-.8) .. (0.5,2.25)
      node[pos=0.2, shape=coordinate](tDOT){};
 \draw[color=blue, thick, double distance=1pt, dashed]
   (M) .. controls ++(.4,1.4) and ++(-.5,-1) .. (-1.1,1.8) to[out=90, in=140] (tDOT);
 \draw[color=blue, thick, double distance=1pt, dashed]
    (DOT) .. controls++(-.65,0) and ++(-.25,.3) .. (L);
 \node at (tDOT){\bbullet}; \node at (DOT){\bbullet};
   \node[blue] at (.6,1.45){$\scs f_3$};
   \node[blue] at (-1.05,1.1){$\scs f_1$};
   \node[blue] at (-.85,.65){$\scs f_2$};
   \node at (1,2) {$\l$};
   \draw[thick] (-0.5,0) .. controls ++(0,-0.6) and ++(0,-0.6) .. (0.5,0)
     node[pos=0.05, shape=coordinate](D){};
  \draw[color=blue, thick, double distance=1pt, dashed]
    (D) to[out=160, in=-90] (-1.75,2.25);
  \node at (D) {\bbullet};
   \node[blue] at (-2,1.8){$\scs \l$};
\end{tikzpicture} }
 \end{equation}
Then sliding one of the dots through the second term using \eqref{eq:rightdotslide} we get a sum of two terms \[
\hackcenter{\begin{tikzpicture} [scale=0.8]
  \draw[thick, <-] (0.5,0) .. controls (0.5,0.4) and (-0.5,0.6) .. (-0.5,1)
      node[pos=0.5, shape=coordinate](X){};
    \draw[thick, ->] (-0.5,0) .. controls (-0.5,0.4) and (0.5,0.6) .. (0.5,1);
  \draw[thick, ->] (0.5,1) .. controls (0.5,1.4) and (-0.5,1.6) .. (-0.5,2)
      node[pos=0.5, shape=coordinate](Y){};
    \draw[thick, <-] (-0.5,1) .. controls (-0.5,1.4) and (0.5,1.6) .. (0.5,2);
  \draw[color=blue,  thick, dashed] (Y) -- (X);
  \node at (1,1.5) {$\l$};
    \draw[thick] (-0.5,0) .. controls ++(0,-0.6) and ++(0,-0.6) .. (0.5,0)
     node[pos=0, shape=coordinate](D){};
  \draw[color=blue, thick, double distance=1pt, dashed]
    (D) to[out=160, in=-90] (-1,2);
  \node at (D) {\bbullet};
    \node[blue] at (-1.3,1.8){$\scs \l$};
\end{tikzpicture} }
\; = \;\;
\hackcenter{\begin{tikzpicture} [scale=0.8]
  \draw[thick, <-] (0.5,0) .. controls (0.5,0.4) and (-0.5,0.6) .. (-0.5,1)
      node[pos=0.5, shape=coordinate](X){};
    \draw[thick] (-0.5,0) .. controls (-0.5,0.4) and (0.5,0.6) .. (0.5,1)
        node[pos=1, shape=coordinate](tD){};
  \draw[thick, ->] (0.5,1) .. controls (0.5,1.4) and (-0.5,1.6) .. (-0.5,2)
      node[pos=0.5, shape=coordinate](Y){};
    \draw[thick, <-] (-0.5,1) .. controls (-0.5,1.4) and (0.5,1.6) .. (0.5,2);
  \draw[color=blue,  thick, dashed] (Y) -- (X);
  \node at (1,1.5) {$\l$};
    \draw[thick] (-0.5,0) .. controls ++(0,-0.6) and ++(0,-0.6) .. (0.5,0)
     node[pos=0, shape=coordinate](D){};
  \draw[color=blue, thick, double distance=1pt, dashed]
    (D) to[out=160, in=-90] (-1.25,2);
  \draw[color=blue, thick,  dashed]
    (tD) .. controls ++(-1.4,.3) and ++(0,-.6) .. (-1,2);
  \node at (D) {\bbullet}; \node at (tD) {\bbullet};
  \node[blue] at (-1.8,1.7){$\scs \l-1$};
\end{tikzpicture} }
\; + \;\;
\hackcenter{\begin{tikzpicture} [scale=0.8]
  \draw[thick, ->] (0.5,1) .. controls (0.5,1.4) and (-0.5,1.6) .. (-0.5,2)
      node[pos=0.5, shape=coordinate](Y){};
  \draw[thick, <-] (-0.5,1) .. controls (-0.5,1.4) and (0.5,1.6) .. (0.5,2);
    \draw[thick] (-0.5,1) .. controls ++(0,-0.5) and ++(0,-0.5) .. (0.5,1)
     node[pos=0.5, shape=coordinate](CUP){};
  \draw[color=blue, thick,  dashed]
    (Y) .. controls ++(.1,-.6) and ++(0,-.6) .. (-1,2);
  \draw[thick, ->] (-0.5,-.25) .. controls ++(-0,0.6) and ++(0,0.6) .. (0.5,-.25)
      node[pos=0.5, shape=coordinate](CAP){};
  \draw[thick] (-0.5,-.25) .. controls ++(0,-0.6) and ++(0,-0.6) .. (0.5,-.25)
      node[pos=0.05, shape=coordinate](D){};
  \draw[color=blue, thick, double distance=1pt, dashed]
    (CUP) .. controls++(0,.5) and ++(-0,.5) .. (.5,.7)
     .. controls ++(0,-.3) and ++(0,.3) .. (CAP) node[pos=0.15,right]{$\scs \l-1$\;};
    \draw[color=blue, thick, double distance=1pt, dashed]
    (D) to[out=160, in=-90] (-1.25,2);
  \node at (1,1.5) {$\l$};
  \node at (D) {\bbullet};
  \node[blue] at (-1.7,1.8){$\scs \l-1$};
\end{tikzpicture} }
\; = \;\;
(-1)^{\l-1}
\hackcenter{\begin{tikzpicture} [scale=0.8]
  \draw[thick, ->] (0.5,1) .. controls (0.5,1.4) and (-0.5,1.6) .. (-0.5,2)
      node[pos=0.5, shape=coordinate](Y){};
  \draw[thick] (-0.5,1) .. controls (-0.5,1.4) and (0.5,1.6) .. (0.5,2);
    \draw[thick,->] (-0.5,1) .. controls ++(0,-0.5) and ++(0,-0.5) .. (0.5,1)
     node[pos=0.5, shape=coordinate](CUP){};
  \draw[color=blue, thick,  dashed]
    (Y) .. controls ++(.1,-.6) and ++(0,-.6) .. (-1,2);
  \draw[color=blue, thick, double distance=1pt, dashed]
    (CUP) .. controls++(0,.5) and ++(-0,-.9) .. (-1.25,2);
  \node at (1,1.5) {$\l$};
  \node[blue] at (-1.7,1.8){$\scs \l-1$};
\end{tikzpicture} }
\]
where the first diagram is zero by the dot sliding relation, the positivity of bubbles axiom~\eqref{eq:positivity}, and the curl relation \eqref{eq:lgz-curl}. The second can be simplified using that degree zero bubbles are multiplication by 1.   The last term in \eqref{eq:lcurl1} can be rewritten as
\[
\sum_{\xy (0,2)*{\scs f_1+f_2+f_3}; (0,-1)*{\scs = \l-1}; \endxy}
\hackcenter{\begin{tikzpicture}
  \draw[thick, ->] (1.2,1.5) .. controls ++(0,-.8) and ++(0,-.8) .. (.4,1.5)
      node[pos=0.85, shape=coordinate](D){};
   \draw[color=blue, thick,double distance=1pt,  dashed] (D) to[out=160,in=-90] (0,1.5);
  \draw[thick, ->] (-1.1,-.25) .. controls ++(-0,0.6) and ++(0,0.6) .. (-.3,-.25)
      node[pos=0.5, shape=coordinate](X){}
      node[pos=0.1, shape=coordinate](Y){};
  \draw[thick] (-1.1,-.25) .. controls ++(0,-0.6) and ++(0,-0.6) .. (-0.3,-.25)
      node[pos=0.1, shape=coordinate](Z){};
  \draw[color=blue, thick, double distance=1pt, dashed] (X) .. controls++(0,.65) and ++(-.65,.3) .. (Y) node[pos=0.5,above]{$\scs \l-1$\;};
  \draw[color=blue, thick, dashed, double distance=1pt]
     (Z) .. controls ++(-.9,0) and ++(0,-.7) ..(-1.5,1.5)
     node[pos=0.9,left]{$\scs f_2+1$\;};;
  \node at (Y) {\bbullet};
  \node at (Z) {\bbullet};
  \draw[thick, ->] (1.1,0) .. controls ++(0,0.6) and ++(0,0.6) .. (.3,0)
      node[pos=0.05, shape=coordinate](C){};
  \draw[thick] (1.1,0) .. controls ++(0,-0.6) and ++(-0,-0.6) .. (.3,0)
      node[pos=0.5, shape=coordinate](A){}
      node[pos=0.2, shape=coordinate](B){};
  \draw[color=blue, thick, double distance=1pt, dashed]
    (A) .. controls++(-.1,.5) and ++(-.2,.3) .. (B)
         node[pos=0.9,right]{$\scs -\l-1$\;};
   \draw[color=blue, thick,double distance=1pt,  dashed]
    (C) .. controls ++(-.3,.4) and ++(-.1,-1) .. (-.75,1.5)
    node[pos=0.9,left]{$\scs f_3$\;};
   \node[blue] at (-.25,1.4){$\scs f_1$\;};
     \node at (B) {\bbullet};
     \node at (C) {\bbullet};\node at (D) {\bbullet};
  \node at (1.75,.8) {$\lambda$};
\end{tikzpicture} }
\quad = \quad -
\sum_{f=0}^{\l-1} (-1)^{\l-f}
\hackcenter{\begin{tikzpicture}
  \draw[thick, ->] (1.2,1.5) .. controls ++(0,-.8) and ++(0,-.8) .. (.4,1.5)
      node[pos=0.85, shape=coordinate](D){};
   \draw[color=blue, thick,double distance=1pt,  dashed] (D) to[out=160,in=-90] (0,1.5);
  \draw[thick, ->] (1.1,0) .. controls ++(0,0.6) and ++(0,0.6) .. (.3,0)
      node[pos=0.05, shape=coordinate](C){};
  \draw[thick] (1.1,0) .. controls ++(0,-0.6) and ++(-0,-0.6) .. (.3,0)
      node[pos=0.5, shape=coordinate](A){}
      node[pos=0.2, shape=coordinate](B){};
  \draw[color=blue, thick, double distance=1pt, dashed]
    (A) .. controls++(-.1,.5) and ++(-.2,.3) .. (B)
         node[pos=0.9,right]{$\scs -\l-1$\;};
   \draw[color=blue, thick,double distance=1pt,  dashed]
    (C) .. controls ++(-.3,.4) and ++(-.1,-1) .. (-.75,1.5)
    node[pos=0.9,left]{$\scs \l-f$\;};
   \node[blue] at (-.25,1.4){$\scs f$\;};
     \node at (B) {\bbullet};
     \node at (C) {\bbullet};\node at (D) {\bbullet};
  \node at (1.75,.8) {$\lambda$};
\end{tikzpicture} }
\]
using relation \eqref{eq:fake-bubble}.  Combining these bubble terms with the term on the right of \eqref{eq:lcurl1} and using the adjoint structure completes the claim. The case $\l<0$ is proven similarly.
\end{proof}

\begin{cor} (Dotted curls)\label{cor:dotcurl} For $m \geq 0$ the following dotted curl relations
\begin{align}
  \hackcenter{\begin{tikzpicture} [scale=0.8]
  \draw[thick] (0.5,1) -- (0.5,2);
  \draw[thick] (0.5,-.5) -- (0.5,0);
  \draw[thick] (-1.5,0) -- (-1.5,1);
  \draw[thick] (0.5,0) .. controls ++(-0,0.5) and ++(0,-0.5) .. (-0.5,1)
      node[pos=0.5, shape=coordinate](X){};
  \draw[thick, ->] (-0.5,0) .. controls ++(0,0.5) and ++(0,-0.5) .. (0.5,1);
  \draw[thick, ->] (-0.5,1) .. controls ++(0,0.6) and ++(0,0.6) .. (-1.5,1);
  \draw[thick] (-1.5,0) .. controls ++(0,-0.6) and ++(0,-0.6) .. (-0.5,0)
     node[pos=0.5, shape=coordinate](CUP){}
     node[pos=.9, shape=coordinate](DOT){};
  \draw[color=blue,  thick, dashed]
   (X) .. controls ++(-1.2,0) and ++(0,-1.2) ..(0,2);
    \draw[color=blue,  thick, double distance=1pt,dashed]
   (CUP) .. controls ++(-0,.5) and ++(0,-1.6) ..(-2,2);
  \draw[color=blue,  thick, double distance=1pt,dashed]
   (DOT) .. controls ++(-.3,.8) and ++(0,-.8) ..(-1.5,2);
   \node at (0,-0.25) {$\l$};
   \node at (DOT) {\bbullet};
   \node[blue] at (-2.4,1.8) {$\scs \l-1$};
   \node[blue] at (-1.2,1.8) {$\scs m$};
\end{tikzpicture}}
&\;\; = \;\;
\sum_{f+g=m+\l} (-1)^{g} \;\;
\hackcenter{\begin{tikzpicture} [scale=0.9]
  \draw[thick, ->] (2.25,-1) -- (2.25,1.25)
        node[pos=0.5, shape=coordinate](D){};
  \draw[color=blue, thick,double distance=1pt,  dashed] (D) to[out=150,in=-90] (1.75,1.25);
  \draw[thick, ->] (1.1,0) .. controls ++(0,0.6) and ++(0,0.6) .. (.3,0)
      node[pos=0.05, shape=coordinate](C){};
  \draw[thick] (1.1,0) .. controls ++(0,-0.6) and ++(-0,-0.6) .. (.3,0)
      node[pos=0.5, shape=coordinate](A){}
      node[pos=0.2, shape=coordinate](B){};
  \draw[color=blue, thick, double distance=1pt, dashed]
    (A) .. controls++(-.1,.5) and ++(-.2,.3) .. (B)
         node[pos=0.9,right]{$\scs -\l-1$\;};
   \draw[color=blue, thick,double distance=1pt,  dashed]
    (C) .. controls ++(-.7,.2) and ++(0,-.8) .. (1.25,1.25)
    node[pos=0.9,left]{$\scs f$\;};
     \node at (B) {\bbullet};
     \node at (D) {\bbullet};
     \node at (C) {\bbullet};
     \node[blue] at (1.95,1) {$\scs g$};
  \node at (-0,-.5) {$\lambda$};
\end{tikzpicture} }
& \text{for $\l\geq 0$,}
\\
  \hackcenter{\begin{tikzpicture} [scale=0.8]
  \draw[thick] (-0.5,1) .. controls ++(0,.3) and ++(0,-.3)..  (0.5,2);
  \draw[thick] (-0.5,-.5) -- (-0.5,0);
  \draw[thick] (1.5,0) -- (1.5,1);
  \draw[thick] (-0.5,0) .. controls ++(-0,0.5) and ++(0,-0.5) .. (0.5,1)
      node[pos=0.5, shape=coordinate](X){}
      node[pos=1, shape=coordinate](MD){};
  \draw[thick, ->] (0.5,0) .. controls ++(0,0.5) and ++(0,-0.5) .. (-0.5,1);
  \draw[thick, ->] (0.5,1) .. controls ++(0,0.5) and ++(0,0.5) .. (1.5,1)
     node[pos=0.5, shape=coordinate](CUP){};
  \draw[thick, ->] (1.5,0) .. controls ++(0,-0.5) and ++(0,-0.5) .. (0.5,0);
  \draw[color=blue,  thick, dashed]
   (X) .. controls ++(-1,0) and ++(0,-.9) ..(-1.5,2);
    \draw[color=blue,  thick, double distance=1pt,dashed]
   (CUP) .. controls ++(0,.3) and ++(0,-.3) .. (-0.5,2);
   \draw[color=blue,  thick, double distance=1pt,dashed]
   (MD) .. controls ++(-.3,.4) and ++(0,-.5) .. (-1,2);
   \node at (MD) {\bbullet};
   \node at (0,-0.25) {$\l$};
   \node[blue] at (1,1.8) {$\scs \l-1$};
   \node[blue] at (-1.2,1.8) {$\scs m$};
\end{tikzpicture}}
&\;\; = \;\;
\sum_{f+g=m-\l} (-1)^{f} \;\;
\hackcenter{\begin{tikzpicture} [scale=0.9]
  \draw[thick, ->] (-1.75,-1) -- (-1.75,1.25)
        node[pos=0.3, shape=coordinate](D){};
  \draw[color=blue, thick,double distance=1pt,  dashed] (D) to[out=150,in=-90] (-3,1.25);
  \draw[thick, ->] (-0.4,0) .. controls ++(-0,0.6) and ++(0,0.6) .. (0.4,0)
      node[pos=0.5, shape=coordinate](X){}
      node[pos=0.1, shape=coordinate](Y){};
  \draw[thick] (-0.4,0) .. controls ++(0,-0.6) and ++(0,-0.6) .. (0.4,0)
      node[pos=0.1, shape=coordinate](Z){};
  \draw[color=blue, thick, double distance=1pt, dashed]
    (X) .. controls++(0,.65) and ++(-.65,.3) .. (Y) node[pos=0.15,right]{$\scs \l-1$\;};
  \draw[color=blue, thick, double distance=1pt, dashed]
   (Z) .. controls ++(-1.5,.2) and ++(0,-.5) .. (-2.25,1.25)
    node[pos=0.9, left] {$\scs g$};
   \node[blue] at (-3.3,1.1) {$\scs f$};
  \node at (Y) {\bbullet};
  \node at (Z) {\bbullet};
  \node at (D) {\bbullet};
  \node at (-1.25,-.5) {$\lambda$};
\end{tikzpicture} }
& \text{for $\l\leq 0$,}
\end{align}
hold in $\Uc$.
\end{cor}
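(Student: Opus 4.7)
The strategy is to reduce the dotted curl to the undotted one from Proposition~\ref{prop:othercurl} by sliding the $m$ extra dots through the crossing, and then to repackage the resulting correction terms as a single shifted fake bubble sum. I will argue the case $\lambda\geq0$; the case $\lambda\leq0$ is entirely analogous by reflecting the diagrams.

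First I would place the $m$ dots on the curl just above the crossing of the curl. Using the odd nilHecke dot-slide relation \eqref{eq:onil-dot} in its iterated form \eqref{eq:inductive-dot}, sliding $m$ dots across the crossing produces two sorts of terms: one term in which the $m$ dots have moved through to the opposite strand (leaving a plain curl), together with $m$ correction terms, each of which is a composite of a smaller bubble attached at the curl and a residual dotted strand. Explicitly, each correction term has $j$ dots on one portion and $m-1-j$ dots on another, for $0\le j\le m-1$, and these can be organized into the same shape as the right-hand side of the target formula but with the summation variable in the range $f+g=m+\lambda$ coming from $j$ through $m-1$ rather than $0$ through $m+\lambda$.

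Next, on the term with the plain (undotted) curl, I apply Proposition~\ref{prop:othercurl} to convert it into the sum $\sum_{f+g=\lambda}(-1)^{g}$ of the corresponding bubble-and-dot configuration. Combining this sum with the correction terms from the previous step, and reordering dots using \eqref{eq:onil-dot} so that all dots appear on the rightward-oriented vertical strand, yields a sum indexed by $f+g=m+\lambda$ whose bubble labels match those appearing on the right-hand side up to contributions from fake (low-degree) bubbles.

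The last step is to simplify these boundary contributions. Whenever the degree of a putative bubble is strictly less than $\lambda-1$ (respectively, the degree of the clockwise companion is negative), relation \eqref{eq:positivity} kills the term; whenever the label falls into the fake-bubble range, the inductive defining relations \eqref{eq:fake-bubble} collapse the appropriate partial sums to the correct fake-bubble values. I expect this bookkeeping step to be the main obstacle: one must check that the signs $(-1)^g$ produced by the cyclicity relations \eqref{eqn-dot-cyclicity}--\eqref{eqn-crossing-cyclicity} and by the parity shifts encoded in the dashed-strand crossings combine consistently with the signs in \eqref{eq:inductive-dot} and in Proposition~\ref{prop:othercurl}, so that the outcome is exactly the single sum $\sum_{f+g=m+\lambda}(-1)^g$ appearing in the statement. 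With this verified, the formula for $\lambda\geq 0$ follows; the $\lambda\leq 0$ case is obtained by the symmetric argument, using \eqref{eq:llz-curl} in place of \eqref{eq:lgz-curl} and the second half of Proposition~\ref{prop:othercurl}.
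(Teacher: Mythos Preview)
Your approach is the same as the paper's: the authors' proof is a one-liner stating that the corollary ``follows immediately from Proposition~\ref{prop:othercurl} and the inductive dot slide formula \eqref{eq:inductive-dot}.'' You have correctly identified both ingredients and the order in which to apply them (slide the $m$ dots through the crossing via \eqref{eq:inductive-dot}, then apply Proposition~\ref{prop:othercurl} to the remaining undotted curl).

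One small remark: your ``last step'' invoking \eqref{eq:positivity} and the fake-bubble defining relations \eqref{eq:fake-bubble} is more than you need. The correction terms from \eqref{eq:inductive-dot}, when the crossing is resolved inside the curl, already produce genuine (counter-clockwise) dotted bubbles paired with dots on the through-strand; combining these with the output of Proposition~\ref{prop:othercurl} (which is already written in the uniform fake-bubble notation) gives the full sum $\sum_{f+g=m+\lambda}(-1)^g$ directly. No separate appeal to positivity or to the inductive definition of fake bubbles is required---the fake-bubble bookkeeping is absorbed into the notation on the right-hand side of Proposition~\ref{prop:othercurl}. So the ``main obstacle'' you anticipate does not really arise.
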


\begin{proof}
This follows immediately from Proposition~\ref{prop:othercurl} and the inductive dot slide formula \eqref{eq:inductive-dot}.
\end{proof}

The utility of fake bubbles is demonstrated by the previous corollary since the summation on the right-hand side involves both real and fake bubbles. Notice the similarity between Corollary \ref{cor:dotcurl} and the following Proposition.

\begin{prop}[Dotted curls] \label{prop-dotted-curl}The following dotted curl relations
\begin{align}
  \hackcenter{\begin{tikzpicture} [scale=0.8]
  \draw[thick] (0.5,1.5) -- (0.5,2);
  \draw[thick] (0.5,-.5) -- (0.5,.5);
  \draw[thick] (-0.5,0) -- (-0.5,.5)
    node[pos=.2, shape=coordinate](MD){}
    node[pos=1, shape=coordinate](TD){};
  \draw[thick] (-1.5,0) -- (-1.5,1.5);
  \draw[thick] (0.5,.5) .. controls ++(-0,0.5) and ++(0,-0.5) .. (-0.5,1.5)
      node[pos=0.5, shape=coordinate](X){};
  \draw[thick, ->] (-0.5,.5) .. controls ++(0,0.5) and ++(0,-0.5) .. (0.5,1.5);
  \draw[thick, ->] (-0.5,1.5) .. controls ++(0,0.6) and ++(0,0.6) .. (-1.5,1.5);
  \draw[thick] (-1.5,0) .. controls ++(0,-0.6) and ++(0,-0.6) .. (-0.5,0)
     node[pos=0.5, shape=coordinate](CUP){}
     node[pos=.8, shape=coordinate](DOT){};
  \draw[color=blue,  thick, dashed]
   (X) .. controls ++(-.8,.3) and ++(-.3,.4) ..(TD);
    \draw[color=blue,  thick, double distance=1pt,dashed]
   (CUP) .. controls ++(-0,.5) and ++(-.3,.2) ..(DOT);
   \draw[color=blue,  thick, double distance=1pt,dashed]
   (MD) .. controls ++(.4,0) and ++(0,-1.4) ..(-2,2);
   \node at (0,-0.25) {$\l$};
   \node at (DOT) {\bbullet};\node at (MD) {\bbullet};\node at (TD) {\bbullet};
   \node[blue] at (-2.5,1.8) {$\scs m+\l$};
\end{tikzpicture}}
&\;\; = \;\;
\sum_{f+g=m+\l} (-1)^{g} \;\;
\hackcenter{\begin{tikzpicture} [scale=0.9]
  \draw[thick, ->] (2.25,-1) -- (2.25,1.25)
        node[pos=0.5, shape=coordinate](D){};
  \draw[color=blue, thick,double distance=1pt,  dashed] (D) to[out=150,in=-90] (1.75,1.25);
  \draw[thick, ->] (1.1,0) .. controls ++(0,0.6) and ++(0,0.6) .. (.3,0)
      node[pos=0.05, shape=coordinate](C){};
  \draw[thick] (1.1,0) .. controls ++(0,-0.6) and ++(-0,-0.6) .. (.3,0)
      node[pos=0.5, shape=coordinate](A){}
      node[pos=0.2, shape=coordinate](B){};
  \draw[color=blue, thick, double distance=1pt, dashed]
    (A) .. controls++(-.1,.5) and ++(-.2,.3) .. (B)
         node[pos=0.9,right]{$\scs -\l-1$\;};
   \draw[color=blue, thick,double distance=1pt,  dashed]
    (C) .. controls ++(-.7,.2) and ++(0,-.8) .. (1.25,1.25)
    node[pos=0.9,left]{$\scs f$\;};
     \node at (B) {\bbullet};
     \node at (D) {\bbullet};
     \node at (C) {\bbullet};
     \node[blue] at (1.95,1) {$\scs g$};
  \node at (-0,-.5) {$\lambda$};
\end{tikzpicture} }
& \text{for $\l<0$,}
\\
  \hackcenter{\begin{tikzpicture} [scale=0.8]
  \draw[thick, ->] (-0.5,1) -- (-0.5,2);
  \draw[thick] (-0.5,-.5) -- (-0.5,0);
  \draw[thick] (1.5,0) -- (1.5,1);
  \draw[thick] (-0.5,0) .. controls ++(-0,0.5) and ++(0,-0.5) .. (0.5,1)
      node[pos=0.5, shape=coordinate](X){}
      node[pos=1, shape=coordinate](MD){}
      node[pos=0.7, shape=coordinate](BD){};
  \draw[thick] (0.5,0) .. controls ++(0,0.5) and ++(0,-0.5) .. (-0.5,1);
  \draw[thick, ->] (0.5,1) .. controls ++(-.2,0.8) and ++(.2,0.8) .. (1.5,1)
     node[pos=0.2, shape=coordinate](LC){}
     node[pos=0.58, shape=coordinate](C){};
  \draw[thick, ->] (1.5,0) .. controls ++(0,-0.5) and ++(0,-0.5) .. (0.5,0);
  \draw[color=blue,  thick, dashed]
   (X) .. controls ++(-1.5,.4) and ++(-.2,.5) ..(BD);
    \draw[color=blue,  thick, double distance=1pt,dashed]
   (LC) .. controls ++(-.2,.7) and ++(.2,.7) .. (C);
    \draw[color=blue,  thick, double distance=1pt,dashed]
   (MD) .. controls ++(-.3,.4) and ++(0,-.5) .. (-1,2)
    node[pos=0.85, left]{$\scs m-\l$};
   \node at (0,-0.25) {$\l$};
   \node at (LC) {\bbullet};\node at (MD) {\bbullet};\node at (BD) {\bbullet};
   \node[blue] at (1.54,1.8) {$\scs \l-1$};
\end{tikzpicture}}
&\;\; = \;\;
\sum_{f+g=m-\l} (-1)^{f} \;\;
\hackcenter{\begin{tikzpicture} [scale=0.9]
  \draw[thick, ->] (-1.75,-1) -- (-1.75,1.25)
        node[pos=0.3, shape=coordinate](D){};
  \draw[color=blue, thick,double distance=1pt,  dashed] (D) to[out=150,in=-90] (-3,1.25);
  \draw[thick, ->] (-0.4,0) .. controls ++(-0,0.6) and ++(0,0.6) .. (0.4,0)
      node[pos=0.5, shape=coordinate](X){}
      node[pos=0.1, shape=coordinate](Y){};
  \draw[thick] (-0.4,0) .. controls ++(0,-0.6) and ++(0,-0.6) .. (0.4,0)
      node[pos=0.1, shape=coordinate](Z){};
  \draw[color=blue, thick, double distance=1pt, dashed]
    (X) .. controls++(0,.65) and ++(-.65,.3) .. (Y) node[pos=0.15,right]{$\scs \l-1$\;};
  \draw[color=blue, thick, double distance=1pt, dashed]
   (Z) .. controls ++(-1.5,.2) and ++(0,-.5) .. (-2.25,1.25)
    node[pos=0.9, left] {$\scs g$};
   \node[blue] at (-3.3,1.1) {$\scs f$};
  \node at (Y) {\bbullet};
  \node at (Z) {\bbullet};
  \node at (D) {\bbullet};
  \node at (-1.25,-.5) {$\lambda$};
\end{tikzpicture} }
& \text{for $\l>0$,}
\end{align}
hold in $\Uc$.
\end{prop}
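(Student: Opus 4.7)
The plan is to mirror the proof of Corollary~\ref{cor:dotcurl}, reducing these dotted curl identities to previously established relations by means of the inductive dot slide formula \eqref{eq:inductive-dot} and the odd cyclicity relation \eqref{eqn-dot-cyclicity}. Each time a dot is commuted past an internal crossing of the curl, the odd nilHecke relation \eqref{eq:onil-dot} produces a sum of bubble correction terms, and these accumulated corrections should assemble into the claimed right-hand side.

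First I would use the cap and cup dot-slide relations implied by \eqref{eqn-dot-cyclicity} to normalize the placement of the dots on the LHS, concentrating them onto interior strands of the curl. Then I would apply \eqref{eq:inductive-dot} one dot at a time across the internal crossings, each slide yielding a term with the dot migrated to the opposite strand together with a sum of bubbles indexed by partitions $f + g$. After all the dots have been absorbed into bubble terms, what remains is an undotted curl; for the weight regimes considered in this proposition (namely $\lambda < 0$ for the first equation and $\lambda > 0$ for the second), this residual curl is precisely the type governed by the zero-curl relations \eqref{eq:lgz-curl} and \eqref{eq:llz-curl}. Thus the residual contributes nothing and the entire RHS emerges purely from the bubble corrections produced along the way. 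This is in contrast to the proof of Corollary~\ref{cor:dotcurl}, where Proposition~\ref{prop:othercurl} supplies a nontrivial undotted base case.

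The main obstacle is careful bookkeeping of signs combined with the fake-bubble conventions. The signs $(-1)^g$ and $(-1)^f$ on the right-hand side must be matched against sign contributions from the odd nilHecke slide \eqref{eq:onil-dot} and the parity-sensitive odd cyclicity \eqref{eqn-dot-cyclicity}, where each swap of a dot across a crossing or around a cap/cup can introduce a $(-1)$. Moreover, for small values of $m \pm \lambda$ some of the summand bubbles have negative effective dot count and must be interpreted as fake bubbles via the defining identity \eqref{eq:fake-bubble}; verifying that the recursive relations among real and fake bubbles close up the sum to the compact form $\sum_{f+g = m\pm \lambda}$ on the right-hand side is the most delicate step.
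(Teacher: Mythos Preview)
Your overall strategy---reduce via the inductive dot slide formula \eqref{eq:inductive-dot}---matches the paper's approach. However, there is a gap in your treatment of the residual term. You assert that once all dots are slid away, the remaining undotted curl vanishes by \eqref{eq:lgz-curl}/\eqref{eq:llz-curl}, and you explicitly contrast this with Corollary~\ref{cor:dotcurl} by saying Proposition~\ref{prop:othercurl} is not needed here.

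This is not what the paper does: its one-line proof cites the relations in $\Uc$, Proposition~\ref{prop:othercurl}, \emph{and} the inductive dot slide formula. The curls appearing in \eqref{eq:lgz-curl} and \eqref{eq:llz-curl} are 2-morphisms with two open strand ends on one boundary (a single crossing closed off by a cap or cup), whereas the left-hand sides of Proposition~\ref{prop-dotted-curl} have exactly the same solid-strand skeleton as the diagrams in Proposition~\ref{prop:othercurl} and Corollary~\ref{cor:dotcurl}: a through-$\Ec$-strand with a curl attached via the crossing. After dot-sliding, the residual is still of this latter type, so the vanishing axioms you cite do not directly apply. What distinguishes Proposition~\ref{prop-dotted-curl} from Corollary~\ref{cor:dotcurl} is not the curl shape but the routing of the dashed lines (here the parity outputs of the crossing and of the cup are tied off into extra dots rather than sent to the top boundary); untying those closures via the relations in $\Uc$ and then feeding in the curl values already computed in Proposition~\ref{prop:othercurl} is what yields the bubble sums on the right. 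So Proposition~\ref{prop:othercurl} really is the base case, just as in Corollary~\ref{cor:dotcurl}.
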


\begin{proof}
These relations follow immediately from the relations in $\Uc$, those in Proposition~\ref{prop:othercurl} above, and the inductive dot slide formula \eqref{eq:inductive-dot}.
\end{proof}

The dotted curl relations derived above imply that the equations in equation \eqref{eq:fake-bubble} defining the fake bubbles switch form as $m$ grows large.   More precisely, we have the following Proposition.

\begin{prop} The following relations
\begin{align}  \label{eq_biginfgrass1}
\sum_{f+g=m}
(-1)^{g} \xy (0,-2)*{
\begin{tikzpicture}[scale=0.9]
  \draw[thick, ->] (0.5,0) .. controls (0.5,0.8) and (-0.5,0.8) .. (-0.5,0)
      node[pos=0, shape=coordinate](Z){};
  \draw[thick] (0.5,0) .. controls (0.5,-0.8) and (-0.5,-0.8) .. (-0.5,0)
      node[pos=0.5, shape=coordinate](X){}
      node[pos=0.2, shape=coordinate](Y){};
  \draw[color=blue, thick, double distance=1pt, dashed] (X) .. controls++(-.1,.7) and ++(-.2,.4) .. (Y)
         node[pos=0.9,right]{$\scs -\l-1$\;};
   \draw[color=blue, thick, double distance=1pt, dashed]
    (Z) .. controls ++(-1,.7) and ++(.1,-1) .. (1,1.25) ;
   \node[blue] at (1.3,1.1){$\scs f$\;};
 \node at (Y) {\bbullet}; \node at (Z) {\bbullet};
  \node at (-.25,1.1) {$\lambda$};
\end{tikzpicture} }; \endxy
\hackcenter{
\begin{tikzpicture}[scale=0.9]
  \draw[thick, ->] (-0.5,0) .. controls (-0.5,0.8) and (0.5,0.8) .. (0.5,0)
      node[pos=0.5, shape=coordinate](X){}
      node[pos=0.1, shape=coordinate](Y){};
  \draw[thick] (-0.5,0) .. controls (-0.5,-0.8) and (0.5,-0.8) .. (0.5,0)
      node[pos=0.1, shape=coordinate](Z){};
  \draw[color=blue, thick, double distance=1pt, dashed] (X) .. controls++(0,.65) and ++(-.65,.3) .. (Y) node[pos=0.15,right]{$\scs \l-1$\;};
  \draw[color=blue, thick, double distance=1pt, dashed] (Z) to[out=180, in=90] (-1,1.25) ;
  \node[blue] at (-.6,1.2){$\scs g$\;};
  \node at (Y) {\bbullet}; \node at (Z) {\bbullet};
\end{tikzpicture} }
&\;\; = \;\; \delta_{m,0}\onebb_{\onebbl}
&
\text{for $\l>0$ and $\l <m $.}
\\ \label{eq_biginfgrass2}
\sum_{f+g=m} (-1)^{g}
\hackcenter{
\begin{tikzpicture}[scale=0.9]
  \draw[thick, ->] (-0.5,0) .. controls (-0.5,0.8) and (0.5,0.8) .. (0.5,0)
      node[pos=0.5, shape=coordinate](X){}
      node[pos=0.1, shape=coordinate](Y){};
  \draw[thick] (-0.5,0) .. controls (-0.5,-0.8) and (0.5,-0.8) .. (0.5,0)
      node[pos=0.1, shape=coordinate](Z){};
  \draw[color=blue, thick, double distance=1pt, dashed] (X) .. controls++(0,.65) and ++(-.65,.3) .. (Y) node[pos=0.15,right]{$\scs \l-1$\;};
  \draw[color=blue, thick, double distance=1pt, dashed] (Z) to[out=180, in=90] (-1,1.25) ;
  \node[blue] at (-.6,1.2){$\scs f$\;};
  \node at (Y) {\bbullet}; \node at (Z) {\bbullet};
\end{tikzpicture} }
  \xy (0,-2)*{
\begin{tikzpicture}[scale=0.9]
  \draw[thick, ->] (0.5,0) .. controls (0.5,0.8) and (-0.5,0.8) .. (-0.5,0)
      node[pos=0, shape=coordinate](Z){};
  \draw[thick] (0.5,0) .. controls (0.5,-0.8) and (-0.5,-0.8) .. (-0.5,0)
      node[pos=0.5, shape=coordinate](X){}
      node[pos=0.2, shape=coordinate](Y){};
  \draw[color=blue, thick, double distance=1pt, dashed] (X) .. controls++(-.1,.7) and ++(-.2,.4) .. (Y)
         node[pos=0.9,right]{$\scs -\l-1$\;};
   \draw[color=blue, thick, double distance=1pt, dashed]
    (Z) .. controls ++(-1,.7) and ++(.1,-1) .. (1,1.25) ;
   \node[blue] at (1.3,0.9){$\scs g$\;};
 \node at (Y) {\bbullet}; \node at (Z) {\bbullet};
  \node at (-.5,1.1) {$\lambda$};
\end{tikzpicture} }; \endxy\;\;
 &\;\; = \;\; \delta_{m,0}\onebb_{\onebbl}
 &\text{for $\l <0$ and $-\l < m$, }
\end{align}
hold in $\Uc$.
\end{prop}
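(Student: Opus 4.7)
The relations \eqref{eq_biginfgrass1} and \eqref{eq_biginfgrass2} extend the defining equations \eqref{eq:fake-bubble} for fake bubbles, which were stated only in the range $m \leq |\lambda|$, to all $m > |\lambda|$.  In the extended range every bubble appearing in the sum is a \emph{real} (not fake) bubble, so the content is a genuine relation among ordinary dotted bubbles.  My plan is to compute a suitable dotted curl in two different ways: once via Corollary~\ref{cor:dotcurl}, which realizes the infinite Grassmannian sum as a dotted curl, and once by directly simplifying the curl via dot slides to zero.  Equating the two outcomes forces the sum to vanish.

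Concretely, to prove \eqref{eq_biginfgrass1} for $\lambda>0$ and $m>\lambda$, I will apply Corollary~\ref{cor:dotcurl} in its $\lambda\geq0$ form with dot parameter $m'=m-\lambda>0$.  The right-hand side is exactly $\sum_{f+g=m}(-1)^{g}$ (bubble product), namely the left-hand side of \eqref{eq_biginfgrass1}.  It remains to show that the corresponding dotted curl on the left-hand side of the corollary vanishes.  The key diagrammatic step is to use the inductive dot slide formula \eqref{eq:inductive-dot} to push the $m'$ dots through the crossings inside the curl; this rewrites the dotted curl as an \emph{undotted} curl of the same shape plus a sum of correction terms involving strictly fewer dots on the curl and correspondingly more dots on attached bubbles.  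The undotted curl vanishes by the curl relation \eqref{eq:lgz-curl}, and the correction terms are handled inductively: each correction is (up to sign) an infinite Grassmannian sum at a smaller value of $m$, which is either covered by the defining equations \eqref{eq:fake-bubble} (when the index is $\leq\lambda$) or by the inductive hypothesis on \eqref{eq_biginfgrass1} (when it is $>\lambda$).  The base case $m=\lambda+1$ produces only bubble corrections killed by positivity of bubbles \eqref{eq:positivity} or by \eqref{eq:fake-bubble} at $m=\lambda$.

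The argument for \eqref{eq_biginfgrass2} with $\lambda<0$ and $m>-\lambda$ is entirely parallel, using instead the $\lambda\leq0$ branch of Corollary~\ref{cor:dotcurl}, the curl relation \eqref{eq:llz-curl}, and the downward-oriented form of the inductive dot slide \eqref{eq:inductive-dot}; the alternating signs switch from $(-1)^{g}$ to $(-1)^{f}$ in the natural way.

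The main obstacle is the careful tracking of parity signs throughout the computation.  Every time a dashed parity strand crosses a solid strand one picks up a factor $\alpha^{\pm 1}$; the odd cyclicity relations \eqref{eqn-dot-cyclicity} and \eqref{eqn-crossing-cyclicity} contribute further $(-1)^{\lambda}$ factors when dots or crossings are slid across caps and cups; and the defining equation \eqref{eq:fake-bubble} for fake bubbles already carries alternating $(-1)^{g}$ signs.  Matching all of these signs against the $(-1)^{g}$ appearing in \eqref{eq_biginfgrass1} requires a disciplined bookkeeping that is genuinely new to the covering setting and absent from the even analogue in \cite{Lau1}.  Once the signs are reconciled, the induction closes and both infinite Grassmannian relations follow.
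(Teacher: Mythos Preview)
Your proposal has a genuine gap at the very first step.  The right-hand side of Corollary~\ref{cor:dotcurl} (in its $\lambda\geq 0$ form) is \emph{not} a product of two closed bubbles.  It is a 2-morphism $\Ec\onebbl\to\Pi^{\bullet}\Ec\onebbl$: each summand consists of a counter-clockwise bubble labelled $f$ sitting next to an \emph{open} upward strand carrying $g$ dots.  The left-hand side of \eqref{eq_biginfgrass1}, by contrast, lives in $\END(\onebbl)$ and is a product of a counter-clockwise bubble with a \emph{clockwise} bubble.  These are different objects, so your identification ``the right-hand side is exactly $\sum_{f+g=m}(-1)^g$ (bubble product)'' is false.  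Consequently your plan to show the dotted curl itself vanishes cannot succeed either: Corollary~\ref{cor:dotcurl} asserts precisely that this curl equals a (generically nonzero) sum of bubble-times-dotted-strand terms, so the curl with $m'>0$ dots is not zero.  Your proposed inductive dot-slide argument simply re-derives Corollary~\ref{cor:dotcurl} from Proposition~\ref{prop:othercurl} rather than producing any vanishing.

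The paper's argument is quite different and avoids induction entirely.  One considers a single closed ``double-curl'' diagram: a crossing whose two strands are closed off on both sides by caps and cups, with $m_1$ dots on the left lobe and $m_2$ dots on the right lobe.  This diagram can be simplified in two distinct ways.  Viewing the left portion as a left dotted curl one applies Corollary~\ref{cor:dotcurl}; viewing the right portion as a right dotted curl one applies Proposition~\ref{prop-dotted-curl} instead.  Each simplification produces a double sum of bubble products, and equating the two outcomes yields \eqref{eq_biginfgrass1} directly.  The point you are missing is that one needs \emph{both} dotted-curl results (Corollary~\ref{cor:dotcurl} and Proposition~\ref{prop-dotted-curl}), not just one of them, and that the object to which they are applied must already be closed so that the output lies in $\END(\onebbl)$.
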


\begin{proof}
Equation \eqref{eq_biginfgrass1} follows by simplifying the diagram
\[
  \hackcenter{\begin{tikzpicture} [scale=0.8]
  \draw[thick] (-1.5,0) -- (-1.5,1);
  \draw[thick] (0.5,0) .. controls ++(-0,0.5) and ++(0,-0.5) .. (-0.5,1)
      node[pos=0.5, shape=coordinate](X){};
  \draw[thick, ->] (-0.5,0) .. controls ++(0,0.5) and ++(0,-0.5) .. (0.5,1)
   node[pos=1, shape=coordinate](MD){};
  \draw[thick, ->] (-0.5,1) .. controls ++(0,0.6) and ++(0,0.6) .. (-1.5,1);
  \draw[thick] (-1.5,0) .. controls ++(0,-0.6) and ++(0,-0.6) .. (-0.5,0)
     node[pos=0.5, shape=coordinate](CUP){}
     node[pos=.9, shape=coordinate](DOT){};
    \draw[thick, ->] (0.5,1) .. controls ++(0,0.5) and ++(0,0.5) .. (1.5,1)
     node[pos=0.5, shape=coordinate](CUP2){};
    \draw[thick, ->] (1.5,0) .. controls ++(0,-0.5) and ++(0,-0.5) .. (0.5,0);
  \draw[color=blue,  thick, dashed]
   (X) .. controls ++(-1.2,0) and ++(0,-1.2) ..(-0.5,2);
    \draw[color=blue,  thick, double distance=1pt,dashed]
   (CUP) .. controls ++(-0,.5) and ++(0,-1.6) ..(-2,2);
   \draw[color=blue,  thick, double distance=1pt,dashed]
   (CUP2) .. controls ++(0,.3) and ++(0,-.3) .. (1,2);
   \draw[color=blue,  thick, double distance=1pt,dashed]
   (MD) .. controls ++(-.3,.4) and ++(0,-.5) .. (0,2);
  \draw[color=blue,  thick, double distance=1pt,dashed]
   (DOT) .. controls ++(-.3,.8) and ++(0,-.8) ..(-1.5,2);
  \draw[thick] (1.5,0) -- (1.5,1);
   \node at (0,-0.25) {$\l$};
   \node at (DOT) {\bbullet};
   \node at (MD) {\bbullet};
   \node[blue] at (-2.4,1.8) {$\scs \l-1$};
   \node[blue] at (1.5,1.8) {$\scs \l-1$};
   \node[blue] at (-1.1,1.8) {$\scs m_1$};
   \node[blue] at (.4,1.8) {$\scs m_2$};
\end{tikzpicture}}
\]
in two possible ways using the dotted curl relations from Corollary~\ref{cor:dotcurl} and Proposition ~\ref{prop-dotted-curl}.  The proof of \eqref{eq_biginfgrass2} is proven similarly.
\end{proof}

%---------------------------------------------------------------------
\subsection{An upper bound on the size of Homs in $\Uc$} \label{subsec:upper}
%---------------------------------------------------------------------

% - - - - - - - - - - - - - - - - - - - - - - - - - - - - - - - - - - - - - - - -
%
\subsubsection{Graded 2-hom-spaces}
%
% - - - - - - - - - - - - - - - - - - - - - - - - - - - - - - - - - - - - - - - -
For each pair of 1-morphisms $x$ and $y$ of $\Uc$ there is a $\Z$-graded vector space
\begin{equation}
\HOM_{\Uc}(x,y) := \bigoplus_{t \in \Z}  \Hom(x, y\la t\ra).
\end{equation}
Using the super-2-category structure of $\Uc$ we can form the $(\Z \times \Zt)$-graded vector space
\[
\Pi\HOM_{\Uc}(x,y) := \bigoplus_{t \in \Z}  \Hom(x, y\la t\ra)
 \oplus \bigoplus_{t \in \Z}\Hom(x, \Pi y\la t\ra)
\]
between 1-morphisms $x$ and $y$. The graded dimension of this graded vector space is defined by
\[
\dim_{q,\pi} \left(\Pi\HOM_{\Uc(x,y)} \right) =
\sum_{t \in \Z}
 q^t\dim \Hom(x, y \la t \ra ) +\pi\sum_{t \in \Z}
 q^t\dim \Hom(x, \Pi y\la t \ra ).
\]

% -------------------------------------------------------------------------------
%
\subsubsection{Spanning sets for endomorphisms of $\onebbl$}
%
% ----------------------------------------------------------------------------

Following the arguments in \cite[Section 8]{Lau1}  the relations from Section~\ref{subsec:additional} give rise to spanning sets for the space of homs between arbitrary 1-morphisms in $\Uc$.

Let
\[\Xi := \Bbbk \la z_1,z_2,\dots, z_a,\dots \ra / (z_a z_b - (-1)^{a \cdot b} z_b z_a \; \text{for all $a,b$})
\]
denote the $(\Z \otimes \Zt)$-graded superalgebra generated by symbols $z_a$ of degree $|z_a|=2a$ and $p(z_a)=a$.  The superalgebra $\Xi$ is supercommutative.  If $2\in\Bbbk^\times$ it is graded-commutative (tensor product of a polynomial algebra in the $z_a$'s for $a$ even with an exterior algebra in the $z_a$'s for $a$ odd).  If $2=0$ in $\Bbbk$ then it is a polynomial ring.

Any monomial $z_{a_1} \dots z_{a_k}$ in $\Xi$ can be identified with a 2-morphism in  $\Pi\HOM_{\Uc}(\onebbl,\onebbl)$ via the assignment sending $z_a$ to the 2-morphism
\[
\begin{array}{ccc}
  \hackcenter{
\begin{tikzpicture}[scale=0.75]
  \draw[thick, ->] (0.5,0) .. controls (0.5,0.8) and (-0.5,0.8) .. (-0.5,0)
      node[pos=0, shape=coordinate](Z){};
  \draw[thick] (0.5,0) .. controls (0.5,-0.8) and (-0.5,-0.8) .. (-0.5,0)
      node[pos=0.5, shape=coordinate](X){}
      node[pos=0.2, shape=coordinate](Y){};
 %% Draw double blue curvy line
  \draw[color=blue, thick, double distance=1pt, dashed] (X) .. controls++(-.1,.7) and ++(-.2,.4) .. (Y)
         node[pos=0.9,right]{$\scs -\l-1$\;};
   \draw[color=blue, thick, double distance=1pt, dashed]
    (Z) .. controls ++(-1,.7) and ++(.1,-1) .. (.75,1.25) ;
   \node[blue] at (1.1,1.1){$\scs a$\;};
 %% Draw the bullet last so it comes out on top
  \draw[line width=0mm] (0.5,0) .. controls (0.5,-0.8) and (-0.5,-0.8) .. (-0.5,0)
     node[pos=0.2]{\bbullet};
  \draw[line width=0mm] (0.5,0) .. controls (0.5,0.8) and (-0.5,0.8) .. (-0.5,0)
      node[pos=0.0]{\bbullet};
  \node at (-1,.3) {$\lambda$};
\end{tikzpicture} }
 \quad\text{for $\l\leq0 $,}&  \qquad \text{and}\qquad &
 \hackcenter{
\begin{tikzpicture}[scale=0.75]
  \draw[thick, ->] (-0.5,0) .. controls (-0.5,0.8) and (0.5,0.8) .. (0.5,0)
      node[pos=0.5, shape=coordinate](X){}
      node[pos=0.1, shape=coordinate](Y){};
  \draw[thick] (-0.5,0) .. controls (-0.5,-0.8) and (0.5,-0.8) .. (0.5,0)
      node[pos=0.1, shape=coordinate](Z){};
  \draw[color=blue, thick, double distance=1pt, dashed] (X) .. controls++(0,.65) and ++(-.65,.3) .. (Y) node[pos=0.15,right]{$\scs \l-1$\;};
  \draw[color=blue, thick, double distance=1pt, dashed] (Z) to[out=180, in=90] (-1,1.25) ;
  \node[blue] at (-.6,1.2){$\scs a$\;};
  \draw[line width=0mm] (-0.5,0) .. controls (-0.5,0.8) and (0.5,0.8) .. (0.5,0)
    node[pos=0.1]{\bbullet};
  \draw[line width=0mm] (-0.5,0) (-0.5,0) .. controls (-0.5,-0.8) and (0.5,-0.8) .. (0.5,0)
        node[pos=0.1]{\bbullet};
  \node at (-1.2,-.2) {$\l$};
\end{tikzpicture}} \quad\text{for $\l\geq 0$}.
\end{array}
\]
This assignment gives rise to a degree-preserving superalgebra homomorphism mapping multiplication in $\Xi$ to horizontal juxtaposition of diagrams coming from horizontal composition in $\Pi\HOM_{\Uc}(\onebbl,\onebbl)$.

Arguing as in \cite[Proposition 8.2]{Lau1} gives the following:
\begin{prop}
The map
\begin{equation} \label{eq_ltol}
 \Xi \to \Pi\HOM_{\Uc}(\onebbl,\onebbl).
\end{equation}
described above is a surjective homomorphism of graded superalgebras.
\end{prop}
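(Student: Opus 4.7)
The plan is to follow the strategy of \cite[Proposition 8.2]{Lau1}, adapting it to the super-2-categorical setting. The proof breaks naturally into two parts: (i) that the assignment is a well-defined homomorphism of graded superalgebras, and (ii) that it is surjective.

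For part (i), the degree and parity of each generator $z_a$ matches that of the corresponding bubble by construction, since both the clockwise and counter-clockwise dotted bubbles of $a$ ``labelled'' dots have $\Z$-degree $2a$ and parity $a$ (recall that a single dot has parity $1$, and the bubble wraps to a closed loop). The supercommutation $z_a z_b = (-1)^{ab} z_b z_a$ translates, under horizontal composition, to a skew-commutativity of bubbles. I would verify this using the super-2-category axioms: two bubbles of parities $a$ and $b$ are, via the parity shift formalism of Subsection \ref{subsec-superconventions}, morphisms with dashed ``tails'' of the appropriate multiplicity $a$ and $b$. Interchanging their horizontal positions forces one to pass $a$ dashed strands past $b$ dashed strands, which by $\alpha_\Psi = -\onebb_{\Psi^2}$ and equations \eqref{eqn-s2c-commute-1}--\eqref{eqn-s2c-commute-2} produces the sign $(-1)^{ab}$. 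One must also check that the defining relations of $\Xi$ suffice: there are no other relations among bubbles of positive degree, which will be established when we identify the image of $\Xi$ in the bubble subalgebra via the surjectivity argument.

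For part (ii), any 2-morphism in $\Pi\HOM_{\Uc}(\onebbl,\onebbl)$ is represented by a closed diagram with no boundary on top or bottom, consisting of solid strands (possibly decorated with crossings and dots) together with dashed strands. The reduction proceeds in stages. First, any dashed loop or dashed bigon disconnected from the solid diagram collapses to a scalar by \eqref{eqn-dashed-bubble} and \eqref{eqn-dashed-cup-cap}. Second, using the mixed R2 relations \eqref{eqn-mixed-R2-relation-1} (and their $\lambda\leq 0$ analogues), together with the pitchfork-cyclicity of the biadjunctions \eqref{eq_biadjoint1}--\eqref{eq_biadjoint2} and the odd nilHecke relations \eqref{eq:oddnilquad}--\eqref{eq:onil-dot}, one reduces any mixed-orientation crossing between oppositely oriented strands to a sum of simpler diagrams involving fewer crossings and more caps/cups, or to bubbles. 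Third, curl relations \eqref{eq:lgz-curl}, \eqref{eq:llz-curl}, Proposition \ref{prop:othercurl}, and Corollary \ref{cor:dotcurl} are used to remove any curls, replacing them with sums of products of bubbles. At this point the remaining diagrams consist of nested circles (possibly with dots), and the bubble slide identities \eqref{eq_biginfgrass1}--\eqref{eq_biginfgrass2} together with the defining relations \eqref{eq:fake-bubble} allow us to isolate every real bubble, rewriting nested configurations as linear combinations of horizontal products of single bubbles. By an induction on a suitable complexity measure (for instance, lexicographic on number of crossings, then number of cups/caps, then number of dots), every closed diagram reduces to a $\Bbbk$-linear combination of juxtaposed single dotted bubbles, which are precisely the images of monomials in $\Xi$.

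The main obstacle is the bookkeeping of signs during the reduction. In the super setting, the dashed bookkeeping strands attached to dots and crossings must be tracked carefully: every time a solid diagrammatic move is applied, one must verify that the induced motion of dashed tails through the rest of the diagram does not produce an uncontrolled sign. This is especially delicate in applying the mixed R2 relation \eqref{eqn-mixed-R2-relation-1} inside a larger closed diagram and in the inductive dot slide \eqref{eq:inductive-dot}, where fake bubbles of negative formal label appear on the right-hand side. The key technical point is that the parity shift 1-morphism $\Psi$ commutes (up to the isomorphism $\alpha$) with every generating 1-morphism, so all signs which arise during reduction to bubbles are precisely those produced by $\alpha_\Psi=-\onebb_{\Psi^2}$ together with the odd nilHecke dot and crossing signs, giving exactly the supercommutative structure on the bubble generators claimed.
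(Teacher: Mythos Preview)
Your approach is essentially the same as the paper's, which simply cites \cite[Proposition~8.2]{Lau1} and asserts that the argument carries over; you have filled in the details of that citation. Two minor corrections: first, your sentence ``there are no other relations among bubbles of positive degree'' is a statement about injectivity, not well-definedness, and is neither needed nor claimed here (indeed, injectivity is left open as Conjecture~\ref{conj-free}); for the homomorphism to be well-defined you only need the supercommutation relation to hold in the target, which you correctly verify via $\alpha_\Psi=-\onebb_{\Psi^2}$. Second, equations \eqref{eq_biginfgrass1}--\eqref{eq_biginfgrass2} are infinite Grassmannian relations, not bubble slides; the reduction of nested bubbles in \cite{Lau1} actually proceeds by working from the innermost closed curve outward, using the curl relations (Proposition~\ref{prop:othercurl}, Corollary~\ref{cor:dotcurl}, Proposition~\ref{prop-dotted-curl}) to absorb an inner bubble into the adjacent strand, rather than by sliding bubbles past strands directly.
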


In particular, any diagram representing a 2-morphism $\HOM_{\Uc}(\onebbl, \Pi^{s} \onebbl)$ can be reduced to a linear combination of products of non-nested dotted bubbles of the same orientation whose dashed lines never intersect.
\[
\begin{array}{ccc}
  \l\leq0 & \qquad \qquad &\l\geq 0 \\
  \hackcenter{
\begin{tikzpicture}[scale=0.75]
  \draw[thick, ->] (0.5,0) .. controls (0.5,0.8) and (-0.5,0.8) .. (-0.5,0)
      node[pos=0, shape=coordinate](Z){};
  \draw[thick] (0.5,0) .. controls (0.5,-0.8) and (-0.5,-0.8) .. (-0.5,0)
      node[pos=0.5, shape=coordinate](X){}
      node[pos=0.2, shape=coordinate](Y){};
 %% Draw double blue curvy line
  \draw[color=blue, thick, double distance=1pt, dashed] (X) .. controls++(-.1,.7) and ++(-.2,.4) .. (Y)
         node[pos=0.9,right]{$\scs -\l-1$\;};
   \draw[color=blue, thick, double distance=1pt, dashed]
    (Z) .. controls ++(-1,.7) and ++(.1,-1) .. (.75,1.25) ;
   \node[blue] at (1.1,1.1){$\scs a_1$\;};
 %% Draw the bullet last so it comes out on top
  \draw[line width=0mm] (0.5,0) .. controls (0.5,-0.8) and (-0.5,-0.8) .. (-0.5,0)
     node[pos=0.2]{\bbullet};
  \draw[line width=0mm] (0.5,0) .. controls (0.5,0.8) and (-0.5,0.8) .. (-0.5,0)
      node[pos=0.0]{\bbullet};
  \node at (-1,.3) {$\lambda$};
\end{tikzpicture} }
  \hackcenter{
\begin{tikzpicture}[scale=0.75]
  \draw[thick, ->] (0.5,0) .. controls (0.5,0.8) and (-0.5,0.8) .. (-0.5,0)
      node[pos=0, shape=coordinate](Z){};
  \draw[thick] (0.5,0) .. controls (0.5,-0.8) and (-0.5,-0.8) .. (-0.5,0)
      node[pos=0.5, shape=coordinate](X){}
      node[pos=0.2, shape=coordinate](Y){};
 %% Draw double blue curvy line
  \draw[color=blue, thick, double distance=1pt, dashed] (X) .. controls++(-.1,.7) and ++(-.2,.4) .. (Y)
         node[pos=0.9,right]{$\scs -\l-1$\;};
   \draw[color=blue, thick, double distance=1pt, dashed]
    (Z) .. controls ++(-1,.7) and ++(.1,-1) .. (.75,1.25) ;
   \node[blue] at (1.1,1.1){$\scs a_2$\;};
 %% Draw the bullet last so it comes out on top
  \draw[line width=0mm] (0.5,0) .. controls (0.5,-0.8) and (-0.5,-0.8) .. (-0.5,0)
     node[pos=0.2]{\bbullet};
  \draw[line width=0mm] (0.5,0) .. controls (0.5,0.8) and (-0.5,0.8) .. (-0.5,0)
      node[pos=0.0]{\bbullet};
\end{tikzpicture} } \dots \quad
  \hackcenter{
\begin{tikzpicture}[scale=0.75]
  \draw[thick, ->] (0.5,0) .. controls (0.5,0.8) and (-0.5,0.8) .. (-0.5,0)
      node[pos=0, shape=coordinate](Z){};
  \draw[thick] (0.5,0) .. controls (0.5,-0.8) and (-0.5,-0.8) .. (-0.5,0)
      node[pos=0.5, shape=coordinate](X){}
      node[pos=0.2, shape=coordinate](Y){};
 %% Draw double blue curvy line
  \draw[color=blue, thick, double distance=1pt, dashed] (X) .. controls++(-.1,.7) and ++(-.2,.4) .. (Y)
         node[pos=0.9,right]{$\scs -\l-1$\;};
   \draw[color=blue, thick, double distance=1pt, dashed]
    (Z) .. controls ++(-1,.7) and ++(.1,-1) .. (.75,1.25) ;
   \node[blue] at (1.1,1.1){$\scs a_k$\;};
 %% Draw the bullet last so it comes out on top
  \draw[line width=0mm] (0.5,0) .. controls (0.5,-0.8) and (-0.5,-0.8) .. (-0.5,0)
     node[pos=0.2]{\bbullet};
  \draw[line width=0mm] (0.5,0) .. controls (0.5,0.8) and (-0.5,0.8) .. (-0.5,0)
      node[pos=0.0]{\bbullet};
\end{tikzpicture} }
 &  \qquad  \qquad &
 \hackcenter{
\begin{tikzpicture}[scale=0.75]
  \draw[thick, ->] (-0.5,0) .. controls (-0.5,0.8) and (0.5,0.8) .. (0.5,0)
      node[pos=0.5, shape=coordinate](X){}
      node[pos=0.1, shape=coordinate](Y){};
  \draw[thick] (-0.5,0) .. controls (-0.5,-0.8) and (0.5,-0.8) .. (0.5,0)
      node[pos=0.1, shape=coordinate](Z){};
  \draw[color=blue, thick, double distance=1pt, dashed] (X) .. controls++(0,.65) and ++(-.65,.3) .. (Y) node[pos=0.15,right]{$\scs \l-1$\;};
  \draw[color=blue, thick, double distance=1pt, dashed] (Z) to[out=180, in=90] (-1,1.25) ;
  \node[blue] at (-.6,1.2){$\scs a_1$\;};
  \draw[line width=0mm] (-0.5,0) .. controls (-0.5,0.8) and (0.5,0.8) .. (0.5,0)
    node[pos=0.1]{\bbullet};
  \draw[line width=0mm] (-0.5,0) (-0.5,0) .. controls (-0.5,-0.8) and (0.5,-0.8) .. (0.5,0)
        node[pos=0.1]{\bbullet};
  \node at (-1.2,-.2) {$\l$};
\end{tikzpicture}}
 \hackcenter{
\begin{tikzpicture}[scale=0.75]
  \draw[thick, ->] (-0.5,0) .. controls (-0.5,0.8) and (0.5,0.8) .. (0.5,0)
      node[pos=0.5, shape=coordinate](X){}
      node[pos=0.1, shape=coordinate](Y){};
  \draw[thick] (-0.5,0) .. controls (-0.5,-0.8) and (0.5,-0.8) .. (0.5,0)
      node[pos=0.1, shape=coordinate](Z){};
  \draw[color=blue, thick, double distance=1pt, dashed] (X) .. controls++(0,.65) and ++(-.65,.3) .. (Y) node[pos=0.15,right]{$\scs \l-1$\;};
  \draw[color=blue, thick, double distance=1pt, dashed] (Z) to[out=180, in=90] (-1,1.25) ;
  \node[blue] at (-.6,1.2){$\scs a_2$\;};
  \draw[line width=0mm] (-0.5,0) .. controls (-0.5,0.8) and (0.5,0.8) .. (0.5,0)
    node[pos=0.1]{\bbullet};
  \draw[line width=0mm] (-0.5,0) (-0.5,0) .. controls (-0.5,-0.8) and (0.5,-0.8) .. (0.5,0)
        node[pos=0.1]{\bbullet};
\end{tikzpicture}}
\dots \quad
 \hackcenter{
\begin{tikzpicture}[scale=0.75]
  \draw[thick, ->] (-0.5,0) .. controls (-0.5,0.8) and (0.5,0.8) .. (0.5,0)
      node[pos=0.5, shape=coordinate](X){}
      node[pos=0.1, shape=coordinate](Y){};
  \draw[thick] (-0.5,0) .. controls (-0.5,-0.8) and (0.5,-0.8) .. (0.5,0)
      node[pos=0.1, shape=coordinate](Z){};
  \draw[color=blue, thick, double distance=1pt, dashed] (X) .. controls++(0,.65) and ++(-.65,.3) .. (Y) node[pos=0.15,right]{$\scs \l-1$\;};
  \draw[color=blue, thick, double distance=1pt, dashed] (Z) to[out=180, in=90] (-1,1.25) ;
  \node[blue] at (-.6,1.2){$\scs a_k$\;};
  \draw[line width=0mm] (-0.5,0) .. controls (-0.5,0.8) and (0.5,0.8) .. (0.5,0)
    node[pos=0.1]{\bbullet};
  \draw[line width=0mm] (-0.5,0) (-0.5,0) .. controls (-0.5,-0.8) and (0.5,-0.8) .. (0.5,0)
        node[pos=0.1]{\bbullet};
\end{tikzpicture}}
\end{array}
\]
The image of the monomial basis of $\Xi$ under the surjective homomorphism \eqref{eq_ltol} is a homogeneous spanning set of the graded $\Bbbk$-module $\Pi\HOM_{\Uc}(\onebbl, \onebbl)$. We call such a monomial in the image of $\Xi$ a {\em bubble monomial}.  Define the power series
\begin{equation} \label{eq-def-xi}\begin{split}
  \xi_\Z &:= \prod_{a=1}^{\infty} (1-(\pi q^2)^{2a})^{-1},\\
  \xi_0 &:= \prod_{a=1}^{\infty} (1-(\pi q^2)^{2a})^{-1}\cdot\prod_{a=1}^{\infty} (1+(\pi q^2)^{2a-1}),\\
  \xi_2 &:= \prod_{a=1}^{\infty} (1-(\pi q^2)^a)^{-1}.
\end{split}\end{equation}

\begin{cor} \label{cor:homonel} If $\Bbbk=\Z$ (respectively $\Bbbk$ is a field of characteristic $2$, respectively $\Bbbk$ is a field of characteristic other than $2$), then
\begin{equation*}
\dim_{q,\pi} \Pi\HOM_{\Uc}(\onebbl, \onebbl) \leq \xi
\end{equation*}
termwise and $\Pi\HOM_{\Uc}(\onebbl, \onebbl)$ is graded local.  Here $\xi=\xi_\Z$ (respectively $\xi=\xi_2$, respectively $\xi=\xi_0$).  In the case $\Bbbk=\Z$, by $\dim$ we mean free rank (and say nothing about torsion).
\end{cor}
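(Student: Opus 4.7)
The proof proceeds by combining the surjective graded-super\-algebra homomorphism $\Xi\twoheadrightarrow\Pi\HOM_\Uc(\onebbl,\onebbl)$ of the preceding proposition with a direct computation of the graded size of $\Xi$, then deducing graded locality from connectedness.

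Since the map is surjective, $\dim_{q,\pi}\Pi\HOM_\Uc(\onebbl,\onebbl)\leq\dim_{q,\pi}\Xi$ termwise (or, in the $\Z$-case, an analogous free-rank bound). The defining relation of $\Xi$, specialized to $a=b$ with $a$ odd, yields $2z_a^2=0$. When $\Bbbk$ has characteristic $2$ all signs in $\Xi$ are trivial, so $\Xi\cong\Bbbk[z_1,z_2,\dots]$; with $|z_a|=2a$ and $p(z_a)=a\bmod 2$, its graded dimension is $\prod_{a\geq 1}(1-(\pi q^2)^a)^{-1}=\xi_2$. When $\Bbbk$ is a field of characteristic different from $2$, the relation $2z_a^2=0$ becomes $z_a^2=0$ for odd $a$, and $\Xi$ splits as $\Bbbk[z_a:a\text{ even}]\otimes_\Bbbk\Lambda(z_a:a\text{ odd})$ with graded dimension $\xi_0$.

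The $\Z$-case is the principal subtlety, since the stated bound $\xi_\Z$ is strictly smaller than the free rank of $\Xi$ itself (which equals $\xi_0$). The additional input needed is that the image in $\Pi\HOM_\Uc(\onebbl,\onebbl)$ of each odd-indexed generator $z_a$ is itself $2$-torsion. This is extracted from the super-2-category structure of $\Uc$: the sign $\alpha_\Psi=-\onebb_{\Psi^2}$ for dashed-over-dashed crossings, together with the dot and crossing cyclicity relations \eqref{eqn-dot-cyclicity}--\eqref{eqn-crossing-cyclicity}, allows one to rotate the dashed tail of an odd-parity bubble past itself, producing the relation $z_a=-z_a$ in $\Pi\HOM_\Uc(\onebbl,\onebbl)$ for $a$ odd. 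Granted this, every monomial containing any odd-indexed generator is $2$-torsion, so only the polynomial algebra in the even-indexed generators contributes to the free rank, yielding the bound $\prod_{a\geq 1}(1-q^{4a})^{-1}=\xi_\Z$. This step is the main obstacle and is where all the sign bookkeeping of the super-2-category structure is genuinely used.

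Finally, graded locality follows from connectedness: since every generator $z_a$ has strictly positive $q$-degree $2a>0$, the bidegree-$(0,0)$ component of $\Pi\HOM_\Uc(\onebbl,\onebbl)$ is spanned by $\id_{\onebbl}$. The positive-degree part is therefore a two-sided graded ideal with residue ring $\Bbbk$, and, together with the maximal ideal of $\Bbbk$ in degree zero if $\Bbbk$ is not a field, this is the unique maximal homogeneous ideal, establishing that $\Pi\HOM_\Uc(\onebbl,\onebbl)$ is graded local.
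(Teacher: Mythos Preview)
Your treatment of the two field cases and of graded locality is exactly what the paper intends: the corollary is meant to be an immediate consequence of the surjection $\Xi\twoheadrightarrow\Pi\HOM_{\Uc}(\onebbl,\onebbl)$, and the bounds $\xi_2$, $\xi_0$ are precisely the graded dimensions of $\Xi$ over a field of characteristic $2$, respectively characteristic $\neq 2$.

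Your handling of the $\Z$-case, however, has a genuine gap. You correctly observe that the free rank of $\Xi$ over $\Z$ equals $\xi_0$, not $\xi_\Z$, so surjectivity alone does not give the stated bound. But your proposed remedy---that each individual odd-degree bubble satisfies $2z_a=0$ in $\Pi\HOM_{\Uc}(\onebbl,\onebbl)$---is not justified. A single bubble carries one thick dashed tail; there is nothing for that tail to ``rotate past,'' and the cyclicity relations \eqref{eqn-dot-cyclicity}--\eqref{eqn-crossing-cyclicity} you cite do not produce a relation of the form $z_a=-z_a$. More decisively, your claim directly contradicts the paper's own Conjecture~\ref{conj-free}, which asserts that $\Xi\to\Pi\HOM_{\Uc}(\onebbl,\onebbl)$ is an \emph{isomorphism} over $\Z$; under that conjecture, single odd bubbles are torsion-free, and only their squares are $2$-torsion. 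The same point is visible in Conjecture~\ref{conj-spanning}, where the conjectural free basis $\widetilde{B}^\pi$ allows odd bubbles of \emph{distinct} degrees to appear.

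In short, the discrepancy you detected between $\xi_\Z$ and $\xi_0$ is real and is not resolved by the argument you propose; the bound that actually follows from the surjection, and that is consistent with the paper's later conjectures, is $\xi_0$ in the $\Z$-case as well. The stated $\xi_\Z$ appears to be an oversight in the paper rather than a sharper bound awaiting the proof you sketched.
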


% -------------------------------------------------------------------------------
%
\subsubsection{Other spanning sets} \label{subsubsec-otherspanning}
%
% ----------------------------------------------------------------------------

Let $\epsilon = (\epsilon_1,\dots, \epsilon_k)$ denote a {\em covering sequence} consisting of symbols $\epsilon_i$ in the alphabet $\{ -, +, \circ \}$.  Let ${\rm CSeq}$ denote the set of all such covering sequences.  We write ${\rm Seq}$ for those sequences $\epsilon$ with each $\epsilon_i$ in the sub-alphabet $\{ -,+\}$ and refer to such sequences as {\em signed sequences}.   Covering sequences index 1-morphisms in $\Uc$ by setting $\Ec_{+} := \Ec$,   $\Ec_{-} := \Fc$ and $\Ec_{\circ} := \Pi$, so that
\[
 \Ec_{\epsilon}\onebbl := \Ec_{\epsilon_1} \dots \Ec_{\epsilon_k}\onebbl.
\]
The empty sequence $\epsilon=\emptyset$ corresponds to the element $\Ec_{\emptyset}\onebbl = \onebbl$.

Every covering sequence $\ep$ determines a signed sequence denoted $\underline{\ep}$ since the super-2-category structure implies every 1-morphism of the form $\Ec_{\epsilon}\onebbl$ is canonically isomorphic to a  1-morphism $\Pi^s\Ec_{\underline{\epsilon}}\onebbl$ where $s$ is the number of $\circ$ symbols in $\epsilon$ and $\underline{\epsilon}$ is the signed sequence obtained from $\epsilon$ by removing all $\circ$ symbols.

A diagram representing a 2-morphism in $\HOM_{\Uc}(\Ec_{\epsilon}\onebbl, \Ec_{\epsilon'}\onebbl)$ has lower boundary labelled by the covering sequence $\epsilon$ and upper boundary labelled by $\epsilon'$ (where lines labelled $+$ are oriented up, lines labelled $-$ are oriented down, and lines labelled $\circ$ are dashed).
\[
\xy
(0,0)*{
\begin{tikzpicture}
    \draw[thick, <-] (0,0) .. controls (0,1) and (1,1) .. (1,2)
        node[pos=.5, shape=coordinate](CROSSING2){};
    \draw[thick, ->] (1,0) .. controls (1,1) and (0,1) .. (0,2);
    \draw[thick] (.5,0) .. controls (.5,.5) and (1,.5) .. (1,1)
        node[pos=.65, shape=coordinate](CROSSING1){};
    \draw[thick, ->] (1,1) .. controls (1,1.5) and (.5,1.5) .. (.5,2)
        node[pos=.35, shape=coordinate](CROSSING3){}
        node[pos=.65, shape=coordinate](DOT){};
    \draw[thick, color=blue, dashed] (CROSSING1) [out=180, in=-90] to (-1,2);
    \draw[thick, color=blue, dashed] (CROSSING2) [out=180, in=-90] to (-.5,2);
    \draw[thick, color=blue, dashed] (CROSSING3)  .. controls ++(-.9,.1) and ++(-.4,.4) .. (DOT);
\node() at (DOT) {\bbullet};
\end{tikzpicture}};
(24,2)*{\begin{tikzpicture}[scale=0.75]
  \draw[thick, ->] (-0.5,0) .. controls (-0.5,0.8) and (0.5,0.8) .. (0.5,0)
      node[pos=0.5, shape=coordinate](X){}
      node[pos=0.1, shape=coordinate](Y){};
  \draw[thick] (-0.5,0) .. controls (-0.5,-0.8) and (0.5,-0.8) .. (0.5,0)
      node[pos=0.1, shape=coordinate](Z){};
  \draw[color=blue, thick, double distance=1pt, dashed] (X) .. controls++(0,.65) and ++(-.65,.3) .. (Y) node[pos=0.15,right]{$\scs \l-1$\;};
  \draw[color=blue, thick, dashed] (Z) to[out=180, in=90] (-1,1.5) ;
  \draw[line width=0mm] (-0.5,0) .. controls (-0.5,0.8) and (0.5,0.8) .. (0.5,0)
    node[pos=0.1]{\bbullet};
  \draw[line width=0mm] (-0.5,0) (-0.5,0) .. controls (-0.5,-0.8) and (0.5,-0.8) .. (0.5,0)
        node[pos=0.1]{\bbullet};
\end{tikzpicture}};
(0,-13)*{-};
(5,-13)*{+};
(11,-13)*{+};
(-10,13)*{\circ};
(-5,13)*{\circ};
(0,13)*{+};
(5,13)*{+};
(10,13)*{-};
(18,13)*{\circ};
(55,-13)*{\Ec_{-++}\onebbl := \Fc\Ec\Ec\onebbl};
(60,13)*{\Ec_{\circ\circ++-\circ}\onebbl := \Pi \Pi\Ec\Ec\Fc\Pi\onebbl};
\endxy
\]

In order to define the spanning set we first recall the spanning sets from \cite[Section 3.2]{KL3} for the original 2-category $\Ucev$.

Mark $m$ points $1 \times \{0\},2 \times \{0\}, \ldots, m \times \{0\}$ on the
lower boundary $\R \times \{0\}$ of the strip $\R \times [0,1]$ and $k$ points $1
\times \{1\},2 \times \{1\}, \ldots, k \times \{1\}$ on the upper boundary $\R
\times \{1\}$. Assuming $m+k$ is even, choose an immersion of $\frac{m+k}{2}$
strands into $\R\times [0,1]$ with these $m+k$ points as the endpoints, and such that critical values for the height function are isolated, distinct, and nondegenerate.  Orient
each strand.  Then endpoints inherit orientations from the strands. Orientation at lower and upper endpoints define signed sequences
$\epsilon, \epsilon' \in {\rm Seq}$. We consider immersions modulo boundary-preserving homotopies and call them pairings between signed sequences $\epsilon$ and $\epsilon'$, or simply $(\epsilon,\epsilon')$-pairings.  Critical values are allowed to momentarily pass each other in homotopies.

There is a bijection between $(\epsilon,\epsilon')$-pairings and complete matchings of $m+k$ points such that the two points in each matching pair have compatible orientations.  A minimal diagram $D$ of a $(\epsilon,\epsilon')$-pairing is a generic immersion that realizes the pairing such that strands have no self--intersections and any two strands intersect at most once.  We consider minimal diagrams up to boundary--preserving isotopies. For each $(\epsilon,\epsilon')$-pairing fix a choice of minimal diagram $D$ and denote by $p(\epsilon,\epsilon')$ the set of the minimal diagram representatives of $(\epsilon,\epsilon')$-pairings.
For each diagram $D$ in $p(\epsilon,\ep')$ choose an interval on each arc, away from the intersections.  Let $B'_{\epsilon,\ep'}$ denote the union over all $D$, of diagrams built from $D$ by putting an arbitrary number of dots on each of the intervals.

Forgetting the dashed lines in a bubble monomial from $\Xi$ gives rise to a 2-endomorphisms of the 1-morphisms $\onebbl$ in the (even) 2-category $\Ucev$.
Let $B_{\ep,\ep',\lambda}$ be the set obtained from $B'_{\epsilon,\ep'}$ by labelling the rightmost region by the weight $\lambda$ and placing an arbitrary bubble monomial of $\Ucev$ in the rightmost region.
Proposition 3.11 of  \cite{KL3} shows that the sets $B_{\ep,\ep',\lambda}$ give a basis for the space of 2-morphisms $\HOM_{\Ucev}(\Ec_{\ep}\onebbl, \Ec_{\epsilon'}\onebbl)$ in the 2-category $\Ucev$.

To each $D \in B_{\epsilon,\ep',\lambda}$ we refer to each crossing and dot in $D$ as an {\em internal vertex} of the diagram $D$.

For the covering 2-category $\Uc$ we need some additional structure to describe spanning sets.

Given two covering sequences $\epsilon, \ep' \in {\rm CSeq}$ with associated signed sequences $\underline{\ep}, \underline{\ep'} \in {\rm Seq}$, let $D$ be an element of $B_{\underline{\ep},\underline{\ep'},\lambda}$.  An {\em assignment of $\Pi$-data} to $D$ relative to the covering sequences $\epsilon$ and $\ep'$ is a pairing $\zeta$ on the union of the set of internal vertices of $D$ and the set of points labelled by $\circ$ on the boundary.  Diagrammatically, pairs are joined by non-intersecting dashed arcs in generic position relative to the diagram $D$.  Dashed lines leaving an internal vertex that intersect solid strands connected to the same internal vertex are always assumed to intersect in a clock-wise fashion:
\[
\hackcenter{\begin{tikzpicture}
    \draw[thick, ->] (0,0) -- (0,1.5)
        node[pos=.35, shape=coordinate](DOT){};
    \draw[thick, color=blue, dashed]
   (DOT) .. controls ++(-.4,.2) and ++(0,-.3) .. (-.5,1)
 .. controls ++(0,.3) and ++(0,-.5) .. (.5,1.5);
    \node at (DOT) {\bbullet};
\end{tikzpicture}}
\qquad  \qquad
\hackcenter{\begin{tikzpicture}
    \draw[thick, ->] (0,0) .. controls (0,.5) and (1,1) .. (1,1.5)
        node[pos=.5, shape=coordinate](CROSSING){};
    \draw[thick, color=blue, dashed]
  (CROSSING).. controls ++(-1.2,.1) and ++(0,-.5) .. (.5,1.5);
    \draw[thick, ->] (1,0) .. controls (1,.5) and (0,1) .. (0,1.5);
\end{tikzpicture}}
\]
Such pairs $(D,\zeta)$ determine a 2-morphism, denoted $D^{\zeta}(\lambda)$, in $\Uc$. The above conventions ensure that any choice of $\Pi$-data on a fixed $D$ represents the same 2-morphism in $\Uc$.

\begin{example}
Several examples of $\Pi$-data are shown below for a diagram $D$ in $B_{-++,\circ\circ++-\circ, \lambda}$.
\[
\xy
(0,0)*{
\begin{tikzpicture}[scale=0.8]
    \draw[thick, <-] (0,0) .. controls ++(0,1) and ++(0,-1) .. (1.4,3)
        node[pos=.5, shape=coordinate](CROSSING2){};
    \draw[thick, ->] (1.4,0) .. controls ++(0,1) and ++(0,-1) .. (0,3);
    \draw[thick] (.7,0) .. controls ++(0,.5) and ++(0,-.5) .. (1.4,1.5)
        node[pos=.56, shape=coordinate](CROSSING1){};
    \draw[thick, ->] (1.4,1.5) .. controls ++(0,.5) and ++(0,-.5) .. (.7,3)
        node[pos=.41, shape=coordinate](CROSSING3){}
        node[pos=.65, shape=coordinate](DOT){};
    \draw[thick, color=blue, dashed] (CROSSING1) [out=180, in=-90] to (-1,3);
    \draw[thick, color=blue, dashed] (CROSSING2) [out=180, in=-90] to (-.5,3);
    \draw[thick, color=blue, dashed] (CROSSING3)  .. controls ++(-1.2,.1) and ++(-.6,.4) .. (DOT);
  \node() at (DOT) {\bbullet};
  \draw[thick, ->] (2.2,.6) .. controls ++(-0,0.6) and ++(0,0.6) .. (3,.6)
      node[pos=0.5, shape=coordinate](X){}
      node[pos=0.1, shape=coordinate](Y){};
  \draw[thick] (2.2,.6) .. controls ++(0,-0.6) and ++(0,-0.6) .. (3,.6)
      node[pos=0.1, shape=coordinate](Z){};
  \draw[color=blue, thick, double distance=1pt, dashed] (X) .. controls++(0,.65) and ++(-.65,.3) .. (Y) node[pos=0.15,right]{$\scs \l-1$\;};
  \draw[color=blue, thick, dashed] (Z) .. controls ++(-1,.1) and ++(0,-.8) .. (2.1,3) ;
  \node at (Y) {\bbullet};
  \node at (Z) {\bbullet};
\end{tikzpicture}};
\endxy
\qquad
\xy
(0,0)*{
\begin{tikzpicture}[scale=0.8]
    \draw[thick, <-] (0,0) .. controls ++(0,1) and ++(0,-1) .. (1.4,3)
        node[pos=.5, shape=coordinate](CROSSING2){};
    \draw[thick, ->] (1.4,0) .. controls ++(0,1) and ++(0,-1) .. (0,3);
    \draw[thick] (.7,0) .. controls ++(0,.5) and ++(0,-.5) .. (1.4,1.5)
        node[pos=.56, shape=coordinate](CROSSING1){};
    \draw[thick, ->] (1.4,1.5) .. controls ++(0,.5) and ++(0,-.5) .. (.7,3)
        node[pos=.41, shape=coordinate](CROSSING3){}
        node[pos=.65, shape=coordinate](DOT){};
    \draw[thick, color=blue, dashed] (CROSSING1) [out=180, in=-90] to (-1,3);
    \draw[thick, color=blue, dashed] (CROSSING2) .. controls ++(-1.2,.2) and ++(-.6,.3) .. (CROSSING3);
    \draw[thick, color=blue, dashed] (-.5,3)  .. controls ++(0,-.4) and ++(-.4,.2) .. (DOT);
  \node() at (DOT) {\bbullet};
  \draw[thick, ->] (2.2,.6) .. controls ++(-0,0.6) and ++(0,0.6) .. (3,.6)
      node[pos=0.5, shape=coordinate](X){}
      node[pos=0.1, shape=coordinate](Y){};
  \draw[thick] (2.2,.6) .. controls ++(0,-0.6) and ++(0,-0.6) .. (3,.6)
      node[pos=0.1, shape=coordinate](Z){};
  \draw[color=blue, thick, double distance=1pt, dashed] (X) .. controls++(0,.65) and ++(-.65,.3) .. (Y) node[pos=0.15,right]{$\scs \l-1$\;};
  \draw[color=blue, thick, dashed] (Z) .. controls ++(-1,.1) and ++(0,-.8) .. (2.1,3) ;
  \node at (Y) {\bbullet};
  \node at (Z) {\bbullet};
\end{tikzpicture}};
\endxy
\qquad
\xy
(0,0)*{
\begin{tikzpicture}[scale=0.8]
    \draw[thick, <-] (0,0) .. controls ++(0,1) and ++(0,-1) .. (1.4,3)
        node[pos=.5, shape=coordinate](CROSSING2){};
    \draw[thick, ->] (1.4,0) .. controls ++(0,1) and ++(0,-1) .. (0,3);
    \draw[thick] (.7,0) .. controls ++(0,.5) and ++(0,-.5) .. (1.4,1.5)
        node[pos=.56, shape=coordinate](CROSSING1){};
    \draw[thick, ->] (1.4,1.5) .. controls ++(0,.5) and ++(0,-.5) .. (.7,3)
        node[pos=.41, shape=coordinate](CROSSING3){}
        node[pos=.65, shape=coordinate](DOT){};
    \draw[thick, color=blue, dashed] (-1,3) .. controls ++(0,-.8) and ++(-.6,.2) .. (CROSSING3);
    \draw[thick, color=blue, dashed] (CROSSING2) .. controls ++(-1.5,.4) and ++(0,-1.5) ..(2.1,3);
    \draw[thick, color=blue, dashed] (-.5,3)  .. controls ++(0,-.4) and ++(-.4,.2) .. (DOT);
  \node() at (DOT) {\bbullet};
  \draw[thick, ->] (2.2,.6) .. controls ++(-0,0.6) and ++(0,0.6) .. (3,.6)
      node[pos=0.5, shape=coordinate](X){}
      node[pos=0.1, shape=coordinate](Y){};
  \draw[thick] (2.2,.6) .. controls ++(0,-0.6) and ++(0,-0.6) .. (3,.6)
      node[pos=0.1, shape=coordinate](Z){};
  \draw[color=blue, thick, double distance=1pt, dashed] (X) .. controls++(0,.65) and ++(-.65,.3) .. (Y) node[pos=0.15,right]{$\scs \l-1$\;};
  \draw[color=blue, thick, dashed] (Z) .. controls ++(-.2,.1) and ++(0,-.4) .. (1.6,1)
   .. controls ++(-.4,.7) and ++(-.8,.2) ..  (CROSSING1) ;
  \node at (Y) {\bbullet};
  \node at (Z) {\bbullet};
\end{tikzpicture}};
\endxy
\]
\end{example}

Denote by $B_{\epsilon,\ep',\lambda}^{\pi}$ the set obtained by fixing a choice of
$\Pi$-data $\zeta$ for each diagram $D$ in $B_{\epsilon,\ep',\lambda}$.
Arguing as in \cite[Section 3.2.3]{KL3} we have the following:

\begin{prop}
The set $B_{\ep,\ep',\lambda}^{\pi}$ is a homogeneous spanning set for the graded $\Bbbk$-module $\HOM_{\Uc}(\Ec_{\epsilon}\onebbl, \Ec_{\epsilon'}\onebbl)$.
\end{prop}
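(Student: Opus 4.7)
My plan is to mimic the argument of \cite[Section 3.2.3]{KL3} for the even 2-category $\Ucev$, with additional bookkeeping for the dashed strands. The key preliminary observation is that, by the super-2-category axioms and Lemma~\ref{lem-dashed-cyclicity}, any diagram representing a 2-morphism in $\HOM_{\Uc}(\Ec_\epsilon\onebbl,\Ec_{\epsilon'}\onebbl)$ is determined, up to sign, by the underlying ``solid'' diagram (its crossings, cups, caps, and dots on $\Ec,\Fc$-strands) together with the combinatorial data of how each dashed strand is routed. The relations \eqref{eqn-psi-3-psi}, \eqref{eqn-dashed-bubble}, \eqref{eqn-dashed-cup-cap}, \eqref{eqn-alpha-alpha-inverse} show that any two $\Pi$-data assignments $\zeta,\zeta'$ for a fixed solid diagram $D$ produce 2-morphisms that agree up to a sign, and in particular any planar isotopy of dashed strands (subject to the clockwise convention) is irrelevant. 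Thus it suffices to prove that the underlying solid diagrams can be reduced to elements of $B_{\underline{\epsilon},\underline{\epsilon'},\lambda}$.

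The reduction of the underlying solid diagram proceeds by induction on the number of internal vertices, exactly as in \cite[Section 3.2.3]{KL3}, but using the odd analogues of the classical relations. Specifically: (i) the odd nilHecke relations \eqref{eq:oddnilquad}--\eqref{eq:onil-dot} (plus their downward and mixed-orientation cousins) together with the cyclicity relations \eqref{eqn-dot-cyclicity}, \eqref{eqn-crossing-cyclicity} are used to pull all dots to chosen intervals and remove any pair of strands that cross more than once; (ii) the adjunction relations \eqref{eq_biadjoint1}, \eqref{eq_biadjoint2} allow elimination of zigzags; (iii) the mixed $R(2)$-relations and curl relations (both \eqref{eq:lgz-curl}, \eqref{eq:llz-curl} and the derived curls of Proposition~\ref{prop:othercurl}, Corollary~\ref{cor:dotcurl}, Proposition~\ref{prop-dotted-curl}) convert curls into isolated bubbles in the rightmost region; (iv) the bubble slide and infinite Grassmannian relations \eqref{eq_biginfgrass1}, \eqref{eq_biginfgrass2} allow any collection of nested or non-generic bubbles to be rewritten as a linear combination of non-nested bubble monomials. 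After all such reductions, the resulting solid diagram lies in $B_{\underline{\epsilon},\underline{\epsilon'},\lambda}$, with an arbitrary bubble monomial of $\Ucev$ floating in the rightmost region.

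Once the solid diagram is of the form $D\in B_{\underline{\epsilon},\underline{\epsilon'},\lambda}$, any choice of $\Pi$-data $\zeta$ produces an element $D^\zeta(\lambda)\in B_{\epsilon,\epsilon',\lambda}^{\pi}$, and the element we have reduced to differs from $D^\zeta(\lambda)$ by at most a sign (by the first paragraph). Finally, the bubble monomial in the rightmost region is, by Corollary~\ref{cor:homonel} and the discussion around \eqref{eq_ltol}, a $\Bbbk$-linear combination of non-nested bubble monomials of the form appearing in the definition of $B_{\epsilon,\epsilon',\lambda}^{\pi}$; absorbing this combination gives the desired expression for our original 2-morphism in terms of $B_{\epsilon,\epsilon',\lambda}^{\pi}$.

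The main obstacle, and what requires more care than in the even case of \cite{KL3}, is tracking signs during the reduction: each time an odd nilHecke or curl relation is applied, a dashed strand is created or absorbed, and the super-interchange law \eqref{eqn-s2c-commute-1}, \eqref{eqn-s2c-commute-2} introduces signs whenever one drags an odd 2-morphism past another. These signs, however, only affect the coefficient in $\Bbbk$, not whether the resulting 2-morphism lies in the span of $B_{\epsilon,\epsilon',\lambda}^{\pi}$; thus for the \emph{spanning} statement the sign bookkeeping can be done at the level of ``which element of $B_{\epsilon,\epsilon',\lambda}^{\pi}$ with which scalar'' and does not obstruct the argument.
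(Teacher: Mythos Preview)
Your approach is essentially the same as the paper's, which simply says ``Arguing as in \cite[Section 3.2.3]{KL3}'' and leaves the details to the reader; you have supplied a reasonable expansion of that sketch, correctly identifying the key new ingredient (that different $\Pi$-data on a fixed solid diagram give the same 2-morphism, so one may reduce the solid part exactly as in the even case and then decorate). One minor correction: the paper in fact asserts that different $\Pi$-data give \emph{equal} 2-morphisms (not merely equal up to sign) once the clockwise convention at internal vertices is imposed, though your weaker ``up to sign'' claim already suffices for spanning. Also, your step (iv) conflates two distinct ingredients: the infinite Grassmannian relations \eqref{eq_biginfgrass1}, \eqref{eq_biginfgrass2} convert mixed-orientation bubble products into single-orientation ones, while moving bubbles into the rightmost region requires separate bubble-slide relations (derivable from the curl and nilHecke relations, as in \cite{KL3}) which you should cite or derive explicitly.
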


\begin{prop} \label{conj-spanning-2} If $\Bbbk$ is a field of characteristic $2$, then $B_{\ep,\ep',\lambda}^{\pi}$ is a homogeneous basis for the graded $\Bbbk$-vector space $\HOM_{\Uc}(\Ec_{\epsilon}\onebbl, \Ec_{\epsilon'}\onebbl)$.\end{prop}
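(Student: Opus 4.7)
The plan is to reduce to the analogous basis result for the even $2$-category $\Ucev$ via a ``forgetful'' $2$-functor that is well-defined precisely because $-1 = 1$ in $\Bbbk$. Specifically, I would construct a $2$-functor
\[
\Phi \colon \Uc \otimes_{\Z} \Bbbk \;\longrightarrow\; \Ucev \otimes_{\Z} \Bbbk
\]
which sends $\Ec\onebbl$ and $\Fc\onebbl$ to themselves, sends the parity shift $\Pi\onebbl$ to the identity $1$-morphism $\onebbl$, sends the solid $2$-morphism generators of \eqref{eq_generators}--\eqref{eq_generators_cont} to the corresponding generators in $\Ucev$, and sends every dashed generator (the cups, caps, and dashed-solid crossings of \eqref{eq-dashed-solid-cross}, and the maps $\xi, \xi^{-1}, \alpha_{\Ec}^{\pm 1}, \alpha_{\Fc}^{\pm 1}$) to an identity $2$-morphism. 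In characteristic $2$ all signs appearing in the defining relations---the factor $(-1)^{\lambda}$ in \eqref{eqn-dot-cyclicity} and \eqref{eqn-crossing-cyclicity}, the signs $(-1)^{f_{3}}$ in the $R(2)$-relations, the sign in $\alpha_{\Psi} = -\onebb_{\Psi^{2}}$, and all signs produced by the super-interchange law \eqref{eqn-s2c-commute-1}--\eqref{eqn-s2c-commute-2}---become $+1$, so the odd nilHecke relations \eqref{eq:oddnilquad}--\eqref{eq:onil-dot} collapse exactly onto the even nilHecke relations, and the remaining defining relations reduce to the even Kac--Moody $2$-category relations of \cite{Lau1, KL3}.

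Having verified $\Phi$ is well-defined, I would next show that the image of $\Phi$ on the spanning set is the basis from \cite[Proposition 3.11]{KL3}. First, using the canonical super-$2$-category isomorphisms $\Ec_{\ep}\onebbl \cong \Pi^{s}\Ec_{\underline{\ep}}\onebbl$, one reduces to the case $\ep, \ep' \in \mathrm{Seq}$ where no $\circ$ symbols appear on the boundary; this reduction is compatible with the spanning sets since the boundary $\circ$ points only participate in the $\Pi$-data $\zeta$, which is collapsed by $\Phi$. For $\ep,\ep' \in \mathrm{Seq}$, the $\Pi$-data pairs internal vertices among themselves, and $\Phi$ sends each pair $(D,\zeta) \in B_{\ep,\ep',\lambda}^{\pi}$ to the underlying diagram $D \in B_{\ep,\ep',\lambda}$, giving a bijection onto the \cite{KL3} basis of $\HOM_{\Ucev \otimes \Bbbk}(\Ec_{\ep}\onebbl, \Ec_{\ep'}\onebbl)$.

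Since $\Phi$ carries the spanning set bijectively to a $\Bbbk$-basis, its image is linearly independent; therefore the preimages, the elements of $B_{\ep,\ep',\lambda}^{\pi}$, must themselves be linearly independent in $\HOM_{\Uc}(\Ec_{\ep}\onebbl,\Ec_{\ep'}\onebbl)$, completing the proof. The principal obstacle is the well-definedness check for $\Phi$: although the intuition ``erase dashed strands and set all signs to $+1$'' is clean, one must verify compatibility with the biadjunction $\Ec\onebbl \dashv \onebbl\Fc\Pi^{\lambda+1}$, since the parity shift $\Pi^{\lambda+1}$ in the codomain of the right adjoint is precisely the data that makes $\Phi$ factor through $\Ucev$ only because $\Pi$ maps to $\onebb$; the cyclicity relations \eqref{eqn-crossing-cyclicity} and the fake bubble definitions \eqref{eq:fake-bubble} also need to be tracked carefully. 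Everything else---interchange, dashed isotopies, and the dotted curl and infinite Grassmannian relations of Section~\ref{subsec:additional}---is routine once signs are set to $+1$.
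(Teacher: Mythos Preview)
Your proposal is correct and follows essentially the same approach as the paper: the paper defines the same ``erase dashed lines'' map (calling it $\mathrm{ev}$ rather than $\Phi$) and argues, more tersely, that in characteristic~$2$ the defining relations of $\Uc\otimes_{\Z}\Bbbk$ and $\Ucev\otimes_{\Z}\Bbbk$ become identical, so the $2$-hom-spaces have equal graded dimensions and the spanning set is forced to be a basis. Your version is simply a more explicit unpacking of this, framing the dimension comparison as linear independence via the image of a well-defined $2$-functor; the well-definedness check you flag as the main obstacle is exactly what the paper's one-line ``relations are identical'' is asserting.
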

\begin{proof} Given a 2-morphism $D^\zeta(\lambda)$, let $\text{ev}(D^\zeta(\lambda))$ be the 2-morphism in the (non-super) 2-category $\Ucev\otimes_Z\Bbbk$ obtained by deleting all dashed lines.  Since the 2-hom relations in $\Uc\otimes_\Z\Bbbk$ and $\Ucev\otimes_\Z\Bbbk$ are identical, it follows that their 2-hom-spaces have equal graded dimensions over $\Bbbk$.  Since graded 2-hom space dimensions in $\Ucev\otimes_\Z\Bbbk$ are in accord with \eqref{eqn-graded-dim-conj} (with the $\pi$'s removed), the claim follows.\end{proof}
It follows that over a field of characteristic $2$, given two signed sequences $\ep, \ep' \in {\rm Seq}$, the graded dimension of $\Pi\HOM_{\Uc}(\Ec_{\epsilon}\onebbl, \Ec_{\epsilon'}\onebbl)$ is given by
\begin{equation}\label{eqn-graded-dim-conj}
 \dim_{q,\pi}\Pi\HOM_{\Uc}(\Ec_{\epsilon}\onebbl, \Ec_{\epsilon'}\onebbl)
 = \sum_{D^{\zeta} \in B_{\ep,\ep',\lambda}} q^{\deg (D^{\zeta}(\l))} +
 \pi \sum_{D^{\zeta} \in B_{\ep,\circ\ep',\lambda}} q^{\deg (D^{\zeta}(\l))},
\end{equation}
where $\deg (D^{\zeta}(\l))$ denotes the $q$-degree of the 2-morphism $D^{\zeta}(\l)$ given by the sum of the degrees of each generator from \eqref{eq_generators} and \eqref{eq_generators_cont}.

\begin{conj} \label{conj-spanning} Let $\widetilde{B}_{\ep,\ep',\lambda}^{\pi}\subseteq B_{\ep,\ep',\lambda}^{\pi}$ be the subset consisting of diagrams in which all odd degree bubbles have distinct degrees (i.e., there are no ``repeated'' odd bubbles).
\begin{itemize}
\item If $\Bbbk$ is a field of characteristic not equal to $2$, then $\widetilde{B}_{\ep,\ep',\lambda}^{\pi}$ is a homogeneous basis for the graded $\Bbbk$-vector space $\HOM_{\Uc}(\Ec_{\epsilon}\onebbl, \Ec_{\epsilon'}\onebbl)$.
\item If $\Bbbk=\Z$, then the graded $\Z$-module $\HOM_{\Uc}(\Ec_{\epsilon}\onebbl, \Ec_{\epsilon'}\onebbl)$ has free part with homogeneous basis $\widetilde{B}_{\ep,\ep',\lambda}^{\pi}$ and torsion part the $\Z_2$-vector space on the homogeneous basis $B_{\ep,\ep',\lambda}^{\pi}\setminus\widetilde{B}_{\ep,\ep',\lambda}^{\pi}$.
\end{itemize}
\end{conj}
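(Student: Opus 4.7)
The plan is to prove the conjecture in three stages: first, to account for the $2$-torsion; second, to establish linear independence of $\widetilde{B}^{\pi}_{\ep,\ep',\lambda}$ over a torsion-free base via a faithful $2$-representation; and third, to pin down the torsion structure using the known characteristic-two basis.

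First, I would observe that the super-commutativity of $\Xi$ forces $z_a z_a = -z_a z_a$, i.e.\ $2z_a^2 = 0$, whenever $a$ is odd. More generally, given any diagram $D^{\zeta}(\lambda) \in B^{\pi}_{\ep,\ep',\lambda} \setminus \widetilde{B}^{\pi}_{\ep,\ep',\lambda}$, the presence of two odd-degree bubbles of the same degree (or a more general pair of super-commuting odd $2$-morphisms arising from the super-interchange relations \eqref{eqn-odd-commute-2}) forces the diagram to satisfy $x = -x$, hence $2x = 0$. Thus over $\mathbbm{Z}$ the set $B^{\pi}_{\ep,\ep',\lambda} \setminus \widetilde{B}^{\pi}_{\ep,\ep',\lambda}$ consists of $2$-torsion elements, and over a field of characteristic not equal to $2$ it vanishes entirely, so $\widetilde{B}^{\pi}_{\ep,\ep',\lambda}$ still spans.

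Second, for linear independence of $\widetilde{B}^{\pi}_{\ep,\ep',\lambda}$, I would invoke a faithful $2$-representation of $\Uc$ on the supercategories of modules over the cyclotomic odd nilHecke algebras $\onh^{\Lambda}_n$, obtained by combining the Kang--Kashiwara--Oh strong supercategorical action with the main structure theorem of the paper upgrading such actions to $2$-representations of $\Uc$. For a fixed pair $(\ep, \ep')$, one chooses $\Lambda$ large enough that all weights appearing in the relevant computation are non-vanishing. The resulting hom-space is then described explicitly in terms of odd nilHecke type data, and the free rank over $\mathbbm{Z}$ matches the cardinality (counted with degree) of $\widetilde{B}^{\pi}_{\ep,\ep',\lambda}$ by the basis theorem for $\onh_n$ (Proposition \ref{prop-ekl-2}) together with the decomposition of cyclotomic quotients $\onh^{\Lambda}_n \cong \mathrm{Mat}_{[n]!}(OH_{n,\Lambda})$.

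Third, to rigidify the torsion structure, I would combine stage one with Proposition \ref{conj-spanning-2}: over $\mathbbm{F}_2$, $B^{\pi}_{\ep,\ep',\lambda}$ is a basis, so $\dim_{\mathbbm{F}_2} \HOM_{\Uc}(\Ec_\ep \onebbl, \Ec_{\ep'}\onebbl) \otimes_{\mathbbm{Z}} \mathbbm{F}_2$ equals $|B^{\pi}_{\ep,\ep',\lambda}|$ (graded). The universal coefficient relation for a finitely generated graded $\mathbbm{Z}$-module $M$ whose torsion is $2$-primary reads $\dim_{\mathbbm{F}_2}(M \otimes \mathbbm{F}_2) = \mathrm{rank}_{\mathbbm{Z}}(M) + \dim_{\mathbbm{F}_2}(M_{\mathrm{tors}} \otimes \mathbbm{F}_2)$, and combining it with the free rank determined in stage two forces the torsion part to be an $\mathbbm{F}_2$-vector space of the predicted graded dimension. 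Stage one produced enough $2$-torsion elements indexed by $B^{\pi}_{\ep,\ep',\lambda} \setminus \widetilde{B}^{\pi}_{\ep,\ep',\lambda}$ to saturate this bound, so these elements form an $\mathbbm{F}_2$-basis of the torsion.

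The main obstacle will be establishing the requisite faithfulness in stage two. Unlike the even case (\cite{KL3}), one must track sign ambiguities arising from choices of reduced expressions $\del_w$, handle the super-interchange signs from $\alpha_\Ec, \alpha_\Fc$ uniformly across all cyclotomic levels, and check that no non-trivial element of $\widetilde{B}^{\pi}_{\ep,\ep',\lambda}$ is annihilated upon restriction to the explicit odd nilHecke module it acts on. This is an odd analogue of the polynomial representation faithfulness arguments used to pin down 2-hom-spaces in the even setting, and constructing the right odd polynomial module (built from the skew-polynomial algebras $\spol_n$ of Definition \ref{defn-osym}) is where the real work lies.
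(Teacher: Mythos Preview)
This statement is a \emph{conjecture} in the paper, not a theorem; the authors do not prove it. Immediately after stating it, they observe that the strands part of any 2-hom can be separated from the bubble part, and that the strands part is free by the results of Subsection~\ref{subsec-consequences}. They then explicitly reduce Conjecture~\ref{conj-spanning} to the weaker Conjecture~\ref{conj-free}: that the map $\Xi\to\Pi\HOM_{\Uc}(\onebb_\lambda,\onebb_\lambda)$ of \eqref{eq_ltol} is an isomorphism. That reduction is the full extent of what the paper establishes toward this statement.

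Your three-stage outline is a reasonable framework, and stage one (that diagrams with a repeated odd bubble are $2$-torsion, via supercommutativity of $\Xi$) is correct and essentially matches the paper's implicit reasoning behind the form of Conjecture~\ref{conj-spanning}. Stage three is also a sound reduction once stages one and two are in hand.

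The genuine gap is stage two, and you correctly identify it yourself: you need faithfulness (or at least injectivity on $\widetilde{B}^{\pi}_{\ep,\ep',\lambda}$) of the cyclotomic $2$-representation, and you do not supply it. This is not a technicality to be filled in later; it is exactly the content of the open Conjecture~\ref{conj-free}. Concretely, the cyclotomic action sends bubbles to central elements of $R^\Lambda(n)$, hence into the odd Grassmannian ring $OH_{n,\Lambda}$, and one must show that as $\Lambda$ varies these images separate the free part of $\Xi$. In the even setting this follows from the identification of bubbles with symmetric functions and the classical cohomology of Grassmannians, but the odd analogue is not established anywhere in the paper (nor in \cite{EKL}), and your appeal to Proposition~\ref{prop-ekl-2} and the matrix decomposition of $\onh_n^\Lambda$ does not address it: those results control the strands part, not the bubble part. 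So as written, your proposal is a restatement of the conjecture together with a correct identification of where the difficulty lies, not a proof.
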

If Conjecture \ref{conj-spanning} holds, then equations analogous to \eqref{eqn-graded-dim-conj} hold in these cases.  Working over $\Z$, then, we have
\begin{equation*}
\Pi\HOM_{\Uc}(\Ec_{\epsilon}\onebbl, \Ec_{\epsilon'}\onebbl)\cong\Z^{f(q)}\oplus(\Zt)^{g(q)}
\end{equation*}
as graded $\Z$-modules, where $f,g$ are Laurent polynomials satisfying
\begin{equation*}
\dim_q\Pi\HOM_{\Ucev}(\text{ev}(\Ec_{\epsilon}\onebbl), \text{ev}(\Ec_{\epsilon'}\onebbl))=f(q)+g(q).
\end{equation*}
Every 2-hom space in $\Uc$ can be expressed as the tensor product of a part consisting of strands diagrams with no bubbles and several bubbles (non-nested with the same orientation).  It will follow from the results of Subsection \ref{subsec-consequences} that the strands part is a free $\Z$-module.

Thus Conjecture \ref{conj-spanning}, taken over $\Z$, is reduced to the following weaker conjecture.

\begin{conj} \label{conj-free}
The map $\Xi\to\Pi\HOM_{\Uc}(\onebb_\lambda,\onebb_\lambda)$ of \eqref{eq_ltol} is an isomorphism of graded $\Z$-modules.
\end{conj}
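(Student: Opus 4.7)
The plan is to establish injectivity of the already-surjective map $\Xi \to \Pi\HOM_{\Uc}(\onebbl,\onebbl)$ by transporting the question into a 2-representation where the images of distinct bubble monomials can be distinguished explicitly. As a first reduction, I would invoke Proposition \ref{conj-spanning-2}, which implies that after base change to $\mathbb{F}_2$ the map becomes an isomorphism. This pins down the total graded $\mathbb{F}_2$-dimension of the codomain, reducing the integral claim to ruling out a kernel and to verifying that the free-$\Z$ rank in each bidegree of the codomain matches the rank of $\Xi$, whose graded dimension agrees with $\xi_\Z$ from \eqref{eq-def-xi} away from $2$-torsion.

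To produce the required 2-representation, I would use the Kang--Kashiwara--Oh action on the supermodule categories of the cyclotomic quotients $\onh_n^{\Lambda}$, together with the paper's upgrading theorem (stated earlier in Subsection \ref{subsec-strong-supercat-action}) that promotes any strong supercategorical action of $\U_{q,\pi}(\sltwo)$ to an honest 2-representation of $\Uc$. This yields, for each $\Lambda$, a super-2-functor $\Phi_\Lambda\maps\Uc\to\SBim$. Under $\Phi_\Lambda$ the counter-clockwise bubble $\ccbub{a+\lambda-1}\onebbl$ should act as multiplication by a concrete central element of $\onh_n^{\Lambda}$ expressible in terms of the odd symmetric function calculus; in the limit $n,\Lambda\to\infty$ one obtains an embedding into $\osym$ whose image on distinct bubble words is governed by odd elementary symmetric (or power sum) functions as studied in \cite{EKL,EK}. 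Linear independence of such monomials in $\osym$, together with the matching of $2$-torsion, would then force injectivity of the original map.

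The main obstacle I anticipate is the explicit identification of the image of bubble 2-morphisms under $\Phi_\Lambda$. In the non-super setting of $\Ucev$ this computation is technically involved but well-understood; in the present covering setting the dashed-strand bookkeeping and the parity shifts produced by $\alpha$ and $\xi$ introduce sign subtleties that must be handled carefully, particularly when using the dotted curl and infinite Grassmannian identities \eqref{eq_biginfgrass1}--\eqref{eq_biginfgrass2} to translate bubbles into central elements. A secondary subtlety is matching the $2$-torsion: both $\Xi$ and $\osym$ carry $2$-torsion concentrated in odd-parity bidegrees (arising from $z_a^2$ and from squared odd generators of $\osym$), and the identification must respect the $\Z\times\Zt$ grading so that the free and torsion summands on the two sides align exactly, as forced by the characteristic $2$ count from Step 1.
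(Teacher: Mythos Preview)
The statement you are attempting to prove is labelled \emph{Conjecture} in the paper, and the paper does not supply a proof. It is explicitly left open: the authors remark that Conjecture~\ref{conj-spanning} over $\Z$ reduces to Conjecture~\ref{conj-free}, and the only progress they extract from the cyclotomic 2-representations is Corollary~\ref{cor-ef-bricks}, namely that $\onebbl$ is a brick (the degree-zero piece of the statement). So there is no ``paper's own proof'' to compare against.

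Your strategy is the natural one and mirrors what is done in the even setting of \cite{Lau1}: use surjectivity (already known), control the characteristic-$2$ dimensions via Proposition~\ref{conj-spanning-2}, and then separate bubble monomials by computing their images under the cyclotomic 2-functors $\Phi_\Lambda$. The obstacles you flag are precisely why the authors stop short. First, the explicit identification of dotted bubbles with odd symmetric functions under $\Phi_\Lambda$ has not been carried out in this paper; the right-oriented cap and cup are only determined implicitly (see the remark after \eqref{eqn-action-left-cap-cup}), so writing down bubble images as concrete central elements of $\onh_n^\Lambda$ is genuinely nontrivial. Second, your limiting argument is underspecified: for each fixed $\Lambda$ the target $\onh_n^\Lambda$ is finite-dimensional and kills all sufficiently high-degree bubbles, so you need an inverse-system argument showing that any nonzero element of $\Xi$ survives in \emph{some} $\Phi_\Lambda$, not merely an embedding ``in the limit.'' Third, the $2$-torsion matching is delicate: knowing that the $\mathbb{F}_2$-dimensions agree (Step~1) and that the free ranks agree (Step~2, if completed) does not by itself force the torsion submodules to be isomorphic as graded $\Z$-modules---you would still need to exhibit, for each odd $a$, an element in the codomain that is $2$-torsion and nonzero, i.e., show that the image of $z_a^2$ does not vanish. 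None of these steps is known to fail, but none is a routine verification either, which is why the statement remains a conjecture.
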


%---------------------------------------------------------------------
%
\subsection{Karoubi envelopes and 2-representations} \label{subsec:2reps}
%
%---------------------------------------------------------------------

%- - - - - - - - - - - - - - - - - - - - - - - - - - - - - - - - - - - - - - - -
%
\subsubsection{Karoubi envelopes}
%
% - - - - - - - - - - - - - - - - - - - - - - - - - - - - - - - - - - - - - - - -

Recall that the Karoubi envelope $Kar(\Cc)$ of a category $\Cc$ is an enlargement of
the category $\Cc$ in which all idempotents split (see \cite[Section 9]{Lau1}
and references therein). There is a fully faithful functor $\Cc \to
Kar(\Cc)$ that is universal with respect to functors which split idempotents
in $\Cc$. This means that if $F\maps \Cc \to \Dc$ is any functor
where all idempotents split in $\Dc$, then $F$ extends uniquely (up to
isomorphism) to a functor $\tilde{F} \maps Kar(\Cc) \to \Dc$ (see for
example \cite{Bor}, Proposition 6.5.9).

\begin{defn}
Define the additive $\Bbbk$-linear super-2-category $\Udotc$ to have the same objects
as $\Uc$ and hom additive $\Bbbk$-linear 1-hom-categories given by
$\Hom_{\Udotc}(\lambda,\lambda') = Kar\left(\Hom_{\Uc}(\lambda,\lambda')\right)$. The
fully-faithful additive $\Bbbk$-linear functors $\Hom_{\Uc}(\lambda,\lambda') \to
\Hom_{\Udotc}(\lambda,\lambda')$ combine to form an additive $\Bbbk$-linear 2-functor
$\Uc \to \Udotc$ universal with respect to splitting idempotents in the hom
categories $\Hom_{\Udotc}(\lambda,\lambda')$.  The composition functor
$\Hom_{\Udotc}(\lambda,\lambda') \times \Hom_{\Udotc}(\lambda',\lambda'') \to
\Hom_{\Udotc}(\lambda,\lambda'')$ is induced by the universal property of the Karoubi
envelope from the composition functor for $\Uc$. The 2-category $\Udotc$ has
graded 2-hom-spaces given by
\begin{equation}
\HOM_{\Udotc}(x,y) := \bigoplus_{t\in \Z}\Hom_{\Udotc}(x,y \la t\ra).
\end{equation}
\end{defn}

%- - - - - - - - - - - - - - - - - - - - - - - - - - - - - - - - - - - - - - - -
%
\subsubsection{Divided powers}
%
% - - - - - - - - - - - - - - - - - - - - - - - - - - - - - - - - - - - - - - - -
Consider the 2-morphism
\[
\overline{\partial}_{+,i} \quad := \quad
\hackcenter{
\begin{tikzpicture}[scale=0.6]
   \draw[thick, ->] (-1.5,0) -- (-1.5,1.5);
    \draw[thick, ->] (1.5,0) -- (1.5,1.5);
     \node at (-2.25,.75){$\cdots$};  \node at (2.25,.75){$\cdots$}; \node at (3.6,.75){$\l$};
    \draw[thick, ->] (3,0) -- (3,1.5);
    \draw[thick, ->] (-3,0) -- (-3,1.5);
  \draw[thick, ->] (-0.5,0) .. controls (-0.5,0.75) and (0.5,0.75) .. (0.5,1.5)
      node[pos=0.5, shape=coordinate](X){}
      node[pos=0.2, shape=coordinate](dot){};
  \draw[thick, ->] (0.5,0) .. controls (0.5,0.75) and (-0.5,0.75) .. (-0.5,1.5);
  \draw[color=blue,  thick, dashed] (X) .. controls++(-.5,0) and ++(-.65,.3) .. (dot);
  \draw[line width=0mm] (-0.5,0) .. controls (-0.5,0.75) and (0.5,0.75) .. (0.5,1.5)
      node[pos=0.2]{\tikz \draw[fill=black] circle (0.45ex);};
\end{tikzpicture} }
\]
in $\END_{\Uc}(\Ec^a\onebbl)$ (the dot is on the $i$-th strand). Let $w_0=s_{i_1} \dots s_{i_k}$ denote a reduced decomposition of the longest word in the symmetric group $S_a$.   It follows from \cite[Proposition 3.6]{EKL} that the element $e_{+,a}:=\overline{\partial}_{+,w_0}= \overline{\partial}_{+,i_1}\dots \overline{\partial}_{+,i_k}$ is an idempotent 2-morphism in $\Uc$.  Likewise, define 2-morphisms $\overline{\partial}_{-,i}$ in $\END_{\onebbl\Uc}(\Fc^a)$ by rotating the diagram for $\overline{\partial}_{+,i}$ 180 degrees. Let $e_{-,a}:= \overline{\partial}_{-,i}$ denote the corresponding idempotent in $\END_{\Uc}(\onebbl\Fc^a)$.

Following \cite{EKL} we introduce divided powers
\begin{align}
 \Ec^{(a)}\onebbl &:= \left(
 \Ec^a\onebbl \la \qbin{a}{2}\ra, e_{+,a}
 \right) \\
  \Fc^{(b)}\onebbl &:= \left(
 \Fc^b\onebbl \la \qbin{b}{2}\ra, e_{-,b}
 \right)
\end{align}
in the Karoubi envelope $\Udotc$.

Adding $\Pi$-data to diagrams in \cite[Section 4.4]{EKL} as explained in Section~\ref{subsubsec-otherspanning} prove the following proposition.

\begin{prop} The following relations holds in $\Udotc$.
\begin{align}
\label{eq_EaEb}
 \Ec^{(a)} \Ec^{(b)}\onebbl &= \bigoplus_{\qbins{a+b}{a}}\Ec^{(a+b)}\onebbl, &\\
 \label{eq_FaFb}
 \Fc^{(a)}\Fc^{(b)}\onebbl &= \bigoplus_{\qbins{a+b}{a}}\Fc^{(a+b)}\onebbl. &
\end{align}
\end{prop}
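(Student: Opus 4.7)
The plan is to lift the analogous decomposition at the level of odd nilHecke modules (Proposition 2.12(3)) to the super-2-category $\Uc$, and then extend it to $\Udotc$ via the universal property of the Karoubi envelope.

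First I would recall that $\Ec^{(a)}\onebbl$ is by definition the pair $(\Ec^a\onebbl\ads{\qbins{a}{2}}, e_{+,a})$ where $e_{+,a} = \overline{\partial}_{+,w_0}$ is the image of the odd nilHecke idempotent $e_a = \partial_{w_0}x^\delta \in \onh_a$ under the algebra homomorphism $\onh_a \to \Pi\END_{\Uc}(\Ec^a\onebbl)$ guaranteed by the odd nilHecke action axiom (Definition 3.2(\ref{co:oddNil})). Consequently $\Ec^{(a)}\Ec^{(b)}\onebbl$ is the 1-morphism in $\Udotc$ cut out by the composite idempotent $e_{+,a}\otimes e_{+,b}$ inside $\Pi\END_{\Uc}(\Ec^{a+b}\onebbl)$, where $\otimes$ means horizontal juxtaposition; this composite is the image of $\iota_{a,b}(e_a\otimes e_b)\in\onh_{a+b}$ under the action homomorphism, with $\iota_{a,b}$ as in \eqref{eqn-iota-onh-ab}.

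The key step is then to import the diagrammatic decomposition of $\onh_{a+b}e_a\otimes e_b$ proved in \cite[Section 4.4]{EKL} for the odd nilHecke algebra. That argument produces, for each length-additive coset representative $w$ of $S_a\times S_b$ in $S_{a+b}$, explicit inclusion and projection 2-morphisms between $e_{+,a}\otimes e_{+,b}$ and shifted copies of $e_{+,a+b}$, whose composites sum to $e_{+,a}\otimes e_{+,b}$ and whose opposite composites are identities. Translating each planar diagram from \cite{EKL} into a 2-morphism in $\Uc$ requires specifying $\Pi$-data (dashed strand completions) as described in Section~\ref{subsubsec-otherspanning}; since the local cap/cup/crossing/dot relations in $\Uc$ reduce to those of $\Ucev$ after forgetting dashed strands, and since the super-2-category axioms guarantee that different choices of $\Pi$-data yield equal 2-morphisms, each identity checked in \cite{EKL} lifts verbatim to an identity in $\Uc$.

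Once that lift is in place, the resulting system of mutually orthogonal idempotents in $\Pi\END_{\Uc}(\Ec^{a+b}\onebbl)$, summing to $\iota_{a,b}(e_a\otimes e_b)$ and each conjugate to $e_{+,a+b}$, splits in the Karoubi envelope $\Udotc$ to give the isomorphism
\[
\Ec^{(a)}\Ec^{(b)}\onebbl \;\cong\; \bigoplus_{\qbins{a+b}{a}} \Ec^{(a+b)}\onebbl,
\]
where the multiplicity $\qbins{a+b}{a}$ comes from collecting the degree shifts $q^{2\ell(w)-\ell(w_0^{a+b})+\ell(w_0^{a})+\ell(w_0^{b})}$ over coset representatives (exactly as in the even case). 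The $\Fc$-statement follows by applying the horizontal reflection that swaps upward- and downward-oriented strands and sends $e_{+,n}$ to $e_{-,n}$.

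The main obstacle will be the bookkeeping of signs and parity shifts when translating the EKL diagrammatic identities into $\Uc$: each odd crossing or dot in an EKL diagram contributes a dashed strand whose routing must be consistent throughout the argument. However, this is exactly the bookkeeping handled systematically by the $\Pi$-data formalism of Section~\ref{subsubsec-otherspanning}, so the transfer is mechanical rather than conceptual; no new relation beyond those of $\Uc$ is needed.
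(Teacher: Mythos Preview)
Your proposal is correct and follows essentially the same route as the paper: the paper's proof is the single sentence ``Adding $\Pi$-data to diagrams in \cite[Section 4.4]{EKL} as explained in Section~\ref{subsubsec-otherspanning} prove the following proposition,'' and you have spelled out precisely this transfer. One small correction: the odd nilHecke action on $\Pi\END_{\Uc}(\Ec^n\onebbl)$ comes directly from the defining relations \eqref{eq:oddnilquad}--\eqref{eq:onil-dot} of $\Uc$, not from Definition~\ref{def_strong} (which concerns strong supercategorical actions on an external 2-category $\Cc$).
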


%- - - - - - - - - - - - - - - - - - - - - - - - - - - - - - - - - - - - - - - -
%
\subsubsection{2-representations}
%;8
% - - - - - - - - - - - - - - - - - - - - - - - - - - - - - - - - - - - - - - - -
\begin{defn}
A 2-representation of $\Udotpi(\sltwo)$ is a graded additive $\Bbbk$-linear super-2-functor $\Udotc\to \Cc$ for some graded, additive super-2-category $\Cc$.
\end{defn}

When all of the 1-hom-categories $\Hom_{\Cc}(x,y)$ between objects $x$ and $y$ of $\Cc$ are idempotent complete, in other words $Kar(\Cc) \cong \Cc$, any graded additive $\Bbbk$-linear super-2-functor $\Uc \to \Cc$ extends uniquely to a 2-representation of $\Udotc$.  All abelian categories are idempotent complete.

%---------------------------------------------------------------------
%
\subsection{Grothendieck groups of super-2-categories}
%
%---------------------------------------------------------------------

%- - - - - - - - - - - - - - - - - - - - - - - - - - - - - - - - - - - - - - - -
%
\subsubsection{Grothendieck groups}
%
% - - - - - - - - - - - - - - - - - - - - - - - - - - - - - - - - - - - - - - - -

The notion of Grothendieck group we will use is the split Grothendieck group of a $\Z$-graded super-2-category.  Let $\Cc$ be a graded additive super-2-category with translations, in the obvious sense generalizing \cite[Definition 5.1]{Lau1}; $\Uc$ is an example of such a category.  Elements of $K_0(\Cc)$ are spanned by the classes $[X]$ of 1-morphisms $X$ of $\Cc$.  Identity 1-morphisms $\onebb_{\l}:\l\to \l$ naturally become orthogonal idempotents $\one_{\l}=[\onebb_{\l}]$ of $K_0(\Cc)$.  So if $X: \mu \to \l$ and $Y:\mu'\to \l'$ are 1-morphisms in $\Cc$, then
\begin{equation*}
[X][Y]=[\onebb_{\l}X\onebb_{\mu}][\onebb_{\l'}Y\onebb_{\mu'}]=\one_{\l}[X]\one_{\mu}\one_{\l'}[Y]\one_{\mu'}
=\delta_{\mu \l'}\one_{\l}[X]\one_{\mu}[Y]\one_{\mu'}.
\end{equation*}
If $\mu=\l'$, that is if $X$ and $Y$ are composable, then this equals $\one_{\l}[XY]\one_{\mu'}$.  In this way, $K_0(\Cc)$ naturally becomes an idempotented algebra.  The relations in $K_0(\Cc)$, as in \cite{Lau1}, are determined by isomorphisms of 2-morphisms in $\Cc$:
\begin{equation*}
[\beta]=[\alpha]+[\gamma]\text{ if }\beta\cong\alpha\oplus\gamma
\end{equation*}
in any of the 1-hom-categories of $\Cc$.  The abelian group $K_0(\Cc)$ is made into a module over $\Ac_\pi=\Z[q,q^{-1},\pi]/(\pi^2-1)$ by having the $\Z$-grading act on the classes of 1-morphisms by multiplication by $q$,
\begin{equation*}
[X\la-1\ra]=q[X],
\end{equation*}
and having parity shifts act by multiplication by $\pi$,
\begin{equation*}
[\Pi X]=\pi[X].
\end{equation*}

The space of homs between any two objects in $\Hom_{\Udotc}(\lambda,\mu)$ is a
finite-dimensional $\Bbbk$-vector space.  In particular, the Krull-Schmidt
decomposition theorem holds, and an indecomposable object of
$\Hom_{\Udotc}(\lambda,\mu)$ has the form $(\Ec_{\ep}\onebbl\la t\ra,e)$ for some
minimal/primitive idempotent $e$.  Any presentation of $1=e_1+\dots+e_k$ into the
sum of minimal mutually-orthogonal idempotents gives rise to a decomposition
\begin{equation}
  \Ec_{\ep}\onebbl\la t\ra \cong \bigoplus_{r=1}^{k}(\Ec_{\ep}\onebbl\la t\ra,e_r)
\end{equation}
into a direct sum of indecomposable objects of $\Hom_{\Udotc}(\lambda,\mu)$. Any object
of $\Hom_{\Udotc}(\lambda,\mu)$ has a unique presentation, up to permutation of factors
and isomorphisms, as a direct sum of indecomposables. Choose one representative
${b}$ for each isomorphism class of indecomposables, up to grading shifts, and
denote by $\dot{\mathcal{B}}(\lambda,\mu)$ the set of these representatives. Then
$\{[{b}]\}_b$ is a basis of $K_0\big(\Hom_{\Udotc}(\lambda,\mu)\big)$, viewed as a free
$\Ac_\pi$-module. Composition functors
\begin{equation}
  \Hom_{\Udotc}(\lambda,\lambda') \times \Hom_{\Udotc}(\lambda',\lambda'') \longrightarrow
  \Hom_{\Udotc}(\lambda,\lambda'')
\end{equation}
induce $\Ac_\pi$-bilinear maps
\begin{equation}
  K_0\big(\Hom_{\Udotc}(\lambda,\lambda')\big) \otimes K_0\big(\Hom_{\Udotc}(\lambda',\lambda'')\big) \longrightarrow
  K_0\big(\Hom_{\Udotc}(\lambda,\lambda'')\big)
\end{equation}
turning $K_0(\Udotc)$ into a $\Ac_\pi$-linear additive category with objects
$\lambda\in \Z$.   Multiplication in this basis has structure coefficients in
$\N[q,q^{-1},\pi]/(\pi^2-1)$.
%---------------------------------------------------------------------
%
\subsubsection{$K_0(\Uc)$ admits a map to $\Udotpi$}\label{subsubsec-k0-map}
%
%---------------------------------------------------------------------

\begin{prop} \label{prop_coveringrelsU}
There are explicit 2-isomorphisms
\begin{align}
\Ec \Fc \onebbl &\cong \Fc\Pi \Ec \onebbl \bigoplus_{k=0}^{\lambda-1} \Pi^k\onebbl \la \l-1-2k \ra
 &\text{for $\l \geq 0$ } \\
\Ec\Pi\Fc \onebbl &\cong \Fc \Ec \onebbl\bigoplus_{k=0}^{-\l-1}\Pi^{\l+1+k} \onebbl \la -\l-1-2k \ra
 &\text{for $\l \leq 0$}
\end{align}
in the 2-category $\Uc$.
\end{prop}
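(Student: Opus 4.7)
The plan is to exhibit the isomorphisms of Proposition \ref{prop_coveringrelsU} by constructing explicit mutually inverse 2-morphisms out of the generators, and then verify that the claimed decomposition holds by direct diagrammatic calculation. This is an odd/covering analogue of the argument in Section~5.5 of \cite{Lau1}; the new ingredients are the parity strands and $\alpha$-crossings, but the overall structure of the proof is parallel.

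For $\lambda \geq 0$, I would construct the isomorphism $\Ec\Fc\onebbl \xrightarrow{\sim} \Fc\Pi\Ec\onebbl \oplus \bigoplus_{k=0}^{\lambda-1}\Pi^k\onebbl\ads{\l-1-2k}$ component by component. The map into the $\Fc\Pi\Ec\onebbl$ summand is $(-1)$ times the sideways crossing of \eqref{eqn-mixed-R2-relation-1} (pre/post-composed with an $\alpha_\Ec$ to insert the $\Pi$), and the map out is the analogous sideways crossing going the other way. The map out of the $\Pi^k\onebbl\ads{\l-1-2k}$ summand is $\eta$ capped below by the appropriate dotted fake bubble, while the map in is a dotted version of $\epsilon'$ (with dot count determined by $k$); schematically, projection to the $k$-th weight summand is a cap with $k$ dots, and inclusion is a cup with $\lambda-1-k$ dots, using the fake/real bubble conventions of Section~\ref{subsec:fake-bubbles}.

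The key verifications are then: (i) the "summand completeness" identity asserting that the direct sum of inclusions composed with the direct sum of projections equals the identity of $\Ec\Fc\onebbl$ — this is precisely relation \eqref{eqn-mixed-R2-relation-1}, which was built into the definition of $\Uc$; (ii) the "orthogonality" statements, namely that $\pi_j \circ \iota_k = \delta_{jk}\,\id$ on the bubble summands and $\pi_k$ composed with the sideways-crossing inclusion (and dually) vanishes. Orthogonality among bubble summands reduces, after applying the adjunction relations \eqref{eq_biadjoint1}--\eqref{eq_biadjoint2} and the dotted cap/cup slides of Section~\ref{subsubsec-half_cyclic}, to the infinite-Grassmannian-style identities \eqref{eq:fake-bubble} combined with the bubble positivity \eqref{eq:positivity}. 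Vanishing of the cross-terms $\pi_k \circ (\text{sideways crossing inclusion})$ follows from the curl relations \eqref{eq:lgz-curl} and Proposition~\ref{prop:othercurl} after sliding a cup past a crossing.

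For $\lambda \leq 0$ I would repeat the analogous construction with the roles of the adjunctions swapped, using the cap/cup pair defined via $\epsilon$ and $\eta'$, the sideways crossing appropriate to that case, and the $\Pi^{\l+1+k}$ parity shifts dictated by the adjunction grading shifts. The role of \eqref{eqn-mixed-R2-relation-1} is played by its $\l \leq 0$ counterpart. The main obstacle in both cases is the careful bookkeeping of signs and parity shifts introduced by the $\alpha_\Ec, \alpha_\Fc$ crossings and by the odd cyclicity relation \eqref{eqn-dot-cyclicity}: one must check that the parity shift $\Pi^k$ emerging from the $k$-th cup-with-$(\lambda-1-k)$-dots insertion is exactly the $\Pi^k$ in the statement, and that the sign $(-1)$ prefactor on the sideways-crossing component is forced by the $-$ sign appearing in \eqref{eqn-mixed-R2-relation-1}. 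Once these are matched, the proof reduces to mechanical application of relations already established.
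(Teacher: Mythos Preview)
Your approach is essentially the same as the paper's: write down explicit 2-morphisms built from sideways crossings (for the $\Fc\Pi\Ec$ component) and dotted cups/caps together with fake bubbles (for the $\Pi^k\onebbl$ summands), then verify they are mutually inverse using the defining relation \eqref{eqn-mixed-R2-relation-1} together with the bubble positivity, fake-bubble, and curl relations. One small discrepancy: in the paper the \emph{inclusion} from $\Pi^k\onebbl$ is the simple $k$-dotted cup and the \emph{projection} is the sum over fake bubbles, whereas your description swaps these roles and the dot counts; the paper's choice is the one that makes relation \eqref{eqn-mixed-R2-relation-1} read directly as $\sum_k \iota_k\pi_k = \id$, so you will want to match that convention when you carry out the computation.
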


\begin{proof}
The relations in the 2-category $\Uc$ imply the for $\l\geq 0$ the 2-morphism
\begin{equation}
\hackcenter{\begin{tikzpicture}[scale=0.8]
  \draw[semithick, <-] (-0.5,0) .. controls (-0.5,0.5) and (0.5,0.5) .. (0.5,1)
      node[pos=0.5, shape=coordinate](X){};
  \draw[semithick, ->] (0.5,0) .. controls (0.5,0.5) and (-0.5,0.5) .. (-0.5,1);
  \draw[color=blue,  thick, dashed] (X) to (0,0);
\end{tikzpicture}}\;\; \bigoplus_{k=0}^{\lambda-1}
\hackcenter{\begin{tikzpicture}[scale=0.8]
  \draw[thick, ->-=0.15, ->] (0.5,.2) .. controls (0.6,-0.8) and (-0.6,-0.8) .. (-0.5,.2)
      node[pos=0.85, shape=coordinate](Y){};
  \draw[color=blue, thick, double distance=1pt, dashed]
   (Y) .. controls++(-.5,.2) and ++(0,.4) .. (-1,-1)
         node[pos=0.75,left]{$\scs k$};
  \draw[line width=0mm] (0.5,.2) .. controls (0.5,-0.8) and (-0.5,-0.8) .. (-0.5,.2)
     node[pos=0.85]{\tikz \draw[fill=black] circle (0.4ex);};
\end{tikzpicture} }:\Fc\Pi \Ec \onebbl \bigoplus_{k=0}^{\lambda-1} \Pi^k\onebbl \la \l-1-2k \ra \rightarrow \Ec \Fc \onebbl
\end{equation}
is an isomorphism with inverse
\begin{equation}
-
  \;
\hackcenter{\begin{tikzpicture}[scale=0.8]
  \draw[thick, ->] (-0.5,1) .. controls (-0.5,1.4) and (0.5,1.6) .. (0.5,2)
      node[pos=0.5, shape=coordinate](Y){};
    \draw[thick, <-] (0.5,1) .. controls (0.5,1.4) and (-0.5,1.6) .. (-0.5,2);
  \draw[color=blue,  thick, dashed]
     (Y)  -- (0,2);
\end{tikzpicture} }\;\;
\;\; \bigoplus \;\;
\sum_{j=0}^{\l-1-k} (-1)^j
 \hackcenter{
\begin{tikzpicture}[scale=0.8]
  \draw[thick, ->] (-0.5,0) .. controls (-0.5,0.8) and (0.5,0.8) .. (0.5,0)
      node[pos=0.1, shape=coordinate](DOT){}
      node[pos=0.42, shape=coordinate](L){}
      node[pos=0.5, shape=coordinate](M){}
      node[pos=0.58, shape=coordinate](R){};
  \draw[thick, ->]
  (1.9,1) .. controls ++(0,0.6) and ++(0,0.6) .. (1.1,1)
      node[pos=0.05, shape=coordinate](Z){};
  \draw[thick] (1.9,1) .. controls ++(0,-0.6) and ++(-0,-0.6) .. (1.1,1)
      node[pos=0.5, shape=coordinate](X){}
      node[pos=0.2, shape=coordinate](Y){};
  \draw[color=blue, thick, double distance=1pt, dashed]
    (X) .. controls++(-.1,.5) and ++(-.2,.3) .. (Y)
         node[pos=0.9,right]{$\scs -\l-1$\;};
   \draw[color=blue, thick, double distance=1pt, dashed]
    (Z) .. controls ++(-.5,.4) and ++(.2,.8) .. (R) ;
   \node[blue] at (1.25,0.8){$\scs $\;};
     \node at (Y) {\bbullet};
     \node at (Z) {\bbullet};
 \draw[color=blue, thick, double distance=1pt, dashed] (M) -- (0,1.6);
 \draw[color=blue, thick, double distance=1pt, dashed]
    (DOT) .. controls++(-.65,0) and ++(-.25,.3) .. (L);
  \node at (DOT){\bbullet};
   \node[blue] at (.6,1.4){$\scs j$};
   \node[blue] at (-.3,1.5){$\scs k$};
   \node[blue] at (-1.25,.20){$\scs \l-1 -k-j$};
   \node at (-1,1.2) {$\l$};
\end{tikzpicture} }
\maps \Ec \Fc \onebbl\rightarrow \Fc\Pi \Ec \onebbl \bigoplus_{k=0}^{\lambda-1} \Pi^k\onebbl \la \l-1-2k \ra.
\end{equation}
Likewise, if $\l \le 0$ then the map
\begin{equation}
\hackcenter{\begin{tikzpicture}[scale=0.8]
  \draw[thick, ->] (-0.5,1) .. controls (-0.5,1.4) and (0.5,1.6) .. (0.5,2)
      node[pos=0.5, shape=coordinate](Y){};
    \draw[thick, <-] (0.5,1) .. controls (0.5,1.4) and (-0.5,1.6) .. (-0.5,2);
  \draw[color=blue,  thick, dashed]
     (Y) .. controls ++(.1,.4) and ++(.1,.4)  .. (-.6,1.6)
     .. controls ++(0,-.3) and ++(0,.3) ..(0,1);
\end{tikzpicture} }\;\; \bigoplus_{k=0}^{-\l-1}
\hackcenter{\begin{tikzpicture}[scale=0.8]
  \draw[thick, ->-=0.15, ->] (-0.7,.5) .. controls ++(-.1,-1) and ++(.1,-1) .. (0.7,.5)
      node[pos=0.85, shape=coordinate](Y){}
      node[pos=0.55, shape=coordinate](M){}
      node[pos=0.44, shape=coordinate](X){};
  \draw[color=blue, thick, double distance=1pt, dashed]
   (Y) .. controls++(-.5,.3) and ++(0,.5) .. (M)
         node[pos=0.15,above]{$\scs k$};
   \draw[color=blue, thick, double distance=1pt, dashed]
     (X) .. controls ++(0,.55) and ++(0,.55) ..
      (-.6,-.25) .. controls ++(0,-.3) and ++(0,.4) ..(0,-1);
   \node at (Y){\tikz \draw[fill=black] circle (0.4ex);};
\end{tikzpicture} }:\Ec\Pi\Fc \onebbl \bigoplus_{k=0}^{-\l-1}\Pi^{\l+1+k} \onebbl \la -\l-1-2k \ra \rightarrow \Fc \Ec \onebbl
\end{equation}
is an isomorphism with inverse
\begin{equation}
-
  \;
\hackcenter{\begin{tikzpicture}[scale=0.8]
  \draw[thick, <-] (-0.5,0) .. controls (-0.5,0.4) and (0.5,0.6) .. (0.5,1)
      node[pos=0.5, shape=coordinate](X){};
    \draw[thick, ->] (0.5,0) .. controls (0.5,0.4) and (-0.5,0.6) .. (-0.5,1);
  \draw[color=blue,  thick, dashed]
     (0,1)  .. controls ++(0,-.3) and ++(0,.3) .. (-.6,.4)
     .. controls ++(.1,-.4) and ++(.1,-.4)  .. (X);
\end{tikzpicture} }\;\;
\;\; \bigoplus \;\;
\sum_{j=0}^{-\l-1-k}(-1)^{j}
 \hackcenter{\begin{tikzpicture}[scale=0.8]
  \draw[thick,->-=0.8] (0.5,.25) -- (0.5,.5);
  \draw[thick,->-=0.55] (-0.5,.5) -- (-0.5,.25);
  \draw[thick] (0.5,.5) .. controls ++(.1,.8) and ++(-.1,.8) .. (-0.5,.5)
      node[pos=0.1, shape=coordinate](DOT){};
  \draw[color=blue, thick, double distance=1pt, dashed]
    (DOT) .. controls++(-.5,.4) and ++(0,-1) .. (-.75,1.75);
   \node at (DOT){\bbullet};
   \node[blue] at (0,1.60){$\scs \l-1 -k-j$};
   \node at (-1,.7) {$\l$};
\end{tikzpicture} }
\quad
\hackcenter{\begin{tikzpicture}
  \draw[thick, ->] (-0.4,0) .. controls ++(-0,0.6) and ++(0,0.6) .. (0.4,0)
      node[pos=0.5, shape=coordinate](X){}
      node[pos=0.1, shape=coordinate](Y){};
  \draw[thick] (-0.4,0) .. controls ++(0,-0.6) and ++(0,-0.6) .. (0.4,0)
      node[pos=0.1, shape=coordinate](Z){};
  \draw[color=blue, thick, double distance=1pt, dashed]
    (X) .. controls++(0,.65) and ++(-.65,.3) .. (Y) node[pos=0.15,right]{$\scs \l-1$\;};
  \draw[color=blue, thick, double distance=1pt, dashed] (Z) to[bend left] (-1,1);
  \node at (Y) {\bbullet};
  \node at (Z) {\bbullet};
  \node[blue] at (-.75,.9) {$\scs j$};
\end{tikzpicture} }
\maps\Fc \Ec \onebbl \rightarrow\Ec\Pi\Fc \onebbl \bigoplus_{k=0}^{-\l-1}\Pi^{\l+1+k} \onebbl \la -\l-1-2k \ra.
\end{equation}
\end{proof}

\begin{prop}\label{prop-gamma}
The assignment $E_{\ep}1_{\l} \mapsto [\Ec_{\ep}\onebbl]$ for each covering sequence $\ep \in {\rm CSeq}$ defines an $\Ac_\pi$-algebra homomorphism
\begin{equation}
\gamma \maps \AUdotpi \to K_0(\Udotc).
\end{equation}
\end{prop}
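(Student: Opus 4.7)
The plan is to define $\gamma$ on the generators of $\AUdotpi$---namely the orthogonal idempotents $\onel$ together with the divided powers $E^{(a)}\onel$ and $\onel F^{(a)}$, extended $\Ac_\pi$-linearly---and then verify that every defining relation is sent to a true identity in $K_0(\Udotc)$. The orthogonality $\onel \one_\mu = \delta_{\l,\mu}\onel$ will be immediate because distinct objects of $\Udotc$ admit no non-trivial 1-morphisms between them, so $[\onebbl][\onebb_\mu]$ automatically vanishes when $\l \neq \mu$. The rules $[X\la -1 \ra] = q[X]$ and $[\Pi X] = \pi[X]$ ensure that the $\Ac_\pi$-module structure is respected, and they confirm that the value of $\gamma$ on a general covering sequence $E_{\ep}\onel$ agrees with $\pi^s [\Ec_{\underline{\ep}}\onebbl]$, where $s$ counts the $\circ$'s in $\ep$.

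The central check is the covering $\sltwo$ relation $EF\onel - \pi FE\onel = [\l]\onel$, and Proposition~\ref{prop_coveringrelsU} does essentially all the work. That proposition provides, for $\l \geq 0$, an explicit 2-isomorphism
\[
\Ec\Fc\onebbl \;\cong\; \Fc\Pi\Ec\onebbl \;\oplus\; \bigoplus_{k=0}^{\l-1} \Pi^k \onebbl\la \l-1-2k \ra,
\]
together with a parallel one for $\l \leq 0$ that places $\Ec\Pi\Fc\onebbl$ on the left-hand side. Passing to $K_0(\Udotc)$, the principal term contributes $\pi[\Fc\Ec\onebbl]$ and the bubble summands collect into $[\l][\onebbl]$ after a re-indexing that matches the expansion $[\l] = \sum_{i=0}^{\l-1}\pi^{\l-1-i}q^{\l-1-2i}$ (and the analogous one for $\l < 0$ arising from $[-n] = -\pi^n[n]$).

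The divided power relations $E^{(a)} E^{(b)} = \qbin{a+b}{a} E^{(a+b)}$ and $F^{(a)}F^{(b)} = \qbin{a+b}{a} F^{(a+b)}$ will follow at once from the isomorphisms of equations~\eqref{eq_EaEb} and~\eqref{eq_FaFb}, which were established by splitting the odd nilHecke idempotents $e_{\pm,a}$ in the Karoubi envelope. These, together with the covering relation, exhaust the defining relations of $\AUdotpi$, so the assignment on covering sequences will extend uniquely to an $\Ac_\pi$-algebra homomorphism $\gamma$.

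The main obstacle will be the sign and parity bookkeeping in the $\l \leq 0$ case of the covering relation: Proposition~\ref{prop_coveringrelsU} is stated with $\Ec\Pi\Fc\onebbl$ rather than $\Ec\Fc\onebbl$ on the left, so we must multiply classes by $\pi$ before rearranging, and the formal identity $[-n] = -\pi^n[n]$ must reconcile with the manifestly positive direct-sum decomposition on the 2-categorical side. Once these signs are handled, the remainder of the argument is a routine decategorification computation.
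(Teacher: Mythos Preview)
Your verification of the covering relation via Proposition~\ref{prop_coveringrelsU} is correct, including the $\l\le 0$ bookkeeping, and the divided-power isomorphisms \eqref{eq_EaEb}--\eqref{eq_FaFb} do decategorify as you say. The gap is the sentence ``These, together with the covering relation, exhaust the defining relations of $\AUdotpi$.'' The integral form $\AUdotpi$ is introduced in Definition~\ref{defn-covering-sl2} as an $\Ac_\pi$-\emph{subalgebra} of $\Udotpi$, not by a presentation; you have not established that the relations you list generate the full relation ideal among the divided powers. In particular the mixed relations such as the Lusztig-type identities expressing $E^{(a)}F^{(b)}\onel$ in terms of $F^{(b-j)}E^{(a-j)}\onel$ (cf.\ \eqref{eq_FaEb}--\eqref{eq_EaFb}) are genuine relations in $\AUdotpi$ that do not follow \emph{over $\Ac_\pi$} from the $a=b=1$ covering relation alone, and you have not checked them.

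The paper avoids this problem by a base-change trick that you should adopt. Since $K_0(\Udotc)$ is a free $\Ac_\pi$-module, it embeds in $K_0(\Udotc)\otimes_{\Ac_\pi}\Q(q)^\pi$. One first checks that the assignment extends to a $\Q(q)^\pi$-algebra homomorphism $\gamma_{\Q(q)^\pi}\colon\Udotpi\to K_0(\Udotc)\otimes_{\Ac_\pi}\Q(q)^\pi$; over $\Q(q)^\pi$ the algebra $\Udotpi$ is generated by $E\onel$, $\onel F$, and the idempotents, subject only to the covering relation \eqref{eqn-covering-sl2-relation}, so Proposition~\ref{prop_coveringrelsU} is exactly what is needed. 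Restricting $\gamma_{\Q(q)^\pi}$ to the subalgebra $\AUdotpi$ then automatically respects \emph{all} relations there (they hold already in $\Udotpi$), and one only has to observe that the images of the integral generators land in $K_0(\Udotc)\subset K_0(\Udotc)\otimes_{\Ac_\pi}\Q(q)^\pi$. This last point is where your divided-power computation becomes relevant, but as a statement about images rather than about relations.
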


\begin{proof}
$K_0(\Udotc)$ is a free $\Ac_\pi$-module,
so it is enough to check that the assignment above extends to a homomorphism of
$\Q(q)^\pi$-algebras
\begin{equation}\label{gamma-field}
\gamma_{\Q(q)^\pi} \maps \Udotpi \lra K_0(\Udotc)\otimes_{\Ac_\pi}\Q(q)^\pi.
\end{equation}
Proposition \ref{prop_coveringrelsU} shows that defining relations of $\Udotpi$ lift to 2-isomorphisms of 1-morphisms in
$\Udotc$ and, therefore, descend to relations in the Grothendieck group
$K_0(\Udotc)$. Restricting $\gamma_{\Q(q)^\pi}$ to $\AUdotpi$ gives a homomorphism of
$\Ac_\pi$-algebras with the image of the homomorphism lying in
$K_0(\Udotc)$.
\end{proof}

\begin{prop} The following relations holds in $\Udotc$.
\begin{align}
\label{eq_FaEb} \Fc^{(a)}\Ec^{(b)}\onebbl&=
\bigoplus_{j=0}^{\min(a,b)} \bigoplus_{\qbins{a-b-n}{j}} \Ec^{(b-j)}\Fc^{(a-j)}\onebbl, & \text{if $\l < -2a+2$}\\
\label{eq_EaFb} \Ec^{(a)}\Fc^{(b)}\onebbl&=
\bigoplus_{j=0}^{\min(a,b)}\bigoplus_{\qbins{a-b+n}{j}}\Fc^{(b-j)}\Ec^{(a-j)}\onebbl & \text{if $\l > 2b-2$}.
\end{align}
\end{prop}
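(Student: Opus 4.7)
The two statements are interchanged by the $\Ec\leftrightarrow\Fc$ symmetry combined with a sign flip on weights, so it suffices to establish the decomposition of $\Ec^{(a)}\Fc^{(b)}\onebbl$ under the hypothesis $\l>2b-2$. The approach is induction on $\min(a,b)$. The base case $\min(a,b)=1$ follows from Proposition \ref{prop_coveringrelsU} combined with the divided-power collection identities \eqref{eq_EaEb} and \eqref{eq_FaFb}: applying the basic covering isomorphism $\Ec\Fc\onebb_\mu\cong\Fc\Pi\Ec\onebb_\mu\oplus\bigoplus_{k=0}^{\mu-1}\Pi^k\onebb_\mu\ads{\mu-1-2k}$ (valid since the stable range forces $\mu>0$ at each intermediate stage) and reassembling strands with \eqref{eq_EaEb}, \eqref{eq_FaFb} reproduces the $j=0,1$ summands of the claimed formula.

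For the inductive step, realize $\Ec^{(a)}\Fc^{(b)}\onebbl$ as a direct summand of $\Ec^{(a-1)}\Ec\Fc\Fc^{(b-1)}\onebbl$ by means of the divided-power idempotents $e_{+,a}$ and $e_{-,b}$ introduced in Section \ref{subsec:2reps}. Expand the inner factor $\Ec\Fc\onebb_{\l-2b+2}$ via Proposition \ref{prop_coveringrelsU} (which applies because $\l>2b-2$), producing a ``main'' term $\Ec^{(a-1)}\Fc\Pi\Ec\Fc^{(b-1)}\onebbl$ together with lower-order summands of the form $\Ec^{(a-1)}\Fc^{(b-1)}\onebbl$ with parity shifts and grading shifts. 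Slide the parity shifts past the divided-power idempotents using the $\alpha$-isomorphisms, apply the inductive hypothesis to $\Ec^{(a-1)}\Fc^{(b-1)}\onebbl$ (whose weight is still in the stable range for the smaller indices), and regroup $\Ec$'s and $\Fc$'s back into divided powers using \eqref{eq_EaEb} and \eqref{eq_FaFb}. All intermediate weights that appear stay in the stable range for the parameters encountered, so the inductive hypothesis applies uniformly.

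The main obstacle is the combinatorial verification that the resulting multiplicities collapse to $\qbins{a-b+\l}{j}$ in the $(q,\pi)$-binomial sense. This amounts to a categorification of the Clark-Hill-Wang $(q,\pi)$-Pascal recursion for shifted binomial coefficients, whose shadow in $\AUdotpi$ is well-known. Lifting it to the 2-categorical level requires careful bookkeeping of parity shifts: the signs produced by the super-2-category $\alpha$-isomorphisms when $\Pi$'s are pulled past divided-power idempotents, past caps/cups, and across crossings must conspire to match the $\pi$'s inherent to the $(q,\pi)$-binomial coefficients. A subsidiary technical point, handled by Section \ref{subsec-super-2-categories}, is the $\Pi$-equivariance (up to the canonical $\alpha$-isomorphisms) of the idempotents $e_{\pm,a}$; once this is in hand, the sign accounting reduces to a routine check and the induction closes.
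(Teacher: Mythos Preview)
Your inductive approach is in the right spirit but has a structural gap, and it misses the shortcut that the paper exploits. In your inductive step you expand the inner $\Ec\Fc$ to obtain a ``main term'' $\Ec^{(a-1)}\Fc\Pi\Ec\Fc^{(b-1)}\onebbl$ plus lower-order summands $\Ec^{(a-1)}\Fc^{(b-1)}\onebbl$. The lower-order summands are indeed handled by the inductive hypothesis on $\min(a-1,b-1)$, but the main term is not: it still has an $\Fc$ to the right of $\Ec^{(a-1)}$ and an $\Ec$ to the left of $\Fc^{(b-1)}$, so it cannot simply be ``regrouped into divided powers using \eqref{eq_EaEb} and \eqref{eq_FaFb}.'' Further commutation is required, producing new terms such as $\Ec^{(a-1)}\Fc^{(b-1)}$, $\Ec^{(a-2)}\Fc^{(b-1)}$, $\Ec^{(a-1)}\Fc^{(b-2)}$, etc., and your induction on $\min(a,b)$ alone does not control these. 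A workable scheme would need a stronger inductive parameter (e.g.\ $ab$ or the number of out-of-order $\Ec\Fc$ pairs), together with the full parity bookkeeping you anticipate.

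The paper avoids all of this. Its argument works with the \emph{undivided} powers $\Ec^a\Fc^b\onebbl$: repeated application of the covering $\mf{sl}_2$ isomorphisms (Proposition~\ref{prop_coveringrelsU}) commutes all $\Ec$'s past all $\Fc$'s, yielding a direct sum of shifts of $\Fc^{b-j}\Ec^{a-j}\onebbl$. Since $\Ec^a\Fc^b\onebbl\cong\bigoplus_{[a]![b]!}\Ec^{(a)}\Fc^{(b)}\onebbl$ and $\Fc^{b-j}\Ec^{a-j}\onebbl\cong\bigoplus_{[b-j]![a-j]!}\Fc^{(b-j)}\Ec^{(a-j)}\onebbl$ by \eqref{eq_EaEb} and \eqref{eq_FaFb}, and since the multiplicities are determined by the known identity in $\AUdotpi$, one obtains $\bigoplus_{[a]![b]!}(\text{LHS})\cong\bigoplus_{[a]![b]!}(\text{RHS})$. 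The Krull-Schmidt cancellation property (unique decomposition into indecomposables) then lets you divide out $[a]![b]!$. No explicit $(q,\pi)$-Pascal recursion or parity accounting is needed at the categorical level, because the combinatorics are inherited wholesale from $\AUdotpi$ and Krull-Schmidt does the matching.
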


\begin{proof}
By the Krull-Schmidt theorem, the 1-morphisms $\Fc^b\Ec^a\onebbl$ and $\Ec^a\Fc^b\onebbl$ have unique decompositions into indecomposables.   These equations follow from the covering $\mf{sl}_2$ isomorphisms applied to $\Fc^b\Ec^a\onebbl$ and $\Ec^a\Fc^b\onebbl$.
\end{proof}

%#####################################################################
%
\section{Formal structures in strong supercategorical actions}\label{sec-formal}
%
%#####################################################################

The previous section introduced a super-2-category $\Udotc$ as well as the notion of a strong supercategorical action of $\sltwo$.  An action of $\Udotc$ determines a strong supercategorical action.  The goal of this section is to show that the converse is also true.

The key idea is to use the brick condition for endomorphisms of $\onebl$ in the definition of a strong supercategorical action together with the covering $\mf{sl}_2$-isomorphisms to control the sizes of homs between other 1-morphisms.  This control of hom spaces forces the covering $\mf{sl}_2$-isomorphisms \eqref{eq:EF-rel} and \eqref{eq:FE-rel} to take a rigid form compatible with the isomorphisms \eqref{prop_coveringrelsU} in the definition of $\Uc$.

Throughout Sections \ref{sec-formal}--\ref{sec:proofsl2}, suppose $\Ett,\Ftt$ are functors defining a strong supercategorical action on a 2-category $\Cc$.  All string diagrams in these sections depict 2-morphisms in the 2-category $\Cc$.  These are \emph{not} relations in $\Udotc$; our goal is to prove that the defining relations of $\Udotc$ hold.

%---------------------------------------------------------------------
\subsection{Consequences of $\mf{sl}_2$-relations} \label{subsec:consequences}
%---------------------------------------------------------------------
The structure of a strong action on $\Cc$ imposes strong conditions on the hom-spaces between various maps.  Units and counits for various adjoint structures are formally determined from this data.

For $\l \geq 0$ equation \eqref{eq:FE-rel} together with the adjoint structure formally fixes a choice of 2-morphism $\Ucapr \maps \Ett\Ftt\onebltwo\ads{\l+1} \to \Pi^{\l+1}\onebltwo$ via the adjoint pairing:
\begin{align*}
  &\Hom(\Ett \Ftt \onebltwo \la \lambda+1 \ra, \Pi^{\l+1} \onebltwo) \\
  &\qquad \cong
 \Hom(\Ftt \Pi\Ett \onebltwo \bigoplus_{[\lambda+2]}\onebltwo\la \lambda+1 \ra, \Pi^{\l+1}\onebltwo) \\
 &\qquad \cong
 \Hom((\Ett \onebltwo)^L \Pi\Ett \onebltwo \ads{2 \lambda+4},\Pi^{\l+1}\onebltwo)
  \bigoplus_{k=0}^{\lambda+1} \Hom(\Pi^{k} \onebltwo \la 2(\l+1)-2k \ra, \Pi^{\l+1}\onebltwo )\\
  & \qquad \cong
 \Hom(\Pi\Ett \onebltwo, \Pi^{\l+1}\Ett \onebltwo \la -2\lambda-4 \ra)
  \oplus \Hom(\Pi^{\l+1}\onebltwo, \Pi^{\l+1}\onebltwo),
\end{align*}
where all summands are zero in the second to last equation except when $k=\l+1$.  We take $\Ucapr$ to be the identity map in the second summand of the last equation. This corresponds to the projection out of the top degree summand of $\onebltwo$ in $\Ett\Ftt\onebltwo$
\[
 \xy
  (35,10)*+{\Pi^{\l+1}\onebltwo }="l1";
  (-5,20)*+{\Ftt\Pi\Ett\onebltwo \la \lambda+1 \ra }="t1";
  (-5,10)*+{\Pi^{\l+1} \onebltwo\la 0 \ra }="t2";
  (-5,0)*+{\Pi^{\l}\onebltwo\la 2\ra }="t3";
  (-5,-20)*+{\onebltwo\la 2(\l +1) \ra }="t4";
  (-5,15)*{\oplus}; (-5,5)*{\oplus};  (-5,-5)*{\oplus};  (-5,-15)*{\oplus}; (-5,-9)*{\vdots};
   {\ar "t2";"l1"};
 \endxy
 \]
Likewise, for $\l \geq 0$ define the map $\Ucupl: \onebltwo \la \lambda+1 \ra \rightarrow \Ett \Ftt \onebltwo$ as the inclusion into the lowest degree summand $\onebltwo$ in $\Ett \Ftt \onebltwo$.
\[
 \xy
  (-45,-20)*+{ \onebltwo \la \lambda+1 \ra}="l1";
  (-5,20)*+{\Ftt\Pi\Ett\onebltwo}="t1";
  (-5,10)*+{\Pi^{\l+1} \onebltwo\la -\l -1 \ra }="t2";
  (-5,0)*+{\Pi^{\l}\onebltwo\la -\l +1\ra }="t3";
  (-5,-20)*+{\onebltwo\la \l +1 \ra }="t4";
  (-5,15)*{\oplus}; (-5,5)*{\oplus};  (-5,-5)*{\oplus};  (-5,-15)*{\oplus}; (-5,-9)*{\vdots};
   {\ar "l1"; "t4"};
 \endxy
 \]

The map $\Ucapl \maps \Ftt \Ett \onebl \to \onebl\ads{\l+1}$ is defined by adjunction
\begin{align*}
  &\Hom( \Ftt\Ett \onebl \la -\lambda-1 \ra,  \onebl)
\\&\qquad \cong
\Hom( \Ftt (\Ftt\onebltwo)^R \onebl, \onebl) \cong
\Hom( \Ftt \onebltwo, \onebl \Ftt\onebltwo)
\\ &\qquad \cong
\Hom( (\Ett\onebl)^L\onebltwo \la \lambda+1 \ra,  \Ftt\onebltwo)
\cong
\Hom( \onebltwo \la \lambda+1 \ra,  \Ett \Ftt\onebltwo)
\end{align*}
as the mate under adjunction to the map $\onebltwo\la \l+1\ra \to \Ett\Ftt\onebltwo$ defined above. (For more on mates under adjunction see~\cite{ks1}.) The map $\Ucupr: \Pi^{\l+1}\onebl \la -\lambda-1 \ra \rightarrow  \Ftt\Ett \onebl$ is more difficult to define. Its definition requires an inductive procedure that utilizes the integrability assumption in the definition of a strong action.  This map will be defined in the process of proving proposition~\ref{prop:adjoints}.

Recall that $[-\l]=-\pi^{\l}[\l]$.  For negative weight spaces $(\l \leq 0)$ we have
\begin{equation}
\Ftt\Ett\onebl= \Ett\Pi \Ftt\onebl \oplus \bigoplus_{k=0}^{-\lambda-1}\Pi^{\lambda+1+k} \onebl \la -\lambda -1 -2k\ra .
\end{equation}
Thus a map $\Ucupr: \Pi^{\l+1}\onebl \la -\lambda-1 \ra \rightarrow  \Ftt\Ett \onebl$ is formally determined as the inclusion
\[
 \xy
  (-45,-20)*+{\Pi^{\l+1}\onebl \la -\lambda-1 \ra}="l1";
  (-5,20)*+{\Ett\Pi\Ftt\onebl}="t1";
  (-5,10)*+{\Pi^{(\l+1)+(-\l-1)}\onebl\la \l +1 \ra }="t2";
  (-5,0)*+{\Pi^{(\l+1)+(-\l-2)}\onebl\la \l +3\ra }="t3";
  (-5,-20)*+{\Pi^{\l+1}\onebl\la -\l -1 \ra .}="t4";
  (-5,15)*{\oplus}; (-5,5)*{\oplus};  (-5,-5)*{\oplus};  (-5,-15)*{\oplus}; (-5,-9)*{\vdots};
   {\ar "l1"; "t4"};
 \endxy
 \]
Likewise, define the map
 $\Ucapl: \Ftt\Ett \onebl\rightarrow \onebl \la \lambda+1 \ra$
as the projection:
\[
 \xy
  (45,10)*+{ \onebl \la \lambda+1 \ra .}="l1";
  (-5,20)*+{\Ett\Pi\Ftt\onebl}="t1";
  (-5,10)*+{\Pi^{(\l+1)+(-\l-1)}\onebl\la \l +1 \ra }="t2";
  (-5,0)*+{\Pi^{(\l+1)+(-\l-2)}\onebl\la \l +3\ra }="t3";
  (-5,-20)*+{\Pi^{\l+1}\onebl\la -\l -1 \ra . }="t4";
  (-5,15)*{\oplus}; (-5,5)*{\oplus};  (-5,-5)*{\oplus};  (-5,-15)*{\oplus}; (-5,-9)*{\vdots};
   {\ar "t2"; "l1"};
 \endxy
 \]
 The map $\Ucupl\maps \onebltwo \ads{\l+1} \to \Ett\Ftt\onebltwo$ is then determined by adjunction.  For negative weight space the final adjunction map  $\Ucapr \maps \Ett\Ftt\onebltwo\ads{\l+1} \to \Pi^{\l+1}\onebltwo$ will be defined using the integrability assumption below.

% -------------------------------------------------------------------------------
%
\subsection{Adjoint induction hypothesis and its consequences}
%
% -------------------------------------------------------------------------------

We now make use of the integrability assumption in the definition of a strong supercategorical action. For positive weight spaces $(\lambda \geq 0)$ we proceed by decreasing induction on $\lambda$ starting from the highest weight.  For negative weight spaces $(\l \leq 0)$ we perform increasing induction on $\l$ starting from the lowest weight.  To simplify the exposition we focus on the case $\lambda \geq 0$. Throughout this section we make the following assumption:

\medskip
\noindent\fbox {
   \parbox{\linewidth}{
{\bf Adjoint induction hypothesis:}  \newline Fix $\l \geq 0$.  By induction assume that if $\mu > \lambda$ we have
\begin{equation} \label{eq:ind_hyp}
(\Ett \oneb_{\mu})^R \cong \oneb_{\mu} \Ftt \Pi^{\mu+1}\la\mu+1 \ra \text{ and }
 \left(\oneb_{\mu}\Ftt\right)^L =  \Pi^{\mu+1}\oneb_{\mu}\Ett
  \la \mu+1\ra.
\end{equation}
    }
}

\medskip
\noindent For $\mu \gg 0$ or $\mu \ll 0$ the weight $\mu$ is zero by the integrability assumption so that
\eqref{eq:ind_hyp} vacuously holds since both 1-morphisms are zero.

In this section we derive a number of consequences of the adjoint induction hypothesis \eqref{eq:ind_hyp} culminating in the proof that \eqref{eq:ind_hyp} holds for $\mu=\l$. The reader should recall the `Important Convention' from Definition~\ref{def_strong}.

\begin{lem}\label{lem:E}
Assuming the adjoint induction hypothesis of \eqref{eq:ind_hyp}, if $\mu \ge \l$ then $\Hom(\Ett \oneb_{\mu}, \Pi^k\Ett \oneb_{\mu} \la \ell \ra)$ is zero for all $k$ if $\ell < 0$ and one-dimensional if $\ell=k=0$.  Likewise for $\Hom(\oneb_{\mu} \Ftt\Pi^k,\oneb_{\mu} \Ftt \la \ell \ra)$.
\end{lem}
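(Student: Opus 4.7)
The plan is to reduce the 2-Hom space $\Hom(\Ett\oneb_{\mu},\Pi^k\Ett\oneb_{\mu}\la\ell\ra)$ to a combination of 2-Hom spaces of the form $\Hom(\oneb_{\nu},\Pi^s\oneb_{\nu}\la r\ra)$, which are controlled by the brick condition in the definition of a strong supercategorical action, together with 2-Hom spaces at the strictly higher weight $\mu+2$, which are killed by integrability. The main ingredients will be (i) the right adjoint to $\Ett\oneb_{\mu}$ furnished by the adjoint induction hypothesis \eqref{eq:ind_hyp}, (ii) the given left adjunction $\oneb_{\mu}\Ftt\la-\mu-1\ra\dashv\Ett\oneb_{\mu}$, and (iii) the covering $\mathfrak{sl}_2$-relation \eqref{eq:FE-rel}.

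First I would apply $(\Ett\oneb_{\mu})^R=\oneb_{\mu}\Ftt\Pi^{\mu+1}\la\mu+1\ra$ (valid for $\mu>\l$) and slide parity shifts past $\Ftt$ via $\alpha_{\Ftt}$ to obtain
\[
\Hom(\Ett\oneb_{\mu},\Pi^k\Ett\oneb_{\mu}\la\ell\ra)\;\cong\;\Hom(\Pi^{\mu+1+k}\oneb_{\mu},\Ftt\Ett\oneb_{\mu}\la\mu+1+\ell\ra).
\]
The covering relation for $\mu\geq 0$ displays $\Ftt\Pi\Ett\oneb_{\mu}$ as a direct summand of $\Ett\Ftt\oneb_{\mu}$; multiplying by $\Pi$ shows that $\Ftt\Ett\oneb_{\mu}$ is a direct summand of $\Pi\Ett\Ftt\oneb_{\mu}$. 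Krull--Schmidt cancellation then writes the Hom above as $\Hom(\Pi^{\mu+k}\oneb_{\mu},\Ett\Ftt\oneb_{\mu}\la\mu+1+\ell\ra)$ minus a finite sum of bubble contributions $\Hom(\oneb_{\mu},\Pi^{j-\mu-k}\oneb_{\mu}\la 2\mu+\ell-2j\ra)$ for $0\leq j\leq \mu-1$, each of which is pinned down by the brick condition at weight $\mu$.

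Applying the given left adjunction $\Ftt\dashv\Ett$ to the remaining main term rewrites it as $\Hom(\oneb_{\mu}\Ftt\Pi^{\mu+k},\oneb_{\mu}\Ftt\la 2\mu+\ell\ra)$, which is precisely the ``likewise'' statement of the lemma at weight $\mu$. A symmetric computation for the F-Hom, this time using $(\oneb_{\mu}\Ftt)^L=\Pi^{\mu+1}\oneb_{\mu}\Ett\la\mu+1\ra$ together with the right adjoint $\Ett\la\mu+1\ra$ to $\Ftt$ and the covering relation at weight $\mu+2$, expresses the F-Hom at weight $\mu$ as an E-Hom at the strictly larger weight $\mu+2$, again modulo brick-controlled bubble terms.

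These two reductions are coupled: the E-Hom at weight $\mu$ is controlled by the F-Hom at weight $\mu$, which in turn is controlled by the E-Hom at weight $\mu+2$. A downward induction on $\mu$ (with base case supplied by integrability, since $\oneb_{\mu+2n}\cong 0$ for $n\gg 0$) then forces both claimed bounds. The hard part will be the bookkeeping: one must verify that the grading and parity shifts carried by the bubble contributions at each pass combine with the inductively known vanishing at higher weights to pin down the dimension exactly---zero for $\ell<0$ (for all $k$) and one-dimensional precisely at $\ell=k=0$---rather than merely supplying an upper bound. The rigidity of the brick condition (Hom spaces between $\onebl$'s concentrated in non-negative degree, one-dimensional at degree $0$) is what keeps the bubble accounting tractable.
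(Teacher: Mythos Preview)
Your approach has a genuine gap at the boundary case $\mu=\l$. Your very first step applies $(\Ett\oneb_{\mu})^R\cong\oneb_{\mu}\Ftt\Pi^{\mu+1}\la\mu+1\ra$, which you correctly flag as ``valid for $\mu>\l$''---but the lemma must also cover $\mu=\l$, and the adjoint induction hypothesis \eqref{eq:ind_hyp} gives you nothing there. The same problem recurs in your second reduction: passing from the $\Ftt$-Hom at $\mu$ to the $\Ett$-Hom at $\mu+2$ uses $(\oneb_{\mu}\Ftt)^L$, again unavailable at $\mu=\l$. Since the whole point of this section is to bootstrap the adjoint at weight $\l$ from the hypothesis at strictly higher weights, the case $\mu=\l$ is exactly the one that matters, and your reduction never touches it.

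The paper avoids this by using the adjunctions in the opposite order. It first applies the \emph{left} adjoint $(\Ett\oneb_{\mu})^L=\oneb_{\mu}\Ftt\la-\mu-1\ra$, which is part of the data of a strong supercategorical action and therefore available for all $\mu$ unconditionally. This lands the computation at weight $\mu+2$, where the covering relation \eqref{eq:FE-rel} decomposes $\Ett\Ftt\oneb_{\mu+2}$. Only then does the paper invoke the induction hypothesis, applying $(\oneb_{\mu+2}\Ftt)^L=\Pi^{\mu+3}\Ett\oneb_{\mu+2}\la\mu+3\ra$---and since $\mu\geq\l$ forces $\mu+2>\l$, this is legitimate. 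The upshot is a direct one-step reduction of the $\Ett$-Hom at $\mu$ to the $\Ett$-Hom at $\mu+2$ (plus brick-controlled bubble terms), with no intermediate $\Ftt$-Hom and no appeal to any adjoint at weight $\mu$ itself.
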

\begin{proof}
We prove the result for $\Ett$ by (decreasing) induction on $\mu$ (the result for $\Ftt$ follows by adjunction). We have
\begin{align*}
 & \Hom(\Ett \oneb_{\mu}, \Pi^k\Ett \oneb_{\mu} \la \ell \ra) \\
&\qquad \cong
    \Hom( \oneb_{\mu+2}, \Pi^k \Ett\oneb_{\mu}(\Ett \oneb_{\mu})^L \la \ell \ra) \\
&\qquad \cong
    \Hom( \oneb_{\mu+2} , \Pi^{k}\Ett \Ftt\oneb_{\mu+2} \ads{\ell-\mu-1}) \\
&\qquad \cong
\Hom( \oneb_{\mu+2}, \Pi^{k}\Ftt \Pi \Ett \oneb_{\mu+2} \ads{\ell-\mu-1} ) \oplus
\Hom(\oneb_{\mu+2}, \bigoplus_{[\mu+2]}\Pi^{k} \oneb_{\mu+2} \la \ell -\mu-1\ra) \\
&\qquad \cong
\Hom( \oneb_{\mu+2},\Ftt \Pi^{k+1} \Ett \oneb_{\mu+2} \ads{\ell-\mu-1} ) \oplus
\Hom(\oneb_{\mu+2}, \bigoplus_{[\mu+2]}\Pi^{k} \oneb_{\mu+2} \la \ell -\mu-1\ra) \\
&\qquad \cong
\Hom((\oneb_{\mu+2}\Ftt)^L\oneb_{\mu+2},\Pi^{k+1}\Ett\oneb_{\mu+2} \ads{\ell-\mu-1}) \oplus
\Hom(\oneb_{\mu+2}, \bigoplus_{[\mu+2]}\Pi^{k} \oneb_{\mu+2} \la \ell -\mu-1\ra)) \\
&\qquad \cong
\Hom(\Pi^{\mu+3}\Ett\oneb_{\mu+2}\ads{\mu+3}, \Pi^{k+1}\Ett \oneb_{\mu+2} \ads{\ell-\mu-1}) \oplus
\bigoplus_{j=0}^{\mu+1}\Hom(\oneb_{\mu+2}, \Pi^{k+j} \oneb_{\mu+2} \la \ell -2j\ra) \\
&\qquad \cong
\Hom(\Ett\oneb_{\mu+2}, \Pi^{\mu+k+4}\Ett \oneb_{\mu+2} \ads{\ell-2\mu-4}) \oplus
\bigoplus_{j=0}^{\mu+1}\Hom(\oneb_{\mu+2}, \Pi^{k+j} \oneb_{\mu+2} \la \ell -2j\ra),
\end{align*}
where we used the isomorphisms \eqref{eq-dashed-solid-cross} in the fourth line and the adjoint induction hypothesis \eqref{eq:ind_hyp} on the fifth line.  By induction the first term above is zero and, by condition (\ref{co:hom}) of Definition \ref{def_strong}, all the terms in the direct sum are zero unless $k=0$ and $\ell=0$. In that case we get $\Hom(\oneb_{\mu+2}, \oneb_{\mu+2}) \cong \k$ and we are done.
\end{proof}

\begin{lem}\label{lem:EE}
Assuming the adjoint induction hypothesis \eqref{eq:ind_hyp}, if $\mu \ge \l-2$ then $\Hom(\Ett \Ett \oneb_{\mu}, \Pi^k\Ett \Ett \oneb_{\mu} \la \ell \ra)$ is zero for all $k$ and $\ell \leq -2$ unless $\ell=-2$ and $k=1$ in which case it is one dimensional.
\end{lem}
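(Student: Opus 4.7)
My plan is to follow the template of the proof of Lemma \ref{lem:E}, with one additional layer of adjunction and covering-relation expansion, organized by strong induction on $\mu$ decreasing from large $\mu$ where everything vanishes by integrability. First, I would apply the always-available left adjoint $(\Ett \oneb_\nu)^L = \oneb_\nu \Ftt \la -\nu - 1 \ra$ twice to move both copies of $\Ett$ from source to target, yielding
\begin{equation*}
\Hom(\Ett^2 \oneb_\mu, \Pi^k \Ett^2 \oneb_\mu \la \ell \ra) \;\cong\; \Hom(\oneb_{\mu+4},\; \Pi^k \Ett^2 \Ftt^2 \oneb_{\mu+4} \la \ell - 2\mu - 4 \ra).
\end{equation*}
It is important to use left adjoints throughout, since the adjoint induction hypothesis \eqref{eq:ind_hyp} is not yet available at weight $\l$ itself and the lemma must also hold at the boundary case $\mu = \l-2$.

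Next, since $\mu + 4 \geq \l + 2 \geq 2$ places us in strictly positive weights, I would iteratively apply the covering relation \eqref{eq:FE-rel} to decompose $\Ett^2 \Ftt^2 \oneb_{\mu+4}$: first to the inner $\Ett\Ftt$ at weight $\mu + 2$, then to the resulting outer $\Ett\Ftt$ factors at weight $\mu + 4$, using $\alpha_\Ett$ and $\alpha_\Ftt$ to commute parity shifts past $\Ett$ and $\Ftt$. This exhibits $\Ett^2\Ftt^2\oneb_{\mu+4}$ as a direct sum of three kinds of summands: a single copy of $\Ftt^2 \Ett^2 \oneb_{\mu+4}$ with explicit grading and parity shifts; summands of the form $\Pi^{k'} \Ett\Ftt\oneb_{\mu+4}\la\ell'\ra$ (which further split into $\Pi^{k''}\Ftt\Ett\oneb_{\mu+4}\la\ell''\ra$ plus additional bricks); and pure brick terms $\Pi^{k'}\oneb_{\mu+4}\la\ell'\ra$.

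Each summand would then be analyzed separately. The brick condition of Definition \ref{def_strong}(\ref{co:hom}) immediately handles the pure brick pieces, contributing a one-dimensional factor only when the shift and parity are both zero. The $\Ett\Ftt$-type and $\Ftt\Ett$-type pieces, after applying the adjoint induction hypothesis $(\oneb_\nu\Ftt)^L = \Pi^{\nu+1} \oneb_\nu \Ett \la \nu + 1\ra$ (available at the strictly higher weight $\mu + 4 > \l$), reduce to hom-spaces of the form $\Hom(\Ett\oneb_{\mu+4}, \Pi^\bullet \Ett\oneb_{\mu+4}\la\bullet\ra)$ and $\Hom(\Ftt\oneb_{\mu+4}, \Pi^\bullet \Ftt\oneb_{\mu+4}\la\bullet\ra)$, which are governed by Lemma \ref{lem:E} applied at weight $\mu + 4$. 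The $\Ftt^2 \Ett^2 \oneb_{\mu+4}$ summand reduces by the same adjunction to $\Hom(\Ett^2\oneb_{\mu+4}, \Pi^\bullet \Ett^2 \oneb_{\mu+4}\la\bullet\ra)$, to which Lemma \ref{lem:EE} itself is applied inductively at the strictly higher weight $\mu + 4$.

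The main obstacle is the grading and parity bookkeeping: the iterated covering expansion produces on the order of $\mu^2$ summands whose degree and parity shifts must conspire to leave exactly one one-dimensional contribution in bidegree $(\ell, k) = (-2, 1)$ and nothing else for $\ell \leq -2$. I would cross-check the count against the decategorified identity $E^2 F^2 \onebl = F^2 E^2 \onebl + (\text{lower terms})$ in $\AUpi$, whose graded-quantized form predicts the full Poincar\'e series of $\bigoplus_{k,\ell}\Hom(\Ett^2 \oneb_\mu, \Pi^k \Ett^2 \oneb_\mu \la \ell\ra)$. The expected generator of the surviving summand is the odd crossing 2-morphism $T\maps \Ett^2 \to \Pi\Ett^2$ from Definition \ref{def_strong}(\ref{co:oddNil}), whose bidegree matches $(-2, 1)$.
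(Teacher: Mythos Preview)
Your approach is correct and is the same inductive scheme the paper uses: decreasing induction on $\mu$, peel $\Ett$ off the source via the always-available left adjoint $(\Ett\oneb_\nu)^L$, expand via the covering relation, control lower-order terms by Lemma~\ref{lem:E} and the brick condition, and close the loop on the leading term using the adjoint induction hypothesis at a strictly higher weight.

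The paper makes one simplification that removes most of the bookkeeping you flag as the main obstacle. It moves only the \emph{innermost} $\Ett$ to the target, landing at
\[
\Hom\bigl(\Ett\oneb_{\mu+2},\,\Pi^k\Ett\Ett\Ftt\oneb_{\mu+2}\la\ell-\mu-1\ra\bigr)
\]
rather than your $\Hom(\oneb_{\mu+4},\,\Pi^k\Ett^2\Ftt^2\oneb_{\mu+4}\la\ell-2\mu-4\ra)$. With one $\Ett$ kept on the source side, just two applications of the covering relation suffice (your version actually needs four, not two: commuting $\Ett^2$ fully past $\Ftt^2$ involves four inversions, one of them at weight $\mu+6$, not $\mu+4$). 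More importantly, every lower-order term that appears is already of the form $\Hom(\Ett\oneb_{\mu+2},\,\Pi^\bullet\Ett\oneb_{\mu+2}\la\bullet\ra)$, so Lemma~\ref{lem:E} applies directly---no pure-brick terms and no further reduction of $\Ftt\Ett$-type summands are needed. The leading term becomes $\Hom(\Ett^2\oneb_{\mu+2},\,\Pi^\bullet\Ett^2\oneb_{\mu+2}\la\ell-2\mu-6\ra)$ after a single use of $(\oneb_{\mu+4}\Ftt)^L$, and the unique degree-$(-2)$ survivor is immediately visible as the $j=0$ term in one linear sum, with no decategorified cross-check required.
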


\begin{proof}
The proof is by (decreasing) induction on $\mu$. We have
\begin{align*}
 & \Hom(\Ett \Ett \oneb_{\mu}, \Pi^k\Ett \Ett \oneb_{\mu}) \\
&\quad \cong \Hom(\Ett \oneb_{\mu+2}, \Pi^k\Ett \Ett \oneb_{\mu}(\Ett \oneb_{\mu})^L)
\\
&\quad \cong \Hom(\Ett\oneb_{\mu+2}, \Pi^k\Ett \Ett \Ftt\oneb_{\mu+2}\la -\mu-1 \ra)
\\
&\quad \cong \Hom(\Ett \oneb_{\mu+2}, \Pi^k\Ett \Ftt \Pi \Ett \oneb_{\mu+2} \la -\mu-1 \ra)
 \bigoplus_{j=0}^{\mu+1} \Hom(\Ett \oneb_{\mu+2}, \Pi^{k}\Ett\Pi^j \oneb_{\mu+2} \la \mu+1-2j-\mu-1 \ra)
 \\
&\quad \cong \Hom( \Ett \oneb_{\mu+2}, \Pi^k\Ftt \Pi\Ett\Pi\Ett \oneb_{\mu+2} \la -\mu-1 \ra)
    \bigoplus_{j=0}^{\mu+3} \Hom(\Ett \oneb_{\mu+2}, \Pi^{k+j+1}\Ett \oneb_{\mu+2} \la \mu+3-2j-\mu-1 \ra) \\
& \qquad \qquad \bigoplus_{j=0}^{\mu+1} \Hom(\Ett \oneb_{\mu+2}, \Pi^{k+j} \Ett \oneb_{\mu+2} \la -2j \ra)
\\
&\quad \cong \Hom(\Ett \Ett \oneb_{\mu+2},\Pi^k \Ett \Ett \oneb_{\mu+2} \la -2\mu-6 \ra)
 \bigoplus_{j=0}^{\mu+3} \Hom(\Ett \oneb_{\mu+2}, \Pi^{k+j+1}\Ett \oneb_{\mu+2} \la -2j+2 \ra) \\
& \qquad \qquad \bigoplus_{j=0}^{\mu+1} \Hom(\Ett \oneb_{\mu+2}, \Pi^{k+j}\Ett \oneb_{\mu+2} \la -2j \ra).
\end{align*}
Shifting by $\la \ell \ra$ where $\ell < -2$ we find that the first term is zero by induction and the others are zero by Lemma \ref{lem:E}. If $\ell=-2$ the same vanishing holds with the exception of the term in the middle summation when $j=0$ and $k=1$ which yields $\End(\Ett \oneb_{\mu}) \cong \k$.
\end{proof}

\begin{lem} \label{lem:FEtEF}
Assuming the adjoint induction hypothesis \eqref{eq:ind_hyp}, if $\mu \ge \l$ then $\Hom(\Ftt\Pi\Ett\oneb_{\l}, \Pi^{k}\Ett\Ftt \oneb_{\l} \ads{\ell})$ is zero for all $k$ if $\ell <0$ and one-dimensional if $k=0$ and $\ell=0$.
\end{lem}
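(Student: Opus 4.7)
The plan is to imitate the template used in the proofs of Lemmas~\ref{lem:E} and~\ref{lem:EE}: peel off an outer $\Ftt$ on the left-hand argument by the fixed adjunction, decompose the resulting composition via the covering isomorphism~\eqref{eq:FE-rel}, and reduce each summand either to Lemma~\ref{lem:E} or to Lemma~\ref{lem:EE} via the adjoint induction hypothesis~\eqref{eq:ind_hyp}.

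First I would use $\oneb_\mu\Ftt\ads{-\mu-1}\dashv \Ett\oneb_\mu$ to push the outer $\Ftt$ off the left-hand side and absorb the parity shift into the exponent of $\Pi$, producing
\[
\Hom(\Ftt\Pi\Ett\oneb_\mu, \Pi^{k}\Ett\Ftt\oneb_\mu\ads{\ell})
\;\cong\;
\Hom(\Ett\oneb_\mu, \Pi^{k+1}\Ett\Ett\Ftt\oneb_\mu\ads{\ell-\mu-1}).
\]
Next I would apply~\eqref{eq:FE-rel} twice, first at weight $\mu$ and then at weight $\mu+2$ (both legal since $\mu\ge\l\ge0$), to decompose
\[
\Ett\Ett\Ftt\oneb_\mu
\;\cong\;
\Ftt\Pi\Ett\Pi\Ett\oneb_\mu
\;\oplus\;\bigoplus_{j'=0}^{\mu+1}\Pi^{j'+1}\Ett\oneb_\mu\ads{\mu+1-2j'}
\;\oplus\;\bigoplus_{j=0}^{\mu-1}\Pi^{j}\Ett\oneb_\mu\ads{\mu-1-2j}.
\]
The two direct-sum contributions give Homs of the shape $\Hom(\Ett\oneb_\mu,\Pi^{?}\Ett\oneb_\mu\ads{\ell-2j'})$ and $\Hom(\Ett\oneb_\mu,\Pi^{?}\Ett\oneb_\mu\ads{\ell-2-2j})$, all of which vanish for $\ell<0$ by Lemma~\ref{lem:E}. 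When $\ell=0$ and $k=0$ the single term with $j'=0$ in the first direct sum contributes $\End(\Ett\oneb_\mu)\cong\Bbbk$, while a parity count shows that for $k=1,\ell=0$ every summand is killed.

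For the remaining term coming from $\Ftt\Pi\Ett\Pi\Ett\oneb_\mu$, after commuting parity shifts one obtains a Hom of the form $\Hom(\Ett\oneb_\mu, \Pi^{?}\Ftt\Ett\Ett\oneb_\mu\ads{\ell-\mu-1})$ whose outer $\Ftt$ lives at weight $\mu+2$. Since $\mu\ge\l$ implies $\mu+2>\l$, the adjoint induction hypothesis~\eqref{eq:ind_hyp} supplies the left adjoint $(\oneb_{\mu+2}\Ftt)^L\cong\Pi^{\mu+3}\Ett\oneb_{\mu+2}\ads{\mu+3}$. A further adjunction then rewrites this contribution as
\[
\Hom(\Ett\Ett\oneb_\mu, \Pi^{?}\Ett\Ett\oneb_\mu\ads{\ell-2\mu-4}),
\]
and since $\ell\le0$ and $\mu\ge0$ force $\ell-2\mu-4\le-4<-2$, Lemma~\ref{lem:EE} forces this term to vanish. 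Collecting everything gives the desired vanishing for $\ell<0$ and one-dimensionality at $\ell=k=0$.

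The main obstacle is simply the bookkeeping of $\Z\times\Zt$ grading shifts through the two adjunction steps and the double application of~\eqref{eq:FE-rel}; all the auxiliary indices $j,j'$ have to be tracked so that the final shifts land in the range controlled by Lemma~\ref{lem:E} and Lemma~\ref{lem:EE}, and each time a left adjoint of $\Ftt$ is needed one must verify that the corresponding weight exceeds $\l$ so that~\eqref{eq:ind_hyp} applies. The condition $\mu\ge\l$ guarantees $\mu+2>\l$ at exactly the moment the induction hypothesis is invoked, so the argument goes through cleanly and the one-dimensional contribution at $\ell=k=0$ falls out automatically as the identity on $\Ett\oneb_\mu$.
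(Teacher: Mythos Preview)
Your argument is correct, but it takes a noticeably longer path than the paper's. The paper's proof is a single sentence: reduce to Lemma~\ref{lem:EE}. Concretely, one moves the outer $\Ftt$ on the left via $(\oneb_\l\Ftt)^R\cong\Ett\oneb_\l\ads{-\l-1}$ and then moves the $\Ftt$ on the right via $(\Ftt\oneb_\l)^R\cong\Ett\oneb_{\l-2}\ads{-\l+1}$, both instances of the fixed adjunction, arriving immediately at
\[
\Hom(\Ftt\Pi\Ett\oneb_\l,\Pi^{k}\Ett\Ftt\oneb_\l\ads{\ell})
\;\cong\;
\Hom(\Ett\Ett\oneb_{\l-2},\Pi^{k+1}\Ett\Ett\oneb_{\l-2}\ads{\ell-2}),
\]
and Lemma~\ref{lem:EE} (valid since $\l-2\ge\l-2$) gives the result.

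Your route does only the first adjunction and then expands $\Ett\Ett\Ftt\oneb_\mu$ via two applications of~\eqref{eq:FE-rel}, handling the resulting summands with Lemma~\ref{lem:E} and the residual $\Ftt\Ett\Ett$ term with the induction hypothesis plus Lemma~\ref{lem:EE}. This is exactly the computation performed inside the proof of Lemma~\ref{lem:EE} itself, so you are effectively re-deriving that lemma's content rather than citing it. Nothing is wrong, and your careful tracking of the $j,j'$ shifts is accurate; it is simply that the second adjunction collapses all of your summand analysis into a single appeal to Lemma~\ref{lem:EE}.
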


\begin{proof}
Use the adjoint induction hypothesis to reduce to Lemma~\ref{lem:EE}.
\end{proof}

Lemma~\ref{lem:FEtEF} above implies that the map $\Ftt \Pi\Ett\onebl \to \Ett\Ftt \onebl \cong\Ftt \Pi\Ett \onebl \oplus_{[\l]} \onebl $ which includes $\Ftt \Pi\Ett\onebl$ into the  $\Ftt \Pi\Ett \onebl$ summand of $\Ett \Ftt \onebl$  is unique up to scalar multiple so that it must induce an isomorphism between the $\Ftt \Pi\Ett \onebl$ summand.  This inclusion must also induce the zero map from the $\Ftt \Pi\Ett \onebl$ summand on the right to any summand $\onebl \la \l-1-2k \ra$ on the right.

\begin{lem} \label{lem:EEF}
Assuming the adjoint induction hypothesis \eqref{eq:ind_hyp}, if $\mu \ge \l$ then $$\Hom(\Ett \Ett \Ftt \oneb_{\mu}\la \mu+1 \ra, \Ett\Pi^{\mu}\oneb_{\mu} ) \cong \k.$$
\end{lem}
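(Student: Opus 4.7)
The plan is to apply the covering $\sltwo$ isomorphism \eqref{eq:FE-rel} twice, splitting off the direct summands so that Lemmas~\ref{lem:E} and~\ref{lem:EE} force all contributions except a single copy of $\k$ to vanish.

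First, I would decompose the innermost composite using \eqref{eq:FE-rel}, writing
\[
\Ett\Ftt\oneb_\mu \;\cong\; \Ftt\Pi\Ett\oneb_\mu \;\oplus\; \bigoplus_{k=0}^{\mu-1}\Pi^{k}\oneb_\mu\la \mu-1-2k\ra.
\]
Composing with $\Ett$ on the left and taking $\Hom(-\la\mu+1\ra, \Ett\Pi^\mu\oneb_\mu)$, each finite-sum summand becomes, after using the $\alpha$-maps to move parity shifts,
$\Hom(\Ett\oneb_\mu, \Pi^{\mu-k}\Ett\oneb_\mu\la 2k-2\mu\ra)$, which vanishes by Lemma~\ref{lem:E} because $2k-2\mu\le -2<0$ for $0\le k\le\mu-1$. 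This reduces the problem to showing $\Hom(\Ett\Ftt\Pi\Ett\oneb_\mu\la\mu+1\ra, \Ett\Pi^\mu\oneb_\mu)\cong\k$.

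Next I would apply \eqref{eq:FE-rel} again, now to the composite $\Ett\Ftt\oneb_{\mu+2}$ acting after $\Pi\Ett\oneb_\mu$:
\[
\Ett\Ftt\Pi\Ett\oneb_\mu \;\cong\; \Ftt\Pi\Ett\Pi\Ett\oneb_\mu \;\oplus\; \bigoplus_{k=0}^{\mu+1}\Pi^{k+1}\Ett\oneb_\mu\la \mu+1-2k\ra.
\]
Each finite-sum summand contributes a $\Hom(\Ett\oneb_\mu,\Pi^{a_k}\Ett\oneb_\mu\la 2k-2\mu-2\ra)$, and by Lemma~\ref{lem:E} these all vanish for $0\le k\le \mu$ (since $2k-2\mu-2<0$), whereas for $k=\mu+1$ the internal degree is $0$ and the total parity shift exponent $a_{\mu+1}$ is even (so equivalent via $\xi$ to the identity), giving exactly one copy of $\k$.

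Finally, I need to dispose of the residual term $\Hom(\Ftt\Pi\Ett\Pi\Ett\oneb_\mu\la\mu+1\ra,\Ett\Pi^\mu\oneb_\mu)$. Using $\Pi^2\cong\id$ and the natural isomorphisms $\alpha_\Ett$ commuting $\Ett$ past $\Pi$, I identify this with $\Hom(\Ftt\Ett\Ett\oneb_\mu\la\mu+1\ra,\Pi^\mu\Ett\oneb_\mu)$. Then the adjunction $\oneb_{\mu+2}\Ftt\la-\mu-3\ra\dashv\Ett\oneb_{\mu+2}$ transforms this into
\[
\Hom(\Ett\Ett\oneb_\mu,\,\Pi^\mu\Ett\Ett\oneb_\mu\la -2\mu-4\ra),
\]
which vanishes by Lemma~\ref{lem:EE} because $\mu\ge 0$ forces $-2\mu-4\le -4$, safely outside the exceptional $(\ell,k)=(-2,1)$ case. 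The main obstacle is bookkeeping: one must carefully track the gradings \emph{and} the parity-shift exponents through the repeated decompositions and adjoint moves, so that exactly the $k=\mu+1$ summand in the second decomposition survives.
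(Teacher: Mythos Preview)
Your proof is correct and follows essentially the same approach as the paper: apply the covering $\sltwo$ relation \eqref{eq:FE-rel} twice to move $\Ftt$ past the two $\Ett$'s, use Lemma~\ref{lem:E} to kill all direct-sum terms except the single $k=\mu+1$ contribution yielding $\k$, and then use the adjunction $(\oneb_{\mu+2}\Ftt)^R$ together with Lemma~\ref{lem:EE} to dispose of the residual $\Ftt\Ett\Ett$ term. The only cosmetic difference is that the paper lands on $\Pi^{\mu+2}$ rather than your $\Pi^\mu$ in the final Hom, but since $\Pi^2\cong\id$ this is immaterial.
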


\begin{proof}
Moving the $\Ftt$ past the $\Ett$'s using the covering ${\mathfrak{sl}}_2$ relation \eqref{eqn-covering-sl2-relation} we get
\begin{align*}
 & \Hom(\Ett \Ett \Ftt\oneb_{\mu}\la \mu+1 \ra, \Ett\Pi^{\mu} \oneb_{\mu} )  \\
 &\qquad \cong \Hom( \Ftt\Pi \Ett \Pi\Ett \oneb_{\mu}\la \mu+1 \ra, \Pi^{\mu}  \Ett \oneb_{\mu} )
 \bigoplus_{k=0}^{\mu+1} \Hom(\Pi^{k+1} \Ett \oneb_{\mu}\la 2(\mu+1)-2k\ra, \Ett \Pi^{\mu} \oneb_{\mu})
  \\ \nn  & \qquad \qquad
  \bigoplus_{k=0}^{\mu-1} \Hom(\Ett\Pi^{k} \oneb_{\mu} \la 2\mu-2k\ra,  \Ett\Pi^{\mu} \oneb_{\mu} ).
\end{align*}
By Lemma \ref{lem:E} all the terms in middle and last summations are zero except in the middle summation when $k=\mu+1$ in which case we get $\Hom(\Pi^{\mu+2}\Ett\oneb_{\mu},\Ett\Pi^{\mu}\oneb_{\mu}) \cong \End(\Pi^{\mu}\Ett \oneb_{\mu}) \cong \k$ spanned by the identity. The first summand is isomorphic to
\begin{equation*}\begin{split}
 \Hom(\Ftt \Ett \Ett \oneb_{\mu}\la \mu+1 \ra, \Pi^{\mu+2} \Ett \oneb_{\mu} ) &\cong \Hom(\Ett\Ett \oneb_{\mu} \la \mu+1 \ra, \Pi^{\mu+2}(\oneb_{\mu+2} \Ftt)^R  \Ett \oneb_{\mu} ) \nn \\
 &\cong \Hom(\Ett \Ett \oneb_{\mu}\la \mu +1\ra, \Pi^{\mu+2}\Ett \Ett \oneb_{\mu} \la -\mu-3 \ra)\nn \\
 &\cong \Hom(\Ett \Ett \oneb_{\mu}, \Pi^{\mu+2}\Ett \Ett \oneb_{\mu} \la -2\mu -4 \ra)\nn
\end{split}\end{equation*}
which vanishes by Lemma~\ref{lem:EE} since $\mu \geq \l \ge 0$.
\end{proof}

% - - - - - - - - - - - - - - - - - - - - - - - - - - - - - - - - - - - - - - - -
%
%
\subsubsection{Induced maps}
%
% - - - - - - - - - - - - - - - - - - - - - - - - - - - - - - - - - - - - - - - -
%

The data of a strong supercategorical action formally determines properties of various 2-morphisms. Recall \cite{EKL} that in the Karoubi envelope of the odd nilHecke algebra we define $\Ett^{(2)}\onebl:= (\Ett^2\onebl \la 1 \ra, e_2)$  where
\[
e_2:= \xy (0,0)*{
\begin{tikzpicture}[scale=0.6]
  \draw[thick, ->] (-0.5,0) .. controls (-0.5,0.75) and (0.5,0.75) .. (0.5,1.5)
      node[pos=0.5, shape=coordinate](X){}
      node[pos=0.2, shape=coordinate](dot){};
  \draw[thick, ->] (0.5,0) .. controls (0.5,0.75) and (-0.5,0.75) .. (-0.5,1.5);
  \draw[color=blue,  thick, dashed] (X) .. controls++(-.5,0) and ++(-.65,.3) .. (dot);
  \draw[line width=0mm] (-0.5,0) .. controls (-0.5,0.75) and (0.5,0.75) .. (0.5,1.5)
      node[pos=0.2]{\tikz \draw[fill=black] circle (0.45ex);};
\end{tikzpicture} };
\endxy
\]
is idempotent. From the definition of the thick calculus in \cite{EKL} we have the following isomorphism in the Karoubi envelope:
\[
\xy
 (-25,0)*+{\Ett^{(2)}\onebl \la 1 \ra}="L";
 (25,0)*+{\Pi\Ett^{(2)}\oneb \la -1 \ra}="R";
 (0,20)*+{(\Ett^2\oneb, \Id)}="T";
 (0,-20)*+{(\Ett^2\oneb, \Id)}="B";
 (0,0)*{\bigoplus};
 {\ar^{\begin{tikzpicture}[scale=0.6]
  \draw[thick, ->] (-0.5,0) .. controls (-0.5,0.75) and (0.5,0.75) .. (0.5,1.5)
      node[pos=0.5, shape=coordinate](X){}
      node[pos=0.2, shape=coordinate](dot){};
  \draw[thick, ->] (0.5,0) .. controls (0.5,0.75) and (-0.5,0.75) .. (-0.5,1.5);
  \draw[color=blue,  thick, dashed] (X) .. controls++(-.65,.2) and ++(-.65,.3) .. (dot);
  \draw[line width=0mm] (-0.5,0) .. controls (-0.5,0.75) and (0.5,0.75) .. (0.5,1.5)
      node[pos=0.2]{\tikz \draw[fill=black] circle (0.4ex);};
\end{tikzpicture}} "L";"T"};
 {\ar_{\begin{tikzpicture}[scale=0.6]
  \draw[thick, ->] (-0.5,0) .. controls (-0.5,0.75) and (0.5,0.75) .. (0.5,1.5)
   %% add a label in the middle of the crossing
      node[pos=0.5, shape=coordinate](X){}
      node[pos=0.75, shape=coordinate](dot){}
      node[pos=0.2, shape=coordinate](bldot){};
  \draw[thick, ->] (0.5,0) .. controls (0.5,0.75) and (-0.5,0.75) .. (-0.5,1.5);
  \draw[color=blue,  thick, dashed] (X) .. controls++(-.5,00.1) and ++(-1.95,.35) .. (dot);
  \draw[color=blue,  thick, dashed] (bldot) to[out=100, in=90] (-1,0);
  %% Draw the bullet last so it shows up on top, use an invisible line
    \draw[line width=0mm] (-0.5,0) .. controls (-0.5,0.75) and (0.5,0.75) .. (0.5,1.5)
      node[pos=0.75]{\tikz \draw[fill=black] circle (0.4ex);}
      node[pos=0.2]{\tikz \draw[fill=black] circle (0.4ex);};
\end{tikzpicture}} "R";"T"};
  {\ar_{\begin{tikzpicture}[scale=0.6]
  \draw[thick, ->] (-0.5,0) .. controls (-0.5,0.75) and (0.5,0.75) .. (0.5,1.5)
      node[pos=0.5, shape=coordinate](X){};
  \draw[thick, ->] (0.5,0) .. controls (0.5,0.75) and (-0.5,0.75) .. (-0.5,1.5);
  \draw[color=blue,  thick, dashed] (X) to [out=190,in=-90](-1,1.5) ;
\end{tikzpicture}
  } "B";"R"};
 {\ar^{\begin{tikzpicture}[scale=0.6]
  \draw[thick, ->] (-0.5,0) .. controls (-0.5,0.75) and (0.5,0.75) .. (0.5,1.5)
      node[pos=0.5, shape=coordinate](X){}
      node[pos=0.2, shape=coordinate](dot){};
  \draw[thick, ->] (0.5,0) .. controls (0.5,0.75) and (-0.5,0.75) .. (-0.5,1.5);
  \draw[color=blue,  thick, dashed] (X) .. controls++(-.65,.2) and ++(-.65,.3) .. (dot);
  \draw[line width=0mm] (-0.5,0) .. controls (-0.5,0.75) and (0.5,0.75) .. (0.5,1.5)
      node[pos=0.2]{\tikz \draw[fill=black] circle (0.4ex);};
\end{tikzpicture}} "B";"L"};
\endxy
\]

\begin{lem} \label{lem_niliso}
The map  $\sUupdot \sUup: \Ett \Ett \onebl \rightarrow \Pi\Ett \Ett \onebl \la 2 \ra$ induces an isomorphism $\phi$
 \[
 \xy
  (-5,20)*+{\Pi\Ett^{(2)} \onebl \la -1 \ra}="t1";
  (-5,10)*+{\Ett^{(2)} \onebl \la 1 \ra}="t2";
  (-5,15)*{\oplus};
  (35,10)*+{\Pi^2\Ett^{(2)} \onebl \la 1 \ra}="r1";
  (35,00)*+{\Pi\Ett^{(2)} \onebl \la 3 \ra }="r2";
   (35,5)*{\oplus};
   {\ar^{\phi} "t2"; "r1"};
 \endxy
 \]
 on the $\Ett^{(2)}\onebl$ summand.
\end{lem}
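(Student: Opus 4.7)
The plan is to realize the map $\phi$ explicitly via the odd nilHecke action, to pin down the dimension of the target Hom-space by a brick-type argument, and to close the argument by a direct calculation combined with graded dimension bookkeeping.

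First, I would use condition~\eqref{co:oddNil} together with the Karoubi formalism to identify $\Ett^{(2)}\onebl\la 1\ra$ and $\Pi\Ett^{(2)}\onebl\la -1\ra$ as the summands of $\Ett^2\onebl$ cut out by the orthogonal idempotents $e_2 = \partial_1 x_1$ and $1-e_2 = x_2\partial_1$ of $\onh_2$, and similarly for $\Pi\Ett^2\onebl\la 2\ra$. Under these decompositions $\phi$ is exactly the ``off-diagonal'' matrix block $(1-e_2)\cdot X_1\cdot e_2$ of the 2-morphism $X_1 = \sUupdot\sUup$. A direct manipulation in $\onh_2$ using the relations \eqref{eqn-onh-relations} yields
\begin{equation*}
(1-e_2)\,x_1\,e_2 \;=\; x_1 e_2 - e_2 x_1 e_2 \;=\; x_2\,e_2 \;=\; x_2 - x_2^2\partial_1,
\end{equation*}
which is manifestly nonzero in $\onh_2$.

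Next I would establish the brick-type statement $\Hom(\Ett^{(2)}\onebl\la 1\ra,\,\Pi^2\Ett^{(2)}\onebl\la 1\ra)\cong \End^0(\Ett^{(2)}\onebl)\cong \Bbbk$. To this end, decompose $\End^0(\Ett^2\onebl)$ into its four Karoubi blocks relative to the splitting above, equate the total graded dimension with the value $\Bbbk^2$ produced by Lemma~\ref{lem:EE} (taken with $k=\ell=0$), and observe that the identity endomorphisms of the two summands already exhaust the two-dimensional total. This forces $\End^0(\Ett^{(2)}\onebl)\cong\Bbbk$ and makes the two off-diagonal blocks vanish, so under the canonical identification $\Pi^2\cong\id$ any nonzero even degree-$0$ morphism between the summands is a scalar multiple of an isomorphism.

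Finally I would verify non-vanishing of $\phi$ by a parallel dimension count applied to $\End^{2,\pi}(\Ett^2\onebl)\cong\Bbbk^2$ (again from Lemma~\ref{lem:EE}). Inside this $2\times 2$ block, the diagonal entry $(1-e_2)x_1(1-e_2) = x_2\partial_1 x_1 x_2\partial_1$ collapses to $-x_2 x_1 x_2\partial_1^2 = 0$ in $\onh_2$, while $e_2 x_1 e_2 = (x_1-x_2)e_2$ accounts for a single nonzero diagonal direction. The remaining dimension must therefore be realized by one of the two off-diagonal blocks, and the explicit computation above exhibits $(1-e_2)x_1 e_2 = x_2 e_2$ as the nonzero contribution in precisely the slot where $\phi$ lives; together with the brick property this forces $\phi$ to be an isomorphism. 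The main obstacle is that condition~\eqref{co:oddNil} supplies the $\onh_n$-action on $\Pi\END(\Ett^n)$ but asserts no faithfulness, so one cannot immediately promote nonzero computations in $\onh_2$ to nonzero 2-morphisms in $\Cc$; the graded dimension bookkeeping above is precisely what leaves no room for the relevant block to vanish and so closes the argument.
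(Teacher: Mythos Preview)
Your argument has a genuine gap at both invocations of Lemma~\ref{lem:EE}. That lemma only controls $\Hom(\Ett\Ett\oneb_\mu,\Pi^k\Ett\Ett\oneb_\mu\langle\ell\rangle)$ for $\ell\le -2$; it says nothing about $\ell=0$ or $\ell=2$, so neither $\End^0(\Ett^2\onebl)\cong\Bbbk^2$ nor $\Hom(\Ett^2\onebl,\Pi\Ett^2\onebl\langle 2\rangle)\cong\Bbbk^2$ follows. If you run the recursion in the proof of Lemma~\ref{lem:EE} at $\ell=0$, the $j=0$ term of the middle summation leaves a positive-degree piece $\Hom(\Ett\oneb_{\mu+2},\Pi\Ett\oneb_{\mu+2}\langle 2\rangle)$, and nothing in the setup bounds this. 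Without those dimension counts your nonvanishing-plus-brick strategy cannot close, and the faithfulness worry you raise at the end remains live.

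No such argument is needed, however: the odd nilHecke relations themselves furnish an explicit inverse, and this is what the paper's one-line proof is pointing to. In your idempotent description you correctly computed $\phi=(1-e_2)\,x_1\,e_2=x_2e_2$. Now set $\psi:=e_2\partial_1$. Since $\partial_1 e_2=\partial_1^2x_1=0$ and $e_2\partial_1=\partial_1x_1\partial_1=(1-x_2\partial_1)\partial_1=\partial_1$, we have $e_2\,\psi\,(1-e_2)=\psi$, so $\psi$ is a Karoubi morphism in the reverse direction. Purely from the relations \eqref{eqn-onh-relations},
\[
\psi\phi \;=\; e_2\partial_1x_2e_2 \;=\; e_2(1-x_1\partial_1)e_2 \;=\; e_2,
\qquad
\phi\psi \;=\; x_2e_2\partial_1 \;=\; x_2\partial_1 \;=\; 1-e_2,
\]
so $\psi$ is a two-sided inverse to $\phi$. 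Condition~(\ref{co:oddNil}) guarantees precisely that these identities hold among the 2-morphisms $X_i,T_i$ in $\Cc$; you are transporting an \emph{equation}, not a nonvanishing statement, so faithfulness is irrelevant.
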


\begin{proof}
The claim follows immediately from the axioms of the odd nilHecke algebra.
\end{proof}

The following lemma is the key technical result that allows us to define the other adjunction map.

\begin{lem}\label{lemXind} If $\l > 1$, then the map $\sUupdot \sUdown: \Ett \Ftt \onebl \rightarrow  \Pi\Ett \Ftt \onebl \la 2 \ra$ induces isomorphisms $\phi_k$ shown below for all $0 \leq k \leq \lambda-2$. Similarly, if $\lambda < -1$, then $\sUdown \sUupdot: \Ftt \Ett \onebl \rightarrow\Ftt \Pi\Ett \onebl \la 2 \ra$ induces isomorphisms $\psi_k$ shown below for all $0 \leq k \leq -\lambda-2$.
 \[
  \xy
  (-5,20)*+{\Ftt\Pi\Ett\onebl}="t1";
  (-5,10)*+{\Pi^{\lambda-1}\onebl\la -\l +1 \ra }="t2";
  (-5,0)*+{\Pi^{\lambda-2}\onebl\la -\l +3\ra }="t3";
  (-5,-10)*+{\Pi^{\lambda-3}\onebl\la -\l +5 \ra }="t4";
  (-5,-30)*+{\Pi\onebl\la \l -3 \ra }="t5";
   (-5,-40)*+{\onebl\la \l -1 \ra }="t6"; (-5,-35)*{\oplus};
  (-5,15)*{\oplus}; (-5,5)*{\oplus};  (-5,-5)*{\oplus};  (-5,-15)*{\oplus}; (-5,-25)*{\oplus};
   (-5,-18)*{\vdots};
  (35,10)*+{\Pi\Ftt\Pi\Ett\onebl\la 2\ra}="r1";
  (35,00)*+{\Pi^{\lambda}\onebl\la -\l +3 \ra }="r2";
  (35,-10)*+{\Pi^{\lambda-1}\onebl\la -\l +5\ra }="r3";
  (35,-20)*+{\Pi^{\lambda-3}\onebl\la -\l +5 \ra }="r4";
  (35,-40)*+{\Pi^2\onebl\la \l -1 \ra }="r5";
  (35,-50)*+{\Pi\onebl\la \l +1 \ra }="r6"; (35,-45)*{\oplus};
  (35,5)*{\oplus}; (25,-5)*{\oplus};  (35,-15)*{\oplus};  (35,-25)*{\oplus}; (35,-35)*{\oplus};
   (35,-28)*{\vdots};
   {\ar^{\phi_{\l-2}} "t3"; "r2"}; {\ar^{\phi_{\l-3}} "t4"; "r3"}; {\ar^{\phi_0} "t6"; "r5"};
 \endxy
 \qquad \quad
  \xy
  (-5,20)*+{\Ett\Pi\Ftt\onebl}="t1";
  (-5,10)*+{\onebl\la \l +1 \ra }="t2";
  (-5,0)*+{\Pi\onebl\la \l +3\ra }="t3";
  (-5,-10)*+{\Pi^2\onebl\la \l +5 \ra }="t4";
  (-5,-30)*+{\Pi^{\lambda-2}\onebl\la -\l -3 \ra }="t5";
   (-5,-40)*+{\Pi^{\lambda+1}\onebl\la -\l -1 \ra }="t6"; (-5,-35)*{\oplus};
  (-5,15)*{\oplus}; (-5,5)*{\oplus};  (-5,-5)*{\oplus};  (-5,-15)*{\oplus}; (-5,-25)*{\oplus};
   (-5,-18)*{\vdots};
  (35,10)*+{\Pi\Ett\Pi\Ftt\onebl\la 2\ra}="r1";
  (35,00)*+{\Pi\onebl\la \l +3 \ra }="r2";
  (35,-10)*+{\Pi^{2}\onebl\la \l +5\ra }="r3";
  (35,-20)*+{\Pi^{3}\onebl\la \l +5 \ra }="r4";
  (35,-40)*+{\Pi^{\lambda+1}\onebl\la -\l -1 \ra }="r5";
  (35,-50)*+{\Pi^{\lambda+2}\onebl\la -\l +1 \ra }="r6"; (35,-45)*{\oplus};
  (35,5)*{\oplus}; (25,-5)*{\oplus};  (35,-15)*{\oplus};  (35,-25)*{\oplus}; (35,-35)*{\oplus};
   (35,-28)*{\vdots};
   {\ar^{\psi_{-\l-2}} "t3"; "r2"}; {\ar^{\psi_{-\l-3}} "t4"; "r3"}; {\ar^{\psi_0} "t6"; "r5"};
 \endxy
 \]
Note that if $\lambda = -1, 0 ,1$ the statement above is vacuous (which is why we only consider $\lambda > 1$ and $\lambda < -1$).
\end{lem}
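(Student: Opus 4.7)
The plan is to show that each $\phi_k$ is a nonzero element of a one-dimensional Hom space, reducing the problem to establishing nonvanishing, and then to verify this nonvanishing by an induction leveraging the odd nilHecke relations together with the brick condition.

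First, I would observe via condition (\ref{co:hom}) of Definition \ref{def_strong} that $\Hom(\onebl, \Pi^2\onebl)$ is one-dimensional, spanned by the composition $\xi^{-1} \circ \xi^{-1}$ (or just the canonical generator coming from the super-2-category structure). Since $\Hom(\Pi^k\onebl\la \l-1-2k\ra, \Pi^{k+2}\onebl\la \l-1-2k\ra) \cong \Hom(\onebl, \Pi^2\onebl)$ is one-dimensional over $\k$, each $\phi_k$ is either zero or an isomorphism (the fact that $\xi$-related maps are invertible makes any nonzero element automatically an isomorphism in this Hom space). Thus it suffices to establish $\phi_k \neq 0$ for each $k \in \{0,1,\dots,\l-2\}$.

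Next, I would make the maps explicit. Let $\iota_k \colon \Pi^k\onebl\la\l-1-2k\ra \hookrightarrow \Ett\Ftt\onebl$ denote the inclusion of the $k$-th summand in the chosen covering $\sltwo$ decomposition, and let $\pi_j \colon \Pi\Ett\Ftt\onebl\la 2\ra \twoheadrightarrow \Pi^{j+1}\onebl\la\l+1-2j\ra$ be the corresponding projection for the target. Then by construction
\begin{equation*}
\phi_k = \pi_{k+1} \circ (\sUupdot\sUdown) \circ \iota_k.
\end{equation*}
The induction is on $k$, starting from $k=0$. In the base case $\iota_0 = \Ucupl$ is the explicitly defined inclusion of the bottom summand, and I would verify $\phi_0 \neq 0$ by a direct adjunction calculation: rewriting $\pi_1 \circ (\sUupdot\sUdown) \circ \Ucupl$ using the adjunction $\Ucupl \dashv \Ucapl$ produces a dotted curl-like expression, whose nonvanishing follows from the nondegeneracy of the formal adjoint pairing built in Subsection \ref{subsec:consequences} together with Lemma \ref{lem:E}.

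For the inductive step, I would use the odd nilHecke relation \eqref{eq:onil-dot}, which exchanges a dot on one strand for a dot on the other at the cost of a dashed-line correction term. Applied to $\phi_{k+1}$, this lets one re-express the composition as a combination of $\phi_k$ (after identifying the shifted inclusion of $V_{k+1}$ with a crossing applied to the inclusion of $V_k$) plus correction terms given by bubbles or curls. The correction terms are analyzed using the hom computations of Lemmas \ref{lem:E}, \ref{lem:EE}, and \ref{lem:FEtEF}, together with the adjoint induction hypothesis \eqref{eq:ind_hyp}, which force these terms either to vanish or to contribute a nonzero multiple of the generating isomorphism; in either case, $\phi_{k+1} \neq 0$ follows from $\phi_k \neq 0$.

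The main obstacle will be the base case $\phi_0 \neq 0$ and the sign bookkeeping in the inductive step. Unlike the even setting, the odd nilHecke relation \eqref{eq:onil-dot} carries sign corrections attached to dashed strands, and these signs interact with the parity-shift 2-morphisms $\alpha_{\Ett}, \alpha_{\Ftt}$ and with $\xi$; care is needed to ensure that no undesired cancellation occurs. The parallel statement for $\psi_k$ in the case $\l < -1$ follows by the symmetric argument, exchanging the roles of $\Ett$ and $\Ftt$ and using the other covering $\sltwo$ isomorphism $\Ftt\Ett\onebl \cong \Ett\Pi\Ftt\onebl \oplus \bigoplus_k \Pi^{\l+1+k}\onebl\la -\l-1-2k\ra$ together with the analogues of the formal adjoints constructed for $\l \le 0$.
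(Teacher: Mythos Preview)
Your reduction to showing each $\phi_k\neq 0$ is fine, but the inductive step has a real gap. The odd nilHecke relation \eqref{eq:onil-dot} governs dots and crossings on $\Ett\Ett$ (two upward strands); it tells you nothing about moving a dot across a pair $\Ett\Ftt$. The map in question is $\sUupdot\sUdown$, a dot on the $\Ett$-strand with identity on $\Ftt$, and at this point in the development there is no sideways crossing, no dot-slide relation on mixed orientation strands, and no way to ``exchange a dot on one strand for a dot on the other.'' Moreover, the inclusions $\iota_k$ are purely the abstract ones coming from the covering isomorphism \eqref{eq:FE-rel}; they have no known diagrammatic form yet (the specific form in Corollary~\ref{cor:1} is proved \emph{using} this lemma), so you cannot relate $\iota_{k+1}$ to $\iota_k$ via a crossing.

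The paper's proof avoids exactly this obstacle by introducing an auxiliary $\Ett$-strand: instead of working directly with $\Ett\Ftt\onebl$, it passes to $\Ett\Ett\Ftt\oneb_{\l-2}$ and $\Ett\Ftt\Ett\oneb_{\l-2}$, where the genuine odd nilHecke relations on $\Ett\Ett$ apply. Lemma~\ref{lem_niliso} then guarantees that $\sUupdot\sUup\sUdown$ induces an isomorphism on the $\Ett^{(2)}\Ftt$-summand, and by decomposing $\Ett^{(2)}\Ftt\oneb_{\l-2}$ via the covering relation one extracts enough summands $\Pi^k\Ett\oneb_{\l-2}$ to force the $\phi_k$ to be isomorphisms. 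A cancellation argument comparing the two decompositions of $\Ett\Ett\Ftt\oneb_{\l-2}$ (via $\Ett\Ftt\oneb_{\l-2}$ versus via $\Ett\Ftt\onebl$) ties the induced isomorphisms back to the original $\phi_k$. This extra-strand trick is the key idea you are missing.
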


\begin{proof}
It is easy to see that the morphisms $\phi_k \maps \Pi^k \onebl \ads{\l-1-2k}\to\Pi^k \onebl \ads{\l-1-2k}$ coming from the map $\sUupdot \sUdown$ are the only possible isomorphisms on summands of $\Ett\Ftt\onebl$ isomorphic to $\Pi^k \onebl \ads{\l-1-2k}$.  This follows because such summands cannot exist in $\Ftt\Pi\Ett\onebl$ since the space of projections onto a summand
\begin{equation*}\begin{split}
\Hom(\Ftt\Pi\Ett\onebl, \Pi^k\onebl \ads{\l-1-2k}) &\cong
\Hom(\Pi\Ett\onebl, (\onebl\Ftt)^R\Pi^{k}\onebl \ads{\l-1-2k})\\
&\cong
\Hom(\Ett\onebl, \Pi^{k+1}\Ett\onebl \ads{-2-2k})
\end{split}\end{equation*}
is zero dimensional by Lemma~\ref{lem:E} for all $0 \leq k \leq \l-2$.

Temporarily assume that $\Ett\oneb_{\l-2}$ is indecomposable.
Observe that
\begin{align}
\Ett \Ftt \Ett \oneb_{\lambda-2} &\cong \Ftt \Pi\Ett \Ett \oneb_{\lambda-2} \bigoplus_{k=0}^{\lambda-1} \Pi^k\Ett \oneb_{\lambda-2} \la \lambda-1-2k \ra .
\end{align}
We argue that $\Ftt\Pi\Ett\Ett\oneb_{\l-2}$ contains no summands isomorphic to $\Pi^k\Ett\oneb_{\l-2}\ads{\l-1-2k}$ for $0 < k \leq \l-2$.  Using the cancellation property for indecomposable 1-morphisms the number of isomorphisms $\phi_k$ above is the same as the number of isomorphisms
$$\phi'_{k} \maps \Pi^k\Ett\oneb_{\l-2} \ads{\l-1-2k} \to \Pi^k\Ett\oneb_{\l-2} \ads{\l-1-2k}$$  induced by the map
\begin{equation} \label{eq_EFE}
\sUupdot\sUdown\sUup: \Ett \Ftt \Ett \oneb_{\lambda-2} \rightarrow \Pi \Ett \Ftt \Ett \oneb_{\lambda-2} \la 2 \ra.
\end{equation}

If $\Ftt\Pi\Ett\Ett\oneb_{\l-2}$  contained a summand isomorphic to $\Pi^k\Ett\oneb_{\l-2}\ads{\l-1-2k}$, then there would be a projection
\begin{equation*}\begin{split}
 \Hom(\Ftt\Pi\Ett\Ett\oneb_{\l-2}, \Ett\oneb_{\l-2}\ads{l})
 &\cong   \Hom(\Ett\Ett\oneb_{\l-2}, (\oneb_{\l}\Ftt)^R \Ett\oneb_{\l-2}\ads{\ell})\\
 &\cong    \Hom(\Ett\Ett\oneb_{\l-2}, \Ett\Ett\oneb_{\l-2}\ads{\ell-\l-1}),
\end{split}\end{equation*}
which can only exist if $\ell=\l-1$ by Lemma~\ref{lem:EE}.  By the remarks following Lemma~\ref{lem:FEtEF}, the inclusion map $\Ftt\Pi\Ett \Ett \oneb_{\l-2} \to \Ett\Ftt\Ett\oneb_{\l-2}$ induces the zero map on all summands $(\onebl\ads{\l-1-2k})\Ett\oneb_{\l-2}$. Thus \eqref{eq_EFE} cannot induce any isomorphisms $\phi'_k$ from the $\Ftt\Pi\Ett\Ett\oneb_{\l-2}$ summand.

We show the maps $\phi'_k$ are isomorphisms by relating them to different maps. By Lemma~\ref{lem_niliso}, the map
\begin{equation}\label{eq_EEF}
\sUupdot \sUup \sUdown: \Ett \Ett \Ftt \oneb_{\lambda-2} \rightarrow \Pi\Ett \Ett \Ftt \oneb_{\lambda-2} \la 2 \ra
\end{equation}
induces an isomorphism on the summand of the form $\Ett^{(2)}\Ftt\oneb_{\lambda-2}\la 1\ra$.  But by repeatedly applying \eqref{eq:EF-rel} and using the unique decomposition property it follows that for $\lambda-2 \geq 0$ there is an isomorphism
\begin{equation}
\Ett^{(2)} \Ftt \oneb_{\lambda-2}\ads{1} \cong \Ftt \Ett^{(2)} \oneb_{\lambda-2} \ads{1}
\bigoplus_{k=0}^{\lambda-2} \Pi^k \Ett \oneb_{\lambda-2} \la \lambda-1-2k \ra
\end{equation}
so that the number of isomorphisms
$$\phi''_k \maps \Pi^k \Ett \oneb_{\lambda-2} \ads{\lambda-1-2k} \to
\Pi^k \Ett \oneb_{\lambda-2} \ads{\lambda-1-2k}
$$
induced by \eqref{eq_EEF} is at least one for each $0 \leq k \leq \l-2$.

On the other hand, by decomposing $\Ett\Ftt\oneb_{\lambda-2}$, the map in \eqref{eq_EEF} induces the map
\[
\sUupdot\sUdown\sUup \bigoplus_{i=0}^{\lambda-3} \sUupdot: \Ett \Ftt \Pi\Ett \oneb_{\lambda-2} \bigoplus_{k=0}^{\lambda-3} \Pi^k\Ett \oneb_{\lambda-2} \la \lambda-3-2k \ra \rightarrow \Pi\Ett \Ftt \Pi\Ett \oneb_{\lambda-2} \la 2 \ra \bigoplus_{k=0}^{\lambda-3} \Pi^{k+1}\Ett \oneb_{\lambda-2} \la \lambda-5-2k \ra.
\]
For each $0 \leq k \leq \lambda-2$ the map
\[
 \sUupdot: \Pi^k\Ett \oneb_{\lambda-2} \la \lambda-3-2k \ra \rightarrow \Pi^{k+1}\Ett \oneb_{\lambda-2} \la \lambda-5-2k \ra
\]
is clearly never an isomorphism since the parities and gradings do not match.  Hence the only contribution to the number of isomorphisms $\phi''_k$ comes from the map \eqref{eq_EFE}, showing that $\phi''_k$ is an isomorphism if and only if $\phi'_k$ is an isomorphism.

If $\Ett\oneb_{\l-2}$ is not indecomposable then suppose that $X$ is an indecomposable summand of $\Ett\oneb_{\l-2}$ with multiplicity $m$.  The same argument above shows that the number of isomorphisms $\phi_k$ is equal to $m$ times the number of isomorphisms induced on summands of $X$ by $\phi'_k$. The arguments above show this number must be at least $m$.
\end{proof}

From condition \eqref{co:hom} in the definition of a strong supercategorical action it follows that negative degree diagrams with only dashed lines for endpoints must be equal to the zero 2-morphism.  In particular, the diagrams below are negative degree dotted bubbles.
\[
  \hackcenter{
 \begin{tikzpicture}[scale=0.8]
  \draw[thick, ->] (-0.5,0) .. controls (-0.5,0.8) and (0.5,0.8) .. (0.5,0)
      node[pos=0.5, shape=coordinate](X){};
  \draw[thick] (-0.5,0) .. controls (-0.5,-0.8) and (0.5,-0.8) .. (0.5,0)
      node[pos=0.1, shape=coordinate](Z){};
  \draw[color=blue, thick, double distance=1pt, dashed] (X) -- (0,1.25);
  \draw[color=blue, thick, double distance=1pt, dashed] (Z) to[out=180, in=90] (-1,1.25) ;
  \node[blue] at (-.6,1.1){$\scs  m$\;};
  \node[blue] at (.5,1.1){$\scs  \l-1$\;};
  \draw[line width=0mm] (-0.5,0) (-0.5,0) .. controls (-0.5,-0.8) and (0.5,-0.8) .. (0.5,0)
        node[pos=0.1]{\bbullet};
  \node at (1.3,0) {$\l$};
\end{tikzpicture} } \;\; = 0 \quad \text{for $0 \leq m <\l-1$},
\qquad
\qquad
\hackcenter{
\begin{tikzpicture}[scale=0.8]
  \draw[thick, ->] (0.5,0) .. controls (0.5,0.8) and (-0.5,0.8) .. (-0.5,0);
  \draw[thick] (0.5,0) .. controls (0.5,-0.8) and (-0.5,-0.8) .. (-0.5,0)
      node[pos=0.5, shape=coordinate](X){}
      node[pos=0.2, shape=coordinate](Y){};
 %% Draw double blue curvy line
  \draw[color=blue, thick, double distance=1pt, dashed] (X) -- (0, -1.25)
         node[pos=0.85,right]{$\scs -\l-1$\;};
   \draw[color=blue, thick, double distance=1pt, dashed] (Y) to[out=00, in=-90] (1,1) ;
   \node[blue] at (1.45,0.8){$\scs m$\;};
  \draw[line width=0mm] (0.5,0) .. controls (0.5,-0.8) and (-0.5,-0.8) .. (-0.5,0)
      node[pos=0.2]{\bbullet};
  \node at (-1,.3) {$\lambda$};
\end{tikzpicture} }
  \;\; = 0 \quad \text{for $0 \leq m <-\l-1$}.
\]
As explained in Section~\ref{subsec:consequences}, the $\mf{sl}_2$-relations formally determine a map $\Pi^{\l+1}\onebl\la -\l-1\ra \to \Ftt\Ett\onebl$ when $\l<0$.  It more convenient to work with a related map $\onebl\la -\l-1\ra \to \Ftt\Pi^{\l+1}\Ett\onebl$ defined as follows:
\[
\hackcenter{
\begin{tikzpicture}[scale=0.8]
  \draw[thick, <-] (0.5,0) .. controls ++(0,-0.8) and ++(-0,-0.8) .. (-0.5,0)
      node[pos=0.5, shape=coordinate](X){};
  \draw[color=blue, thick, double distance=1pt, dashed] (X) -- (0, 0)
         node[pos=0.95,above]{$\scs \l+1$\;};
  \node at (-1,-.6) {$\lambda$};
\end{tikzpicture} }
\quad := \quad
\xy (0,-2)*{
\begin{tikzpicture}[scale=0.8]
  \draw[thick, <-] (0.5,0) .. controls (0.5,-0.8) and (-0.5,-0.8) .. (-0.5,0)
      node[pos=0.5, shape=coordinate](X){};
  \draw[color=blue, thick, double distance=1pt, dashed]
    (X).. controls ++(.1,-.5) and ++(0,-.5) .. (-.5, -.5) ..
     controls ++(0,.3) and ++(0,-.3) .. (0,0);
  \node at (-1,-.8) {$\lambda$};
  \node[blue] at (.5,-.9) {$\scs \lambda+1$};
\end{tikzpicture}}; \endxy
\]
The next Corollary shows that degree zero dotted bubbles are non-zero multiples of the identity.

\begin{cor} \label{cor:degz-bubbles}
The maps
\begin{align}
\xy (0,5)*{
\begin{tikzpicture}[scale=0.8]
  \draw[thick, ->] (-0.5,0) .. controls (-0.5,0.8) and (0.5,0.8) .. (0.5,0)
      node[pos=0.5, shape=coordinate](X){}
      node[pos=0.1, shape=coordinate](Y){};
  \draw[thick] (-0.5,0) .. controls (-0.5,-0.8) and (0.5,-0.8) .. (0.5,0);
 %% Draw double blue curvy line
  \draw[color=blue, thick, double distance=1pt, dashed] (X) .. controls++(0,.65) and ++(-.65,.3) .. (Y)
         node[pos=0.15,right]{$\scs \l-1$\;};
 %% Draw the bullet last so it comes out on top
  \draw[line width=0mm] (-0.5,0) .. controls (-0.5,0.8) and (0.5,0.8) .. (0.5,0)
     node[pos=0.1]{\bbullet};
  \node at (1,0.5) {$\lambda$};
\end{tikzpicture}
 };
 \endxy
  &\maps \onebl \to \onebl  \qquad \text{for $\l >0$,}
& \qquad \qquad
 \hackcenter{
\begin{tikzpicture}[scale=0.8]
  \draw[thick, ->] (0.5,0) .. controls (0.5,0.8) and (-0.5,0.8) .. (-0.5,0);
  \draw[thick] (0.5,0) .. controls (0.5,-0.8) and (-0.5,-0.8) .. (-0.5,0)
      node[pos=0.5, shape=coordinate](X){}
      node[pos=0.1, shape=coordinate](Y){};
 %% Draw double blue curvy line
  \draw[color=blue, thick, double distance=1pt, dashed] (X) .. controls++(-.1,1) and ++(-.2,.4) .. (Y)
         node[pos=0.9,right]{$\scs -\l-1$\;};
 %% Draw the bullet last so it comes out on top
  \draw[line width=0mm
  ] (0.5,0) .. controls (0.5,-0.8) and (-0.5,-0.8) .. (-0.5,0)
     node[pos=0.1]{\bbullet};
  \node at (1,0.6) {$\lambda$};
\end{tikzpicture} }&\maps \onebl \to \onebl  \qquad \text{for $\l <0$,}
\end{align}
are both equal to non-zero multiples of $\oneb_{\onebl}$.
\end{cor}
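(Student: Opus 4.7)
The plan is to apply Lemma \ref{lemXind} combined with the brick condition from Lemma \ref{lem:E}. Since $\End(\onebl)\cong\k$ by Lemma \ref{lem:E}, each stated bubble is automatically a scalar multiple of $\onebb_{\onebl}$; the only content to prove is that this scalar is nonzero.

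I focus on the case $\l > 1$ (the case $\l < -1$ is symmetric, by exchanging the roles of $\Ett$ and $\Ftt$ and reversing orientations). The strategy is to identify the scalar corresponding to $\phi_k$ in Lemma \ref{lemXind}, for some $k$ in the range $0 \leq k \leq \l-2$, with the value of the counterclockwise bubble of the corollary. By the covering $\mf{sl}_2$ decomposition, each scalar summand $\Pi^k\onebl\ads{\l-1-2k}$ of $\Ett\Ftt\onebl$ has inclusion and projection maps $\iota_k, \pi_k$ that may be expressed, via the adjunctions \eqref{eq_biadjoint1}--\eqref{eq_biadjoint2} and Proposition \ref{prop_coveringrelsU}, as compositions of the units $\eta, \eta'$ and counits $\epsilon, \epsilon'$ with appropriate numbers of dots and dashed strands. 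Since $\phi_k$ is a map between one-dimensional graded components, it is multiplication by a scalar equal to $\pi_{k'} \circ (\sUupdot\sUdown) \circ \iota_k$ for the matching target summand $k'$. I would then close up this composition using the cyclicity relations \eqref{eqn-dot-cyclicity}, \eqref{eqn-crossing-cyclicity}, the dashed-strand cyclicity of Lemma \ref{lem-dashed-cyclicity}, and the odd nilHecke dot-slide relations \eqref{eq:onil-dot} to reduce it to precisely the counterclockwise bubble of the statement, up to a nonzero sign. Since Lemma \ref{lemXind} asserts that $\phi_k$ is an isomorphism, this scalar is nonzero, and hence so is the bubble.

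For the edge case $\l = 1$ (analogously $\l = -1$), Lemma \ref{lemXind} is vacuous and a separate argument is needed. Here the bubble is the trace $\epsilon \circ (x_\Ec \otimes \onebb_\Ftt) \circ \eta : \onebl \to \onebl$ up to dashed-strand corrections. I would instead invoke Lemma \ref{lem_niliso}, which shows that the dot-insertion on $\Ett\Ett\oneb_{-1}$ induces an isomorphism on the $\Ett^{(2)}\oneb_{-1}$ summand. Closing off one pair of $\Ett$-strands with a cup to land inside $\Ett\Ftt\oneb_1 \cong \Ftt\Pi\Ett\oneb_1 \oplus \oneb_1$ converts this isomorphism into a non-trivial endomorphism of the unique scalar summand $\oneb_1$. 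This endomorphism, unravelled by biadjunction and the odd cyclicity relations, is precisely the bubble at $\l = 1$.

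The main obstacle will be the sign bookkeeping in step (ii) of the identification above. The odd cyclicity relation \eqref{eqn-dot-cyclicity} carries a $(-1)^\l$ factor, and the odd nilHecke dot-slides \eqref{eq:onil-dot} produce signed difference terms each time a dot is slid past a crossing. Additionally, dashed parity strands must be threaded through every cup, cap, and crossing using Lemma \ref{lem-dashed-cyclicity}, and their labels need to be tracked to confirm the bubble produced matches the one in the corollary. The non-vanishing conclusion is insensitive to these signs, but correctly identifying which bubble value corresponds to each $\phi_k$ is the delicate part of the argument.
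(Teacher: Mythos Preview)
Your proposal has a circularity problem. Corollary~\ref{cor:degz-bubbles} lives in Section~\ref{sec-formal}, where we are working inside an arbitrary strong supercategorical action $\Cc$, not in the super-2-category $\Uc$. At this point in the argument, the only adjunction available is the one given in Definition~\ref{def_strong}, namely $\onebl\Ftt\ads{-\l-1}\dashv\Ett\onebl$, together with the inductive hypothesis \eqref{eq:ind_hyp} for weights $\mu>\l$. The biadjointness relations \eqref{eq_biadjoint1}--\eqref{eq_biadjoint2} you invoke are established only in Proposition~\ref{prop:adjoints}, which appears \emph{after} this corollary and explicitly uses it. Likewise, the cyclicity relations \eqref{eqn-dot-cyclicity}, \eqref{eqn-crossing-cyclicity} and Proposition~\ref{prop_coveringrelsU} are defining relations and consequences in $\Uc$; proving that they hold in $\Cc$ is precisely the goal of Sections~\ref{sec:proofcycbiadjoint}--\ref{sec:proofsl2}, all of which come later. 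So you cannot use them here.

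The paper's argument avoids all of this. Recall that in Section~\ref{subsec:consequences} the maps $\Ucupl$ and $\Ucapr$ were \emph{defined} (for $\l\ge 0$) as, respectively, the inclusion of $\onebl$ into the lowest-degree summand of $\Ett\Ftt\onebl$ and the projection onto the top-degree summand $\Pi^{\l-1}\onebl$. Hence both are isomorphisms on those summands by construction, with no appeal to relations in $\Uc$. The bubble is then exactly the composite
\[
\onebl \xrightarrow{\Ucupl} \Ett\Ftt\onebl\la-\l+1\ra \xrightarrow{(\l-1)\text{ dots}} \Pi^{\l-1}\Ett\Ftt\onebl\la\l-1\ra \xrightarrow{\Ucapr} \onebl,
\]
and Lemma~\ref{lemXind} (iterated $\l-1$ times) shows the middle map restricts to an isomorphism between the bottom and top scalar summands. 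The composite is therefore an isomorphism and hence a nonzero scalar. Note also that $\l=1$ requires no separate treatment: there are zero dots and $\Ucupl,\Ucapr$ address the same (unique) scalar summand; your detour through Lemma~\ref{lem_niliso} is unnecessary. Finally, the expression $\pi_{k'}\circ(\sUupdot\sUdown)\circ\iota_k$ for a \emph{single} $\phi_k$ cannot equal the $(\l-1)$-dotted bubble: the intermediate inclusions and projections $\iota_k,\pi_{k'}$ for $0<k<\l-1$ are not given by any explicit cup/cap formulas at this stage, so there is no way to close that composite up to the desired diagram.
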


\begin{proof}
As usual, we prove the case $\l > 0$ (the case $\l < 0$ follows similarly).

By construction, the map $\Ucupl\maps \onebl \rightarrow \Ett \Ftt \onebl \la -\l+1 \ra$ is an isomorphism between the summand $\onebl$ on the left and the corresponding $\onebl$ on the right hand side. Then by Lemma \ref{lemXind}, applying $^{\textcolor[rgb]{0.00,0.00,1.00}{\lambda-1}}\Uupdots\Udown: \Ett \Ftt \onebl \la -\l+1 \ra \rightarrow \Pi^{\l-1}\Ett \Ftt \onebl \la \l-1 \ra$ induces an isomorphism between the summands $\onebl$ on either side.

Finally, again by construction, the map $\Ucapr \maps \Ett \Ftt \onebl \la \l-1 \ra \rightarrow \Pi^{\l-1}\onebl$ is an isomorphism between the summand $\Pi^{\l-1}\onebl$ on the right side and the corresponding top degree summand  $\Pi^{\l-1}\onebl$ on the left hand side. Thus the composition
\[\onebl \xrightarrow{ \Ucupl} \Ett \Ftt  \onebl \la -\l+1 \ra \xrightarrow{\textcolor[rgb]{0.00,0.00,1.00}{\lambda-1} \Uupdots \Udown} \Pi^{\l-1}\Ett \Ftt \onebl \la \l-1 \ra \xrightarrow{
  \begin{tikzpicture}[scale=0.5]
  \draw[semithick, ->] (-0.5,0) .. controls (-0.5,0.65) and (0.5,0.65) .. (0.5,0)
      node[pos=0.5, shape=coordinate](X){};
  \draw[color=blue, thick, double distance=1pt, dashed] (X) .. controls ++(.1,.4) and ++(0,1).. (-1,0);
\end{tikzpicture}
} \onebl
 \]
is an isomorphism and this completes the proof since $\Hom(\onebl, \onebl) \cong \k$.
\end{proof}

% - - - - - - - - - - - - - - - - - - - - - - - - - - - - - - - - - - - - - - - -
%
%
\subsubsection{Defining the last adjunction}
%
% - - - - - - - - - - - - - - - - - - - - - - - - - - - - - - - - - - - - - - - -
%

We are still assuming, for simplicity, that $\l \ge 0$. Recall that we defined all but one of the adjunction maps, namely the 2-morphism $\Ucupr$ in (\ref{eq:B}). This map cannot be defined formally by adjunction.  And since $\l \ge 0$, the 1-morphism $\Ftt \Ett \onebl$ is indecomposable so one cannot define it as an inclusion. To overcome this problem we construct this 2-morphism by defining an up-down crossing and composing it with the $\Ucupl$.

Let $\Ucrossr$ denote the map $ \Ett\Ftt \onebl \cong\Ftt \Pi\Ett \onebl \oplus_{[\l]} \onebl
\rightarrow\Ftt \Pi\Ett\onebl $ which projects onto the  $\Ftt\Pi \Ett \onebl$ summand of $\Ett \Ftt \onebl$. Note that this map is not unique because there exist non-zero maps $\oplus_{[\l]} \onebl \rightarrow\Ftt \Ett \onebl$ but this ambiguity will not matter.

Then we define a map $\Ucuprm \maps \onebl \to \Ftt\Pi^{\l+1}\Ett\onebl\la \l+1\ra$ as the composite
\begin{equation}\label{eq:C}
\xy
 (-25,0)*+{\scs \onebl}="1";
 (-5,0)*+{\scs\Ett \Ftt \onebl \la -\l+1 \ra }="2";
 (25,0)*+{\scs \Pi^{\l}\Ett \Ftt \onebl \la \l +1 \ra}="3";
 (60,0)*+{\scs \Pi^{\l}\Ftt \Pi \Ett \onebl\la \l +1\ra}="4";
 (105,0)*+{\scs \Ftt \Pi^{\l+1} \Ett \onebl\la \l +1\ra.}="5";
  {\ar^-{ \vcenter{\xy (0,0)*{\begin{tikzpicture}[scale=0.5]
  \draw[color=blue, thick, double distance=1pt, dashed] (-0.5,0) .. controls (-0.5,0.5) and (0.5,0.5) .. (0.5,1);
  \draw[semithick, <-] (0.5,0) .. controls (0.5,0.5) and (-0.5,0.5) .. (-0.5,1);
\end{tikzpicture}}; \endxy}\;
\sUupbb
\sUup} "4"; "5"};
 {\ar^-{\xy (0,0)*{\sUupb\Ucrossr}; \endxy\;} "3"; "4"};
 {\ar^-{\l\Uupdots\Udown} "2"; "3"};
 {\ar^-{\Ucupl} "1"; "2"};
\endxy
\end{equation}

\begin{rem}
In the even case, it is implicit in the ``curl-relations" \cite[Proposition 5.4]{Lau1} that given one of the adjunction maps, the other adjunction map is fixed as above.  This approach to the second adjunctions later appears in \cite[Section 4.1.4]{Rou2}.   This form of the second adjunction was verified in the context of cyclotomic quotients by Kashiwara~\cite{Kash}.
\end{rem}

\begin{lem}\label{lem:A'} If $\mu \ge \l$, then the two maps
\[
\begin{tikzpicture}[scale=0.9]
  \draw[thick, ->] (-0.5,0) .. controls (-0.5,0.8) and (1.5,0.8) .. (1.5,0)
      node[pos=0.5, shape=coordinate](L){}
      node[pos=0.33, shape=coordinate](I){};
\draw[color=blue, thick, double distance=1pt, dashed] (L) to [out=90, in=-90] (1.5,2);
\draw[color=blue, thick,  dashed] (I) .. controls++(-1.6,-.1) and ++(.15,-.5) .. (1,2);
\draw[thick] (.5,0) .. controls (.5,.7) and (-.5,.5).. (-.5,1.2);
\draw[thick, ->] (-.5,1.2) -- (-.5,2);
 %% Draw a bunch of labels
   \node[blue] at (1.9,1.8){$\scs \mu-1$};
    \node at (1.9,0.4) {$\mu$};
\end{tikzpicture}
\qquad, \qquad
\begin{tikzpicture}[scale=0.9]
  \draw[thick, ->] (-0.5,0) .. controls (-0.5,0.8) and (1.5,0.8) .. (1.5,0)
      node[pos=0.42, shape=coordinate](L){}
      node[pos=0.5, shape=coordinate](M){}
      node[pos=0.7, shape=coordinate](R){};
  \draw[color=blue, thick, dashed] (R) .. controls ++(.2,.4) and ++(-.15,.4) .. (M);
\draw[color=blue, thick, double distance=1pt, dashed] (L) to [out=90, in=-90] (1.5,2);
\draw[thick] (.5,0) .. controls (.5,.7) and (1.5,.3).. (1.5,1);
\draw[thick, ->] (1.5,1) .. controls (1.5,1.7) and (.5,1.3).. (.5,2);
 %% Draw a bunch of labels
     %%
   \node[blue] at (1.8,1.8){$\scs \mu$};
    \node at (1.9,0.4) {$\mu$};
\end{tikzpicture}
\]
are non-zero multiples of a 2-morphism spanning the 1-dimensional 2-hom-space $ \Hom(\Ett \Ett \Ftt \oneb_{\mu} \la \mu+1 \ra, \Ett \Pi^{\mu} \oneb_{\mu})$ from Lemma~\ref{lem:EEF}.
\end{lem}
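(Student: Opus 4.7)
The plan is to leverage Lemma~\ref{lem:EEF}, which asserts that the 2-hom-space $\Hom(\Ett\Ett\Ftt\oneb_\mu\la\mu+1\ra, \Ett\Pi^\mu\oneb_\mu)$ is one-dimensional. Once this is known, any two nonzero elements of this space must be nonzero scalar multiples of one another, and the lemma reduces to showing that each of the two displayed 2-morphisms is nonzero.

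First I would treat the left-hand diagram. Reading the picture, this 2-morphism is precisely $\mathbf{1}_{\Ett\oneb_{\mu+2}}\otimes\Ucapr$, where $\Ucapr\maps\Ett\Ftt\oneb_\mu\ads{\mu-1}\to\Pi^{\mu-1}\oneb_\mu$ is the counit constructed in Section~\ref{subsec:consequences} as the projection onto the top-degree summand $\Pi^{\mu-1}\oneb_\mu$ of the covering $\sltwo$ decomposition in \eqref{eq:FE-rel}. That projection is nonzero essentially by its definition, and horizontally composing with $\mathbf{1}_{\Ett\oneb_{\mu+2}}$ on the left preserves nonvanishing (for instance, by applying the cancellation property noted after the Krull--Schmidt discussion and using that $\Ett\oneb_{\mu+2}$ is a brick by Lemma~\ref{lem:E}).

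Next I would handle the right-hand diagram, which contains the upward-downward crossing $\Ucrossr$. The strategy is to precompose with $\mathbf{1}_\Ett\otimes\Ucupl\maps\Ett\oneb_\mu\ads{-\mu+1}\to\Ett\Ett\Ftt\oneb_\mu\ads{2}$ and show the composition is a nonzero scalar multiple of the identity on $\Ett\oneb_\mu$ (up to an appropriate $\Pi$-shift). Unpacking $\Ucrossr$ from its definition in~\eqref{eq:C}, the resulting diagram contains a configuration in which $\Ucapr$ is composed with $\Ucupl$ through a sequence of the isomorphisms $\phi_k$ produced by Lemma~\ref{lemXind}; this is exactly the same composition that appeared in the proof of Corollary~\ref{cor:degz-bubbles}, where it was shown to be a nonzero degree-zero dotted bubble, hence a nonzero multiple of $\mathbf{1}_{\onebl}$. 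Tensoring on the outside with $\mathbf{1}_\Ett$ preserves non-vanishing just as in the first case, so the composite is nonzero and therefore the second diagram is nonzero.

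The main obstacle I anticipate is in the second step: the crossing $\Ucrossr$ was formally defined only up to ambiguity on the $[\mu]$-summand, and one must be sure this ambiguity does not contribute to the composition. The key is that the projection $\Ucrossr$ lands inside $\Ftt\Pi\Ett\oneb_\mu$, and any ambiguous component would factor through a summand $\onebl\ads{\mu-1-2k}$ that is killed upon precomposing with $\Ucupl$ (which targets the lowest-degree summand). Verifying this and tracking all the $\Pi$-data and sign contributions carefully---using Lemma~\ref{lem:E} to rule out unwanted hom-spaces between the $\Pi^k\oneb_\mu$ pieces---constitutes the technical heart of the argument.
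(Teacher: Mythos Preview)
Your treatment of the left diagram contains a genuine error: that diagram is \emph{not} $\mathbf{1}_{\Ett}\otimes\Ucapr$. Read the picture again---the middle upward strand (starting at the second bottom position) crosses the left leg of the cap, and the single dashed line emanating from the point you would call $I$ is precisely the $\Pi$-output of that upward crossing $\tau_\Ett$. So the left map is really $(\mathbf{1}_\Ett\otimes\Ucapr)\circ(\tau_\Ett\otimes\mathbf{1}_\Ftt)$ (up to $\Pi$-data), and since the crossing has degree $-2$ and is nilpotent, you cannot conclude nonvanishing just from the fact that $\Ucapr$ is a nonzero projection. The paper's argument for this map is quite different: assume the map is zero, multiply by a dot on the top of the surviving upward strand, and slide that dot down through the crossing using the odd nilHecke relation~\eqref{eq:onil-dot}. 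One of the two resulting terms is the original map composed with a dot (hence zero by hypothesis), and the other is the crossing-free picture $\mathbf{1}_\Ett\otimes\Ucapr$, which \emph{is} a nonzero projection as you argued. This forces a contradiction.

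For the right diagram your instinct to precompose and reduce to a known nonzero morphism is reasonable, but the details are off: equation~\eqref{eq:C} defines $\Ucuprm$ \emph{in terms of} $\Ucrossr$, not the other way around, and at this point in the argument $\Ucrossr$ is only the abstract projection onto the $\Ftt\Pi\Ett$ summand. The paper instead argues directly: the right map factors as $\Ett\Ett\Ftt\xrightarrow{\mathbf{1}_\Ett\otimes\Ucrossr}\Ett\Ftt\Pi\Ett\xrightarrow{\Ucapr\otimes\mathbf{1}}\Pi^\mu\Ett$, where the first arrow projects onto the $\Ett\Ftt\Pi\Ett$ summand and the second projects out the top-degree $\Ett$ summand of $(\Ett\Ftt)\Ett$. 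Since that top summand lies inside $\Ett(\Ftt\Pi\Ett)$, the composite is nonzero. This is cleaner than tracking a precomposition through the bubble argument, and avoids the ambiguity in $\Ucrossr$ you flagged.
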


\begin{proof}
We just need to show that both maps are non-zero. Suppose the map on the left is zero. Adding a dot at the top of the middle upward pointing strand and sliding it past the crossing using the odd nilHecke relation~\eqref{eq:onil-dot} one gets two terms.
\[
\xy (0,0)*{
\begin{tikzpicture}
  \draw[thick, ->] (-0.5,0) .. controls (-0.5,0.8) and (1.5,0.8) .. (1.5,0)
      node[pos=0.5, shape=coordinate](L){}
      node[pos=0.33, shape=coordinate](I){};
\draw[color=blue, thick, double distance=1pt, dashed] (L) to [out=90, in=-90] (1.5,2);
\draw[color=blue, thick,  dashed] (I) .. controls++(-1.6,-.2) and ++(.15,-.5) .. (1,2);
\draw[thick] (.5,0) .. controls (.5,.7) and (-.5,.5).. (-.5,1);
\draw[thick, ->] (-.5,1) -- (-.5,2)
    node[pos=0.32, shape=coordinate](DOT){};
 \draw[color=blue, thick, dashed] (DOT) to [out=180, in=-90] (-1,2);
 \draw[, line width=0mm] (-.5,1) -- (-.5,2)
     node[pos=0.32](){\bbullet};
 %% Draw a bunch of labels
     %%
   \node[blue] at (1.8,1.8){$\scs \mu-1$};
    \node at (1.9,0.4) {$\mu$};
\end{tikzpicture}};
\endxy
\quad
 =
\quad
\xy
(0,0)*{
\begin{tikzpicture}
  \draw[thick, ->] (-0.5,0) .. controls (-0.5,0.8) and (1.5,0.8) .. (1.5,0)
      node[pos=0.5, shape=coordinate](L){}
      node[pos=0.33, shape=coordinate](I){};
\draw[color=blue, thick, double distance=1pt, dashed] (L) to [out=90, in=-90] (1.5,2);
\draw[color=blue, thick,  dashed] (I) .. controls++(-1.5,-.2) and ++(.15,-.5) .. (1,2);
\draw[thick] (.5,0) .. controls (.5,.7) and (-.5,.5).. (-.5,1)
    node[pos=0.15, shape=coordinate](DOT){};
\draw[thick, ->] (-.5,1) -- (-.5,2);
 \draw[color=blue, thick, dashed] (DOT) .. controls ++(-1.7,-0.2) and ++(0,-1).. (-1,2);
 \draw[line width=0mm] (.5,0) .. controls (.5,.7) and (-.5,.5).. (-.5,1)
     node[pos=0.15](){\bbullet};
 %% Draw a bunch of labels
     %%
   \node[blue] at (1.8,1.8){$\scs \mu-1$};
    \node at (1.9,0.4) {$\mu$};
\end{tikzpicture}}; \endxy
\quad + \quad
\xy
(0,0)*{
\begin{tikzpicture}
  \draw[thick, ->] (0.5,0) .. controls (0.5,0.8) and (1.5,0.8) .. (1.5,0)
      node[pos=0.5, shape=coordinate](L){};
 \draw[color=blue, thick, double distance=1pt, dashed] (L) to [out=90, in=-90] (1.5,2);
  \draw[color=blue, thick,  dashed] (-1,2) to [out=-90, in=-90] (1,2);
 \draw[thick, ->] (-.5,0) -- (-.5,2);
 %% Draw a bunch of labels
     %%
   \node[blue] at (1.8,1.8){$\scs \mu-1$};
    \node at (1.9,0.4) {$\mu$};
\end{tikzpicture}}; \endxy
\]
One term is again zero (because it is the composition of a dot and the original map) and the other is just the composite of several isomorphisms and the adjoint map which cannot be zero because it is the projection of $\Ett \oneb_{\mu}$ out of the highest  degree summand inside $\Ett \Ett \Ftt \oneb_{\mu} \la \mu+1 \ra$. Thus the map is non-zero.

On the other hand, the map on the right is the composition
\[
\xy
(40,0)*+{\Pi^{\mu}\Ett \oneb_{\mu}}="1";
(0,0)*+{\Ett \Ftt\Pi \Ett \oneb_{\mu} \la \mu+1 \ra}="2";
(-50,0)*+{ \Ett \Ett \Ftt \oneb_{\mu} \la \mu+1 \ra}="3";
 {\ar^-{\begin{tikzpicture}[scale=0.5]
  \draw[semithick, ->] (-0.5,0) .. controls (-0.5,0.65) and (0.5,0.65) .. (0.5,0)
      node[pos=0.5, shape=coordinate](X){};
  \draw[color=blue, thick, double distance=1pt, dashed] (X) -- (0,1);
  \draw[semithick,->] (1.5,0) -- (1.5,1);
  \draw[thick, dashed, blue] (1,0) -- (1,1);
\end{tikzpicture}} "2";"1"};
 {\ar^-{\sUup\Ucrossr} "3";"2"};
\endxy
\]
together with several isomorphisms.  The first map in the above composite is induced by the projection of $\Ett \Ftt \oneb_{\mu}\cong\Ftt\Pi\Ett \oneb_{\mu} \oplus_{[\mu]} \oneb_{\mu}$ into $\Ftt \Pi\Ett \oneb_{\mu} $ and the second map is induced by the is the projection of $\Ett \oneb_{\mu}$ out of the top degree summand of $\Ett \oneb_{\mu}$ in $(\Ett \Ftt)\Ett \oneb_{\mu}$.  Since the domain of the second projection is in the image of the first, this map is also non-zero.
\end{proof}

\begin{prop} \label{prop:adjoints}
The 2-morphisms
\begin{equation}
\Ucapr: \Ett \Ftt \onebltwo \la \lambda+1 \ra \rightarrow \Pi^{\l+1}\onebltwo, \hspace{1.0cm}
\Ucupl: \onebltwo \la \lambda+1 \ra \rightarrow  \Ett \Ftt \onebltwo,
\end{equation}
\begin{equation}
\Ucapl: \Ftt \Ett \onebl \rightarrow \onebl \la \lambda+1 \ra, \hspace{1.0cm}
\Ucuprm:
\onebl \rightarrow\Ftt\Pi^{\lambda+1} \Ett \onebl \la \lambda+1 \ra
\end{equation}
defined above satisfy the adjunction relations (\ref{eq_biadjoint1}) and (\ref{eq_biadjoint2}) up to non-zero multiples.
\end{prop}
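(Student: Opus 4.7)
My plan is to verify the four zigzag identities encoded in (\ref{eq_biadjoint1}) and (\ref{eq_biadjoint2}) by first reducing each composition to a scalar, then exhibiting the non-vanishing of each scalar. By Lemma~\ref{lem:E} together with the adjoint induction hypothesis, the endomorphism rings $\END(\Ett\oneb_\mu)$ and $\END(\oneb_\mu\Ftt)$ are each one-dimensional, concentrated in degree zero and parity zero. Each of the four zigzag compositions is an endomorphism of degree and parity zero, hence is automatically a scalar multiple of the identity, and the task reduces to showing that these four scalars are non-zero.

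For the adjunction $\onebl\Ftt\ads{-\l-1}\dashv\Ett\onebl$, both the unit $\Ucupl$ and the counit $\Ucapl$ were produced from formal adjunction data: $\Ucupl$ is the inclusion of the lowest-degree summand into $\Ett\Ftt\onebltwo$, and $\Ucapl$ is its mate under this adjunction. The zigzag identities for this pair then follow from the standard theory of mates, so both scalars equal $1$ after suitable rescaling.

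For the harder adjunction $\Ett\onebl\dashv\onebltwo\Ftt\Pi^{\l+1}\ads{\l+1}$ I plan to unwind the composite (\ref{eq:C}) defining $\Ucuprm$. After inserting the definition, the zigzag $\Ett\onebl \to \Ett\Ftt\Pi^{\l+1}\Ett\onebl \to \Pi^{2\l+2}\Ett\onebl \cong \Ett\onebl$ decomposes as a composition of the map $\Ucupl$ onto the appropriate summand, the dashed-dotted isomorphism $\phi_k$ of Lemma~\ref{lemXind}, and the degree-zero dotted bubble of Corollary~\ref{cor:degz-bubbles}; all three pieces are non-zero, so the scalar is non-zero. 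The mirror zigzag, which compares $\oneb_\mu\Ftt$ with itself, is handled symmetrically: I invoke Lemma~\ref{lem:A'} to recognize it as a non-zero multiple of the unique 2-morphism spanning $\Hom(\Ett\Ett\Ftt\oneb_\mu\la\mu+1\ra,\Ett\Pi^\mu\oneb_\mu)$.

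The main obstacle lies in the non-canonical choice of $\Ucrossr$ used in (\ref{eq:C}) to define $\Ucuprm$: distinct projections onto the $\Ftt\Pi\Ett\onebl$ summand of $\Ett\Ftt\onebl$ could a priori yield different zigzag scalars. I will resolve this by showing that the difference between any two such projections is supported on the complementary $\bigoplus_{[\l]}\onebl$ summand, whose contribution to the zigzag vanishes by the hom-space computation of Lemma~\ref{lem:FEtEF}. Once independence is established, any convenient choice reduces the scalar computation to the bubble evaluation of Corollary~\ref{cor:degz-bubbles}, completing the argument up to a non-zero multiple as claimed.
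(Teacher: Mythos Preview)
Your treatment of the easy adjunction $\onebl\Ftt\ads{-\l-1}\dashv\Ett\onebl$ is essentially the paper's: $\Ucupl$ spans a one-dimensional hom space and $\Ucapl$ is defined as its mate, so those zigzags are automatically nonzero scalars.

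For the harder zigzag in \eqref{eq_biadjoint1}, however, your proposed decomposition does not work. You claim the zigzag on $\Ett\onebl$ factors as ``$\Ucupl$ onto the appropriate summand, the isomorphism $\phi_k$ of Lemma~\ref{lemXind}, and the degree-zero dotted bubble.'' But that is exactly the argument for Corollary~\ref{cor:degz-bubbles} (the degree-zero bubble is a nonzero scalar), not for the zigzag. The zigzag uses $\Ucuprm$, whose definition \eqref{eq:C} contains the crossing $\Ucrossr$ projecting onto the $\Ftt\Pi\Ett\onebl$ summand of $\Ett\Ftt\onebl$, together with an extra $\Ett$ strand on the outside. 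The maps $\phi_k$ concern only the $\onebl$ summands, not the $\Ftt\Pi\Ett$ summand, so they say nothing about what $\Ucrossr$ does here. Your paragraph on the non-canonicity of $\Ucrossr$ is likewise beside the point: the issue is not ambiguity in the projection but computing the composite once it is applied.

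The paper resolves this via Lemma~\ref{lem:A'}: the two natural maps $\Ett\Ett\Ftt\oneb_\mu\la\mu+1\ra\to\Ett\Pi^\mu\oneb_\mu$ (one built from $\Ucrossr$ followed by $\Ucapr$, the other from the up--up crossing followed by $\Ucapl$) span the same one-dimensional hom space and are therefore proportional by some nonzero $\kappa$. Passing to the second form replaces the opaque $\Ucrossr$ by an honest odd nilHecke crossing; a single dot slide \eqref{eq:onil-dot} then yields two terms, one vanishing as a degree $-2$ endomorphism of $\Ett\onebl$, the other equal to the identity times the degree-zero bubble. Only at this last step does Corollary~\ref{cor:degz-bubbles} enter. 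You have the right ingredients but misplaced: Lemma~\ref{lem:A'} is what handles the $\Ett$ zigzag, and the companion zigzag then follows formally (one nonzero zigzag scalar forces the other), rather than by a separate symmetric argument.
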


\begin{proof}
We prove one of the adjunction axioms (the second one follows formally).  Since $\End(\Ett\onebl) \cong \k$ it suffices to show that the left side of (\ref{eq_biadjoint2}) is non-zero.  Now, $\Hom( \onebltwo\la \l+1 \ra,\Ett\Ftt\onebltwo) \cong \Hom(\Ett \onebl, \Ett \onebl) \cong \k$ by Lemma \ref{lem:E}. So the map $\Ucupl: \onebltwo \la \l+1 \ra \rightarrow \Ett \Ftt \onebltwo$ must be equal to the adjunction map (up to a multiple). Since $\Ucapl: \Ftt \Ett\onebl \rightarrow \onebl\la \l+1 \ra$ is defined to be the adjunction map their composition must be non-zero.

To show that the left side of (\ref{eq_biadjoint1}) is non-zero we use the definition of the cup given in \eqref{eq:C}. It follows from Lemma~\ref{lem:A'} that
\[
\hackcenter{
\begin{tikzpicture}[scale=0.7]
  \draw[thick] (-0.5,0) .. controls (-0.5,-0.8) and (0.5,-0.8) .. (0.5,0)
     node[pos=0.5, shape=coordinate](X){};
  \draw[thick,->] (-1.5,0) .. controls (-1.5,.8) and (-.5,.8) .. (-.5,0)
      node[pos=0.5, shape=coordinate](Y){};
  \draw[thick, ->-=0.15] (.5,0) -- (.5,1.25);
  \draw[thick, ->] (-1.5,-1.25) -- (-1.5,0);
  \draw[color=blue, thick, double distance=1pt, dashed]
   (X) .. controls ++(.2,1.5) and ++(0,1).. (Y);
    \node at (.7,-1) {$\l$};
    \node[blue] at (-1.4,1) {$\scs \l+1$};
\end{tikzpicture} }
\quad := \quad
 \hackcenter{\begin{tikzpicture}[scale=0.7]
  \draw[thick] (-0.5,0) .. controls (-0.5,0.5) and (0.5,0.5) .. (0.5,1)
      node[pos=0.5, shape=coordinate](X){};
  \draw[thick] (-1.5,1) .. controls (-1.5,1.6) and (-.5,1.6) .. (-.5,1)
      node[pos=0.45, shape=coordinate](LY){}
      node[pos=0.55, shape=coordinate](Y){};
  \draw[thick ] (-0.5,0) .. controls (-0.5,-0.5) and (0.5,-0.5) .. (0.5,0);
  \draw[color=blue, thick, double distance=1pt, dashed]
     (Y) .. controls ++(0,.5) and ++(.1,.6).. (-.2,1)
      .. controls ++(0,-.6) and ++(-1,.4) .. (-0.5,0);
  \draw[color=blue, thick, dashed]
     (LY) .. controls ++(-.1,1.2) and ++(.2,1).. (.2,1)
      .. controls ++(0,.-.3) and ++(0,.4) .. (X);
  \node at (-0.5,0) {\bbullet};
  \draw[thick, <-] (0.5,0) .. controls (0.5,0.5) and (-0.5,0.5) .. (-0.5,1);
  \draw[thick, ->-=0.3] (.5,1) -- (.5,2);
  \draw[thick, ->-=0.8] (-1.5,-1) -- (-1.5,1);
    \node[blue] at (-1,0){$\scs \l$};
    \node at (1.4,0.4) {$\l$};
\end{tikzpicture} }
\quad = \quad
\hackcenter{
\begin{tikzpicture}[scale=0.7]
  \draw[thick, ->] (-0.5,0) .. controls (-0.5,0.5) and (0.5,0.5) .. (0.5,1)
      node[pos=0.5, shape=coordinate](X){};
  \draw[thick] (0.5,0) .. controls (0.5,0.5) and (-0.5,0.5) .. (-0.5,1);
  \draw[thick] (0.5,1) .. controls (0.5,1.6) and (1.5,1.6) .. (1.5,1)
      node[pos=0.45, shape=coordinate](Y){}
      node[pos=0.56, shape=coordinate](RY){};
  \draw[thick] (0.5,0) .. controls (0.5,-.6) and (1.5,-.6) .. (1.5,0)
      node[pos=0.2, shape=coordinate](Z){};
  \draw[thick, ->-=-0.5] (1.5,1) -- (1.5,0);
  \draw[thick] (-0.5,-.75) -- (-0.5,0);
  \draw[thick, ->-=0.5] (-0.5,1) -- (-0.5,2.5);
  \draw[color=blue,  thick, dashed]
     (X) .. controls ++(-1.4,-0.1) and ++(0,-.4) .. (0,1.5)
       ..  controls ++(0,.5) and ++(0,.6) .. (Y);
  \draw[color=blue,  thick, dashed,double distance=1pt,]
    (Z) .. controls ++(-2,0.3) and ++(0,-.5) .. (-1,1.5)
     .. controls ++(0,1) and ++(.1,1.3) .. (RY);
   \node at (Z) {\bbullet};
   \node[blue] at (.25,0){$\scs \l$};
   \node at (2,0.4) {$\l$};
   \node at (-2,1) {$\kappa$};
\end{tikzpicture} }
\]
for some non-zero scalar $\kappa$.
Moving one of the dots through the crossing using the odd nilHecke relation~\eqref{eq:onil-dot} gives a sum of two diagrams.  One is zero since it is the composite of a dot and an endomorphism of $\Ett \onebl$ of degree $-2$.  The other is a scalar multiple of the identity on $\Ett\onebl$ and the non-zero degree zero bubble from Corollary \ref{cor:degz-bubbles}.
\end{proof}

Thus we get that \newline
\medskip
\noindent\fbox {
   \parbox{\linewidth}{
\[
(\Ett \onebl)^R \cong \onebl \Ftt\Pi^{\l+1} \la \l+1 \ra \text{ and } (\Ett \onebl)^L \cong \onebl \Ftt \la -\l-1 \ra,
\]
\[
 \left(\Ftt\onebl\right)^L =  \Pi^{\l-1}\onebl\Ett \la \l-1\ra\text{ and }  \left(\Ftt\onebl\right)^R =  \onebl\Ett \la -\l+1\ra,
\]
    }
}
\noindent which completes the induction step for the adjoint induction hypothesis~\ref{eq:ind_hyp}.

We show that these adjunction maps are unique up to a multiple.

\begin{cor}\label{cor:EFidhoms}
Given a strong supercategorical action of $\mf{sl}_2$ we have
$$\Hom(\Ett \Ftt \onebltwo \la \l+1 \ra, \Pi^{\l+1}\onebltwo) \cong \k, \hspace{1.0cm}
\Hom(\onebltwo \la \l+1 \ra, \Ett \Ftt \onebltwo) \cong \k,$$
$$\Hom(\Ftt \Ett \onebl, \onebl \la \l+1 \ra) \cong \k, \hspace{1.0cm}
\Hom(\onebl, \Pi^{\l+1}\Ftt \Ett \onebl \la \l+1 \ra) \cong \k.$$
\end{cor}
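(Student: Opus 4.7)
The plan is to convert each of the four Hom spaces into an endomorphism space of $\Ett\onebl$ or $\Ftt\oneb_{\l+2}$ by applying the adjunction isomorphisms established in Proposition~\ref{prop:adjoints}, and then invoke the brick statement of Lemma~\ref{lem:E} (and its $\Ftt$ analogue). The existence of non-zero elements in each of these four Hom spaces (given by the units and counits constructed in Section~\ref{subsec:consequences}) then gives the claimed one-dimensionality.

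Concretely, for the first space, apply $(\Ett\onebl)^R \cong \onebl\Ftt\Pi^{\l+1}\la \l+1 \ra$ to obtain
\begin{equation*}
\Hom(\Ett\Ftt\onebltwo\la \l+1 \ra,\Pi^{\l+1}\onebltwo)
\;\cong\;
\Hom(\Ftt\onebltwo\la \l+1 \ra,\,\onebl\Ftt\Pi^{\l+1}\la \l+1 \ra\cdot\Pi^{\l+1}\onebltwo).
\end{equation*}
Since $\Pi^{2(\l+1)}\cong\oneb$ via $\xi$, the right-hand side collapses to $\End(\Ftt\onebltwo)$, which is one-dimensional by the $\Ftt$ version of Lemma~\ref{lem:E}. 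The second space is handled symmetrically by applying $(\Ett\onebl)^L \cong \onebl\Ftt\la -\l-1 \ra$, giving $\End(\Ftt\onebltwo)\cong\k$. For the third space, apply $(\Ftt\oneb_{\l+2})^R \cong \oneb_{\l+2}\Ett\la -\l-1\ra$ to get
\begin{equation*}
\Hom(\Ftt\Ett\onebl,\onebl\la \l+1 \ra)
\;\cong\;
\Hom(\Ett\onebl,\Ett\onebl\la -\l-1 \ra\cdot\onebl\la \l+1 \ra)
\;\cong\;
\End(\Ett\onebl),
\end{equation*}
which is one-dimensional by Lemma~\ref{lem:E}. For the fourth, first slide $\Pi^{\l+1}$ across using that $\Pi^{\l+1}$ is its own left/right adjoint (via $\xi$) to replace the space by $\Hom(\Pi^{\l+1}\onebl,\Ftt\Ett\onebl\la \l+1 \ra)$, and then apply $(\Ftt\oneb_{\l+2})^L \cong \Pi^{\l+1}\Ett\onebl\la \l+1 \ra$ to reduce, after canceling $\Pi^{2(\l+1)}\cong\oneb$ and grading shifts, to $\End(\Ett\onebl)\cong\k$.

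The main thing to watch is the bookkeeping of parity and grading shifts in all four cases, since the $\Pi^{\l+1}$ factor appears with a $+1$ that depends on $\l$, and one must track when each of these is cancelled against its companion from the adjoint formula or from the target/source of the Hom space. In every case the parity shifts cancel in pairs and the grading shifts cancel exactly, leaving a plain endomorphism space. No nontrivial obstacle is expected beyond careful tracking of these shifts, since the adjunctions we need have already been built in Proposition~\ref{prop:adjoints} and the endomorphism-space computation is precisely the content of Lemma~\ref{lem:E} applied at weight $\mu=\l$ or $\mu=\l+2$ (both of which satisfy $\mu\geq\l$, as required for that lemma to apply).
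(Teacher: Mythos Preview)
Your proof is correct and follows essentially the same approach as the paper: reduce each Hom space to $\End(\Ett\onebl)$ or $\End(\Ftt\onebltwo)$ via the adjunctions of Proposition~\ref{prop:adjoints}, then invoke Lemma~\ref{lem:E}. The paper only writes out the first case explicitly and declares the other three similar, whereas you spell out all four; note also that the appeal to non-zero elements is unnecessary, since Lemma~\ref{lem:E} already asserts one-dimensionality (not merely $\leq 1$) in the $\ell=k=0$ case.
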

\begin{proof}
We calculate the first space (the other three are similar). We have
\begin{eqnarray*}
\Hom(\Ett \Ftt \onebltwo \la n+1 \ra, \Pi^{\l+1}\onebltwo)
&\cong& \Hom(\Ftt \onebltwo \la \l+1 \ra, \Pi^{\l+1}(\Ett \onebl)_R \onebltwo) \\
&\cong& \Hom(\Ftt \onebltwo, \Pi^{2(\l+1)}\Ftt \onebltwo) \\
&\cong& \Hom(\Ftt \onebltwo, \Ftt \onebltwo) \cong \k.
\end{eqnarray*}
\end{proof}

We summarize the results of this subsection with the following:

\begin{thm}\label{thm:lradj}
Given a strong supercategorical action of $\mf{sl}_2$ the left and right adjoints of $\Ett$ are isomorphic up to specified shifts and parity. More precisely, the units and counits of these adjunctions are given by the 2-morphisms below which are uniquely determined up to a scalar and induce the adjunction maps.
\begin{equation}\label{eq:A}
\Ucapr: \Ett \Ftt \onebltwo \la \lambda+1 \ra \rightarrow \Pi^{\l+1}\onebltwo \hspace{1.0cm}
\Ucupl: \onebltwo \la \lambda+1 \ra \rightarrow  \Ett \Ftt \onebltwo
\end{equation}
\begin{equation}\label{eq:B}
\Ucapl: \Ftt \Ett \onebl \rightarrow \onebl \la \lambda+1 \ra \hspace{1.0cm}
\Ucuprm:
\onebl \rightarrow\Ftt \Pi^{\lambda+1}\Ett \onebl \la \lambda+1 \ra.
\end{equation}
\end{thm}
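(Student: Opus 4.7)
The plan is to assemble the results of this subsection into the summary statement. Proposition~\ref{prop:adjoints} already establishes that the four 2-morphisms in \eqref{eq:A}--\eqref{eq:B}, after the constructions of Section~\ref{subsec:consequences} and equation~\eqref{eq:C}, satisfy the zigzag identities \eqref{eq_biadjoint1}--\eqref{eq_biadjoint2} up to non-zero scalars. Corollary~\ref{cor:EFidhoms} asserts that each of the four relevant 2-hom-spaces is one-dimensional, so the 2-morphisms are unique up to a scalar. What remains is to close the adjoint induction and to absorb the remaining scalars into the unit maps so that the zigzag identities hold on the nose.

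First I would complete the induction on $\lambda$. The adjoint induction hypothesis \eqref{eq:ind_hyp} holds vacuously for $|\mu| \gg 0$ by the integrability condition in Definition~\ref{def_strong}, since the corresponding objects are zero. Proposition~\ref{prop:adjoints} executes the inductive step at weight $\lambda\geq 0$ assuming the hypothesis at all $\mu > \lambda$, so by descending induction we obtain the conclusion for every non-negative weight. The parallel ascending induction starting from $\lambda \ll 0$ handles negative weights; each ingredient (Lemmas~\ref{lem:E}, \ref{lem:EE}, \ref{lem:FEtEF}, \ref{lem:EEF}, and \ref{lemXind}) has an evident mirror-image analogue in which the roles of $\Ett$ and $\Ftt$ and of cups and caps are interchanged.

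Next I would fix the scalars. For each adjunction there are two zigzag equations, but $\End(\Ett\onebl)\cong\End(\onebl\Ftt)\cong\k$ by Lemma~\ref{lem:E} and its downward analogue, so each of the zigzag composites is automatically a scalar multiple of the identity. Rescaling the unit $\Ucupl$ (respectively $\Ucuprm$) by the inverse of the scalar produced by Proposition~\ref{prop:adjoints} makes \eqref{eq_biadjoint1} (respectively \eqref{eq_biadjoint2}) hold exactly, without disturbing the defining constructions of the remaining counits. Because the 2-hom spaces in Corollary~\ref{cor:EFidhoms} are each one-dimensional, any other choices of units and counits agree with these after scaling, which gives the uniqueness clause.

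The main technical obstacle has already been overcome in the construction \eqref{eq:C} of $\Ucuprm$: since $\Ftt\Ett\onebl$ is indecomposable when $\lambda\geq 0$, this map cannot be defined by inclusion of a direct summand, and no purely formal adjunction recipe is available. Equation~\eqref{eq:C} handles this by composing $\Ucupl$ with the (non-unique) projection $\Ucrossr$ and the odd nilHecke isomorphism provided by Lemma~\ref{lemXind}, and Proposition~\ref{prop:adjoints} then verifies that the resulting composite satisfies the zigzag identity up to a non-zero scalar via a computation combining the odd nilHecke dot-slide relation~\eqref{eq:onil-dot} with the non-vanishing of the degree zero bubble in Corollary~\ref{cor:degz-bubbles}. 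Once this is in place, Theorem~\ref{thm:lradj} follows directly from the packaging described above.
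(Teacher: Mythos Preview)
Your proposal is correct and follows the paper's approach: the theorem is presented in the paper as a summary of the subsection, with no separate proof block, and you have correctly assembled the ingredients --- the constructions of Section~\ref{subsec:consequences} and \eqref{eq:C}, Proposition~\ref{prop:adjoints} for the zigzag identities up to nonzero scalar, the boxed completion of the adjoint induction, and Corollary~\ref{cor:EFidhoms} for uniqueness. The only minor organizational difference is that your rescaling step to make the zigzags hold on the nose is deferred in the paper to the opening of Section~\ref{sec:proofcycbiadjoint}; at the level of Theorem~\ref{thm:lradj} the statement is only ``up to a scalar,'' so that step is not strictly required here.
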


% -------------------------------------------------------------------------------
%
\subsection{A specific form for $\mf{sl}_2$ isomorphisms}
%
% -------------------------------------------------------------------------------
At this point we know that both the left and right adjoint of $\Ftt$ is $\Ett$ (up to a specified shift and parity). Consequently we can prove the following.

\begin{lem}\label{lem:homs}
For any $n \in \Z$ we have
$$\Hom(\Ett \Pi\Ftt \onebl,\Ftt\Ett \onebl) \cong \k \cong \Hom(\Ftt \Pi \Ett \onebl, \Ett \Ftt \onebl).$$
\end{lem}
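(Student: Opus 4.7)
The plan is to reduce both isomorphisms to appropriate specializations of Lemma \ref{lem:FEtEF}.

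The second isomorphism $\Hom(\Ftt\Pi\Ett\onebl, \Ett\Ftt\onebl)\cong \k$ is exactly the case $k=\ell=0$ of Lemma \ref{lem:FEtEF}. That lemma was established under the adjoint induction hypothesis \eqref{eq:ind_hyp}, but Proposition \ref{prop:adjoints} and Theorem \ref{thm:lradj} now verify \eqref{eq:ind_hyp} unconditionally for all weights $\l \geq 0$; the case $\l \leq 0$ is handled by the parallel induction discussed below. Hence Lemma \ref{lem:FEtEF} is available for every $\l\in\Z$, giving the second isomorphism at once.

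For the first isomorphism $\Hom(\Ett\Pi\Ftt\onebl, \Ftt\Ett\onebl)\cong\k$, I would carry out the entire induction of Subsection \ref{subsec:consequences} in parallel with the roles of $\Ett$ and $\Ftt$ exchanged: swap $\Ett \leftrightarrow \Ftt$ throughout, replace the covering relation \eqref{eq:EF-rel} by \eqref{eq:FE-rel}, reverse the inductive parameter (increasing $\mu$ from $\mu \ll 0$ rather than decreasing from $\mu \gg 0$, using integrability again for the base case), and use the biadjunctions $(\Ftt\oneb_\mu)^L = \Pi^{\mu-1}\oneb_\mu\Ett\la\mu-1\ra$ and $(\Ftt\oneb_\mu)^R = \oneb_\mu\Ett\la -\mu+1\ra$ of Theorem \ref{thm:lradj} in place of their $\Ett$-counterparts. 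This produces $\Ftt$-$\Ett$ analogues of Lemmas \ref{lem:E}, \ref{lem:EE}, and \ref{lem:FEtEF}; specializing the $\Ftt\leftrightarrow\Ett$ dual of Lemma \ref{lem:FEtEF} to $k=\ell=0$ yields the first isomorphism.

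The main obstacle is essentially bookkeeping: the asymmetric placement of the parity shifts $\Pi^{\l+1+k}$ versus $\Pi^k$ and of the degree shifts $\la-\l-1-2k\ra$ versus $\la\l-1-2k\ra$ in the two covering relations must be reconciled with the parallel asymmetry in the biadjunctions, namely $(\Ett\oneb_\mu)^R = \oneb_\mu\Ftt\Pi^{\mu+1}\la\mu+1\ra$ versus $(\Ett\oneb_\mu)^L = \oneb_\mu\Ftt\la-\mu-1\ra$. No new categorical input is required beyond what has already been used in the $\Ett$-induction; it is a matter of checking that each adjunction reduction performed in the proofs of Lemmas \ref{lem:E}, \ref{lem:EE}, and \ref{lem:FEtEF} has a faithful $\Ftt$-incarnation with consistent shifts. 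Once this is verified, both hom spaces are seen to be $\k$, as required.
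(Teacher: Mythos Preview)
Your approach works, but it is much heavier than necessary, and the asymmetry between your two cases is artificial. The paper handles both isomorphisms by the same short trick: once Theorem~\ref{thm:lradj} furnishes both left and right adjoints of $\Ett$ at every weight, you simply peel off the two copies of $\Ett$ (one from each slot of the hom) by adjunction and trade them for $\Ftt$'s. For the first isomorphism this reads
\[
\Hom(\Ett\Pi\Ftt\onebl,\Ftt\Ett\onebl)
\cong\Hom\bigl(\Pi\Ftt(\Ett\onebl)^R\onebltwo,\ (\Ett\oneb_{\l-2})^R\Ftt\onebltwo\bigr)
\cong\Hom\bigl(\Pi\Ftt\Ftt\Pi^{\l+1}\onebltwo\la\l+1\ra,\ \Ftt\Pi^{\l-1}\Ftt\onebltwo\la\l-1\ra\bigr),
\]
and after collapsing the parity shifts this is $\Hom(\Ftt\Ftt\onebltwo,\Pi\Ftt\Ftt\onebltwo\la-2\ra)\cong\k$ by the $\Ftt$-adjoint of Lemma~\ref{lem:EE}. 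The second isomorphism is identical in form; indeed this computation \emph{is} the proof of Lemma~\ref{lem:FEtEF}, so your citation of that lemma for the second case is really the paper's method in disguise.

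The point is that there is no need to rerun the entire inductive apparatus of Subsection~\ref{subsec:consequences} with $\Ett$ and $\Ftt$ exchanged. That machinery was only needed \emph{before} the adjoints were established; now that they are, a two-line adjunction calculation reduces both hom spaces directly to the $\ell=-2$, $k=1$ case of Lemma~\ref{lem:EE}. What your route would buy is a standalone parallel suite of $\Ftt$-lemmas, which could be useful elsewhere, but for this statement it is overkill.
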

\begin{proof}
We prove that $\Hom(\Ett \Pi\Ftt \onebl,\Ftt\Ett \onebl) \cong \k$ (the other case follows similarly). One has
\begin{eqnarray*}
\Hom(\Ett \Pi\Ftt \onebl,\Ftt\Ett \onebl)
&\cong& \Hom(\Pi\Ftt(\Ett \onebl)^R \onebltwo, (\Ett \oneb_{\lambda-2})^R \Ftt\onebltwo) \\
&\cong& \Hom(\Pi\Ftt\Ftt\Pi^{\l+1} \onebltwo \la \l+1 \ra,\Ftt \Pi^{\l-1}\Ftt \onebltwo \la \l-1 \ra) \cong \k
\end{eqnarray*}
where the last isomorphism follows from (the adjoint of) Lemma~\ref{lem:EE} together with the parity isomorphisms.
\end{proof}

Subsequently, we denote these maps as
\begin{align} \label{eq:sideways}
 & \hackcenter{\begin{tikzpicture}[scale=0.6]
  \draw[semithick, <-] (-0.5,0) .. controls (-0.5,0.5) and (0.5,0.5) .. (0.5,1)
      node[pos=0.5, shape=coordinate](X){};
  \draw[semithick, ->] (0.5,0) .. controls (0.5,0.5) and (-0.5,0.5) .. (-0.5,1);
  \draw[color=blue,  thick, dashed] (X) to (0,0);
\end{tikzpicture}}\;\;
 \maps \Ftt \Pi\Ett \onebl \rightarrow \Ett \Ftt \onebl,
  & \hackcenter{\begin{tikzpicture}[scale=0.6]
  \draw[semithick, ->] (-0.5,0) .. controls (-0.5,0.5) and (0.5,0.5) .. (0.5,1)
      node[pos=0.5, shape=coordinate](X){};
    \draw[semithick, <-] (0.5,0) .. controls (0.5,0.5) and (-0.5,0.5) .. (-0.5,1);
  \draw[color=blue,  thick, dashed] (X) .. controls ++(.1,.5) and ++(0,.5) .. (-.5,.5)
  .. controls ++(0,-.3) and ++(0,.3) .. (0,0);
\end{tikzpicture}} \;\;
 \maps \Ett \Pi \Ftt \onebl \rightarrow\Ftt\Ett \onebl.
\end{align}
For the moment these maps are uniquely defined only up to a non-zero scalar.

\begin{cor} \label{cor:1} If $\l \ge 0$ then the map
\begin{equation}\label{eq:iso1}
\zeta\;\;:=\;\;
\hackcenter{\begin{tikzpicture}[scale=0.6]
  \draw[semithick, <-] (-0.5,0) .. controls (-0.5,0.5) and (0.5,0.5) .. (0.5,1)
      node[pos=0.5, shape=coordinate](X){};
  \draw[semithick, ->] (0.5,0) .. controls (0.5,0.5) and (-0.5,0.5) .. (-0.5,1);
  \draw[color=blue,  thick, dashed] (X) to (0,0);
\end{tikzpicture}}\;\; \bigoplus_{k=0}^{\lambda-1}
\hackcenter{\begin{tikzpicture}[scale=0.6]
  \draw[thick, ->-=0.15, ->] (0.5,.2) .. controls (0.6,-0.8) and (-0.6,-0.8) .. (-0.5,.2)
      node[pos=0.85, shape=coordinate](Y){};
  \draw[color=blue, thick, double distance=1pt, dashed]
   (Y) .. controls++(-.5,.2) and ++(0,.4) .. (-1,-1)
         node[pos=0.75,left]{$\scs k$};
  \draw[line width=0mm] (0.5,.2) .. controls (0.5,-0.8) and (-0.5,-0.8) .. (-0.5,.2)
     node[pos=0.85]{\tikz \draw[fill=black] circle (0.4ex);};
\end{tikzpicture} }:\Ftt\Pi \Ett \onebl \bigoplus_{k=0}^{\lambda-1} \Pi^k\onebl \la \l-1-2k \ra \rightarrow \Ett \Ftt \onebl
\end{equation}
is an isomorphism. Likewise, if $\l \le 0$ then the map
\begin{equation}\label{eq:iso2}
\zeta\;\;:=\;\;
\hackcenter{\begin{tikzpicture}[scale=0.6]
  \draw[semithick, ->] (-0.5,0) .. controls (-0.5,0.5) and (0.5,0.5) .. (0.5,1)
      node[pos=0.5, shape=coordinate](X){};
    \draw[semithick, <-] (0.5,0) .. controls (0.5,0.5) and (-0.5,0.5) .. (-0.5,1);
  \draw[color=blue,  thick, dashed] (X) .. controls ++(.1,.5) and ++(0,.5) .. (-.5,.5)
  .. controls ++(0,-.3) and ++(0,.3) .. (0,0);
\end{tikzpicture}} \;\; \bigoplus_{k=0}^{-\l-1}
\hackcenter{\begin{tikzpicture}[scale=0.6]
  \draw[thick, ->-=0.15, ->] (-0.7,.5) .. controls ++(-.1,-1) and ++(.1,-1) .. (0.7,.5)
      node[pos=0.85, shape=coordinate](Y){}
      node[pos=0.55, shape=coordinate](M){}
      node[pos=0.44, shape=coordinate](X){};
  \draw[color=blue, thick, double distance=1pt, dashed]
   (Y) .. controls++(-.5,.3) and ++(0,.5) .. (M)
         node[pos=0.15,above]{$\scs k$};
   \draw[color=blue, thick, double distance=1pt, dashed]
     (X) .. controls ++(0,.55) and ++(0,.55) ..
      (-.6,-.25) .. controls ++(0,-.3) and ++(0,.4) ..(0,-1);
   \node at (Y){\tikz \draw[fill=black] circle (0.4ex);};
\end{tikzpicture} }:\Ett\Pi\Ftt \onebl \bigoplus_{k=0}^{-\l-1}\Pi^{\l+1+k} \onebl \la -\l-1-2k \ra \rightarrow \Ftt \Ett \onebl
\end{equation}
is an isomorphism.
\end{cor}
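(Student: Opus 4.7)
I treat the case $\lambda \geq 0$; the case $\lambda \leq 0$ is symmetric after swapping the roles of $\Ett$ and $\Ftt$. The plan is to combine the abstract isomorphism supplied by axiom \eqref{eq:FE-rel} in the definition of a strong supercategorical action with the Krull--Schmidt property of $\Cc$ (which follows from the finite-dimensionality and brick conditions of Definition~\ref{def_strong}) to reduce the claim to checking that, in a block decomposition, each diagonal block of $\zeta$ is a nonzero scalar.

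First I would verify that the summands on the left-hand side of \eqref{eq:iso1} are pairwise non-isomorphic indecomposables. The $\Pi^k\onebl\langle \lambda-1-2k\rangle$ for $0 \le k \le \lambda-1$ have distinct parity--degree bidegrees and are bricks by condition \eqref{co:hom} of Definition~\ref{def_strong}; that $\Ftt\Pi\Ett\onebl$ is indecomposable and non-isomorphic to any bubble summand follows by applying Krull--Schmidt to the abstract isomorphism supplied by \eqref{eq:FE-rel}. I would then verify that all off-diagonal entries of $\zeta$, viewed as a matrix of maps between indecomposable summands of its source and isotypic components of its target, vanish because the ambient Hom spaces are zero; adjunction together with Lemma~\ref{lem:E} shows that both $\Hom(\Ftt\Pi\Ett\onebl, \Pi^k\onebl\langle\lambda-1-2k\rangle)$ and $\Hom(\Pi^k\onebl\langle\lambda-1-2k\rangle, \Ftt\Pi\Ett\onebl)$ vanish for $0 \le k \le \lambda-1$, while distinct bubble summands admit no nonzero maps between them by bidegree reasons. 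Hence $\zeta$ is block-diagonal.

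It then remains to show each diagonal block is a nonzero scalar. On the $\Ftt\Pi\Ett\onebl$ block, $\zeta$ restricts to the sideways crossing fixed in \eqref{eq:sideways}, a generator of the one-dimensional space $\Hom(\Ftt\Pi\Ett\onebl,\Ett\Ftt\onebl)$ of Lemma~\ref{lem:homs}; its composition with the projection onto the $\Ftt\Pi\Ett\onebl$-isotypic component of $\Ett\Ftt\onebl$ is a degree-zero endomorphism of the brick $\Ftt\Pi\Ett\onebl$, and must be nonzero since otherwise the image of $\zeta$ would miss this summand, contradicting the isomorphism of \eqref{eq:FE-rel}. On each bubble summand $\Pi^k\onebl\langle\lambda-1-2k\rangle$, the diagonal block of $\zeta$ is the $k$-dotted cap whose composition with the projection onto the $k$-th bubble summand of $\Ett\Ftt\onebl$ is a scalar multiple of the identity on $\onebl$ (by the brick condition). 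I would show this scalar is nonzero by iterating the telescoping argument used in the proof of Corollary~\ref{cor:degz-bubbles}: form the composite $\onebl \to \Ett\Ftt\onebl \to \Pi^k\Ett\Ftt\onebl \to \Pi^k\onebl\langle\lambda-1-2k\rangle$, in which the first arrow is the unit inclusion into the $k$-th summand (formally determined in Subsection~\ref{subsec:consequences}), the middle arrow is a $k$-fold dot-addition map which is an isomorphism on the relevant summand by Lemma~\ref{lemXind}, and the last arrow is the counit projection onto the $k$-th summand; this composite is manifestly an isomorphism, forcing the scalar to be nonzero.

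The main obstacle is the final step: correctly identifying the dotted bubble that appears in the $k$-th diagonal block of $\zeta$ and matching it to the telescoping composite to conclude non-vanishing uniformly in $k$, rather than only in the single extremal case $k=\lambda-1$ handled explicitly in Corollary~\ref{cor:degz-bubbles}. Once this uniform non-vanishing is secured, block-diagonality with nonzero scalar diagonal blocks immediately yields that $\zeta$ is invertible.
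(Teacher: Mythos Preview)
Your overall strategy matches the paper's, but there is a genuine error: the claim that ``distinct bubble summands admit no nonzero maps between them by bidegree reasons'' is false. For $k>j$ one has
\[
\Hom\bigl(\Pi^k\onebl\langle\lambda-1-2k\rangle,\,\Pi^j\onebl\langle\lambda-1-2j\rangle\bigr)
\;\cong\;
\Hom\bigl(\onebl,\,\Pi^{j-k}\onebl\langle 2(k-j)\rangle\bigr),
\]
which sits in positive $q$-degree $2(k-j)$; condition~\eqref{co:hom} says nothing about such spaces, and in a general strong supercategorical action they need not vanish. So $\zeta$ is \emph{not} block-diagonal on the bubble part. What the paper actually uses is that only the entries with $k<j$ are forced to vanish, so the bubble block of $\zeta$ is merely upper triangular. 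That is still sufficient---an upper triangular matrix with invertible diagonal is invertible---but you should not claim more than triangularity.

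Your handling of the diagonal entries is also slightly tangled. The cup $\Ucupl$ is, by its construction in Subsection~\ref{subsec:consequences}, the inclusion onto the \emph{lowest} bubble summand $\onebl\langle\lambda-1\rangle$ (the case $k=0$), not ``the $k$-th summand.'' The paper's argument is then a clean induction: the $k$-dotted cup factors as the undotted cup followed by $k$ applications of $\sUupdot\sUdown$, and Lemma~\ref{lemXind} says each such dot induces an isomorphism from the $j$-th bubble summand onto the $(j{+}1)$-st. Hence the composite induces an isomorphism onto the $k$-th summand, giving the nonzero diagonal entry you want. This is the ``telescoping'' you allude to, but it runs upward from $k=0$ rather than starting from an inclusion already at level $k$. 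With the triangularity correction and this cleaned-up diagonal argument, your proof coincides with the paper's.
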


\begin{proof}
We prove the case $\l \ge 0$ (the case $\l \le 0$ is proved similarly).

We know that
$$\Ett \Ftt \onebl \cong\Ftt \Pi\Ett \onebl \bigoplus_{k=0}^{\lambda-1}\Pi^k \onebl \la \l-1-2k \ra$$
and by Lemma \ref{lem:homs} the map $\Ucrossl $  must induce an isomorphism between the $\Ftt \Pi\Ett \onebl$ summands and must induce the zero map from the $\Ftt \Pi\Ett \onebl$ summand on the left to any summand $\Pi^k\onebl \la \l-1-2k \ra$ on the right.

It remains to show that $\bigoplus_{k=0}^{\lambda-1} \hackcenter{\begin{tikzpicture}[scale=0.6]
  \draw[thick, ->-=0.15, ->] (0.5,.2) .. controls (0.6,-0.8) and (-0.6,-0.8) .. (-0.5,.2)
      node[pos=0.85, shape=coordinate](Y){};
  \draw[color=blue, thick, double distance=1pt, dashed]
   (Y) .. controls++(-.5,.2) and ++(0,.4) .. (-1,-1)
         node[pos=0.75,left]{$\scs k$};
  \draw[line width=0mm] (0.5,.2) .. controls (0.5,-0.8) and (-0.5,-0.8) .. (-0.5,.2)
     node[pos=0.85]{\tikz \draw[fill=black] circle (0.4ex);};
\end{tikzpicture} }$ induces an isomorphism between the summands $\Pi^k\onebl \la \l-1-2k \ra$ on either side. Since $\Hom(\onebl, \onebl \la \ell \ra) = 0$ if $\ell < 0$ it follows that the induced map
$$\bigoplus_{k=0}^{\lambda-1}\Pi^k \onebl \la \l-1-2k \ra \rightarrow \bigoplus_{k=0}^{\lambda-1}\Pi^k  \onebl \la \l-1-2k \ra$$
is upper triangular (when expressed as a matrix). We show that the maps on the diagonal are isomorphisms between the summands $\Pi^k \onebl \la \l-1-2k \ra$ on either side.

Now, by construction, the map $\;\Ucupl: \onebl \ads{\l-1} \rightarrow \Ett \Ftt \onebl$ is an isomorphism onto the summand  $\onebl \ads{\l-1}$ on the right side. Consequently, by Lemma \ref{lemXind}, the composition
$$\onebl \la \l-1-2k \ra \xrightarrow{\Ucupl} \Ett \Ftt \onebl \la -2k \ra \xrightarrow{\textcolor[rgb]{0.00,0.00,1.00}{k}\Uupdots\Udown} \Pi^k\Ett \Ftt \onebl $$
must also induce an isomorphism between the summands $\onebl \la \l-1-2k \ra$ on either side. This proves that all the diagonal entries are isomorphisms so we are done.
\end{proof}

% -------------------------------------------------------------------------------
%
\subsection{Endomorphisms of $\Ett \onebl$}
%
% -------------------------------------------------------------------------------

In an arbitrary strong supercategorical action on a 2-category $\Cc$ there may be many additional 2-morphisms in $\Cc$ that are not composites of caps, cups, dots or crossings.  The Lemma below is an important technical result that limits the form of 2-endomorphisms of $\Ett \onebl$.

\begin{lem}\label{lem:main}
Suppose $\mu < |\l+2|$ (or $\mu=1$ and $\l=-1$) and $f \in \Hom^{2\mu}(\Ett \onebl, \Pi^{\mu}\Ett \onebl)$. If $\l \ge -1$ then $f$ is of the form
\begin{equation}\label{eq:main1}
\sum_{i=0}^{\mu} \;\;
\hackcenter{\begin{tikzpicture}[scale=0.7]
  \draw[thick, ->] (1,0) -- (1,2)
     node[pos=0.5, shape=coordinate](X){};
  \draw[color=blue, thick, double distance=1pt, dashed]
   (X) to [out=180, in=-90] (0,2);
    \draw[line width=0mm] (1,0) -- (1,2)
     node[pos=0.5](){\bbullet};
    %% Make a coupon filled with a given label place at coordinate (-2,0.75) name the node Fj
   \node[draw, thick, fill=blue!20,rounded corners=4pt,inner sep=3pt]
     (fi) at (-1,.5) {$f_i$};
   \draw[color=blue, thick, double distance=1pt, dashed] (fi) to  (-1,2);
    \node at (1.8,1.5) {$ \l$};
     \node[blue] at (.3,1.8) {$\scs i$};
      \node[blue] at (-1.7,1.8) {$\scs\mu-i$};
\end{tikzpicture}  }
\end{equation}
where  $f_i \in \Hom^{2\mu-2i}(\onebltwo, \Pi^{\mu-i}\onebltwo)$. Similarly, if $\l \le -1$ (or $\mu=2$ and $\l=-1$) then $f$ is of the form
\begin{equation}\label{eq:main2}
\sum_{i=0}^{\mu} \;\;
\hackcenter{\begin{tikzpicture}[scale=0.7]
  \draw[thick, ->] (1,-.25) -- (1,2)
     node[pos=0.7, shape=coordinate](X){};
  \draw[color=blue, thick, double distance=1pt, dashed]
   (X) to [out=180, in=-90] (0,2);
    \draw[line width=0mm] (1,-.25) -- (1,2)
     node[pos=0.7](){\bbullet};
    %% Make a coupon filled with a given label place at coordinate (-2,0.75) name the node Fj
   \node[draw, thick, fill=blue!20,rounded corners=4pt,inner sep=3pt]
     (fi) at (2,.25) {$f_i$};
   \draw[color=blue, thick, double distance=1pt, dashed]
    (fi) .. controls ++(0,1) and ++(0,-1.8) ..  (-1,2);
    \node at (3,.5) {$\l$};
     \node[blue] at (.3,1.8) {$\scs i$};
      \node[blue] at (-1.5,1.8) {$\scs\mu-i$};
\end{tikzpicture}  }
\end{equation}
where $f_i \in \Hom^{2\mu-2i}(\onebl, \Pi^{\mu-i}\onebl)$. By adjunction there are analogous results for $f \in \Hom^{2\mu}(\Ftt \Pi^{\mu}\onebl,\Ftt \onebl)$.
\end{lem}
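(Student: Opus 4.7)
We focus on the first case $\l \geq -1$; the second form \eqref{eq:main2} then follows by the symmetry that exchanges $\Ett$ with $\Ftt$ and reverses orientations. Given $f \in \Hom^{2\mu}(\Ett\onebl,\Pi^{\mu}\Ett\onebl)$, the plan is to ``bend'' $f$ into a 2-morphism with no $\Ett$-legs, decompose it via the covering $\mf{sl}_2$ isomorphism, and then read off the data $(f_i)_{i=0}^{\mu}$. Concretely, using the right adjoint $(\Ett\onebl)^R \cong \onebl\Ftt\Pi^{\l+1}\la\l+1\ra$ from Theorem~\ref{thm:lradj}, the unit of the adjunction induces a natural isomorphism
\[
\Hom^{2\mu}(\Ett\onebl,\Pi^{\mu}\Ett\onebl) \;\cong\; \Hom\bigl(\onebl,\, \Pi^{\l+1+\mu}\Ftt\Ett\onebl\la\l+1-2\mu\ra\bigr),
\]
sending $f$ to its mate obtained by capping off the top $\Ett$-strand with $\Ucapl$ (up to $\alpha_\Ett$-twists).

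Next I would apply Corollary~\ref{cor:1}. For $\l \leq 0$ the covering $\mf{sl}_2$ isomorphism decomposes $\Ftt\Ett\onebl$ directly as $\Ett\Pi\Ftt\onebl \oplus \bigoplus_{k=0}^{-\l-1}\Pi^{\l+1+k}\onebl\la-\l-1-2k\ra$; for $\l \geq 0$ the same relation applied at weight $\l+2$ together with $\alpha_\Ett$, $\alpha_\Ftt$ exhibits $\Ftt\Ett\onebl$ as a direct summand of $\Pi\Ett\Ftt\onebl$. In either case the Hom space splits as a ``principal'' piece $\Hom(\onebl,\Pi^{*}\Ett\Pi\Ftt\onebl\la\l+1-2\mu\ra)$ together with $\mu+1$ summands of the form $\Hom(\onebl,\Pi^{\mu-i}\onebl\la 2i-2\mu\ra)$ for $i=0,\dots,\mu$, the latter being identified via the weight-$\l+2$ reindexing with the summands $\Hom^{2\mu-2i}(\onebltwo,\Pi^{\mu-i}\onebltwo)$ appearing in \eqref{eq:main1}. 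A further adjunction using $(\Ftt\onebl)^{L} \cong \Pi^{\l-1}\onebl\Ett\la\l-1\ra$ rewrites the principal piece as a Hom of the form $\Hom(\Ett\onebltwo,\Pi^{*}\Ett\onebltwo\la 2-2\mu\ra)$; the hypothesis $\mu < |\l+2|$ forces the internal degree to be strictly negative (or zero with wrong parity in the edge case $\mu=1,\l=-1$), so Lemma~\ref{lem:E} applied at weight $\l+2$ kills it. The remaining direct-sum summands account for exactly the parametrization $(f_i)_i$ in the claim.

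The hard step, and the one requiring the most bookkeeping, will be to verify that the composite isomorphism ``adjunction $\circ$ decomposition'' sends the $i$-th basic element $f_i\in\Hom^{2\mu-2i}(\onebltwo,\Pi^{\mu-i}\onebltwo)$ precisely to the diagram depicted in \eqref{eq:main1}, i.e.\ to $i$ dots on the upward $\Ett$-strand multiplied by the bubble $f_i$. This amounts to unpacking the inclusions/projections in Corollary~\ref{cor:1}, which are built from $\epsilon'$ caps and chains of dots, and then straightening the resulting composite using the inductive dot-slide formula \eqref{eq:inductive-dot}. Tracking the signs arising from the $\alpha_{\Ett}$ and $\alpha_{\Ftt}$ coherences through this identification is the technically delicate point; once carried out, surjectivity of the map $(f_0,\dots,f_\mu) \mapsto \text{RHS of \eqref{eq:main1}}$ is established, yielding the lemma. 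The second form \eqref{eq:main2} is then obtained by applying the same argument to the biadjoint formulation, or equivalently by composing with the $\Ucupr,\Ucapl$ adjunctions on the opposite side.
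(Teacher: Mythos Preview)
Your overall strategy—pass to $\onebltwo$ by adjunction, decompose via Corollary~\ref{cor:1}, and kill the principal piece with Lemma~\ref{lem:E}—is exactly the paper's. The gap is in the very first step: for the case $\l \ge -1$ you take the \emph{right} adjoint of $\Ett\onebl$ and land on $\Ftt\Ett\onebl$ at weight $\l$. But for $\l\ge 0$ the 1-morphism $\Ftt\Ett\onebl$ is the small (indecomposable) piece; Corollary~\ref{cor:1} tells you only that it sits as a summand inside $\Ett\Ftt\onebl$, with the $\bigoplus_{[\l]}\onebl$ summands lying in the \emph{complement}. So your claimed splitting of $\Hom(\onebl,\Pi^{*}\Ftt\Ett\onebl\la\cdot\ra)$ into a principal piece plus $\mu+1$ summands simply does not occur there. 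If you then try to strip off the $\Ftt$ with $(\Ftt\onebltwo)^L$ you recover exactly $\Hom^{2\mu}(\Ett\onebl,\Pi^\mu\Ett\onebl)$ again: the two adjunctions cancel.

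The fix is to use the \emph{left} adjoint $(\Ett\onebl)^L\cong\onebl\Ftt\la-\l-1\ra$ instead. This produces
\[
\Hom^{2\mu}(\Ett\onebl,\Pi^{\mu}\Ett\onebl)\;\cong\;\Hom^{2\mu}\bigl(\onebltwo,\Pi^{\mu}\Ett\Ftt\onebltwo\la-(\l+1)\ra\bigr),
\]
so you are now at weight $\l+2\ge 1$, where $\Ett\Ftt\onebltwo$ genuinely decomposes as $\Ftt\Pi\Ett\onebltwo\oplus\bigoplus_{[\l+2]}\onebltwo$. The $\bigoplus_{[\l+2]}\onebltwo$ summands give precisely the $\mu+1$ pieces $\Hom^{2(\mu-k)}(\onebltwo,\Pi^{\,*}\onebltwo)$; the principal $\Ftt\Pi\Ett\onebltwo$ piece becomes, after one more adjunction (this time $(\Ftt\oneb_{\l+4})^L$), $\Hom(\Ett\onebltwo,\Pi^{\,*}\Ett\onebltwo\la 2\mu-2(\l+2)\ra)$, which vanishes by Lemma~\ref{lem:E} under the hypothesis $\mu<\l+2$. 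This is also why the $f_i$ in \eqref{eq:main1} live at weight $\l+2$, not $\l$. The edge case $\mu=1$, $\l=-1$ is exactly where $\mu=\l+2$ and the principal piece contributes an extra $\End(\Ett\oneb_{1})$-term, recovered as the dot. Your plan for tracing the explicit isomorphism back through Corollary~\ref{cor:1} to obtain the diagrams in \eqref{eq:main1} is correct once this first step is repaired.
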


\begin{proof}
We prove the case $\l \ge -1$ (the case $\l \le -1$ is proved similarly). We will deal with the special case $\mu=1$ and $\l=-1$ at the end. We have
\begin{eqnarray*}
& &\Hom^{2\mu}(\Ett \onebl, \Pi^{\mu}\Ett \onebl) \\
&\cong&
  \Hom^{2\mu}(\onebltwo, \Pi^{\mu}\Ett (\Ett \onebl)^L \onebltwo) \\
&\cong&
  \Hom^{2\mu}(\onebltwo, \Pi^{\mu}\Ett \Ftt \onebltwo \la -(\l+1) \ra) \\
&\cong&
  \Hom^{2\mu}\left(\onebltwo, \Pi^{\mu} \left(\bigoplus_{[\l+2]} \onebltwo \la -(\l+1) \ra
   \oplus\Ftt\Pi \Ett \onebltwo \la -(\l+1) \ra\right) \right) \\
&\cong&
 \Hom^{2\mu-\l-1}(\onebltwo, \Pi^{\mu}
 \bigoplus_{[\l+2]} \onebltwo)
 \oplus \Hom^{2\mu}((\Ftt \oneb_{\l+4})^L, \Pi^{\mu+1} \Ett \onebltwo \la -(\l+1) \ra) \\
&\cong&
 \Hom^{2\mu-\l-1}(\onebltwo,\Pi^{\mu}
 \bigoplus_{[\l+2]} \onebltwo)
 \oplus \Hom^{2\mu}(\Pi^{\l+3}\Ett \onebltwo \la \l+3 \ra, \Pi^{\mu+1}\Ett \onebltwo \la -(\l+1) \ra) \\
&\cong&
 \Hom^{2\mu-\l-1}(\onebltwo, \bigoplus_{[\l+2]}\Pi^{\mu} \onebltwo)
\end{eqnarray*}
where the last line follows since $\mu<(\l+2)$ meaning $\Hom^{2\mu}(\Pi^{\l+3}\Ett \onebltwo, \Pi^{\mu+1} \Ett \onebltwo \la -2(\l+2) \ra) = 0.$ Note that in the third isomorphism above we use the inverse of the isomorphism in \eqref{eq:iso1}. Keeping track of degrees, we find that
\begin{equation}\label{eq:2}
\Hom^{2\mu}(\Ett \onebl, \Pi^{\mu}\Ett \onebl) \cong \bigoplus_{k = 0}^{\l+1} \Hom(\onebltwo, \Pi^{k+\mu}\onebltwo \la 2(\mu-k) \ra).
\end{equation}

If $f \in \Hom^{2\mu}(\Ett \onebl, \Pi^{\mu}\Ett \onebl)$ then we denote the map induced by adjunction
\[
f' \in \Hom^{2\mu}(\onebltwo, \Pi^{\mu}\Ett \Ftt \onebltwo \la -(\l+1) \ra)
\]
and the induced maps on the right side of (\ref{eq:2}) by $f_k' \in \Hom(\onebltwo, \Pi^{k+\mu} \onebltwo \la 2(\mu-k) \ra)$.

Now let us trace through the series of isomorphisms above in order to explicitly identify $f'$ with $f_k'$. The critical isomorphism is the third one where one uses the isomorphism \eqref{eq:iso1}
from Corollary \ref{cor:1}. Thus we find that $f'$ corresponds to the composite
\[
\hackcenter{
\begin{tikzpicture}[scale=0.8]
 \draw[thick, <-] (-0.5,2) .. controls (-0.5,1) and (0.5,1) .. (0.5,2)
      node[pos=0.2, shape=coordinate](tDOT){};
 \draw[color=blue, thick, double distance=1pt, dashed]
   (-1.4,.8) .. controls ++(0,1) and ++(-.5,.5) .. (tDOT);
 \draw[color=blue, thick, double distance=1pt, dashed] (-1.6,.8) -- (-1.6,2);
  \node[draw, thick, fill=blue!20,rounded corners=4pt,inner sep=3pt]
    (Fj) at (-1.5,0.75) {$\scs \;\; f_k' \;\;$};
  \draw[line width=0mm] (-0.5,2) .. controls (-0.5,1) and (0.5,1) .. (0.5,2)
      node[pos=0.2](){\bbullet};
   \node[blue] at (-1,1.45){$\scs k$};
   \node[blue] at (-1.8,1.8){$\scs \mu$};
   \node at (.6,0.7) {$\l$};
\end{tikzpicture} } \maps \onebltwo \to \Pi^{\mu}\Ett \Ftt \onebltwo \la 2\mu-(\l+1) \ra.
\]
Setting
\[
\hackcenter{
\begin{tikzpicture}
 \draw[color=blue, thick, double distance=1pt, dashed] (-1.5,.8) -- (-1.5,2);
  \node[draw, thick, fill=blue!20,rounded corners=4pt,inner sep=3pt]
    (Fj) at (-1.5,0.75) {$\scs  f_k $};
   \node[blue] at (-2,1.8){$\scs \mu-k$};
   \node at (-.6,1) {$\l$};
\end{tikzpicture} }
\quad :=\quad
\hackcenter{
\begin{tikzpicture}
 \draw[color=blue, thick, double distance=1pt, dashed]
   (-1.6,.8) .. controls ++(0,.5) and ++(0,.5) .. (-1.2,.8);
 \draw[color=blue, thick, double distance=1pt, dashed] (-1.8,.8) -- (-1.8,2);
  \node[draw, thick, fill=blue!20,rounded corners=4pt,inner sep=3pt]
    (Fj) at (-1.5,0.75) {$\scs \quad f_k' \quad$};
   \node[blue] at (-1.2,1.35){$\scs k$};
   \node[blue] at (-1.5,1.8){$\scs \mu-k$};
   \node at (-.6,1.5) {$\l$};
\end{tikzpicture} }
\]
and using the adjunction which relates $f$ and $f'$ completes the proof.

If $\mu=2$ and $\l=-1$ then the long calculation above yields
$$\Hom^2(\Ett \oneb_{-1}, \Pi \Ett \oneb_{-1}) \cong \Hom^2(\oneb_1,\Pi  \oneb_1) \oplus \Hom^2(\Ett \oneb_1, \Pi \Ett \oneb_1).$$
The space of maps on the right is one-dimensional and is induced by the dot. The result follows.
\end{proof}

% ==============================================================================
%
\section{Odd cyclic biadjointness}\label{sec:proofcycbiadjoint}
%
% ==============================================================================

Recall that at this point, adjunction maps are only determined up to a scalar. In this section we rescale them so that caps and cups are adjoint to each other and so that
\begin{align}
\xy (0,5)*{
\begin{tikzpicture}[scale=0.8]
  \draw[thick, ->] (-0.5,0) .. controls (-0.5,0.8) and (0.5,0.8) .. (0.5,0)
      node[pos=0.5, shape=coordinate](X){}
      node[pos=0.1, shape=coordinate](Y){};
  \draw[thick] (-0.5,0) .. controls (-0.5,-0.8) and (0.5,-0.8) .. (0.5,0);
 %% Draw double blue curvy line
  \draw[color=blue, thick, double distance=1pt, dashed] (X) .. controls++(0,.65) and ++(-.65,.3) .. (Y)
         node[pos=0.15,right]{$\scs \l-1$\;};
 %% Draw the bullet last so it comes out on top
  \draw[line width=0mm] (-0.5,0) .. controls (-0.5,0.8) and (0.5,0.8) .. (0.5,0)
     node[pos=0.1]{\bbullet};
  \node at (1,0.5) {$\lambda$};
\end{tikzpicture}
 };
 \endxy
  &= \oneb_{\onebl}  \qquad \text{for $\l >0$,}
& \qquad \qquad
 \xy (0,0)*{
\begin{tikzpicture}[scale=0.8]
  \draw[thick, ->] (0.5,0) .. controls (0.5,0.8) and (-0.5,0.8) .. (-0.5,0);
  \draw[thick] (0.5,0) .. controls (0.5,-0.8) and (-0.5,-0.8) .. (-0.5,0)
      node[pos=0.5, shape=coordinate](X){}
      node[pos=0.1, shape=coordinate](Y){};
 %% Draw double blue curvy line
  \draw[color=blue, thick, double distance=1pt, dashed] (X) .. controls++(-.1,1) and ++(-.2,.4) .. (Y)
         node[pos=0.9,right]{$\scs -\l-1$\;};
 %% Draw the bullet last so it comes out on top
  \draw[line width=0mm
  ] (0.5,0) .. controls (0.5,-0.8) and (-0.5,-0.8) .. (-0.5,0)
     node[pos=0.1]{\bbullet};
  \node at (1,0.6) {$\lambda$};
\end{tikzpicture}
}; \endxy &= \oneb_{\onebl}  \qquad \text{for $\l <0$.}
\end{align}
Recall that negative degree dotted bubbles are zero while a dotted bubble of degree zero must be a non-zero multiple of the identity map by Corollary~\ref{cor:degz-bubbles} (note that for $\l=0$ there are no degree zero dotted bubbles). By rescaling the adjunction maps we can ensure the degree zero bubbles satisfy the conditions above. More precisely, we rescale in the following order:
\[
\begin{tabular}{|l|c|c|c|c|}
 \hline
 $ \l \geq 0$ &
  \xy (0,-3)*{\begin{tikzpicture}[scale=0.8]
  \draw[thick, ->] (-.75,0) .. controls ++(0,-1) and ++(0,-1) .. (.75,0)
      node[pos=0.5, shape=coordinate](X){};
  \draw[color=blue, thick, double distance=1pt, dashed]
   (X) -- (0,-1.4);
   \node at (1.2,-.7) {$\l-1$};
   \node at (0,-.25) {$\l+1$};
  \end{tikzpicture} }; \endxy
  &
    \xy (0,-3)*{\begin{tikzpicture}[scale=0.8]
  \draw[thick, ->] (-.75,0) .. controls ++(0,1) and ++(0,1) .. (.75,0)
      node[pos=0.5, shape=coordinate](X){};
  \draw[color=blue, thick, double distance=1pt, dashed]
   (X) -- (0,1.4);
   \node at (1.2,.7) {$\l+1$};
   \node at (0,.25) {$\l-1$};
  \end{tikzpicture} }; \endxy
  &
  \xy (0,-3)*{\begin{tikzpicture}[scale=0.8]
  \draw[thick, ->] (.75,0) .. controls ++(0,-1) and ++(0,-1) .. (-.75,0);
   \node at (1.2,-.7) {$\l+1$};
   \node at (0,-.25) {$\l-1$};
  \end{tikzpicture} }; \endxy
  &
  \xy (0,-3)*{\begin{tikzpicture}[scale=0.8]
  \draw[thick, ->] (.75,0) .. controls ++(0,1) and ++(0,1) .. (-.75,0);
   \node at (1.2,.7) {$\l-1$};
   \node at (0,.25) {$\l+1$};
  \end{tikzpicture} }; \endxy
    \\& & &  &\\ \hline
 & \;\;\txt{ fixed arbitrarily} \;\;
 & \;\;\txt{ determined by \\adjunction}\;\;
 & \;\;\txt{ fixed by value\\ of bubble}\;\;
 & \;\;\txt{ determined by\\ adjunction}\;\;
 \\
 \hline
\end{tabular}
\]
\[
\begin{tabular}{|l|c|c|c|c|}
 \hline
 $ \l < 0$ &
   \xy (0,-3)*{\begin{tikzpicture}[scale=0.8]
  \draw[thick, ->] (.75,0) .. controls ++(0,-1) and ++(0,-1) .. (-.75,0);
   \node at (1.2,-.7) {$\l+1$};
   \node at (0,-.25) {$\l-1$};
  \end{tikzpicture} }; \endxy
  &
   \xy (0,-3)*{\begin{tikzpicture}[scale=0.8]
  \draw[thick, ->] (.75,0) .. controls ++(0,1) and ++(0,1) .. (-.75,0);
   \node at (1.2,.7) {$\l-1$};
   \node at (0,.25) {$\l+1$};
  \end{tikzpicture} }; \endxy
  &
   \xy (0,-3)*{\begin{tikzpicture}[scale=0.8]
  \draw[thick, ->] (-.75,0) .. controls ++(0,-1) and ++(0,-1) .. (.75,0)
      node[pos=0.5, shape=coordinate](X){};
  \draw[color=blue, thick, double distance=1pt, dashed]
   (X) -- (0,-1.4);
   \node at (1.2,-.7) {$\l-1$};
   \node at (0,-.25) {$\l+1$};
  \end{tikzpicture} }; \endxy
  &
    \xy (0,-3)*{\begin{tikzpicture}[scale=0.8]
  \draw[thick, ->] (-.75,0) .. controls ++(0,1) and ++(0,1) .. (.75,0)
      node[pos=0.5, shape=coordinate](X){};
  \draw[color=blue, thick, double distance=1pt, dashed]
   (X) -- (0,1.4);
   \node at (1.2,.7) {$\l+1$};
   \node at (0,.25) {$\l-1$};
  \end{tikzpicture} }; \endxy
    \\& & &  &\\ \hline
 & \;\;\txt{ fixed arbitrarily}\;\;
 & \;\;\txt{ determined by \\adjunction}\;\;
 & \;\;\txt{ fixed by value\\ of bubble}\;\;
 & \;\;\txt{ determined by\\ adjunction}\;\;
 \\
 \hline
\end{tabular}
\]
The only time this rescaling fails is when $\l=0$ above. In that case, the rescaling for $\l \geq 0$ fixes the value of the positive bubble in weight $\l=1$, so that
\begin{equation} \label{eq_bub1}
 \hackcenter{\begin{tikzpicture} [scale=0.6]
  \draw[thick, ->, , ->-=0.03] (-0.5,0) .. controls (-0.5,0.8) and (0.5,0.8) .. (0.5,0);
  \draw[thick] (0.5,0) .. controls (0.5,-0.8) and (-0.5,-0.8) .. (-0.5,0);
  \node at (1,0.5) {$+1$};
\end{tikzpicture} }= \oneb_{\onebl},
\end{equation}
where $+1$ denotes the outside region of the bubble. However, the rescaling for $\l=0$ fails to rescale the positive degree bubble in region $\l=-1$.  This bubble is multiplication by some arbitrary scalar $c_{-1}$:
\begin{equation} \label{eq_defcmone}
 \hackcenter{\begin{tikzpicture} [scale=0.6]
  \draw[thick, ->, , ->-=0.03] (0.5,0) .. controls (0.5,0.8) and (-0.5,0.8) .. (-0.5,0);
  \draw[thick] (0.5,0) .. controls (0.5,-0.8) and (-0.5,-0.8) .. (-0.5,0);
  \node at (1,0.5) {$-1$};
\end{tikzpicture} }
  =  c_{-1} \oneb_{\onebl}
\end{equation}
where $-1$ denotes the outside region of the bubble. We will show in Proposition~\ref{prop_free_param} that in fact $c_{-1} = 1$.

% -------------------------------------------------------------------------------
%
\subsection{Odd cyclicity for dots}
%
% -------------------------------------------------------------------------------

% - - - - - - - - - - - - - - - - - - - - - - - - - - - - - - - - - - - - - - - -
%
%
\subsubsection{Defining downward oriented dots}
%
% - - - - - - - - - - - - - - - - - - - - - - - - - - - - - - - - - - - - - - - -
%

We will make repeated use of the 2-morphism $x_\Fc'$, which is defined diagrammatically by
\begin{equation}\label{eqn-x-fc-prime}
\hackcenter{
 % [inline block 1: 25 envs, 21976 chars -> data_tex | \begin{tikzpicture} [scale=0.8] 	\draw[thick, <-] (0,0) -- (0,3)...]
}.
\]
The right-curl diagram on the right is zero by Lemma~\ref{lem:E} since it is a 2-morphism of degree $-2$.  Simplifying the remaining equation using the odd nilHecke dot slide equation and removing terms containing a dot composed with a degree $-2$ right twist curl we find that $\gamma_0=-1$. Substituting this into \eqref{eq:bubble} with $m=\mu+1=0$ shows that $\gamma_1$ is the zero 2-morphism.
\end{proof}

% - - - - - - - - - - - - - - - - - - - - - - - - - - - - - - - - - - - - - - - -
%
%
\subsubsection{Half cyclicity relations} \label{subsubsec-half_cyclic}
%
% - - - - - - - - - - - - - - - - - - - - - - - - - - - - - - - - - - - - - - - -
It is often more convenient to use Lemma~\ref{lem-x-fc} in the following form.
\begin{cor}\label{cor-dot-cyclicity}
The following local relations hold in any strong supercategorical action.
\begin{equation}\label{eqn-dot-cyclicity-left-cap}
\hackcenter{\begin{tikzpicture} [scale=0.9]
	\draw[thick, ->] (1,0) .. controls (1,.8) and (0,.8) .. (0,0)
		node[pos=.8, shape=coordinate](DOT){};
	\draw[thick, color=blue, dashed] (DOT) [out=-30, in=90] to (.5,0);
		\node at (DOT){\bbullet};
  \node at (1.2,0.8) {$\l-1$};
\end{tikzpicture}}
\quad = \quad
\hackcenter{\begin{tikzpicture}[scale=0.9]
	\draw[thick, ->] (1,0) .. controls (1,.8) and (0,.8) .. (0,0)
		node[pos=.2, shape=coordinate](DOT){};
	\draw[thick, color=blue, dashed] (DOT) [out=--140, in=90] to (.5,0);
		\node at (DOT){\bbullet};
  \node at (1.2,0.8) {$\l-1$};
\end{tikzpicture}}
\qquad \qquad
\hackcenter{\begin{tikzpicture}[scale=0.9]
	\draw[thick, ->] (1,0) .. controls (1,-.8) and (0,-.8) .. (0,0)
		node[pos=.8, shape=coordinate](DOT){};
	\draw[thick, color=blue, dashed]
        (DOT) .. controls ++(-.6,.5) and ++(0,.7) ..(-.25,-1);
		\node at (DOT){\bbullet};
  \node at (1.2,-0.8) {$\l+1$};
\end{tikzpicture}}
\quad = \quad
\hackcenter{\begin{tikzpicture}[scale=0.9]
	\draw[thick, ->] (1,0) .. controls (1,-.8) and (0,-.8) .. (0,0)
		node[pos=.2, shape=coordinate](DOT){};
	\draw[thick, color=blue, dashed] (DOT) to[out=-30, in=90] (1.25,-1);
		\node at (DOT){\bbullet};
  \node at (-.3,-0.8) {$\l+1$};
\end{tikzpicture}}
\end{equation}

\begin{equation}\label{eqn-dot-cyclicity-right-cap}
\hackcenter{\begin{tikzpicture}[scale=0.9]
	\draw[thick, ->] (0,0) .. controls (0,.8) and (1,.8) .. (1,0)
		node[pos=.5, shape=coordinate](TOPCAP){}
		node[pos=.15, shape=coordinate](DOT){};
	\draw[thick, color=blue, dashed, double distance=1pt] (TOPCAP) -- (.5,1.5)
		node[pos=0.8,blue, right](){$\scs \lambda$};
	\draw[thick, color=blue, dashed] (DOT) [out=135, in=-90] to (-.25,1.5);
		\node at (DOT){\bbullet};
  \node at (1.4,0.8) {$\l+1$};
\end{tikzpicture}}
\quad=\quad
\left\{
\begin{array}{ll}
  \hackcenter{\begin{tikzpicture}[scale=0.9]
	\draw[thick, ->] (0,0) .. controls (0,.8) and (1,.8) .. (1,0)
		node[pos=.5, shape=coordinate](TOPCAP){}
		node[pos=.75, shape=coordinate](DOT){};
	\draw[thick, color=blue, dashed, double distance=1pt] (TOPCAP) -- (.5,1.5)
		node[pos=0.8,blue, left](){$\scs \lambda$};
	\draw[thick, color=blue, dashed]
     (DOT) .. controls ++(.6,-.5) and ++(0,-1) ..  (1.25,1.5);
	\draw (DOT) -- (DOT)
		node(){\bbullet};
   \node at (-.7,0.8) {$\l+1$};
\end{tikzpicture}}
\quad+\quad 2\;
\hackcenter{\begin{tikzpicture}[scale=0.9]
	\draw[thick, ->] (-2,-.5) .. controls ++(0,.7) and ++(0,.7) .. (-1,-.5)
		node[pos=.5, shape=coordinate](TOPCAP){}
		node[pos=.25, shape=coordinate](DOT){};
 \draw[thick, color=blue, dashed, double distance=1pt] (TOPCAP) -- (-1.5,1)
		node[pos=0.8,blue, left](){$\scs \lambda$};
  \draw[thick, ->] (-0.4,0) .. controls ++(-0,0.6) and ++(0,0.6) .. (0.4,0)
      node[pos=0.5, shape=coordinate](X){}
      node[pos=0.1, shape=coordinate](Y){};
  \draw[thick] (-0.4,0) .. controls ++(0,-0.6) and ++(0,-0.6) .. (0.4,0)
      node[pos=0.1, shape=coordinate](Z){};
  \draw[color=blue, thick, double distance=1pt, dashed] (X) .. controls++(0,.65) and ++(-.65,.3) .. (Y) node[pos=0.15,right]{$\scs \l$\;};
  \draw[color=blue, thick, dashed] (Z) to[bend left] (-1,1) ;
  \node at (Y) {\bbullet};
  \node at (Z) {\bbullet};
  \node at (1,-.5) {$\l+1$};
\end{tikzpicture}} & \l>0 \\ & \\
  \hackcenter{\begin{tikzpicture}[scale=0.9]
	\draw[thick, ->] (0,0) .. controls (0,.8) and (1,.8) .. (1,0)
		node[pos=.5, shape=coordinate](TOPCAP){}
		node[pos=.75, shape=coordinate](DOT){};
	\draw[thick, color=blue, dashed, double distance=1pt] (TOPCAP) -- (.5,1.5)
		node[pos=0.8,blue, left](){$\scs \lambda$};
	\draw[thick, color=blue, dashed]
     (DOT) .. controls ++(.6,-.5) and ++(0,-1) ..  (1.25,1.5);
	\draw (DOT) -- (DOT)
		node(){\bbullet};
   \node at (-.7,0.8) {$\l+1$};
\end{tikzpicture}}
\quad+\quad 2\;
\hackcenter{\begin{tikzpicture}[scale=0.9]
  \draw[thick, ->] (-1.4,-0.5) .. controls ++(-.1,1.5) and ++(.1,1.5) .. (1.4,-0.5)
		node[pos=.5, shape=coordinate](TOPCAP){};
	\draw[thick, color=blue, dashed, double distance=1pt] (TOPCAP) -- (0,1.25)
		node[pos=0.8,blue, left](){$\scs \lambda$};
  \draw[thick, ->] (0.4,0) .. controls ++(0,0.6) and ++(0,0.6) .. (-0.4,0)
      node[pos=0.05, shape=coordinate](Z){};
  \draw[thick] (0.4,0) .. controls ++(0,-0.6) and ++(-0,-0.6) .. (-0.4,0)
      node[pos=0.5, shape=coordinate](X){}
      node[pos=0.2, shape=coordinate](Y){};
  \draw[color=blue, thick, double distance=1pt, dashed]
    (X) .. controls++(-.1,.5) and ++(-.2,.3) .. (Y)
         node[pos=0.9,right]{$\scs -\l$\;};
   \draw[color=blue, thick,  dashed]
    (Z) .. controls ++(-1,.4) and ++(.1,-1) .. (1,1) ;
   \node[blue] at (1.25,0.8){$\scs $\;};
     \node at (Y) {\bbullet};
     \node at (Z) {\bbullet};
  \node at (2,.3) {$\lambda+1$};
\end{tikzpicture} }
   & \l<0 \\ & \\-\;\;
     \hackcenter{\begin{tikzpicture}[scale=0.9]
	\draw[thick, ->] (0,0) .. controls (0,.8) and (1,.8) .. (1,0)
		node[pos=.5, shape=coordinate](TOPCAP){}
		node[pos=.75, shape=coordinate](DOT){};
	\draw[thick, color=blue, dashed, double distance=1pt] (TOPCAP) -- (.5,1.5)
		node[pos=0.8,blue, left](){$\scs \lambda$};
	\draw[thick, color=blue, dashed]
     (DOT) .. controls ++(.6,-.5) and ++(0,-1) ..  (1.25,1.5);
	\draw (DOT) -- (DOT)
		node(){\bbullet};
   \node at (-.7,0.8) {$\l+1$};
\end{tikzpicture}}& \l=0
\end{array}
\right.
\end{equation}

\begin{equation}\label{eqn-dot-cyclicity-right-cup}
\hackcenter{\begin{tikzpicture}[scale=0.9]
	\draw[thick] (0,1) .. controls (0,.3) and (1,.3) .. (1,1)
		node[pos=.5, shape=coordinate](BOTTOMCUP){}
		node[pos=.85, shape=coordinate](DOT){};
	\draw[thick, ->] (1,1) -- (1,1.5);
	\draw[thick] (0,1) -- (0,1.5);
	\draw[thick, color=blue, dashed] (DOT) [out=135, in=-90] to (.75,1.5);
	\draw[thick, color=blue, double distance=1pt, dashed] (BOTTOMCUP) -- (.5,1.5)
		node[pos=.8, left, blue](){$\scs \lambda$};
	\node at (DOT){\bbullet};;
    \node at (1.5,.3) {$\lambda-1$};
\end{tikzpicture}}
\quad=\quad
\left\{
\begin{array}{ll}
  \hackcenter{\begin{tikzpicture}[scale=0.9]
	\draw[thick] (0,1) .. controls (0,.3) and (1,.3) .. (1,1)
		node[pos=.5, shape=coordinate](BOTTOMCUP){}
		node[pos=.15, shape=coordinate](DOT){};
	\draw[thick, ->] (1,1) -- (1,1.5);
	\draw[thick] (0,1) -- (0,1.5);
	\draw[thick, color=blue, dashed] (DOT) [out=45, in=-90] to (.25,1.5);
	\draw[thick, color=blue, double distance=1pt, dashed] (BOTTOMCUP) -- (.5,1.5)
		node[pos=.8, right, blue](){$\scs \lambda$};
	\node at (DOT){\bbullet};
    \node at (1.5,.3) {$\lambda-1$};
\end{tikzpicture}} \quad + \quad 2 \;\;
\hackcenter{\begin{tikzpicture}[scale=0.9]
  \draw[thick]  (-1.6,0.5) -- (-1.6,1);
  \draw[thick, ->]  (1.6,0.5) -- (1.6,1);
  \draw[thick] (-1.6,0.5) .. controls ++(-.1,-2) and ++(.1,-2) .. (1.6,0.5)
		node[pos=.5, shape=coordinate](TOPCAP){};
	\draw[thick, color=blue, dashed, double distance=1pt]
    (TOPCAP) .. controls ++(0,.5) and ++(0,-1.5) ..  (1,1)
		node[pos=0.8,blue, right](){$\scs \lambda$};
  \draw[thick, ->] (-0.4,0) .. controls ++(-0,0.6) and ++(0,0.6) .. (0.4,0)
      node[pos=0.5, shape=coordinate](X){}
      node[pos=0.1, shape=coordinate](Y){};
  \draw[thick] (-0.4,0) .. controls ++(0,-0.6) and ++(0,-0.6) .. (0.4,0)
      node[pos=0.1, shape=coordinate](Z){};
  \draw[color=blue, thick, double distance=1pt, dashed] (X) .. controls++(0,.65) and ++(-.65,.3) .. (Y) node[pos=0.15,right]{$\scs \l$\;};
  \draw[color=blue, thick, dashed] (Z) to[bend left] (-1,1) ;
  \node at (Y) {\bbullet};
  \node at (Z) {\bbullet};
  \node at (2,-.6) {$\lambda-1$};
\end{tikzpicture} }& \l>0
 \\ & \\
  \hackcenter{\begin{tikzpicture}[scale=0.9]
	\draw[thick] (0,1) .. controls (0,.3) and (1,.3) .. (1,1)
		node[pos=.5, shape=coordinate](BOTTOMCUP){}
		node[pos=.15, shape=coordinate](DOT){};
	\draw[thick, ->] (1,1) -- (1,1.5);
	\draw[thick] (0,1) -- (0,1.5);
	\draw[thick, color=blue, dashed] (DOT) [out=45, in=-90] to (.25,1.5);
	\draw[thick, color=blue, double distance=1pt, dashed] (BOTTOMCUP) -- (.5,1.5)
		node[pos=.8, right, blue](){$\scs \lambda$};
	\node at (DOT){\bbullet};
    \node at (1.5,.3) {$\lambda-1$};
\end{tikzpicture}}
\quad + \quad 2\;\;
\hackcenter{\begin{tikzpicture}
	\draw[thick, ->] (.5,1) .. controls ++(0,-.8) and ++(0,-.8) .. (2,1)
		node[pos=.5, shape=coordinate](BOTTOMCUP){};
	\draw[thick, color=blue, double distance=1pt, dashed] (BOTTOMCUP) to[out=90, in=-90] (1.5,1);
		\node[ blue] at (1.65,.8){$\scs \lambda$};
    \draw[thick, ->] (0.4,0) .. controls ++(0,0.6) and ++(0,0.6) .. (-0.4,0)
      node[pos=0.05, shape=coordinate](Z){};
  \draw[thick] (0.4,0) .. controls ++(0,-0.6) and ++(-0,-0.6) .. (-0.4,0)
      node[pos=0.5, shape=coordinate](X){}
      node[pos=0.2, shape=coordinate](Y){};
  \draw[color=blue, thick, double distance=1pt, dashed]
    (X) .. controls++(-.1,.5) and ++(-.2,.3) .. (Y)
         node[pos=0.9,right]{$\scs -\l$\;};
   \draw[color=blue, thick,  dashed]
    (Z) .. controls ++(-1,.4) and ++(.1,-1) .. (1,1) ;
   \node[blue] at (1.25,0.8){$\scs $\;};
     \node at (Y) {\bbullet};
     \node at (Z) {\bbullet};
    \node at (1.8,-.3) {$\lambda-1$};
\end{tikzpicture}}
   & \l<0
   \\ & \\
 - \;\; \hackcenter{\begin{tikzpicture}[scale=0.9]
	\draw[thick] (0,1) .. controls (0,.3) and (1,.3) .. (1,1)
		node[pos=.5, shape=coordinate](BOTTOMCUP){}
		node[pos=.15, shape=coordinate](DOT){};
	\draw[thick, ->] (1,1) -- (1,1.5);
	\draw[thick] (0,1) -- (0,1.5);
	\draw[thick, color=blue, dashed] (DOT) [out=45, in=-90] to (.25,1.5);
	\draw[thick, color=blue, double distance=1pt, dashed] (BOTTOMCUP) -- (.5,1.5)
		node[pos=.8, right, blue](){$\scs \lambda$};
	\node at (DOT){\bbullet};
    \node at (1.5,.3) {$\lambda-1$};
\end{tikzpicture}}& \l=0
\end{array}
\right.
\end{equation}
\end{cor}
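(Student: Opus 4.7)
The plan is to split the three sets of equations into two kinds: the ``free'' relations \eqref{eqn-dot-cyclicity-left-cap} which follow tautologically from the definition of dots on downward strands, and the ``hard'' relations \eqref{eqn-dot-cyclicity-right-cap} and \eqref{eqn-dot-cyclicity-right-cup} which repackage Lemma \ref{lem-x-fc} in local form. In each case, the main work reduces to inserting the defining equation \eqref{eqn-x-fc-prime} for $x_\Fc'$ and then simplifying with adjunction zigzags.

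For \eqref{eqn-dot-cyclicity-left-cap}, I would note that these equations say precisely that a dot can be slid across a left-oriented cap or left-oriented cup without cost. Since the downward dot was \emph{defined} in \eqref{eqn-x-fc-prime} by zigzagging through an upward dot via the left-oriented biadjunction $\onebl\Ftt \dashv \Ett\onebl$, both sides of each equation in \eqref{eqn-dot-cyclicity-left-cap} become the same canonical 2-morphism after one application of the zigzag identities \eqref{eq_biadjoint1}. There is no further obstruction here; no signs, parities, or case-by-case analysis in $\lambda$ arise.

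For \eqref{eqn-dot-cyclicity-right-cap}, I would take equation \eqref{eqn-x-fc-right} of Lemma \ref{lem-x-fc} and ``pull off'' the left-oriented cup and cap appearing at the bottom-left of its LHS by the zigzag identity: these fuse to an identity on the downward strand, and what remains on the LHS is precisely the right-cap-plus-dot diagram appearing on the LHS of \eqref{eqn-dot-cyclicity-right-cap} (the sign $(-1)^{\lambda}$ in \eqref{eqn-x-fc-right} is absorbed when the dashed line from the dot is pulled across the $\lambda$-labeled region, by the super-interchange rule). On the RHS, the first term of \eqref{eqn-x-fc-right} is, by the second form of the definition \eqref{eqn-x-fc-prime}, exactly the right-cap with a dot on the downward strand appearing in \eqref{eqn-dot-cyclicity-right-cap}, and the second term is already the desired bubble correction. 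The three cases $\lambda>0$, $\lambda<0$, $\lambda=0$ are inherited directly from Lemma \ref{lem-x-fc}.

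For \eqref{eqn-dot-cyclicity-right-cup}, the cleanest route is to repeat the argument of Lemma \ref{lem-x-fc} in its ``unit'' form: in place of the right-cap/dot/left-zigzag combination, one starts with a right-oriented cup and a dot, and closes it off via the right-oriented zigzag. By Lemma \ref{lem:main} applied to $\Hom^2(\onebl,\Pi\Ftt\Ett\onebl\la\cdot\ra)$, the resulting 2-morphism must equal a scalar multiple of the downward-dot version plus a bubble correction term; the same bubble-closing computation as in Lemma \ref{lem-x-fc} (equation \eqref{eq:bubble}) pins down both scalars and yields \eqref{eqn-dot-cyclicity-right-cup}. Alternatively one can take adjoints of \eqref{eqn-dot-cyclicity-right-cap} and use the adjunction structure of Theorem~\ref{thm:lradj}. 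The main obstacle throughout is the careful bookkeeping of signs: every time a dashed line is dragged past a cap, cup, or odd generator, one potentially picks up a sign from the super-interchange law \eqref{eqn-s2c-commute-2}, and the $\lambda$-dependent coefficients $(-1)^{\lambda}$ appearing in \eqref{eqn-x-fc-right} must match the ones that appear after these slides, which is what forces the case split $\lambda > 0,\lambda < 0, \lambda = 0$ and the $\lambda = 0$ minus sign.
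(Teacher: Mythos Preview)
Your approach is correct and for the central computation \eqref{eqn-dot-cyclicity-right-cap} matches the paper's: both compose \eqref{eqn-x-fc-right} with a right cap and simplify via a zigzag, with the $(-1)^\lambda$ absorbed by resolving $\lambda$ dashed-dashed crossings. One terminological slip: the cup and cap appearing on the LHS of \eqref{eqn-x-fc-right} are \emph{right}-oriented (they carry the double-dashed $\Pi$-structure), not left-oriented; it is the newly applied right cap that cancels against that right cup via the right-adjunction zigzag \eqref{eq_biadjoint1}.

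For \eqref{eqn-dot-cyclicity-left-cap} you take a cleaner route than the paper. The paper derives the left-cap relation from \eqref{eqn-x-fc-right} together with the already-established right-cap relation, whereas you observe that since $x_\Fc'$ is \emph{defined} in \eqref{eqn-x-fc-prime} as the mate of $x_\Ec$ under the left adjunction $\onebl\Ftt\dashv\Ett\onebl$, the identities $\epsilon(x_\Fc'\otimes 1)=\epsilon(1\otimes x_\Ec)$ and $(1\otimes x_\Fc')\eta=(x_\Ec\otimes 1)\eta$ follow from a single zigzag and interchange. No super-signs arise because $\epsilon$ and $\eta$ are even. This is correct and more direct than the paper's argument. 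For \eqref{eqn-dot-cyclicity-right-cup} both you and the paper say ``similarly.''
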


\begin{proof}
Applying a right cap to equation \eqref{eqn-x-fc-right} gives
\begin{equation}
(-1)^{\lambda}\;\;
\hackcenter{\begin{tikzpicture}
	\draw[color=blue, thick, double distance=1pt, dashed] (.25,.6) .. controls (.25,1.8) and (.75,1.8) .. (.75,1.4);
	\draw[thick, <-] (1,0) [out=90, in=-90] to (1,1) .. controls (1,1.5) and (.5,1.5) .. (.5,1) .. controls (.5,.5) and (0,.5) .. (0,1) [out=90, in=-90] to (0,2)
		node[pos=0, shape=coordinate](DOT){};
	\draw[color=blue, thick, dashed] (DOT) [out=135, in=-90] to (.2, 2.5);
	\draw[thick] (0,2) .. controls (0,2.35) and (-.5,2.35) .. (-.5,2)
		node[pos=.5, shape=coordinate](TOPCAP2){};
	\draw[thick] (-.5,0) -- (-.5,2);
	\draw[thick, color=blue, double distance=1pt, dashed] (TOPCAP2) -- (-.25,2.5)
		node[pos=0.9, left, blue](){$\scs \lambda$};
	\draw (DOT) -- (DOT)
		node[pos=0](){\bbullet};
  \node at (1.5,2) {$\l+1$};
\end{tikzpicture}}
\quad=\quad
\hackcenter{\begin{tikzpicture}
	\draw[thick, <-] (0,0) [out=90, in=-90] to (0,1) .. controls (0,1.5) and (.5,1.5) .. (.5,1) .. controls (.5,.5) and (1,.5) .. (1,1) [out=90, in=-90] to (1,1.25);
		node[pos=.1, shape=coordinate](DOT){};
	\draw[thick] (-.5,1.25) .. controls (-.5,2.3) and (1,2.3) .. (1,1.25)
		node[pos=.5, shape=coordinate](TOPCAP2){};
	\draw[thick] (-.5,0) -- (-.5,1.25);
	\draw[thick, color=blue, double distance=1pt, dashed] (TOPCAP2) -- (.25,2.5)
		node[pos=0.9,left, blue](){$\scs \lambda$};
	\draw[color=blue, thick, dashed] (DOT) [out=-135, in=90] to (.25, .5) .. controls (.25,0) and (1.25,0) .. (1.25,1) [out=80, in=-90] to (1.5,2.5);
	\draw (DOT) -- (DOT)
		node[pos=0](){\bbullet};
  \node at (1.8,0.2) {$\l+1$};
\end{tikzpicture}}
\quad+\quad 2 \;\;
\hackcenter{\begin{tikzpicture}
	\draw[thick, ->] (-2,-.5) .. controls ++(0,.7) and ++(0,.7) .. (-1,-.5)
		node[pos=.5, shape=coordinate](TOPCAP){}
		node[pos=.25, shape=coordinate](DOT){};
 \draw[thick, color=blue, dashed, double distance=1pt] (TOPCAP) -- (-1.5,1)
		node[pos=0.8,blue, left](){$\scs \lambda$};
  \draw[thick, ->] (-0.4,0) .. controls ++(-0,0.6) and ++(0,0.6) .. (0.4,0)
      node[pos=0.5, shape=coordinate](X){}
      node[pos=0.1, shape=coordinate](Y){};
  \draw[thick] (-0.4,0) .. controls ++(0,-0.6) and ++(0,-0.6) .. (0.4,0)
      node[pos=0.1, shape=coordinate](Z){};
  \draw[color=blue, thick, double distance=1pt, dashed] (X) .. controls++(0,.65) and ++(-.65,.3) .. (Y) node[pos=0.15,right]{$\scs \l$\;};
  \draw[color=blue, thick, dashed] (Z) to[bend left] (-1,1) ;
  \node at (Y) {\bbullet};
  \node at (Z) {\bbullet};
  \node at (1,-.5) {$\l+1$};
\end{tikzpicture}}
\end{equation}
for $\l>0$. Using isotopy and resolving $\lambda$ dashed-dashed crossings on the left-hand side, this becomes exactly \eqref{eqn-dot-cyclicity-right-cap}. Equation~\eqref{eqn-dot-cyclicity-right-cap} for other values of $\l$ is proven similarly.

The left cap equation \eqref{eqn-x-fc-right} can be proven in a similar manner using \eqref{eqn-x-fc-right} and right cap equation proven above.  The cup equations are proven similarly.
\end{proof}

% -------------------------------------------------------------------------------
%
\subsection{Odd cyclicity for crossings}
%
% -------------------------------------------------------------------------------

% - - - - - - - - - - - - - - - - - - - - - - - - - - - - - - - - - - - - - - - -
%
\subsubsection{Defining downward oriented crossings}
%
% - - - - - - - - - - - - - - - - - - - - - - - - - - - - - - - - - - - - - - - -

In any strong supercategorical action we can define a downward oriented crossing using the easy adjunctions as follows.
\begin{equation}
\hackcenter{\begin{tikzpicture}
\begin{scope}[shift={(0,0)},rotate=180]
	\draw[thick, ->] (0,0) .. controls (0,.75) and (.5,.75) .. (.5,1.5);
	\draw[thick, ->] (.5,0) .. controls (.5,.75) and (0,.75) .. (0,1.5)
		node[pos=.5, shape=coordinate](CROSSING){};
	\draw[thick, color=blue, dashed] (CROSSING) [out=180, in=-90] to (-.5,1.5);
 \node at (.8,.75) {$\l$};
\end{scope}
\end{tikzpicture} }
\quad := \quad
\hackcenter{
\begin{tikzpicture}[scale=0.6]
  \draw[thick, ->] (-0.5,0) .. controls ++(-0,0.5) and ++(0,-0.5) .. (0.5,1)
      node[pos=0.5, shape=coordinate](X){};
  \draw[thick, ->] (0.5,0) .. controls ++(0,0.5) and ++(0,-0.5) .. (-0.5,1);
  \draw[thick] (0.5,0) .. controls ++(0,-0.5) and ++(0,-0.5) .. (1.5,0);
  \draw[thick] (-0.5,0) .. controls ++(0,-1.5) and ++(0,-1.5) .. (2.5,0);
  \draw[thick, ->-=0.5] (2.5,2) -- (2.5,0);
  \draw[thick, ->-=0.5] (1.5,2) -- (1.5,0);
  \draw[thick] (-0.5,1) .. controls ++(0,0.5) and ++(0,0.5) .. (-1.5,1);
  \draw[thick] (0.5,1) .. controls ++(0,1.5) and ++(0,1.5) .. (-2.5,1);
  \draw[thick, ->-=0.5] (-2.5,1) -- (-2.5,-1.5);
  \draw[thick, ->-=0.5] (-1.5,1) -- (-1.5,-1.5);
  \draw[color=blue,  thick, dashed]
   (X) .. controls ++(-1,.4) and ++(0,1).. (-.75,-1.5);
      \node at (4,-1) {$\l+4$};
\end{tikzpicture}}
\end{equation}

It follows from Corollary~\ref{cor-dot-cyclicity} and the definition of the downward oriented dot from \eqref{eqn-x-fc-prime} that with this definition of the downward oriented crossing we have the downward oriented nilHecke axioms.  In particular, the equations
\[
\hackcenter{\begin{tikzpicture}\begin{scope}[shift={(0,0)},rotate=180]
	\draw[thick, ->] (0,0) .. controls (0,.75) and (.5,.75) .. (.5,1.5)
		node[pos=.5, shape=coordinate](CROSSING){}
		node[pos=.25, shape=coordinate](DOT){};
	\draw[thick, ->] (.5,0) .. controls (.5,.75) and (0,.75) .. (0,1.5);
	\draw[thick, color=blue, dashed] (DOT) [out=180, in=-90] to (-1,1.5);
	\draw[thick, color=blue, dashed] (CROSSING) [out=180, in=-90] to (-.5,1.5);
	\node() at (DOT) {\bbullet};\end{scope}
\end{tikzpicture}}
\quad-\quad
\hackcenter{\begin{tikzpicture}\begin{scope}[shift={(0,0)},rotate=180]
	\draw[thick, ->] (0,0) .. controls (0,.75) and (.5,.75) .. (.5,1.5)
		node[pos=.5, shape=coordinate](CROSSING){}
		node[pos=.75, shape=coordinate](DOT){};
	\draw[thick, ->] (.5,0) .. controls (.5,.75) and (0,.75) .. (0,1.5);
	\draw[thick, color=blue, dashed] (DOT) [out=180, in=-90] to (-1,1.5);
	\draw[thick, color=blue, dashed] (CROSSING) [out=180, in=-90] to (-.5,1.5);
	\node() at (DOT) {\bbullet}; \end{scope}
\end{tikzpicture}}
\quad = \quad
\hackcenter{\begin{tikzpicture} \begin{scope}[shift={(0,0)},rotate=180]
	\draw[thick, ->] (0,0) .. controls (0,.75) and (.5,.75) .. (.5,1.5);
	\draw[thick, ->] (.5,0) .. controls (.5,.75) and (0,.75) .. (0,1.5)
		node[pos=.5, shape=coordinate](CROSSING){}
		node[pos=.75, shape=coordinate](DOT){};
	\draw[thick, color=blue, dashed] (DOT) [out=180, in=-90] to (-.5,1.5);
	\draw[thick, color=blue, dashed] (CROSSING) [out=180, in=-90] to (-1,1.5);
	\node() at (DOT) {\bbullet};
\end{scope}
\end{tikzpicture}}
\quad-\quad
\hackcenter{\begin{tikzpicture} \begin{scope}[shift={(0,0)},rotate=180]
	\draw[thick, ->] (0,0) .. controls (0,.75) and (.5,.75) .. (.5,1.5);
	\draw[thick, ->] (.5,0) .. controls (.5,.75) and (0,.75) .. (0,1.5)
		node[pos=.5, shape=coordinate](CROSSING){}
		node[pos=.25, shape=coordinate](DOT){};
	\draw[thick, color=blue, dashed] (DOT) [out=180, in=-90] to (-.5,1.5);
	\draw[thick, color=blue, dashed] (CROSSING) [out=180, in=-90] to (-1,1.5);
	\node() at (DOT) {\bbullet};
\end{scope}
\end{tikzpicture}}
\quad=\quad
\hackcenter{\begin{tikzpicture}
\begin{scope}[shift={(0,0)},rotate=180]
	\draw[thick, ->] (0,0) -- (0,1.5);
	\draw[thick, ->] (.5,0) -- (.5,1.5);
	\draw[thick, color=blue, dashed] (-.75,1.5) .. controls (-.75,1.15) and (-.25,1.15) .. (-.25,1.5);
\end{scope}
\end{tikzpicture}}
\]
holds in any strong supercategorical action.

% - - - - - - - - - - - - - - - - - - - - - - - - - - - - - - - - - - - - - - - -
%
\subsubsection{Half cyclicity for crossings}
%
% - - - - - - - - - - - - - - - - - - - - - - - - - - - - - - - - - - - - - - - -

\begin{prop}
The following relations hold in any strong supercategorical action.
\begin{alignat}{2}\label{eq-half-crossing-cycl}
\hackcenter{
\begin{tikzpicture}[scale=0.7]
  \draw[thick, ->] (-0.5,0) .. controls ++(-0,0.5) and ++(0,-0.5) .. (0.5,1)
      node[pos=0.5, shape=coordinate](X){};
  \draw[thick, ->] (0.5,0) .. controls ++(0,0.5) and ++(0,-0.5) .. (-0.5,1);
  \draw[thick] (0.5,0) .. controls ++(0,-0.5) and ++(0,-0.5) .. (1.5,0);
  \draw[thick] (-0.5,0) .. controls ++(0,-1.5) and ++(0,-1.5) .. (2.5,0);
  \draw[thick, ->-=0.5] (2.5,1) -- (2.5,0);
  \draw[thick, ->-=0.5] (1.5,1) -- (1.5,0);
  \draw[color=blue,  thick, dashed] (X) to [out=180, in=-90](-1.5,1);
  \node at (1.5,-1.5) {$\l$};
\end{tikzpicture}}
\;\; &=
\hackcenter{ \begin{tikzpicture} [scale=0.7]
  \draw[thick,->] (-0.5,1) .. controls ++(0,-0.5) and ++(0,0.5) .. (0.5,0)
      node[pos=0.5, shape=coordinate](X){};
  \draw[thick, ->] (0.5,1) .. controls ++(0,-0.5) and ++(0,0.5) .. (-0.5,0);
  \draw[thick] (-0.5,0) .. controls ++(0,-0.5) and ++(0,-0.5) .. (-1.5,0);
  \draw[thick] (0.5,0) .. controls ++(0,-1.5) and ++(0,-1.5) .. (-2.5,0);
  \draw[thick, ->-=0.5] (-2.5,0) -- (-2.5,1);
  \draw[thick, ->-=0.5] (-1.5,0) -- (-1.5,1);
  \draw[color=blue,  thick, dashed]
    (X) .. controls  ++(1.5,0) and ++(1,0) .. (0,-1.5)
    .. controls ++(-3.5,-.5) and ++(0,-1) .. (-3,1);
  \node at (1.5,-1) {$\l$};
\end{tikzpicture}}
&\qquad
\hackcenter{
\begin{tikzpicture}[scale=0.7]
  \draw[thick] (-0.5,1) .. controls ++(-0,-0.5) and ++(0,0.5) .. (0.5,0)
      node[pos=0.5, shape=coordinate](X){};
  \draw[thick, ->] (0.5,1) .. controls ++(0,-0.5) and ++(0,0.5) .. (-0.5,0);
  \draw[thick] (0.5,0) .. controls ++(0,-0.5) and ++(0,-0.5) .. (1.5,0)
    node[pos=0.5, shape=coordinate](inCAP){};
  \draw[thick] (-0.5,0) .. controls ++(0,-1.5) and ++(0,-1.5) .. (2.5,0)
   node[pos=0.44, shape=coordinate](L){}
   node[pos=0.51, shape=coordinate](R){}
   node[pos=0.5, shape=coordinate](outCAP){};
  \draw[thick, ->-=0.5] (2.5,0) -- (2.5,1);
  \draw[thick, ->-=0.5] (1.5,0) -- (1.5,1);
  \draw[color=blue,  thick, dashed]
    (X) .. controls ++(.4,-.2) and ++(0,-.5) .. (1,1);
  \draw[color=blue, thick, double distance=1pt, dashed]
    (inCAP) .. controls++(0,.7) and ++(0,.7) .. (.5,-0.5)
     .. controls ++(0,-0.3) and ++(.1,.6) .. (outCAP);
       \node at (2,-1.5) {$\l$};
\end{tikzpicture}}
\;\; &= \;\; (-1)^{\l+1} \;\;
\hackcenter{
\begin{tikzpicture}[scale=0.7]
  \draw[thick, ->] (0.5,0) .. controls ++(-0,0.5) and ++(0,-0.5) .. (-0.5,1)
      node[pos=0.5, shape=coordinate](X){};
  \draw[thick, ->] (-0.5,0) .. controls ++(0,0.5) and ++(0,-0.5) .. (0.5,1);
  \draw[thick] (-0.5,0) .. controls ++(0,-0.5) and ++(0,-0.5) .. (-1.5,0)
     node[pos=0.5, shape=coordinate](inCAP){};
  \draw[thick] (0.5,0) .. controls ++(0,-1.5) and ++(0,-1.5) .. (-2.5,0)
   node[pos=0.44, shape=coordinate](L){}
   node[pos=0.51, shape=coordinate](R){}
     node[pos=0.55, shape=coordinate](outCAP){};;
  \draw[thick, ->-=0.5] (-2.5,1) -- (-2.5,0);
  \draw[thick, ->-=0.5] (-1.5,1) -- (-1.5,0);
    \draw[color=blue, thick, double distance=1pt, dashed]
    (inCAP) .. controls++(0,.7) and ++(0,.7) .. (-1.5,-0.5)
     .. controls ++(0,-0.3) and ++(0,.5) .. (outCAP);
  \draw[color=blue,  thick, dashed] (X) to [out=180, in=-90](-1,1);
       \node at (0,-1.5) {$\l$};
\end{tikzpicture}} \nn
\\
\hackcenter{
\begin{tikzpicture} [scale=0.7]
  \draw[thick, ->] (-0.5,0) .. controls ++(-0,-0.5) and ++(0,0.5) .. (0.5,-1)
      node[pos=0.5, shape=coordinate](X){};
  \draw[thick, ->] (0.5,0) .. controls ++(0,-0.5) and ++(0,0.5) .. (-0.5,-1);
  \draw[thick] (0.5,0) .. controls ++(0,0.5) and ++(0,0.5) .. (1.5,0);
  \draw[thick] (-0.5,0) .. controls ++(0,1.5) and ++(0,1.5) .. (2.5,0);
  \draw[thick, ->-=0.5] (2.5,-1) -- (2.5,0);
  \draw[thick, ->-=0.5] (1.5,-1) -- (1.5,0);
  \draw[color=blue,  thick, dashed] (X) to [out=0, in=90](1,-1);
  \node at (2.7,1) {$\l$};
\end{tikzpicture}}
\;\; &= \;\; \;\;
\hackcenter{ \begin{tikzpicture} [scale=0.7]
  \draw[thick,->] (-0.5,-1) .. controls ++(0,0.5) and ++(0,-0.5) .. (0.5,0)
      node[pos=0.5, shape=coordinate](X){};
  \draw[thick, ->] (0.5,-1) .. controls ++(0,0.5) and ++(0,-0.5) .. (-0.5,0);
  \draw[thick] (-0.5,0) .. controls ++(0,0.5) and ++(0,0.5) .. (-1.5,0);
  \draw[thick] (0.5,0) .. controls ++(0,1.5) and ++(0,1.5) .. (-2.5,0);
  \draw[thick, ->-=0.5] (-2.5,0) -- (-2.5,-1);
  \draw[thick, ->-=0.5] (-1.5,0) -- (-1.5,-1);
  \draw[color=blue,  thick, dashed]  (X) .. controls  ++(-.4,0.2) and ++(0,0.5) .. (-1,-1);
  \node at (1,1) {$\l$};
\end{tikzpicture} }
& \qquad
\hackcenter{ \begin{tikzpicture}[scale=0.7]
  \draw[thick,->] (0.5,-1) .. controls ++(0,0.5) and ++(0,-0.5) .. (-0.5,0)
      node[pos=0.5, shape=coordinate](X){};
  \draw[thick, ->] (-0.5,-1) .. controls ++(0,0.5) and ++(0,-0.5) .. (0.5,0);
  \draw[thick] (0.5,0) .. controls ++(0,0.5) and ++(0,0.5) .. (1.5,0)
    node[pos=0.5, shape=coordinate](inCAP){};;
  \draw[thick] (-0.5,0) .. controls ++(0,1.5) and ++(0,1.5) .. (2.5,0)
     node[pos=0.51, shape=coordinate](L){}
   node[pos=0.56, shape=coordinate](R){}
     node[pos=0.5, shape=coordinate](outCAP){};;
  \draw[thick, ->-=0.5] (2.5,0) -- (2.5,-1);
  \draw[thick, ->-=0.5] (1.5,0) -- (1.5,-1);
  \draw[color=blue,  thick, dashed]
    (X) .. controls  ++(-1.5,0) and ++(0,-1) .. (-0.5,1.75);
  \draw[color=blue, thick, double distance=1pt, dashed]
    (inCAP) .. controls++(0,.7) and ++(0,-.7) .. (0,1)
     .. controls ++(0,.5) and ++(0,.5) .. (outCAP);
  \node at (2.3,1.5) {$\l$};
\end{tikzpicture}}
\;\; &= \;\; (-1)^{\l-1} \;\;
\hackcenter{ \begin{tikzpicture}[scale=0.7]
  \draw[thick,->] (-0.5,0) .. controls ++(0,-0.5) and ++(0,0.5) .. (0.5,-1)
      node[pos=0.5, shape=coordinate](X){};
  \draw[thick, ->] (0.5,0) .. controls ++(0,-0.5) and ++(0,0.5) .. (-0.5,-1);
  \draw[thick] (-0.5,0) .. controls ++(0,0.5) and ++(0,0.5) .. (-1.5,0)
    node[pos=0.5, shape=coordinate](inCAP){};
  \draw[thick] (0.5,0) .. controls ++(0,1.5) and ++(0,1.5) .. (-2.5,0)
     node[pos=0.51, shape=coordinate](L){}
   node[pos=0.56, shape=coordinate](R){}
     node[pos=0.5, shape=coordinate](outCAP){};
  \draw[thick, ->-=0.5] (-2.5,-1) -- (-2.5,0);
  \draw[thick, ->-=0.5] (-1.5,-1) -- (-1.5,0);
  \draw[color=blue,  thick, dashed]
    (X) .. controls  ++(1.5,-.2) and ++(0,-1) .. (.5,1.75);
  \draw[color=blue, thick, double distance=1pt, dashed]
    (inCAP) .. controls++(0,.7) and ++(0,-.7) .. (-2,1)
     .. controls ++(0,.5) and ++(0,.5) .. (outCAP);
  \node at (-.25,1.5) {$\l$};
\end{tikzpicture}}
\end{alignat}
\end{prop}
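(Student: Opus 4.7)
The plan is to verify the four equations in turn, leveraging the analogous strategy used for the odd dot cyclicity in Lemma~\ref{lem-x-fc} and Corollary~\ref{cor-dot-cyclicity}. The first equation in each row is essentially built into the definition of the downward crossing and downward cups/caps: by construction, the downward crossing was defined using the right-oriented adjunction, so pulling it across a right cap or cup only involves zig-zag identities that are already in force. Concretely, for the first equation in the top row, both sides unwind (via right adjunction zig-zags) to the same composition involving an upward crossing with three strands bent around; for the first equation in the bottom row, the same argument works after reflecting about a horizontal axis. These verifications are routine diagrammatic computations.

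The real content is in the second equation of each row, namely the cyclicity across a left cap/cup, which carries the sign $(-1)^{\l\pm 1}$. I would treat the top-right equation first. Using Lemma~\ref{lem:main} applied to the 2-morphism obtained by closing one strand of the LHS minus the RHS with a right-adjunction cup, one sees that the difference lies in a very restricted space: after rewriting via adjunctions, both sides become 2-morphisms $\Ett\Pi\Ftt \onebl \to \Ftt\Ett \onebl\ads{\ast}$, and the space $\Hom(\Ett \Pi\Ftt\onebl,\Ftt\Ett\onebl)\cong\Bbbk$ by Lemma~\ref{lem:homs}. Hence the two sides are scalar multiples of each other plus possible contributions from diagrams factoring through bubbles (whose number is controlled because of the degree).

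To pin down the scalar $\pm 1$, I would imitate the strategy from the proof of Lemma~\ref{lem-x-fc}: close off the equation with a suitable number of dots on the loose cap/cup strand and reduce both sides to dotted bubbles. The left-hand side, after resolving the $\l$ dashed-dashed crossings produced by pulling through the parity line, accrues a sign $(-1)^{\l}$ which must be matched on the right; the extra $\pm 1$ comes from one remaining odd braiding (an $\alpha$-crossing) that cannot be removed. The terms involving degree zero bubbles are already normalized to the identity from the rescaling discussed immediately after the statement. Combining these calculations yields the sign $(-1)^{\l+1}$. The bottom-right equation is proved by the same procedure applied upside-down, producing the sign $(-1)^{\l-1}$.

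The main obstacle will be carefully tracking the signs coming from the odd braiding of dashed strands through the closure diagrams: every time a dashed line is dragged past another dashed line the axiom $\alpha_{\xi}=-\oneb_{\xi^2}$ introduces a sign, and every time a cup-cap zig-zag is straightened a factor appears. Organizing these signs in the order produced by the closure (rather than relying on isotopy invariance of individual pieces) is where I expect errors to creep in. Once the scalars in both closures match, however, the identities drop out because $\End(\Ett\onebl)\cong\Bbbk$ by Lemma~\ref{lem:E}, so there is no room for further ambiguity.
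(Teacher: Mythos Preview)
Your overall strategy---show the relevant hom-space is one-dimensional so the two sides agree up to a scalar, then pin down that scalar---matches the paper's. However, you have misidentified which hom-space is in play and which lemma controls it. After taking adjoints, both sides of each equation live in $\Hom(\Ett^2\onebl,\Pi\Ett^2\onebl\la-2\ra)$ (or its $\Ftt^2$ analogue), not in $\Hom(\Ett\Pi\Ftt\onebl,\Ftt\Ett\onebl)$. The paper's computation is
\[
\Hom(\oneb_{\l+4}, \Pi \Ett^2 \Ftt^2 \oneb_{\l+4} \la -2(\l +3)\ra)\cong\Hom(\Ett^2\onebl, \Pi \Ett^2 \onebl  \la -2\ra) \cong \Bbbk,
\]
which invokes Lemma~\ref{lem:EE}, not Lemma~\ref{lem:homs} or Lemma~\ref{lem:main}. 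Your description ``both sides become 2-morphisms $\Ett\Pi\Ftt\onebl\to\Ftt\Ett\onebl$'' would correspond to bending only one strand, but both strands are already bent in these diagrams. Note also that the paper applies this same one-dimensionality argument uniformly to all four equations, not just the right column; it does not treat the left column as a consequence of the definition via zig-zags (though your observation that the left-column equalities unwind this way is reasonable).

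For determining the scalar, the paper's method is more direct than your proposed bubble closure. Rather than closing everything off and reducing to dotted bubbles as in the proof of Lemma~\ref{lem-x-fc}, the paper precomposes both sides with an additional upward crossing and a dot at the bottom, then simplifies using the odd nilHecke dot-slide relation together with the already-established dot cyclicity from Corollary~\ref{cor-dot-cyclicity}. The nilHecke relation kills one of the resulting terms (via $\partial^2=0$ or a negative-degree piece), and what remains forces $\kappa=(-1)^{\l\pm 1}$ immediately. Your closure-to-bubbles approach could plausibly be made to work, but you have not said how the bubbles on the two sides would be matched, and the bookkeeping of dashed crossings would be heavier than in the paper's route.
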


\begin{proof}
Using Lemma~\ref{lem:EE} together with the biadjoint structure, it follows that each of the equations above must hold up to a scalar multiple since each of the hom-spaces are non-zero and each map involved is adjoint to a crossing which is assumed to be non-zero.  For example,
\begin{align*}
 &\Hom(\oneb_{\l+4}, \Pi \Ett^2 \Ftt^2 \oneb_{\l+4} \la -2(\l +3)\ra)  \\ &=
  \Hom(\oneb_{\l+4}(\Ftt\oneb_{\l+4})^R, \Pi \Ett^2 \onebl  \la -2(\l +2)\ra)
  =
  \Hom(\Ett^2\onebl, \Pi \Ett^2 \onebl  \la -2\ra) \cong \Bbbk.
\end{align*}
To solve for the exact scalar we (pre)compose with a dot, or a dot and a crossing, and simplify using the odd nilHecke relations together with the dot cyclicity relations from Corollary~\ref{cor-dot-cyclicity}. For example, to prove the last equality in \eqref{eq-half-crossing-cycl} assume that
\begin{equation} \label{eq-c2}
\hackcenter{ \begin{tikzpicture}[scale=0.7]
  \draw[thick,->] (0.5,-1) .. controls ++(0,0.5) and ++(0,-0.5) .. (-0.5,0)
      node[pos=0.5, shape=coordinate](X){};
  \draw[thick, ->] (-0.5,-1) .. controls ++(0,0.5) and ++(0,-0.5) .. (0.5,0);
  \draw[thick] (0.5,0) .. controls ++(0,0.5) and ++(0,0.5) .. (1.5,0)
    node[pos=0.5, shape=coordinate](inCAP){};;
  \draw[thick] (-0.5,0) .. controls ++(0,1.5) and ++(0,1.5) .. (2.5,0)
     node[pos=0.51, shape=coordinate](L){}
   node[pos=0.56, shape=coordinate](R){}
     node[pos=0.5, shape=coordinate](outCAP){};;
  \draw[thick, ->-=0.5] (2.5,0) -- (2.5,-1);
  \draw[thick, ->-=0.5] (1.5,0) -- (1.5,-1);
  \draw[color=blue,  thick, dashed]
    (X) .. controls  ++(-1.5,0) and ++(0,-1) .. (-0.5,1.75);
  \draw[color=blue, thick, double distance=1pt, dashed]
    (inCAP) .. controls++(0,.7) and ++(0,-.7) .. (0,1)
     .. controls ++(0,.5) and ++(0,.5) .. (outCAP);
  \node at (2.3,1.5) {$\l$};
\end{tikzpicture}}
\quad = \quad \kappa \;\;
\hackcenter{ \begin{tikzpicture}[scale=0.7]
  \draw[thick,->] (-0.5,0) .. controls ++(0,-0.5) and ++(0,0.5) .. (0.5,-1)
      node[pos=0.5, shape=coordinate](X){};
  \draw[thick, ->] (0.5,0) .. controls ++(0,-0.5) and ++(0,0.5) .. (-0.5,-1);
  \draw[thick] (-0.5,0) .. controls ++(0,0.5) and ++(0,0.5) .. (-1.5,0)
    node[pos=0.5, shape=coordinate](inCAP){};
  \draw[thick] (0.5,0) .. controls ++(0,1.5) and ++(0,1.5) .. (-2.5,0)
     node[pos=0.51, shape=coordinate](L){}
   node[pos=0.56, shape=coordinate](R){}
     node[pos=0.5, shape=coordinate](outCAP){};
  \draw[thick, ->-=0.5] (-2.5,-1) -- (-2.5,0);
  \draw[thick, ->-=0.5] (-1.5,-1) -- (-1.5,0);
  \draw[color=blue,  thick, dashed]
    (X) .. controls  ++(1.5,-.2) and ++(0,-1) .. (.5,1.75);
  \draw[color=blue, thick, double distance=1pt, dashed]
    (inCAP) .. controls++(0,.7) and ++(0,-.7) .. (-2,1)
     .. controls ++(0,.5) and ++(0,.5) .. (outCAP);
  \node at (-2.5,1.5) {$\l$};
\end{tikzpicture}}
\end{equation}
for some non-zero scalar $\kappa$.  This implies
\[
\hackcenter{ \begin{tikzpicture}[scale=0.7]
  \draw[thick,<-] (2.5,-2) -- (2.5,-1);
  \draw[thick, <-] (1.5,-2) -- (1.5,-1);
    \draw[thick] (0.5,-2) .. controls ++(0,0.5) and ++(0,-0.5) .. (-0.5,-1)
      node[pos=1, shape=coordinate](DOT){}
      node[pos=0.5, shape=coordinate](Y){};
  \draw[thick, ->] (-0.5,-2) .. controls ++(0,0.5) and ++(0,-0.5) .. (0.5,-1);
  \draw[thick,->] (0.5,-1) .. controls ++(0,0.5) and ++(0,-0.5) .. (-0.5,0)
      node[pos=0.5, shape=coordinate](X){};
  \draw[thick, ->] (-0.5,-1) .. controls ++(0,0.5) and ++(0,-0.5) .. (0.5,0);
  \draw[thick] (0.5,0) .. controls ++(0,0.5) and ++(0,0.5) .. (1.5,0)
    node[pos=0.5, shape=coordinate](inCAP){};;
  \draw[thick] (-0.5,0) .. controls ++(0,1.5) and ++(0,1.5) .. (2.5,0)
     node[pos=0.51, shape=coordinate](L){}
   node[pos=0.56, shape=coordinate](R){}
     node[pos=0.5, shape=coordinate](outCAP){};;
  \draw[thick] (2.5,0) -- (2.5,-1);
  \draw[thick] (1.5,0) -- (1.5,-1);
  \draw[color=blue,  thick, dashed]
    (X) .. controls  ++(-1.5,0) and ++(0,-1) .. (-0.5,1.75);
  \draw[color=blue, thick, double distance=1pt, dashed]
    (inCAP) .. controls++(0,.7) and ++(0,-.7) .. (0,1)
     .. controls ++(0,.5) and ++(0,.5) .. (outCAP);
  \draw[thick, color=blue, dashed] (DOT) .. controls ++(-1.2,.5) and ++(0,-.5) .. (-1,1.75);
  \node at (2.3,1.5) {$\l$};
  \node at (DOT){\bbullet};
\end{tikzpicture}}
\quad = \quad \kappa \;\;
\hackcenter{ \begin{tikzpicture}[scale=0.7]
   \draw[thick,<-] (0.5,-2) -- (0.5,-1);
  \draw[thick, <-] (-0.5,-2) -- (-0.5,-1);
    \draw[thick] (-1.5,-2) .. controls ++(0,0.5) and ++(0,-0.5) .. (-2.5,-1)
    node[pos=1, shape=coordinate](DOT){}
      node[pos=0.5, shape=coordinate](Y){};
  \draw[thick] (-2.5,-2) .. controls ++(0,0.5) and ++(0,-0.5) .. (-1.5,-1);
  \draw[thick] (-0.5,0) .. controls ++(0,-0.5) and ++(0,0.5) .. (0.5,-1)
      node[pos=0.5, shape=coordinate](X){};
  \draw[thick] (0.5,0) .. controls ++(0,-0.5) and ++(0,0.5) .. (-0.5,-1);
  \draw[thick] (-0.5,0) .. controls ++(0,0.5) and ++(0,0.5) .. (-1.5,0)
    node[pos=0.5, shape=coordinate](inCAP){};
  \draw[thick] (0.5,0) .. controls ++(0,1.5) and ++(0,1.5) .. (-2.5,0)
     node[pos=0.51, shape=coordinate](L){}
   node[pos=0.56, shape=coordinate](R){}
     node[pos=0.5, shape=coordinate](outCAP){};
  \draw[thick, ->-=0.5] (-2.5,-1) -- (-2.5,0);
  \draw[thick, ->-=0.5] (-1.5,-1) -- (-1.5,0);
  \draw[color=blue,  thick, dashed]
    (X) .. controls  ++(1.5,-.2) and ++(0,-1) .. (.5,1.75);
  \draw[color=blue, thick, double distance=1pt, dashed]
    (inCAP) .. controls++(0,.7) and ++(0,-.7) .. (-2,1)
     .. controls ++(0,.5) and ++(0,.5) .. (outCAP);
  \draw[thick, color=blue, dashed] (DOT) .. controls ++(-1,.3) and ++(0,-.5) .. (-3.25,1.75);
  \node at (-2.5,1.5) {$\l$};
   \node at (DOT){\bbullet};
\end{tikzpicture}}
\]
which after simplification implies that $\kappa = (-1)^{\l-1}$.
\end{proof}
% - - - - - - - - - - - - - - - - - - - - - - - - - - - - - - - - - - - - - - - -
%
\subsubsection{Cyclicity for sideways crossings} \label{subsec-half-sideways}
%
% - - - - - - - - - - - - - - - - - - - - - - - - - - - - - - - - - - - - - - - -

We now fix the free scalar multiple in the definition of sideways crossings from \eqref{eq:sideways}.  Define sideways crossings  using the adjoint structure
\begin{equation}
\hackcenter{\begin{tikzpicture}[scale=0.7]
	\draw[thick, ->] (.5,0) .. controls ++(0,.75) and ++(0,-.75) .. (-.5,1.5);
	\draw[thick, ->] (.5,1.5) .. controls ++(0,-.75) and ++(0,.75)..(-.5,0)
		node[pos=.5, shape=coordinate](CROSSING){};
	\draw[thick, color=blue, dashed] (CROSSING)  to (0,-0);
 \node at (1,.75) {$\l$};
\end{tikzpicture} }
\quad := \quad
\hackcenter{
\begin{tikzpicture}[scale=0.7]
  \draw[thick, ->] (-0.5,0) .. controls ++(-0,0.5) and ++(0,-0.5) .. (0.5,1)
      node[pos=0.5, shape=coordinate](X){};
  \draw[thick, ->] (0.5,0) .. controls ++(0,0.5) and ++(0,-0.5) .. (-0.5,1);
  \draw[thick] (0.5,0) .. controls ++(0,-0.5) and ++(0,-0.5) .. (1.5,0);
  \draw[thick] (.5,2) -- (.5,1);
  \draw[thick, ->-=0.5] (1.5,2) -- (1.5,0);
  \draw[thick] (-0.5,1) .. controls ++(0,0.5) and ++(0,0.5) .. (-1.5,1);
  \draw[thick] (-.5,0) -- (-.5,-1);
  \draw[thick, ->-=0.5] (-1.5,1) -- (-1.5,-1);
  \draw[color=blue,  thick, dashed]
   (X) .. controls ++(-1,.4) and ++(0,1).. (-1,-1);
      \node at (1.8,-0.8) {$\l$};
\end{tikzpicture}}
\qquad
\qquad
\hackcenter{\begin{tikzpicture}[scale=0.7]
	\draw[thick, ->] (-.5,0) .. controls ++(0,.75) and ++(0,-.75) .. (.5,1.5);
	\draw[thick, ->] (-.5,1.5) .. controls ++(0,-.75) and ++(0,.75)..(.5,0)
		node[pos=.5, shape=coordinate](CROSSING){};
	\draw[thick, color=blue, dashed] (CROSSING)  to (0,1.5);
    \node at (1,.75) {$\l$};
\end{tikzpicture} }
\quad := \quad
\hackcenter{
\begin{tikzpicture}[scale=0.7]
  \draw[thick, ->] (0.5,0) .. controls ++(-0,0.5) and ++(0,-0.5) .. (-0.5,1)
      node[pos=0.5, shape=coordinate](X){};
  \draw[thick, ->] (-0.5,0) .. controls ++(0,0.5) and ++(0,-0.5) .. (0.5,1);
  \draw[thick] (-0.5,0) .. controls ++(0,-0.5) and ++(0,-0.5) .. (-1.5,0)
      node[pos=.48, shape=coordinate](RD){}
      node[pos=.48, shape=coordinate](LD){}
      node[pos=.5, shape=coordinate](bCROSS){};
  \draw[thick] (-.5,2) -- (-.5,1);
  \draw[thick, ->-=0.5] (-1.5,2) -- (-1.5,0);
  \draw[thick] (0.5,1) .. controls ++(0,0.5) and ++(0,0.5) .. (1.5,1)
     node[pos=.5, shape=coordinate](tCROSS){};;
  \draw[thick] (.5,0) -- (.5,-1);
  \draw[thick, ->-=0.5] (1.5,1) -- (1.5,-1);
  \draw[color=blue,  thick, dashed]
   (X) .. controls ++(-1.3,0) and ++(0,-1).. (-1,2);
  \draw[color=blue, thick, double distance=1pt, dashed]
    (bCROSS) .. controls ++(-.1,3) and ++(0.1,.75) ..(tCROSS);
      \node at (2.1,.5) {$\l$};
   \node[blue] at (1.5,1.8) {$\scs \l-1$};
\end{tikzpicture}}
\end{equation}
for all weights $\lambda$.  It follows immediately from these definitions that the following relations hold.
\begin{align}
\hackcenter{  \begin{tikzpicture}[scale=0.8]
  \draw[thick, ->] (-0.5,0).. controls ++(-0,-0.5) and ++(0,0.5) ..(0.5,-1)
      node[pos=0.5, shape=coordinate](X){};
  \draw[thick] (0.5,0).. controls ++(0,-0.5) and ++(0,0.5) ..(-0.5,-1);
  \draw[thick] (-0.5,0) .. controls ++(0,0.5) and ++(0,0.5) .. (-1.5,0)
      node[pos=.5, shape=coordinate](CUP){};
  \draw[thick, ->-=0.5] (-1.5,-1) -- (-1.5,0);
  \draw[thick, ->] (.5,0) -- (.5,0.75);
  \draw[color=blue,  thick, dashed]   (X) -- (0,.75);
   \draw[color=blue, thick, double distance=1pt, dashed]   (CUP) -- (-1,.75);
      \node at (1,-.5) {$\l$};
\end{tikzpicture} }
\quad &= \quad
\hackcenter{
  \begin{tikzpicture}[scale=0.8]
  \draw[thick, ->](-0.5,-1) .. controls ++(-0,0.5) and ++(0,-0.5) ..(0.5,-.25)
      node[pos=0.5, shape=coordinate](X){};
  \draw[thick] (-0.5,-.25) .. controls ++(0,-0.5) and ++(0,0.5) .. (0.5,-1);
  \draw[thick] (0.5,-.25) .. controls ++(0,0.3) and ++(0,0.3) .. (1.5,-.25)
      node[pos=.5, shape=coordinate](CUP){};
  \draw[thick, ->-=0.5] (1.5,-.25) -- (1.5,-1);
  \draw[thick,, ->] (-.5,-.25) to[out=90, in=-90] (0.5,0.75);
  \draw[color=blue,  thick, dashed]
    (X) .. controls ++(-.5,0) and ++(0,-.5) .. (-.75,0)
    .. controls ++(0,.3) and ++(0,-.4) .. (0,.75);
   \draw[color=blue, thick, double distance=1pt, dashed]
   (CUP) .. controls ++(0,.3) and ++(0,-.6).. (-.75,.75);
      \node at (1.3,.5) {$\l$};
\end{tikzpicture} }
\quad &
\hackcenter{
  \begin{tikzpicture}[scale=0.8]
  \draw[thick, ->](-0.5,1.25) .. controls ++(-0,-0.5) and ++(0,0.5) ..(0.5,0)
      node[pos=0.5, shape=coordinate](X){};
  \draw[thick, ->] (-0.5,0) .. controls ++(0,0.5) and ++(0,-0.5) .. (0.5,1.25);
  \draw[thick] (0.5,0) .. controls ++(0,-0.5) and ++(0,-0.5) .. (1.5,0)
      node[pos=.5, shape=coordinate](CUP){};
  \draw[thick, ->-=0.5] (1.5,0 ) -- (1.5,1.25);
  \draw[thick] (-.5,0) -- (-.5,-0.5);
  \draw[color=blue, thick, double distance=1pt, dashed]
   (CUP) -- (1,1.25);
  \draw[color=blue,  thick, dashed]
   (X) to (0,1.25);
      \node at (0,-.25) {$\l$};
\end{tikzpicture} }
\quad &= \quad
\hackcenter{  \begin{tikzpicture}[scale=0.8]
  \draw[thick, ->] (-0.5,0) .. controls ++(-0,0.5) and ++(0,-0.5) .. (0.5,1.25)
      node[pos=0.5, shape=coordinate](X){};
  \draw[thick, ->] (0.5,0) .. controls ++(0,0.5) and ++(0,-0.5) .. (-0.5,1.25);
  \draw[thick] (-0.5,0) .. controls ++(0,-0.5) and ++(0,-0.5) .. (-1.5,0)
      node[pos=.5, shape=coordinate](CUP){};
  \draw[thick, ->-=0.5] (-1.5,1.25) -- (-1.5,0);
  \draw[thick] (.5,0) -- (.5,-0.5);
  \draw[color=blue, thick, double distance=1pt, dashed]
   (CUP) .. controls ++(0,1.25) and ++(0,-.5) .. (0,1.25);
  \draw[color=blue,  thick, dashed]
   (X) .. controls ++(-1.3,0) and ++(0,-.3).. (-1,1.25);
      \node at (1,.5) {$\l$};
\end{tikzpicture} }
\\
\hackcenter{
  \begin{tikzpicture}[scale=0.8]
  \draw[thick, ->] (0.5,0).. controls ++(-0,-0.5) and ++(0,0.5) ..(-0.5,-1.25)
      node[pos=0.5, shape=coordinate](X){};
  \draw[thick] (-0.5,0) .. controls ++(0,-0.5) and ++(0,0.5) .. (0.5,-1.25);
  \draw[thick] (0.5,0) .. controls ++(0,0.5) and ++(0,0.5) .. (1.5,0)
      node[pos=.5, shape=coordinate](CUP){};
  \draw[thick, ->-=0.5] (1.5,-1.25) -- (1.5,0);
  \draw[thick,, ->] (-.5,0) -- (-.5,0.5);
  \draw[color=blue,  thick, dashed]
   (X) -- (0,-1.25);
      \node at (-1,-.5) {$\l+2$};
\end{tikzpicture} }
\quad &= \quad
\hackcenter{  \begin{tikzpicture}[scale=0.8]
  \draw[thick, ->] (0.5,-1.25) .. controls ++(-0,0.5) and ++(0,-0.5) ..(-0.5,0)
      node[pos=0.5, shape=coordinate](X){};
  \draw[thick] (-0.5,-1.25).. controls ++(0,0.5) and ++(0,-0.5) ..(0.5,0);
  \draw[thick] (-0.5,0) .. controls ++(0,0.5) and ++(0,0.5) .. (-1.5,0)
      node[pos=.5, shape=coordinate](CUP){};
  \draw[thick, ->] (-1.5,0) -- (-1.5,-1.25);
  \draw[thick, ->] (.5,0) -- (.5,0.5);
  \draw[color=blue,  thick, dashed]
   (X) .. controls ++(-.3,.1) and ++(0,.5) .. (-1,-1.25);
      \node at (1,-.5) {$\l$};
\end{tikzpicture} }
\quad &
\hackcenter{
  \begin{tikzpicture}[scale=0.8]
  \draw[thick, <-](-0.5,1.25) .. controls ++(-0,-0.5) and ++(0,0.5) ..(0.5,0)
      node[pos=0.5, shape=coordinate](X){};
  \draw[thick, ->] (-0.5,0) .. controls ++(0,0.5) and ++(0,-0.5) .. (0.5,1.25);
  \draw[thick] (0.5,0) .. controls ++(0,-0.5) and ++(0,-0.5) .. (1.5,0)
      node[pos=.5, shape=coordinate](CUP){};
  \draw[thick, ->-=0.5] (1.5,1.25) -- (1.5, 0);
  \draw[thick] (-.5,0) -- (-.5,-0.5);
  \draw[color=blue,  thick, dashed]
   (X) .. controls ++(-.5,.3) and ++(0,.5)..(-1,-.5);
      \node at (1,.5) {$\l$};
\end{tikzpicture} }
\quad &= \quad
\hackcenter{  \begin{tikzpicture}[scale=0.8]
  \draw[thick, ->] (0.5,1.25) .. controls ++(-0,-0.5) and ++(0,0.5) ..(-0.5,0)
      node[pos=0.5, shape=coordinate](X){};
  \draw[thick, ->] (0.5,0).. controls ++(0,0.5) and ++(0,-0.5) ..(-0.5,1.25);
  \draw[thick] (-0.5,0) .. controls ++(0,-0.5) and ++(0,-0.5) .. (-1.5,0)
      node[pos=.5, shape=coordinate](CUP){};
  \draw[thick, ->-=0.5] (-1.5,0) -- (-1.5,1.25);
  \draw[thick] (.5,0) -- (.5,-0.5);
  \draw[color=blue,  thick, dashed]
   (X) -- (0,-.5);
      \node at (1,.5) {$\l$};
\end{tikzpicture} }
\end{align}
Similar equations hold for a downward oriented line in the middle of a cap and cup.

It is straightforward to derive directly from the definitions that the following equalities hold.
\begin{equation}\label{eq:leftdotslide}
\hackcenter{\begin{tikzpicture}
	\draw[thick, ->] (.5,1.25).. controls ++(0,-.75) and ++(0,.75) .. (-.5,0)
		node[pos=.5, shape=coordinate](CROSSING){}
		node[pos=.75, shape=coordinate](DOT){};
	\draw[thick, ->] (.5,0) .. controls ++(0,.75) and ++(0,-.75) .. (-.5,1.25);
	\draw[thick, color=blue, dashed]
   (DOT) to[out=-20, in=90] (-.2,0);
	\draw[thick, color=blue, dashed] (CROSSING)  to[out=-90, in=90] (.2,0);
	\node() at (DOT) {\bbullet};
\end{tikzpicture}}
\quad-\quad
\hackcenter{\begin{tikzpicture}
	\draw[thick, ->] (.5,1.25).. controls ++(0,-.75) and ++(0,.75) .. (-.5,0)
		node[pos=.5, shape=coordinate](CROSSING){}
		node[pos=.25, shape=coordinate](DOT){};
	\draw[thick, ->] (.5,0) .. controls ++(0,.75) and ++(0,-.75) .. (-.5,1.25);
	\draw[thick, color=blue, dashed]
   (DOT) .. controls ++(.3,-.3) and ++(0,.4) .. (-.2,0);
	\draw[thick, color=blue, dashed] (CROSSING)  to[out=-90, in=90] (.2,0);
	\node() at (DOT) {\bbullet};
\end{tikzpicture}}
\quad = \quad
\hackcenter{\begin{tikzpicture}
	\draw[thick, ->] (.5,1.25).. controls ++(0,-.75) and ++(0,.75) .. (-.5,0);
	\draw[thick, ->] (.5,0) .. controls ++(0,.75) and ++(0,-.75) .. (-.5,1.25)
        node[pos=.5, shape=coordinate](CROSSING){}
		node[pos=.2, shape=coordinate](DOT){};
	\draw[thick, color=blue, dashed]
   (DOT) .. controls ++(-.1,.2) and ++(0,.4) .. (0,0);
	\draw[thick, color=blue, dashed] (CROSSING)  to[out=-90, in=90] (-.25,0);
	\node() at (DOT) {\bbullet};
\end{tikzpicture}}
\quad-
\hackcenter{\begin{tikzpicture}
	\draw[thick, ->] (.5,1.25).. controls ++(0,-.75) and ++(0,.75) .. (-.5,0);
	\draw[thick, ->] (.5,0) .. controls ++(0,.75) and ++(0,-.75) .. (-.5,1.25)
        node[pos=.5, shape=coordinate](CROSSING){}
		node[pos=.75, shape=coordinate](DOT){};
	\draw[thick, color=blue, dashed]
   (DOT) .. controls ++(-1.3,.5) and ++(0,.4) .. (.2,0);
	\draw[thick, color=blue, dashed] (CROSSING)  to[out=-90, in=90] (-.25,0);
	\node() at (DOT) {\bbullet};
\end{tikzpicture}} \quad = \quad
\hackcenter{\begin{tikzpicture}
	\draw[thick, ->] (.5,1.25).. controls ++(0,-.6) and ++(0,-.6) .. (-.5,1.25);
	\draw[thick, ->] (.5,0) .. controls ++(0,.6) and ++(0,.6) .. (-.5,0);
	\draw[thick, color=blue, dashed]
   (.2,0) .. controls ++(0,.3) and ++(0,.3) .. (-.2,0);
\end{tikzpicture}}
\end{equation}

\begin{equation}\label{eq:rightdotslide}
\hackcenter{\begin{tikzpicture}
\begin{scope}[shift={(0,0)},rotate=180]
	\draw[thick, ->] (.5,1.25).. controls ++(0,-.75) and ++(0,.75) .. (-.5,0)
		node[pos=.5, shape=coordinate](CROSSING){}
		node[pos=.75, shape=coordinate](DOT){};
	\draw[thick, ->] (.5,0) .. controls ++(0,.75) and ++(0,-.75) .. (-.5,1.25);
	\draw[thick, color=blue, dashed]
   (DOT) to[out=-20, in=90] (-.2,0);
	\draw[thick, color=blue, dashed] (CROSSING)  to[out=-90, in=90] (.2,0);
	\node() at (DOT) {\bbullet};
\end{scope}
\end{tikzpicture}}
\quad-\quad
\hackcenter{\begin{tikzpicture}
\begin{scope}[shift={(0,0)},rotate=180]
	\draw[thick, ->] (.5,1.25).. controls ++(0,-.75) and ++(0,.75) .. (-.5,0)
		node[pos=.5, shape=coordinate](CROSSING){}
		node[pos=.25, shape=coordinate](DOT){};
	\draw[thick, ->] (.5,0) .. controls ++(0,.75) and ++(0,-.75) .. (-.5,1.25);
	\draw[thick, color=blue, dashed]
   (DOT) to[out=-20, in=90] (-.2,0);
	\draw[thick, color=blue, dashed] (CROSSING)  to[out=-90, in=90] (.2,0);
	\node() at (DOT) {\bbullet};\end{scope}
\end{tikzpicture}}
\quad = \quad
\hackcenter{\begin{tikzpicture}
	\draw[thick, ->] (-.5,1.5).. controls ++(0,-.8) and ++(0,-.8) .. (.5,1.5)
        node[pos=.5, shape=coordinate](tCUP){};
	\draw[thick, ->] (-.5,0) .. controls ++(0,.4) and ++(0,.4) .. (.5,0)
        node[pos=.5, shape=coordinate](bCAP){};
 \draw[color=blue, thick, double distance=1pt, dashed]
    (tCUP) .. controls ++(0,.4) and ++(0,.4)..(.5,.7)
     .. controls ++(0,-.2) and ++(0,.3) .. (bCAP) ;
	\draw[thick, color=blue, dashed]
   (.2,1.5) .. controls ++(0,-.25) and ++(0,-.25) .. (-.2,1.5);%\end{scope}
\end{tikzpicture}}
\end{equation}
Note that the other version of \eqref{eq:rightdotslide} is more complicated and the formula for sliding a dot on the downward oriented strand has additional terms.
% ==============================================================================
%
\section{Proof of the extended ${\mf{sl}}_2$ relations}\label{sec:proofsl2}
%
% ==============================================================================

In this section we show that the formal inverses of the $\mf{sl}_2$ isomorphisms defined in Corollary ~\ref{cor:1} agree with the version of the inverses given by the 2-category $\Uc$.

% -------------------------------------------------------------------------------
%
\subsection{Fake bubbles} \label{subsec:fake-bubbles}
%
% -------------------------------------------------------------------------------

We introduce a shorthand notation for representing certain 2-morphisms in $\Uc$.   These 2-morphisms are defined inductively by an equation analogous to the equation
\[
 \sum_{r=0}^m (-1)^re_r h_{m-r} = \delta_{m,r}
\]
relating elementary and complete symmetric functions.

% - - - - - - - - - - - - - - - - - - - - - - - - - - - - - - - - - - - - - - - -
%
%
\subsubsection{The case of $\lambda >0$}
%
% - - - - - - - - - - - - - - - - - - - - - - - - - - - - - - - - - - - - - - - -
%
For all $\l>0$ the equation
\begin{equation} \label{eq:fake-bubble-p}
\sum_{j=0}^{m}
(-1)^j
\xy
(0,0)*{
\begin{tikzpicture}[scale=0.9]
  %% Make a coupon filled with a given label place at coordinate (-2,0.75) name the node Fj
 \node[draw, thick, fill=blue!20,rounded corners=4pt,inner sep=3pt]
   (Fj) at (1.5,0) {$B_j$};
 \draw[color=blue, thick, double distance=1pt, dashed] (Fj) to  (1.5,1.25);
    \node[blue] at (1.8,0.8){$\scs j$\;};
  \draw[thick, ->] (-0.5,0) .. controls (-0.5,0.8) and (0.5,0.8) .. (0.5,0)
      node[pos=0.5, shape=coordinate](X){}
      node[pos=0.1, shape=coordinate](Y){};
  \draw[thick] (-0.5,0) .. controls (-0.5,-0.8) and (0.5,-0.8) .. (0.5,0)
      node[pos=0.1, shape=coordinate](Z){};
  \draw[color=blue, thick, double distance=1pt, dashed] (X) .. controls++(0,.65) and ++(-.65,.3) .. (Y) node[pos=0.15,right]{$\scs \l-1$\;};
  \draw[color=blue, thick, double distance=1pt, dashed] (Z) to[out=180, in=90] (-1,1.25) ;
  \node[blue] at (-.4,1.2){$\scs m-j$\;};
  \node at (Y) {\bbullet};\node at (Z) {\bbullet};
  \node at (2.3,0) {$\l$};
\end{tikzpicture}
};
\endxy \;\; = \;\; (-1)^m  \delta_{m,0}\oneb_{\onebl}
\end{equation}
inductively defines 2-morphisms $B_j \maps \onebl \to \Pi^j \onebl \la 2j\ra$ for $0 \leq j \leq m$.
It is clear that this definition is independent of the weight $\l$ for all $\lambda>0$.

\begin{example} Several examples are given below.
\begin{align*}
 \hackcenter{
 \begin{tikzpicture}[scale=0.8]
 \node[draw, thick, fill=blue!20,rounded corners=4pt,inner sep=3pt]
   (Fj) at (-0,0) {$B_0$};
  \node at (.7,0) {$\l$};
\end{tikzpicture} }
& \;\; =\;\;
\oneb_{\onebl}  \\
 \hackcenter{
 \begin{tikzpicture}[scale=0.9]
 \node[draw, thick, fill=blue!20,rounded corners=4pt,inner sep=3pt]
   (Fj) at (-0,0) {$B_1$};
 \draw[color=blue, thick, dashed] (Fj) to  (-0,1.25);
    \node[blue] at (.3,0.8){};
  \node at (.7,0) {$\l$};
\end{tikzpicture} }
& \;\; =\;\;
\hackcenter{\begin{tikzpicture}
  \draw[thick, ->] (-0.4,0) .. controls ++(-0,0.6) and ++(0,0.6) .. (0.4,0)
      node[pos=0.5, shape=coordinate](X){}
      node[pos=0.1, shape=coordinate](Y){};
  \draw[thick] (-0.4,0) .. controls ++(0,-0.6) and ++(0,-0.6) .. (0.4,0)
      node[pos=0.1, shape=coordinate](Z){};
  \draw[color=blue, thick, double distance=1pt, dashed] (X) .. controls++(0,.65) and ++(-.65,.3) .. (Y) node[pos=0.15,right]{$\scs \l-1$\;};
  \draw[color=blue, thick, dashed] (Z) to[bend left] (-1,1) ;
  \node at (Y) {\bbullet};
  \node at (Z) {\bbullet};
  \node at (.8,-.25) {$\l$};
\end{tikzpicture}}  \\
 \hackcenter{
 \begin{tikzpicture}[scale=0.9]
 \node[draw, thick, fill=blue!20,rounded corners=4pt,inner sep=3pt]
   (Fj) at (-0,0) {$B_2$};
 \draw[color=blue, thick, dashed, double distance=1pt,] (Fj) to  (-0,1.25);
    \node[blue] at (.3,0.8){$\scs 2$\;};
  \node at (.7,0) {$\l$};
\end{tikzpicture} }
& \;\; =\;\; \;
\hackcenter{\begin{tikzpicture}
  \draw[thick, ->] (-0.4,0) .. controls ++(-0,0.6) and ++(0,0.6) .. (0.4,0)
      node[pos=0.5, shape=coordinate](X){}
      node[pos=0.1, shape=coordinate](Y){};
  \draw[thick] (-0.4,0) .. controls ++(0,-0.6) and ++(0,-0.6) .. (0.4,0)
      node[pos=0.1, shape=coordinate](Z){};
  \draw[color=blue, thick, double distance=1pt, dashed] (X) .. controls++(0,.65) and ++(-.65,.3) .. (Y) node[pos=0.15,right]{$\scs \l-1$\;};
  \draw[color=blue, thick, dashed] (Z) to[bend left] (-1,1) ;
  \node at (Y) {\bbullet};
  \node at (Z) {\bbullet};
\end{tikzpicture}}
\hackcenter{\begin{tikzpicture}
  \draw[thick, ->] (-0.4,0) .. controls ++(-0,0.6) and ++(0,0.6) .. (0.4,0)
      node[pos=0.5, shape=coordinate](X){}
      node[pos=0.1, shape=coordinate](Y){};
  \draw[thick] (-0.4,0) .. controls ++(0,-0.6) and ++(0,-0.6) .. (0.4,0)
      node[pos=0.1, shape=coordinate](Z){};
  \draw[color=blue, thick, double distance=1pt, dashed] (X) .. controls++(0,.65) and ++(-.65,.3) .. (Y) node[pos=0.15,right]{$\scs \l-1$\;};
  \draw[color=blue, thick, dashed] (Z) to[bend left] (-1,1) ;
  \node at (Y) {\bbullet};
  \node at (Z) {\bbullet};
  \node at (.8,-.25) {$\l$};
\end{tikzpicture}}
\;\; - \;\;
\hackcenter{\begin{tikzpicture}
  \draw[thick, ->] (-0.4,0) .. controls ++(-0,0.6) and ++(0,0.6) .. (0.4,0)
      node[pos=0.5, shape=coordinate](X){}
      node[pos=0.1, shape=coordinate](Y){};
  \draw[thick] (-0.4,0) .. controls ++(0,-0.6) and ++(0,-0.6) .. (0.4,0)
      node[pos=0.1, shape=coordinate](Z){};
  \draw[color=blue, thick, double distance=1pt, dashed] (X) .. controls++(0,.65) and ++(-.65,.3) .. (Y) node[pos=0.15,right]{$\scs \l-1$\;};
  \draw[color=blue, thick, double distance=1pt,dashed] (Z) to[bend left] (-1,1) ;
  \node at (Y) {\bbullet};
  \node at (Z) {\bbullet};
  \node at (.8,-.25) {$\l$};
\end{tikzpicture}}
\end{align*}
\end{example}

% - - - - - - - - - - - - - - - - - - - - - - - - - - - - - - - - - - - - - - - -
%
%
\subsubsection{The case of $\lambda <0$}
%
% - - - - - - - - - - - - - - - - - - - - - - - - - - - - - - - - - - - - - - - -
%
For $\l<0$ inductively define 2-morphisms $\overline{B_j} \maps \onebl \to \Pi^j \onebl\la 2j\ra$ by the equation
\begin{equation} \label{eq:fake-bubble-n}
\sum_{j=0}^{m}
(-1)^{j}\;\;
\hackcenter{
\begin{tikzpicture}[scale=0.9]
 \node[draw, thick, fill=blue!20,rounded corners=4pt,inner sep=3pt]
   (Fj) at (2,0) {$\overline{B_j}$};
 \draw[color=blue, thick, double distance=1pt, dashed] (Fj) to  (2,1.25);
    \node[blue] at (2.3,0.8){$\scs j$\;};
  \draw[thick, ->] (0.5,0) .. controls (0.5,0.8) and (-0.5,0.8) .. (-0.5,0)
      node[pos=0, shape=coordinate](Z){};
  \draw[thick] (0.5,0) .. controls (0.5,-0.8) and (-0.5,-0.8) .. (-0.5,0)
      node[pos=0.5, shape=coordinate](X){}
      node[pos=0.2, shape=coordinate](Y){};
  \draw[color=blue, thick, double distance=1pt, dashed] (X) .. controls++(-.1,.7) and ++(-.2,.4) .. (Y)
         node[pos=0.9,right]{$\scs -\l-1$\;};
   \draw[color=blue, thick, double distance=1pt, dashed]
    (Z) .. controls ++(-1,.7) and ++(.1,-1) .. (1,1.25) ;
   \node[blue] at (.4,0.9){$\scs m-j$\;};
  \node at (Y) {\bbullet};\node at (X) {\bbullet};
  \node at (-1,-.3) {$\l$};
\end{tikzpicture} }
 \;\; = \;\; \delta_{m,0}\oneb_{\onebl}.
\end{equation}
Again it is clear that these 2-morphisms are independent of the weight $\l$ for all $\l<0$, except for the case when $\l=-1$.  In this case, the definition of the 2-morphisms $\overline{B_j}$ depends on the free parameter $c_{-1}$ from \eqref{eq_defcmone} corresponding to the degree zero bubble.

\begin{example} Several examples for $\l<-1$ are given below.
\begin{align*}
 \hackcenter{
 \begin{tikzpicture}[scale=0.8]
  %% Make a coupon filled with a given label place at coordinate (-2,0.75) name the node Fj
 \node[draw, thick, fill=blue!20,rounded corners=4pt,inner sep=3pt]
   (Fj) at (-0,0) {$\overline{B_0}$};
  \node at (.7,0) {$\l$};
\end{tikzpicture} }
& \;\; =\;\;
\oneb_{\onebl}  \\
 \hackcenter{
 \begin{tikzpicture}[scale=0.9]
  %% Make a coupon filled with a given label place at coordinate (-2,0.75) name the node Fj
 \node[draw, thick, fill=blue!20,rounded corners=4pt,inner sep=3pt]
   (Fj) at (-0,0) {$\overline{B_1}$};
 \draw[color=blue, thick, dashed] (Fj) to  (-0,1.25);
    \node[blue] at (.3,0.8){};
  \node at (.7,0) {$\l$};
\end{tikzpicture} }
& \;\; =\;\; \;\;
\hackcenter{\begin{tikzpicture}
  \draw[thick, ->] (0.4,0) .. controls ++(0,0.6) and ++(0,0.6) .. (-0.4,0)
      node[pos=0.05, shape=coordinate](Z){};
  \draw[thick] (0.4,0) .. controls ++(0,-0.6) and ++(-0,-0.6) .. (-0.4,0)
      node[pos=0.5, shape=coordinate](X){}
      node[pos=0.2, shape=coordinate](Y){};
  \draw[color=blue, thick, double distance=1pt, dashed]
    (X) .. controls++(-.1,.5) and ++(-.2,.3) .. (Y)
         node[pos=0.9,right]{$\scs -\l-1$\;};
   \draw[color=blue, thick,  dashed]
    (Z) .. controls ++(-1,.4) and ++(.1,-1) .. (1,1) ;
   \node[blue] at (1.25,0.8){$\scs $\;};
     \node at (Y) {\bbullet};
     \node at (Z) {\bbullet};
  \node at (1.5,.3) {$\lambda$};
\end{tikzpicture}}  \\
 \hackcenter{
 \begin{tikzpicture}[scale=0.9]
  %% Make a coupon filled with a given label place at coordinate (-2,0.75) name the node Fj
 \node[draw, thick, fill=blue!20,rounded corners=4pt,inner sep=3pt]
   (Fj) at (-0,0) {$\overline{B_2}$};
 \draw[color=blue, thick, dashed, double distance=1pt,] (Fj) to  (-0,1.25);
    \node[blue] at (.3,0.8){$\scs 2$\;};
  \node at (.7,0) {$\l$};
\end{tikzpicture} }
& \;\; =\;\; \;
\hackcenter{\begin{tikzpicture}
  \draw[thick, ->] (0.4,0) .. controls ++(0,0.6) and ++(0,0.6) .. (-0.4,0)
      node[pos=0.05, shape=coordinate](Z){};
  \draw[thick] (0.4,0) .. controls ++(0,-0.6) and ++(-0,-0.6) .. (-0.4,0)
      node[pos=0.5, shape=coordinate](X){}
      node[pos=0.2, shape=coordinate](Y){};
  \draw[color=blue, thick, double distance=1pt, dashed]
    (X) .. controls++(-.1,.5) and ++(-.2,.3) .. (Y)
         node[pos=0.9,right]{$\scs -\l-1$\;};
   \draw[color=blue, thick,  dashed]
    (Z) .. controls ++(-1,.4) and ++(.1,-1) .. (1,1) ;
   \node[blue] at (1.25,0.8){$\scs $\;};
     \node at (Y) {\bbullet};
     \node at (Z) {\bbullet};
\end{tikzpicture}}
\hackcenter{\begin{tikzpicture}
  \draw[thick, ->] (0.4,0) .. controls ++(0,0.6) and ++(0,0.6) .. (-0.4,0)
      node[pos=0.05, shape=coordinate](Z){};
  \draw[thick] (0.4,0) .. controls ++(0,-0.6) and ++(-0,-0.6) .. (-0.4,0)
      node[pos=0.5, shape=coordinate](X){}
      node[pos=0.2, shape=coordinate](Y){};
  \draw[color=blue, thick, double distance=1pt, dashed]
    (X) .. controls++(-.1,.5) and ++(-.2,.3) .. (Y)
         node[pos=0.9,right]{$\scs -\l-1$\;};
   \draw[color=blue, thick,  dashed]
    (Z) .. controls ++(-1,.4) and ++(.1,-1) .. (1,1) ;
   \node[blue] at (1.25,0.8){$\scs $\;};
     \node at (Y) {\bbullet};
     \node at (Z) {\bbullet};
  \node at (1.5,.3) {$\lambda$};
\end{tikzpicture}}
\;\; - \;\;
\hackcenter{\begin{tikzpicture}
  \draw[thick, ->] (0.4,0) .. controls ++(0,0.6) and ++(0,0.6) .. (-0.4,0)
      node[pos=0.05, shape=coordinate](Z){};
  \draw[thick] (0.4,0) .. controls ++(0,-0.6) and ++(-0,-0.6) .. (-0.4,0)
      node[pos=0.5, shape=coordinate](X){}
      node[pos=0.2, shape=coordinate](Y){};
  \draw[color=blue, thick, double distance=1pt, dashed]
    (X) .. controls++(-.1,.5) and ++(-.2,.3) .. (Y)
         node[pos=0.9,right]{$\scs -\l-1$\;};
   \draw[color=blue, thick,  dashed,double distance=1pt,]
    (Z) .. controls ++(-1,.4) and ++(.1,-1) .. (1,1)
     node[pos=0.9, left] {$\scs 2$} ;
   \node[blue] at (1.25,0.8){$\scs $\;};
     \node at (Y) {\bbullet};
     \node at (Z) {\bbullet};
  \node at (1.5,.3) {$\lambda$};
\end{tikzpicture}}
\end{align*}
\end{example}

% - - - - - - - - - - - - - - - - - - - - - - - - - - - - - - - - - - - - - - - -
%
%
\subsubsection{The case of $\lambda =0$}
%
% - - - - - - - - - - - - - - - - - - - - - - - - - - - - - - - - - - - - - - - -
By Lemma~\ref{lem:E} the degree zero curls in weight $\l=0$ must be scalar multiples of the identity map. We define parameters by the equations
\begin{equation} \label{eq:c0}
  \hackcenter{\begin{tikzpicture} [scale=0.8]
  \draw[thick] (0.5,1) -- (0.5,1.5);
  \draw[thick] (0.5,-.5) -- (0.5,0);
  \draw[thick] (-1.5,0) -- (-1.5,1);
  \draw[thick,->] (0.5,0) .. controls ++(-0,0.5) and ++(0,-0.5) .. (-0.5,1)
      node[pos=0.5, shape=coordinate](X){};
  \draw[thick, ->] (-0.5,0) .. controls ++(0,0.5) and ++(0,-0.5) .. (0.5,1);
  \draw[thick, ->] (-0.5,1) .. controls ++(0,0.6) and ++(0,0.6) .. (-1.5,1);
  \draw[thick, ->] (-1.5,0) .. controls ++(0,-0.6) and ++(0,-0.6) .. (-0.5,0)
     node[pos=0.5, shape=coordinate](CUP){};
  \draw[color=blue,  thick, dashed]
   (X) .. controls ++(-1.2,0) and ++(0,.9) ..(CUP);
   \node at (0,-0.25) {$0$};
   \node at (1,-0.25) {$+2$};
\end{tikzpicture}}
\quad = \quad c^-_0  \;\;
  \hackcenter{\begin{tikzpicture} [scale=0.8]
  \draw[thick,->] (0,-.5) -- (0,1.5);
   \node at (-.5,-0.25) {$0$};
   \node at (.5,-0.25) {$+2$};
\end{tikzpicture}}, \qquad \quad
  \hackcenter{\begin{tikzpicture} [scale=0.8]
  \draw[thick] (-0.5,1) -- (-0.5,1.5);
  \draw[thick] (-0.5,-.5) -- (-0.5,0);
  \draw[thick] (1.5,0) -- (1.5,1);
  \draw[thick,->] (-0.5,0) .. controls ++(-0,0.5) and ++(0,-0.5) .. (0.5,1)
      node[pos=0.5, shape=coordinate](X){};
  \draw[thick, ->] (0.5,0) .. controls ++(0,0.5) and ++(0,-0.5) .. (-0.5,1);
  \draw[thick, ->] (0.5,1) .. controls ++(0,0.6) and ++(0,0.6) .. (1.5,1)
     node[pos=0.5, shape=coordinate](CUP){};
  \draw[thick, ->] (1.5,0) .. controls ++(0,-0.6) and ++(0,-0.6) .. (0.5,0);
  \draw[color=blue,  thick, dashed]
   (X) .. controls ++(-2,.3) and ++(.1,.7) ..(CUP);
   \node at (0,-0.25) {$0$};
   \node at (-1,-0.25) {$+2$};
\end{tikzpicture}}
\quad = \quad -c^+_0 \;\;
  \hackcenter{\begin{tikzpicture} [scale=0.8]
  \draw[thick,->] (0,-.5) -- (0,1.5);
   \node at (-.5,-0.25) {$-2$};
   \node at (.5,-0.25) {$0$};
\end{tikzpicture}},
\end{equation}
for some coefficients $c_0^+$ and $c_0^-$.  It is convenient to introduce fake bubbles in weight $\l=0$ defined by setting
\begin{align}
  \hackcenter{
\begin{tikzpicture}[scale=0.9]
  \draw[thick, ->] (-0.5,0) .. controls (-0.5,0.8) and (0.5,0.8) .. (0.5,0)
      node[pos=0.5, shape=coordinate](X){}
      node[pos=0.1, shape=coordinate](Y){};
  \draw[thick] (-0.5,0) .. controls (-0.5,-0.8) and (0.5,-0.8) .. (0.5,0)
      node[pos=0.1, shape=coordinate](Z){};
  \draw[color=blue, thick,  dashed] (X) .. controls++(0,.65) and ++(-.65,.3) .. (Y) node[pos=0.15,right]{$\scs -1$\;};
  \node at (Y) {\bbullet}; \node at (1,.75) {$\lambda$};
\end{tikzpicture} }
&\;\; = \;\; c_0^+\oneb_{\oneb_{0}} &
\xy (0,-2)*{
\begin{tikzpicture}[scale=0.9]
  \draw[thick, ->] (0.5,0) .. controls (0.5,0.8) and (-0.5,0.8) .. (-0.5,0)
      node[pos=0, shape=coordinate](Z){};
  \draw[thick] (0.5,0) .. controls (0.5,-0.8) and (-0.5,-0.8) .. (-0.5,0)
      node[pos=0.5, shape=coordinate](X){}
      node[pos=0.2, shape=coordinate](Y){};
  \draw[color=blue, thick,  dashed] (X) .. controls++(-.1,.7) and ++(-.2,.4) .. (Y)
         node[pos=0.9,right]{$\scs -1$\;};
 \node at (Y) {\bbullet};
  \node at (1,.65) {$\lambda$};
\end{tikzpicture} }; \endxy
&\;\; = \;\; c_0^-\oneb_{\oneb_{0}}.
\end{align}

% - - - - - - - - - - - - - - - - - - - - - - - - - - - - - - - - - - - - - - - -
%
%
\subsubsection{Notation for all $\lambda$}
%
% - - - - - - - - - - - - - - - - - - - - - - - - - - - - - - - - - - - - - - - -
%

We refer to the maps $B_j$ for $0 \leq j \leq \l-1$ and $\overline{B_j}$ for $0 \leq j \leq -\l-1$ as {\em odd fake bubbles} because they are analogues of fake bubbles from the even case.  It is convenient to introduce a different notation for these odd fake bubbles.  This new notation makes it possible to express various equations in a uniform manner independent of whether the weight $\l$ is positive or negative.  We write
\[
\begin{array}{ccc}
  \l>0 & \qquad \qquad &\l<0 \\
  \hackcenter{
\begin{tikzpicture}[scale=0.9]
  \draw[thick, ->] (0.5,0) .. controls (0.5,0.8) and (-0.5,0.8) .. (-0.5,0)
      node[pos=0, shape=coordinate](Z){};
  \draw[thick] (0.5,0) .. controls (0.5,-0.8) and (-0.5,-0.8) .. (-0.5,0)
      node[pos=0.5, shape=coordinate](X){}
      node[pos=0.2, shape=coordinate](Y){};
 %% Draw double blue curvy line
  \draw[color=blue, thick, double distance=1pt, dashed] (X) .. controls++(-.1,.7) and ++(-.2,.4) .. (Y)
         node[pos=0.9,right]{$\scs -\l-1$\;};
   \draw[color=blue, thick, double distance=1pt, dashed]
    (Z) .. controls ++(-1,.7) and ++(.1,-1) .. (1,1) ;
   \node[blue] at (1.3,0.8){$\scs j$\;};
 %% Draw the bullet last so it comes out on top
  \draw[line width=0mm] (0.5,0) .. controls (0.5,-0.8) and (-0.5,-0.8) .. (-0.5,0)
     node[pos=0.2]{\bbullet};
  \draw[line width=0mm] (0.5,0) .. controls (0.5,0.8) and (-0.5,0.8) .. (-0.5,0)
      node[pos=0.0]{\bbullet};
  \node at (-1,.3) {$\lambda$};
\end{tikzpicture} }
\quad := \quad
\hackcenter{
 \begin{tikzpicture}[scale=0.9]
 \node[draw, thick, fill=blue!20,rounded corners=4pt,inner sep=3pt]
   (Fj) at (-0,0) {$B_j$};
  \draw[color=blue, thick, double distance=1pt, dashed] (Fj) to  (0,1.25);
  \node at (.7,.5) {$\l$};
  \node[blue] at (.3,1.1){$\scs j$\;};
\end{tikzpicture}  }
 &  \qquad \qquad &
 \hackcenter{
\begin{tikzpicture}[scale=0.9]
  \draw[thick, ->] (-0.5,0) .. controls (-0.5,0.8) and (0.5,0.8) .. (0.5,0)
      node[pos=0.5, shape=coordinate](X){}
      node[pos=0.1, shape=coordinate](Y){};
  \draw[thick] (-0.5,0) .. controls (-0.5,-0.8) and (0.5,-0.8) .. (0.5,0)
      node[pos=0.1, shape=coordinate](Z){};
  \draw[color=blue, thick, double distance=1pt, dashed] (X) .. controls++(0,.65) and ++(-.65,.3) .. (Y) node[pos=0.15,right]{$\scs \l-1$\;};
  \draw[color=blue, thick, double distance=1pt, dashed] (Z) to[out=180, in=90] (-1,1.25) ;
  \node[blue] at (-.65,1.2){$\scs j$\;};
  \draw[line width=0mm] (-0.5,0) .. controls (-0.5,0.8) and (0.5,0.8) .. (0.5,0)
    node[pos=0.1]{\bbullet};
  \draw[line width=0mm] (-0.5,0) (-0.5,0) .. controls (-0.5,-0.8) and (0.5,-0.8) .. (0.5,0)
        node[pos=0.1]{\bbullet};
  \node at (1.2,0) {$\l$};
\end{tikzpicture}}
\quad := \quad
\hackcenter{
 \begin{tikzpicture}[scale=0.9]
 \node[draw, thick, fill=blue!20,rounded corners=4pt,inner sep=3pt]
   (Fj) at (-0,0) {$\overline{B_j}$};
  \draw[color=blue, thick, double distance=1pt, dashed] (Fj) to  (0,1.25);
  \node at (.7,.5) {$\l$};
  \node[blue] at (.3,1.1){$\scs j$\;};
\end{tikzpicture}  }
\end{array}
\]
for all $0 \leq j\leq |\l|-1$.  The drawback of this notation is that it appears to involve a negative number of dots.  We never allow negative dots in diagrams involving 2-morphisms.  Whenever a negative number of dots is encountered in a dotted bubble diagram the dotted bubble is interpreted as an odd fake bubble defined above.

The advantage of introducing this notation for odd fake bubbles is that equations \eqref{eq:fake-bubble-p} and \eqref{eq:fake-bubble-n} can be expressed as:
\begin{align}\label{eq:fake-bubble-last}
\sum_{f+g=m} (-1)^{g}
\hackcenter{
\begin{tikzpicture}[scale=0.9]
  \draw[thick, ->] (-0.5,0) .. controls (-0.5,0.8) and (0.5,0.8) .. (0.5,0)
      node[pos=0.5, shape=coordinate](X){}
      node[pos=0.1, shape=coordinate](Y){};
  \draw[thick] (-0.5,0) .. controls (-0.5,-0.8) and (0.5,-0.8) .. (0.5,0)
      node[pos=0.1, shape=coordinate](Z){};
  \draw[color=blue, thick, double distance=1pt, dashed] (X) .. controls++(0,.65) and ++(-.65,.3) .. (Y) node[pos=0.15,right]{$\scs \l-1$\;};
  \draw[color=blue, thick, double distance=1pt, dashed] (Z) to[out=180, in=90] (-1,1.25) ;
  \node[blue] at (-.6,1.2){$\scs f$\;};
  \node at (Y) {\bbullet}; \node at (Z) {\bbullet};
\end{tikzpicture} }
  \xy (0,-2)*{
\begin{tikzpicture}[scale=0.9]
  \draw[thick, ->] (0.5,0) .. controls (0.5,0.8) and (-0.5,0.8) .. (-0.5,0)
      node[pos=0, shape=coordinate](Z){};
  \draw[thick] (0.5,0) .. controls (0.5,-0.8) and (-0.5,-0.8) .. (-0.5,0)
      node[pos=0.5, shape=coordinate](X){}
      node[pos=0.2, shape=coordinate](Y){};
  \draw[color=blue, thick, double distance=1pt, dashed] (X) .. controls++(-.1,.7) and ++(-.2,.4) .. (Y)
         node[pos=0.9,right]{$\scs -\l-1$\;};
   \draw[color=blue, thick, double distance=1pt, dashed]
    (Z) .. controls ++(-1,.7) and ++(.1,-1) .. (1,1.25) ;
   \node[blue] at (1.3,0.9){$\scs g$\;};
 \node at (Y) {\bbullet}; \node at (Z) {\bbullet};
  \node at (-.5,1.1) {$\lambda$};
\end{tikzpicture} }; \endxy\;\;
 &\;\; = \;\; \delta_{m,0}\oneb_{\onebl}
 &\text{for $\l >0$,}
\\
\sum_{f+g=m}
(-1)^{g} \xy (0,-2)*{
\begin{tikzpicture}[scale=0.9]
  \draw[thick, ->] (0.5,0) .. controls (0.5,0.8) and (-0.5,0.8) .. (-0.5,0)
      node[pos=0, shape=coordinate](Z){};
  \draw[thick] (0.5,0) .. controls (0.5,-0.8) and (-0.5,-0.8) .. (-0.5,0)
      node[pos=0.5, shape=coordinate](X){}
      node[pos=0.2, shape=coordinate](Y){};
  \draw[color=blue, thick, double distance=1pt, dashed] (X) .. controls++(-.1,.7) and ++(-.2,.4) .. (Y)
         node[pos=0.9,right]{$\scs -\l-1$\;};
   \draw[color=blue, thick, double distance=1pt, dashed]
    (Z) .. controls ++(-1,.7) and ++(.1,-1) .. (1,1.25) ;
   \node[blue] at (1.3,1.1){$\scs f$\;};
 \node at (Y) {\bbullet}; \node at (Z) {\bbullet};
  \node at (-.25,1.1) {$\lambda$};
\end{tikzpicture} }; \endxy
\hackcenter{
\begin{tikzpicture}[scale=0.9]
  \draw[thick, ->] (-0.5,0) .. controls (-0.5,0.8) and (0.5,0.8) .. (0.5,0)
      node[pos=0.5, shape=coordinate](X){}
      node[pos=0.1, shape=coordinate](Y){};
  \draw[thick] (-0.5,0) .. controls (-0.5,-0.8) and (0.5,-0.8) .. (0.5,0)
      node[pos=0.1, shape=coordinate](Z){};
  \draw[color=blue, thick, double distance=1pt, dashed] (X) .. controls++(0,.65) and ++(-.65,.3) .. (Y) node[pos=0.15,right]{$\scs \l-1$\;};
  \draw[color=blue, thick, double distance=1pt, dashed] (Z) to[out=180, in=90] (-1,1.25) ;
  \node[blue] at (-.6,1.2){$\scs g$\;};
  \node at (Y) {\bbullet}; \node at (Z) {\bbullet};
\end{tikzpicture} }
&\;\; = \;\; \delta_{m,0}\oneb_{\onebl}
&
\text{for $\l<0$,}
\end{align}
for $0 \leq m \leq |\l|-1$.  The relations in the 2-category $\Uc$ are also conveniently expressed in terms of fake bubbles.

% -------------------------------------------------------------------------------
%
\subsection{A general form for the inverse map}
%
% -------------------------------------------------------------------------------

By Corollary~\ref{cor:1} the map
\begin{equation}
\zeta\;\;:=\;\;
\hackcenter{\begin{tikzpicture}[scale=0.6]
  \draw[semithick, <-] (-0.5,0) .. controls (-0.5,0.5) and (0.5,0.5) .. (0.5,1)
      node[pos=0.5, shape=coordinate](X){};
  \draw[semithick, ->] (0.5,0) .. controls (0.5,0.5) and (-0.5,0.5) .. (-0.5,1);
  \draw[color=blue,  thick, dashed] (X) to (0,0);
\end{tikzpicture}}\;\; \bigoplus_{k=0}^{\lambda-1}
\hackcenter{\begin{tikzpicture}[scale=0.6]
  \draw[thick, ->-=0.15, ->] (0.5,.2) .. controls (0.6,-0.8) and (-0.6,-0.8) .. (-0.5,.2)
      node[pos=0.85, shape=coordinate](Y){};
  \draw[color=blue, thick, double distance=1pt, dashed]
   (Y) .. controls++(-.5,.2) and ++(0,.4) .. (-1,-1)
         node[pos=0.75,left]{$\scs k$};
  \draw[line width=0mm] (0.5,.2) .. controls (0.5,-0.8) and (-0.5,-0.8) .. (-0.5,.2)
     node[pos=0.85]{\tikz \draw[fill=black] circle (0.4ex);};
\end{tikzpicture} }:\Ftt\Pi \Ett \onebl \bigoplus_{k=0}^{\lambda-1} \Pi^k\onebl \la \l-1-2k \ra \rightarrow \Ett \Ftt \onebl
\end{equation}
is invertible. We describe its inverse $\zeta^{-1}$ diagrammatically as follows:
\begin{equation}
\hackcenter{\begin{tikzpicture}[scale=0.55]
  \draw[thick, ->-=0.12, ->-=0.95] (-0.6,-0.1) .. controls ++(-0,0.75) and ++(0,-0.75) .. (0.6,2.1);
  \draw[thick,  ->-=0.12, ->-=0.95] (-0.6,2.1).. controls ++(0,-0.75) and ++(0,0.75) ..(0.6,-0.1);
  \draw[color=blue,  thick, dashed] (0,1) -- (0,2.1) ;
   %% Make a coupon filled with a given label place at coordinate (-2,0.75) name the node Fj
   \node[draw, thick, fill=blue!20,rounded corners=4pt,inner sep=3pt]
     (fi) at (0,1) {$\zeta(\l)$};
\end{tikzpicture} }\;\; \bigoplus_{k=0}^{\l-1}
\left( \quad\;\;
 \hackcenter{
 \begin{tikzpicture}[scale=0.55]
  \draw[thick, ->-=0.12, ->-=0.95] (-0.6,-0.1) .. controls ++(-0.1,1.3) and ++(0.1,1.3) .. (0.6,-0.1);
  \draw[color=blue,  thick, dashed, double distance=1pt] (0,1) -- (0,2.4) ;
   %% Make a coupon filled with a given label place at coordinate (-2,0.75) name the node Fj
   \node[draw, thick, fill=blue!20,rounded corners=4pt,inner sep=3pt]
     (fi) at (0,1) {$\zeta(\l-1-k)$};
   \node[blue] at (.3,2.1) {$\scs k$};
\end{tikzpicture} } \;\; \;\;\right)
 : \Ett \Ftt \onebl \rightarrow\Ftt\Pi \Ett \onebl \bigoplus_{k=0}^{\lambda-1} \Pi^k\onebl \la \l-1-2k \ra .
\end{equation}
Likewise, for $\l \leq 0$ the inverse of
\begin{equation}
\zeta\;\;:=\;\;
\hackcenter{\begin{tikzpicture}[scale=0.6]
  \draw[semithick, ->] (-0.5,0) .. controls (-0.5,0.5) and (0.5,0.5) .. (0.5,1)
      node[pos=0.5, shape=coordinate](X){};
    \draw[semithick, <-] (0.5,0) .. controls (0.5,0.5) and (-0.5,0.5) .. (-0.5,1);
  \draw[color=blue,  thick, dashed] (X) .. controls ++(.1,.5) and ++(0,.5) .. (-.5,.5)
  .. controls ++(0,-.3) and ++(0,.3) .. (0,0);
\end{tikzpicture}} \;\; \bigoplus_{k=0}^{-\l-1}
\hackcenter{\begin{tikzpicture}[scale=0.6]
  \draw[thick, ->-=0.15, ->] (-0.7,.5) .. controls ++(-.1,-1) and ++(.1,-1) .. (0.7,.5)
      node[pos=0.85, shape=coordinate](Y){}
      node[pos=0.55, shape=coordinate](M){}
      node[pos=0.44, shape=coordinate](X){};
  \draw[color=blue, thick, double distance=1pt, dashed]
   (Y) .. controls++(-.5,.3) and ++(0,.5) .. (M)
         node[pos=0.15,above]{$\scs k$};
   \draw[color=blue, thick, double distance=1pt, dashed]
     (X) .. controls ++(0,.55) and ++(0,.55) ..
      (-.6,-.25) .. controls ++(0,-.3) and ++(0,.4) ..(0,-1);
   \node at (Y){\tikz \draw[fill=black] circle (0.4ex);};
\end{tikzpicture} }:\Ett\Pi\Ftt \onebl \bigoplus_{k=0}^{-\l-1}\Pi^{\l+1+k} \onebl \la -\l-1-2k \ra \rightarrow \Ftt \Ett \onebl
\end{equation}
can be expressed as
\begin{equation}
\hackcenter{\begin{tikzpicture}[scale=0.5]
  \draw[thick, ->-=0.12, ->-=0.95] (0.6,-0.1) .. controls ++(0,0.75) and ++(0,-0.75) .. (-0.6,2.1);
  \draw[thick, ->-=0.12, ->-=0.95]
   (0.6,2.1).. controls ++(0,-0.75) and ++(0,0.75) ..(-0.6,-0.1);
  \draw[color=blue,  thick, dashed] (0,1) -- (0,2.1) ;
  \node[draw, thick, fill=blue!20,rounded corners=4pt,inner sep=3pt]
     (fi) at (0,1) {$\zeta(\l)$};
\end{tikzpicture} }\;\; \bigoplus_{k=0}^{-\l-1}
\left( \quad\;\;
 \hackcenter{
 \begin{tikzpicture}[scale=0.5]
  \draw[thick, ->-=0.12, ->-=0.95] (0.6,-0.1) .. controls ++(0.1,1.3) and ++(-0.1,1.3) .. (-0.6,-0.1);
  \draw[color=blue,  thick, dashed, double distance=1pt] (0,1) -- (0,2.4) ;
   %% Make a coupon filled with a given label place at coordinate (-2,0.75) name the node Fj
   \node[draw, thick, fill=blue!20,rounded corners=4pt,inner sep=3pt]
     (fi) at (0,1) {$\zeta(-\l-1-k)$};
   \node[blue] at (1.1,2.1) {$\scs \l+1+k$};
\end{tikzpicture} } \;\; \;\;\right)
 : \Ftt \Ett \onebl \rightarrow\Ett\Pi\Ftt \onebl \bigoplus_{k=0}^{-\l-1}\Pi^{\l+1+k} \onebl \la -\l-1-2k \ra .
\end{equation}

Condition (3) of Definition~\ref{def_strong} only requires the {\it{existence}} of isomorphisms between the two 1-morphisms on either side. However, the space of 2-morphisms between a pair of 1-morphisms in $\Cc$ could contain maps that cannot be expressed using 2-morphisms from the strong supercategorical action of $\mf{sl}_2$, i.e. using dots, crossings, caps, and cups. In the next proposition we show that this is not the case for the 2-morphisms giving the isomorphism $\zeta^{-1}$.

\begin{prop} \label{prop_form-of-inv}
The isomorphism $\zeta^{-1}$ for $\l \geq 0$ has the form
\begin{equation} \label{eq_phi-inverses-U}
\hackcenter{\begin{tikzpicture}[scale=0.5]
  \draw[thick, ->-=0.12, ->-=0.95] (-0.6,-0.1) .. controls ++(-0,0.75) and ++(0,-0.75) .. (0.6,2.1);
  \draw[thick,  ->-=0.12, ->-=0.95] (-0.6,2.1).. controls ++(0,-0.75) and ++(0,0.75) ..(0.6,-0.1);
  \draw[color=blue,  thick, dashed] (0,1) -- (0,2.1);
   \node[draw, thick, fill=blue!20,rounded corners=4pt,inner sep=3pt]
     (fi) at (0,1) {$\zeta(\l)$};
   \node at (1.3,0.5) {$\l$};
\end{tikzpicture} }
\;\;=\;\; \beta_{\l}
  \;
\hackcenter{\begin{tikzpicture}[scale=0.5]
  \draw[thick, ->-=0.12, ->-=0.95] (-0.6,-0.1) .. controls ++(-0,0.75) and ++(0,-0.75) .. (0.6,2.1);
  \draw[thick,  ->-=0.12, ->-=0.95] (-0.6,2.1).. controls ++(0,-0.75) and ++(0,0.75) ..(0.6,-0.1);
  \draw[color=blue,  thick, dashed] (0,1) -- (0,2.1) ;
  \node at (1,1) {$\l$};
\end{tikzpicture} }\;\;
\qquad  \qquad
\hackcenter{
 \begin{tikzpicture}[scale=0.5]
  \draw[thick, ->-=0.12, ->-=0.95] (-0.6,-0.1) .. controls ++(-0.1,1.3) and ++(0.1,1.3) .. (0.6,-0.1);
  \draw[color=blue,  thick, dashed, double distance=1pt] (0,1) -- (0,2.4) ;
   \node[draw, thick, fill=blue!20,rounded corners=4pt,inner sep=3pt]
     (fi) at (0,1) {$\zeta(\l-1-k)$};
   \node[blue] at (.3,2.1) {$\scs k$};
   \node at (2.3,0) {$\l$};
\end{tikzpicture} }
\;\; = \;\;
\sum_{j=0}^{\l-1-k} (-1)^j
 \xy
 (0,0)*{
\begin{tikzpicture}
  \draw[thick, ->] (-0.5,0) .. controls (-0.5,0.8) and (0.5,0.8) .. (0.5,0)
      node[pos=0.1, shape=coordinate](DOT){}
      node[pos=0.42, shape=coordinate](L){}
      node[pos=0.5, shape=coordinate](M){}
      node[pos=0.58, shape=coordinate](R){};
 \node[draw, thick, fill=blue!20,rounded corners=4pt,inner sep=3pt]
    (Fj) at (1.25,0.65) {$\scs B_j$};
 \draw[color=blue, thick, double distance=1pt, dashed] (Fj) to [out=90, in=90] (R);
 \draw[color=blue, thick, double distance=1pt, dashed] (M) to (0,1.5);
 \draw[color=blue, thick, double distance=1pt, dashed]
    (DOT) .. controls++(-.65,0) and ++(-.25,.3) .. (L);
 \draw[line width=0mm] (-0.5,0) .. controls (-0.5,0.8) and (0.5,0.8) .. (0.5,0)
        node[pos=0.1]{\bbullet};
   \node[blue] at (-.3,1.5){$\scs k$};
   \node[blue] at (1.15,1.4){$\scs j$};
   \node[blue] at (-1.45,.20){$\scs \l-1 -k-j$};
   \node at (-1,1.2) {$\l$};
\end{tikzpicture} };
\endxy
\end{equation}
for some $\lambda$-dependent coefficients $\beta_{\lambda} \in \Bbbk^{\times}$. For $\ell \leq 0$ the isomorphism $\zeta^{-1}$ has the form
\begin{equation}
\hackcenter{\begin{tikzpicture}[scale=0.5]
  \draw[thick, ->-=0.12, ->-=0.95] (0.6,-0.1) .. controls ++(0,0.75) and ++(0,-0.75) .. (-0.6,2.1);
  \draw[thick, ->-=0.12, ->-=0.95]
   (0.6,2.1).. controls ++(0,-0.75) and ++(0,0.75) ..(-0.6,-0.1);
  \draw[color=blue,  thick, dashed] (0,1) -- (0,2.1) ;
  \node[draw, thick, fill=blue!20,rounded corners=4pt,inner sep=3pt]
     (fi) at (0,1) {$\zeta(\l)$};
\end{tikzpicture} }
\;\;=\;\; \beta_{\l}
  \;
  \hackcenter{\begin{tikzpicture}[scale=0.8]
  \draw[thick, <-] (-0.5,0) .. controls (-0.5,0.4) and (0.5,0.6) .. (0.5,1)
      node[pos=0.5, shape=coordinate](X){};
    \draw[thick, ->] (0.5,0) .. controls (0.5,0.4) and (-0.5,0.6) .. (-0.5,1);
  \draw[color=blue,  thick, dashed]
     (0,1)  .. controls ++(0,-.3) and ++(0,.3) .. (-.6,.4)
     .. controls ++(.1,-.4) and ++(.1,-.4)  .. (X);
\end{tikzpicture} }
\;\;
\qquad  \qquad
\hackcenter{
 \begin{tikzpicture}[scale=0.5]
  \draw[thick, ->-=0.12, ->-=0.95] (0.6,-0.1) .. controls ++(0.1,1.3) and ++(-0.1,1.3) .. (-0.6,-0.1);
  \draw[color=blue,  thick, dashed, double distance=1pt] (0,1) -- (0,2.4) ;
   \node[draw, thick, fill=blue!20,rounded corners=4pt,inner sep=3pt]
     (fi) at (0,1) {$\zeta(-\l-1-k)$};
   \node[blue] at (.3,2.1) {$\scs k$};
   \node at (2.3,0) {$\l$};
\end{tikzpicture} }
\;\; = \;\;
\sum_{j=0}^{-\l-1-k}(-1)^{j}
 \hackcenter{\begin{tikzpicture}
  \draw[thick,->-=0.8] (0.5,.25) -- (0.5,.5);
  \draw[thick,->-=0.55] (-0.5,.5) -- (-0.5,.25);
  \draw[thick] (0.5,.5) .. controls ++(.1,.8) and ++(-.1,.8) .. (-0.5,.5)
      node[pos=0.1, shape=coordinate](DOT){};
 \node[draw, thick, fill=blue!20,rounded corners=4pt,inner sep=3pt]
    (Fj) at (1.5,0.75) {$\scs \overline{B_j}$};
 \draw[color=blue, thick, double distance=1pt, dashed]
    (Fj) .. controls ++(0,.4) and ++(0,-.6) .. (1.25,1.75);
  \draw[color=blue, thick, double distance=1pt, dashed]
    (DOT) .. controls++(-.5,.4) and ++(0,-1) .. (-.75,1.75);
   \node at (DOT){\bbullet};
   \node[blue] at (1.5,1.6){$\scs j$};
   \node[blue] at (0,1.60){$\scs \l-1 -k-j$};
   \node at (-1,.7) {$\l$};
\end{tikzpicture} }
\end{equation}
for $\beta_{\lambda} \in \Bbbk^{\times}$.
\end{prop}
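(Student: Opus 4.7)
\smallskip

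The plan is to determine $\zeta^{-1}$ by first pinning down the form of each of its components using dimension counts for 2-hom-spaces, then fixing the remaining scalars via the defining condition $\zeta\zeta^{-1}=\id_{\Ett\Ftt\onebl}$. I will focus on the case $\l\geq 0$; the case $\l\leq 0$ is analogous by swapping the roles of $\Ett$ and $\Ftt$.

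First I would identify the hom-spaces in which each component of $\zeta^{-1}$ lives. The component landing in $\Ftt\Pi\Ett\onebl$ lies in $\Hom(\Ett\Ftt\onebl,\Ftt\Pi\Ett\onebl)$, which is one-dimensional by Lemma~\ref{lem:homs} (after using adjunction and the parity isomorphism). Hence this component must be $\beta_\l$ times any chosen generator of the space, and the sideways crossing $\Ucrossl$ defined in Section~\ref{subsec-half-sideways} is a natural such generator. For the component landing in $\Pi^k\onebl\la\l-1-2k\ra$, I would use the adjunction $\Ett\dashv\Ftt\ads{-\l-1}$ to rewrite
\begin{equation*}
\Hom\bigl(\Ett\Ftt\onebl,\Pi^k\onebl\la\l-1-2k\ra\bigr)\cong \Hom\bigl(\Ftt\onebl,\Ftt\Pi^k\onebl\la -2k\ra\bigr),
\end{equation*}
and then apply the adjoint version of Lemma~\ref{lem:main} (which restricts such 2-morphisms to sums of a dot on the outgoing strand capped off by a map $\onebl\to\Pi^{\mu-i}\onebl$). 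The resulting maps are exactly diagrams consisting of a right cap with some number of dots, closed off on the extra dashed legs by a fake bubble $B_j$; these span a finite-dimensional space whose basis is indexed by $j\in\{0,\ldots,\l-1-k\}$. Hence $\zeta^{-1}$ must have the general form given in the statement, with \emph{a priori} undetermined coefficients.

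Next I would fix the coefficients by writing out $\zeta\zeta^{-1}=\id_{\Ett\Ftt\onebl}$ block by block. The off-diagonal blocks ($\Ftt\Pi\Ett\onebl\to\Pi^k\onebl$ and $\Pi^k\onebl\to\Ftt\Pi\Ett\onebl$) vanish: the first because the relevant hom-space is zero (Lemma~\ref{lem:FEtEF} combined with an adjunction), and the second because the composite yields a curl times lower-degree bubble which is killed by Lemma~\ref{lem:E} and the positivity of bubbles. The diagonal block $\Ftt\Pi\Ett\onebl\to\Ftt\Pi\Ett\onebl$ involves composing two sideways crossings together with the adjunction cup, and simplifying via zig-zag and the half-cyclicity relations from Corollary~\ref{cor-dot-cyclicity} reduces to the degree-zero bubble; this determines $\beta_\l$ as the inverse of a nonzero scalar, hence $\beta_\l\in\Bbbk^\times$. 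The diagonal blocks $\Pi^k\onebl\to\Pi^{k'}\onebl$ require composing the proposed bubble formula with a cup bearing $k'$ dots. Using the dot-slide relations \eqref{eq:leftdotslide}, \eqref{eq:rightdotslide} to propagate all dots into a single dotted bubble on one side, and then the half-cyclicity of Corollary~\ref{cor-dot-cyclicity} to tidy up signs, this reduces to the fake bubble defining equation \eqref{eq:fake-bubble-last}, which telescopes precisely to $\delta_{k,k'}$.

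The main obstacle is the careful bookkeeping of signs in the diagonal bubble block computation: the factor $(-1)^j$ in the proposed formula has to match up with the sign $(-1)^\l$ coming from the half-cyclic dot relation \eqref{eqn-dot-cyclicity-right-cap}, the sign $(-1)^{g}$ in \eqref{eq:fake-bubble-last}, and the signs introduced when resolving dashed crossings via \eqref{eqn-dashed-bubble}. Once this sign analysis is carried out, the formula in the proposition is forced uniquely, and no relation beyond those already established in Section~\ref{subsec:consequences} and Section~\ref{sec:proofcycbiadjoint} is needed. The case $\l\leq 0$ is handled symmetrically using the analogues of Lemma~\ref{lem:homs} and Lemma~\ref{lem:main} on the $\Ftt$ side, and the $\overline{B_j}$ fake bubbles defined in \eqref{eq:fake-bubble-n}.
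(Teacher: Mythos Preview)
Your overall strategy matches the paper's: use Lemma~\ref{lem:homs} to force the crossing component to be $\beta_\l$ times the sideways crossing, use the adjoint form of Lemma~\ref{lem:main} to write the cap components as sums of dotted caps decorated by unknown bubble-like maps $f_j$, and then compare against the inverse condition to identify $f_j=B_j$ via the fake bubble defining equation~\eqref{eq:fake-bubble-last}.

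Two points of divergence are worth noting. First, a notational slip: you write $\zeta\zeta^{-1}=\id_{\Ett\Ftt\onebl}$ but then describe blocks like $\Ftt\Pi\Ett\onebl\to\Pi^k\onebl$, which are components of $\zeta^{-1}\zeta$ on the direct sum side, not of $\zeta\zeta^{-1}$. The paper works with $\zeta^{-1}\zeta=\id$ throughout, which is what your block description actually matches. Second, and more substantively, you propose to compute the $\Ftt\Pi\Ett\onebl\to\Ftt\Pi\Ett\onebl$ diagonal block by simplifying a double sideways crossing via zig-zag and Corollary~\ref{cor-dot-cyclicity}. That simplification is precisely the content of equation~\eqref{eq:FEtEF-beta}, which is \emph{derived from} this proposition in Section~6.3, so you cannot appeal to it here. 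The paper avoids this entirely: it never computes that block. Instead it argues that $\Hom(\Ett\Ftt\onebl,\Ftt\Pi\Ett\onebl)\cong\Bbbk$ forces the component to be $\beta_\l$ times the crossing for some scalar, and $\beta_\l\neq 0$ follows simply because $\zeta^{-1}$ is an isomorphism. Only the $\Pi^\ell\onebl\to\Pi^k\onebl$ blocks are needed to pin down the $f_j$'s, and the composite there is directly a dotted bubble paired with $f_j$---no dot-slides or half-cyclicity required---which compares by inspection to~\eqref{eq:fake-bubble-last}. Your route would work once you drop the unnecessary $\Ftt\Pi\Ett$ block computation, but the paper's argument is leaner.
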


\begin{proof}
The first equation in \eqref{eq_phi-inverses-U} follows immediately from Lemma~\ref{lem:homs}.  For the second claim  take adjoints in Lemma~\ref{lem:main} equation \eqref{eq:main1} so that
\begin{equation} \label{eq_phi-inverses}
\hackcenter{
 \begin{tikzpicture}[scale=0.55]
  \draw[thick, ->-=0.12, ->-=0.95] (-0.6,-0.1) .. controls ++(-0.1,1.3) and ++(0.1,1.3) .. (0.6,-0.1);
  \draw[color=blue,  thick, dashed, double distance=1pt] (0,1) -- (0,2.4) ;
   \node[draw, thick, fill=blue!20,rounded corners=4pt,inner sep=3pt]
     (fi) at (0,1) {$\zeta(\l-1-k)$};
   \node[blue] at (.3,2.1) {$\scs k$};
   \node at (2.3,0) {$\l$};
\end{tikzpicture} }
\;\; = \;\;
\sum_{j=0}^{\l-1-k}
 \xy
 (0,3)*{
\begin{tikzpicture}
  \draw[thick, ->] (-0.5,0) .. controls (-0.5,0.8) and (0.5,0.8) .. (0.5,0)
      node[pos=0.1, shape=coordinate](DOT){}
      node[pos=0.42, shape=coordinate](L){}
      node[pos=0.5, shape=coordinate](M){}
      node[pos=0.58, shape=coordinate](R){};
 \node[draw, thick, fill=blue!20,rounded corners=4pt,inner sep=3pt]
  (Fj) at (-1.75,0.75) {$\scs f_j'(\l -1 -k)$};
 \draw[color=blue, thick, double distance=1pt, dashed] (Fj) to [out=90, in=90] (M);
 \draw[color=blue, thick, double distance=1pt, dashed] (R) to[bend right] (.5,1.5);
 \draw[color=blue, thick, double distance=1pt, dashed]
   (DOT) .. controls++(-.65,0) and ++(-.25,.3) .. (L);
   \draw[line width=0mm] (-0.5,0) .. controls (-0.5,0.8) and (0.5,0.8) .. (0.5,0)
        node[pos=0.1]{\bbullet};
   \node[blue] at (.7,1.5){$\scs k$};
   \node[blue] at (-1.55,1.6){$\scs j$};
   \node[blue] at (-1.45,.20){$\scs \l-1 -k-j$};
\end{tikzpicture}
};
\endxy
\;\; :\Ett\Ftt \onebl \rightarrow \onebl \la \ell-1-2k \ra
\end{equation}
for some 2-morphisms $f_j'(\l-1-k) \in \Hom_{\Uc}(\onebl,\Pi^k\onebl\la 2j \ra)$.
After absorbing additional scalars we can rewrite this as
\begin{equation}
 \hackcenter{
 \begin{tikzpicture}[scale=0.5]
  \draw[thick, ->-=0.12, ->-=0.95] (-0.6,-0.1) .. controls ++(-0.1,1.3) and ++(0.1,1.3) .. (0.6,-0.1);
  \draw[color=blue,  thick, dashed, double distance=1pt] (0,1) -- (0,2.4) ;
   \node[draw, thick, fill=blue!20,rounded corners=4pt,inner sep=3pt]
     (fi) at (0,1) {$\zeta(\l-1-k)$};
   \node[blue] at (.3,2.1) {$\scs k$};
   \node at (2.3,0) {$\l$};
\end{tikzpicture} }
\;\; = \;\;
\sum_{j=0}^{\l-1-k} (-1)^{j}  \xy
 (0,3)*{
\begin{tikzpicture}
  \draw[thick, ->] (-0.5,0) .. controls (-0.5,0.8) and (0.5,0.8) .. (0.5,0)
      node[pos=0.1, shape=coordinate](DOT){}
      node[pos=0.42, shape=coordinate](L){}
      node[pos=0.5, shape=coordinate](M){}
      node[pos=0.58, shape=coordinate](R){};
 \node[draw, thick, fill=blue!20,rounded corners=4pt,inner sep=3pt]
  (Fj) at (1.55,0.5) {$\scs f_j(\l -1 -k)$};
 \draw[color=blue, thick, double distance=1pt, dashed] (Fj) to [out=90, in=80] (R);
 \draw[color=blue, thick, double distance=1pt, dashed] (M) to  (0,1.5);
 \draw[color=blue, thick, double distance=1pt, dashed]
   (DOT) .. controls++(-.65,0) and ++(-.25,.3) .. (L);
   \draw[line width=0mm] (-0.5,0) .. controls (-0.5,0.8) and (0.5,0.8) .. (0.5,0)
        node[pos=0.1]{\bbullet};
   \node[blue] at (-.35,1.4){$\scs k$};
   \node[blue] at (1.35,1.4){$\scs j$};
   \node[blue] at (-1.45,.20){$\scs \l-1 -k-j$};
\end{tikzpicture}
};
\endxy
\end{equation}
for some 2-morphisms $f_j(\l-1-k) \in \Hom_{\Uc}(\onebl,\Pi^k\onebl\la 2j \ra)$.

The component of $\zeta^{-1} \zeta$ mapping the summand $\Pi^{\ell}\onebl\la \ell-1-2\ell\ra$ to the summand $\Pi^k\onebl\la \l-1-2k\ra$ is given by the composite
\begin{equation}
\xy
 (-45,0)*+{\Pi^{\ell}\onebl\la \l-1-2\ell\ra}="1";
 (0,0)*+{\Ett\Ftt\onebl}="2";
 (75,0)*+{\Pi^{k}\onebl\la \l-1-2k \ra .}="3";
 {\ar^{
  \hackcenter{\begin{tikzpicture}[scale=0.9]
  \draw[thick, ->-=0.15, ->] (0.5,.2) .. controls (0.6,-0.8) and (-0.6,-0.8) .. (-0.5,.2)
      node[pos=0.85, shape=coordinate](Y){};
  \draw[color=blue, thick, double distance=1pt, dashed]
   (Y) .. controls++(-.5,.2) and ++(0,.4) .. (-1,-1)
         node[pos=0.75,left]{$\scs \ell$};
  \draw[line width=0mm] (0.5,.2) .. controls (0.5,-0.8) and (-0.5,-0.8) .. (-0.5,.2)
     node[pos=0.85]{\tikz \draw[fill=black] circle (0.4ex);};
\end{tikzpicture} }
} "1";"2"};
 {\ar^-{
\xy
 (0,3)*{
\begin{tikzpicture}
  \draw[thick, ->] (-0.5,0) .. controls (-0.5,0.8) and (0.5,0.8) .. (0.5,0)
      node[pos=0.1, shape=coordinate](DOT){}
      node[pos=0.42, shape=coordinate](L){}
      node[pos=0.5, shape=coordinate](M){}
      node[pos=0.58, shape=coordinate](R){};
 \node[draw, thick, fill=blue!20,rounded corners=4pt,inner sep=3pt]
  (Fj) at (1.55,0.5) {$\scs f_j(\l -1 -k)$};
 \draw[color=blue, thick, double distance=1pt, dashed] (Fj) to [out=90, in=80] (R);
 \draw[color=blue, thick, double distance=1pt, dashed] (M) to  (0,1.5);
 \draw[color=blue, thick, double distance=1pt, dashed]
   (DOT) .. controls++(-.65,0) and ++(-.25,.3) .. (L);
   \draw[line width=0mm] (-0.5,0) .. controls (-0.5,0.8) and (0.5,0.8) .. (0.5,0)
        node[pos=0.1]{\bbullet};
   \node[blue] at (-.35,1.4){$\scs k$};
   \node[blue] at (1.35,1.4){$\scs j$};
   \node[blue] at (-1.45,.20){$\scs \l-1 -k-j$};
   \node at (-1.5,.80){$\sum_j$};
\end{tikzpicture}
};
\endxy
 } "2";"3"};
\endxy
\end{equation}
The condition $\zeta^{-1} \zeta = \oneb$ implies that this composite must equal $\delta_{\ell,k} \oneb_{\Pi^k \onebl \la \l-1-2k\ra}$.  Assume that $\ell \geq k+j \geq k$ or else the composite will contain a negative degree bubble which always equal to zero.  Then closing off $k$ of the dashed lines and bending up the remaining $\ell-k$ dashed lines coming out of the first diagram we get
\[
\sum_{j=0}^{\l-1-k} (-1)^j
 \xy
 (0,3)*{
\begin{tikzpicture}
  \draw[thick, ->] (-0.5,0) .. controls (-0.5,0.8) and (0.5,0.8) .. (0.5,0)
      node[pos=0.1, shape=coordinate](DOT){}
      node[pos=0.41, shape=coordinate](L){}
      node[pos=0.5, shape=coordinate](M){}
      node[pos=0.58, shape=coordinate](R){};
  \draw[thick] (-0.5,0) .. controls (-0.5,-1) and (0.5,-1) .. (0.5,0)
      node[pos=0.05, shape=coordinate](mDOT){}
      node[pos=0.2, shape=coordinate](lDOT){};
 \node[draw, thick, fill=blue!20,rounded corners=4pt,inner sep=3pt]
    (Fj) at (1.75,0.5) {$\scs f_j(\l -1 -k)$};
 \draw[color=blue, thick, double distance=1pt, dashed] (Fj) to [out=90, in=70] (R);
 \draw[color=blue, thick, double distance=1pt, dashed]
    (mDOT) .. controls++(-1.5,.3) and ++(-0.1,1) .. (M);
 \draw[color=blue, thick, double distance=1pt, dashed]
    (lDOT) .. controls++(-1.5,.3) and ++(-0,-1) .. (-1,1.75);
 \draw[color=blue, thick, double distance=1pt, dashed]
     (DOT) .. controls++(-.65,0) and ++(-.25,.3) .. (L);
 \draw[line width=0mm] (-0.5,0) .. controls (-0.5,0.8) and (0.5,0.8) .. (0.5,0)
        node[pos=0.1]{\bbullet};
 \draw[line width=0mm] (-0.5,0) .. controls (-0.5,-1) and (0.5,-1) .. (0.5,0)
      node[pos=0.05]{\bbullet}
      node[pos=0.2]{\bbullet};
   \node[blue] at (-.1,1.1){$\scs k$};
   \node[blue] at (1.75,1.3){$\scs j$};
   \node[blue] at (-.65,1.6){$\scs \ell-k$};
   \node at (1.3,-.25){$\l$};
\end{tikzpicture}
};
\endxy \quad = \quad \delta_{\ell,k}  \oneb_{\onebl \la \l-1-2k\ra}.
\]
By varying $\ell$ for fixed $k$, it is possible to rewrite each of the 2-morphisms $f_j(\ell-1-k)$ as products of bubbles.  For example, by setting $\ell=k$ it follows that the degree zero 2-morphism $f_{0}(\ell-1-k)$ is multiplication by the scalar $1$.  Continuing by induction, decreasing $\ell$ shows that all the $f_j(\ell-1-k)$ can be rewritten as a linear combination of 2-morphisms in the image of the generating 2-morphisms of the 2-category $\Uc$. Notice that the above diagram contains a negative degree bubble if $j>(\ell-k)$, so we can restrict the sum for $0 \leq j \leq (\ell-k)$. Comparing this equation to the defining equations \eqref{eq:fake-bubble-last} for the fake bubbles we must have $f_j(\l-1-k) = B_j$ for $0 \leq \ell, k \leq \l-1$.

For $\l \leq 0$ the adjoint of  Lemma~\ref{lem:main} equation \eqref{eq:main2} implies
\begin{equation}
\hackcenter{ \begin{tikzpicture}[scale=0.5]
  \draw[thick, ->-=0.12, ->-=0.95]
     (0.6,-0.1) .. controls ++(0.1,1.3) and ++(-0.1,1.3) .. (-0.6,-0.1);
  \draw[color=blue,  thick, dashed, double distance=1pt] (0,1) -- (0,2.4) ;
   \node[draw, thick, fill=blue!20,rounded corners=4pt,inner sep=3pt]
     (fi) at (0,1) {$\zeta(-\l-1-k)$};
   \node[blue] at (.3,2.1) {$\scs k$};
   \node at (2.3,0) {$\l$};
\end{tikzpicture} }
\quad = \quad
\sum_{j=0}^{-\l-1-k}
 \hackcenter{\begin{tikzpicture}
  \draw[thick,->-=0.8] (0.5,.25) -- (0.5,.5);
  \draw[thick,->-=0.55] (-0.5,.5) -- (-0.5,.25);
  \draw[thick] (0.5,.5) .. controls ++(.1,.8) and ++(-.1,.8) .. (-0.5,.5)
      node[pos=0.1, shape=coordinate](DOT){};
 \node[draw, thick, fill=blue!20,rounded corners=4pt,inner sep=3pt]
    (Fj) at (1.75,0.75) {$\scs f_j'(-\l-1-k)$};
 \draw[color=blue, thick, double distance=1pt, dashed]
    (Fj) .. controls ++(0,.4) and ++(0,-.6) .. (1.25,1.75);
  \draw[color=blue, thick, double distance=1pt, dashed]
    (DOT) .. controls++(-.5,.4) and ++(0,-1) .. (-.75,1.75);
   \node at (DOT){\bbullet};
   \node[blue] at (1.5,1.6){$\scs j$};
   \node[blue] at (0,1.60){$\scs \l-1 -k-j$};
   \node at (-1,.7) {$\l$};
\end{tikzpicture} }
\end{equation}
for some 2-morphisms $f_j'(-\l-1-k)\maps \Ftt\Ett\onebl \to \Pi^j\onebl\la 2j \ra$.
Again, by considering the composite map $\pi^{\l+1+\ell}\onebl\la -\l-1-2\ell\ra \to \Ftt\Ett\onebl \to \pi^{\l+1+k}\onebl\la -l-1-2k\ra$ the 2-morphisms $f_j'(-\l-1-k)$ can be related to the fake bubbles $\overline{B_j}$.
\end{proof}

% -------------------------------------------------------------------------------
%
\subsection{Relations resulting from the ${\mf{sl}}_2$ commutator relation}
%
% -------------------------------------------------------------------------------

In this section we collect a set of relations that follow from the general form of the inverse $\zeta^{-1}$ of $\zeta$.  We also uniquely solve for the coefficients $\beta_{\l}$ and $c_{-1}$.
Observe that $\zeta^{-1}$ in Proposition~\ref{prop_form-of-inv} is the inverse of $\zeta$  if and only if the following relations hold in $\Cc$:

% - - - - - - - - - - - - - - - - - - - - - - - - - - - - - - - - - - - - - - - -
%
%
\subsubsection{Relations for $\lambda >0$}
%
% - - - - - - - - - - - - - - - - - - - - - - - - - - - - - - - - - - - - - - - -
%
In addition to the equations relating part of the inverse to fake bubbles, we have the following relations.
\[
\hackcenter{\begin{tikzpicture}
  \draw[thick, ->] (-0.5,0) to (-0.5,2);
  \draw[thick, <-] (0.5,0) to (0.5,2);
  \node at (1,1.5) {$\l$};
\end{tikzpicture}}
\quad = \quad
\beta_{\l} \;\;
\hackcenter{\begin{tikzpicture}
  \draw[thick, <-] (0.5,0) .. controls (0.5,0.4) and (-0.5,0.6) .. (-0.5,1)
      node[pos=0.5, shape=coordinate](X){};
    \draw[thick, ->] (-0.5,0) .. controls (-0.5,0.4) and (0.5,0.6) .. (0.5,1);
  \draw[thick, ->] (0.5,1) .. controls (0.5,1.4) and (-0.5,1.6) .. (-0.5,2)
      node[pos=0.5, shape=coordinate](Y){};
    \draw[thick, <-] (-0.5,1) .. controls (-0.5,1.4) and (0.5,1.6) .. (0.5,2);
  \draw[color=blue,  thick, dashed] (Y) -- (X);
  \node at (1,1.5) {$\l$};
\end{tikzpicture} }
\quad + \quad
\sum_{
\xy (0,2)*{\scs f_1+f_2+f_3}; (0,-1)*{\scs = \l-1}; \endxy} (-1)^{f_3} \;\;
 \hackcenter{
\begin{tikzpicture}
  \draw[thick, ->] (-0.5,0) .. controls (-0.5,0.8) and (0.5,0.8) .. (0.5,0)
      node[pos=0.1, shape=coordinate](DOT){}
      node[pos=0.42, shape=coordinate](L){}
      node[pos=0.5, shape=coordinate](M){}
      node[pos=0.58, shape=coordinate](R){};
  \draw[thick, ->]
  (1.9,1) .. controls ++(0,0.6) and ++(0,0.6) .. (1.1,1)
      node[pos=0.05, shape=coordinate](Z){};
  \draw[thick] (1.9,1) .. controls ++(0,-0.6) and ++(-0,-0.6) .. (1.1,1)
      node[pos=0.5, shape=coordinate](X){}
      node[pos=0.2, shape=coordinate](Y){};
  \draw[color=blue, thick, double distance=1pt, dashed]
    (X) .. controls++(-.1,.5) and ++(-.2,.3) .. (Y)
         node[pos=0.9,right]{$\scs -\l-1$\;};
   \draw[color=blue, thick, double distance=1pt, dashed]
    (Z) .. controls ++(-.5,.4) and ++(.2,.8) .. (R) ;
   \node[blue] at (1.25,0.8){$\scs $\;};
     \node at (Y) {\bbullet};
     \node at (Z) {\bbullet};
 \draw[thick, <-] (-0.5,2.25) .. controls ++(0,-.8) and ++(0,-.8) .. (0.5,2.25)
      node[pos=0.2, shape=coordinate](tDOT){};
 \draw[color=blue, thick, double distance=1pt, dashed]
   (M) .. controls ++(.4,1.4) and ++(-.5,-1) .. (-1.1,1.8) to[out=90, in=140] (tDOT);
 \draw[color=blue, thick, double distance=1pt, dashed]
    (DOT) .. controls++(-.65,0) and ++(-.25,.3) .. (L);
 \node at (tDOT){\bbullet}; \node at (DOT){\bbullet};
   \node[blue] at (.6,1.4){$\scs f_3$};
   \node[blue] at (-1.35,1.45){$\scs f_1$};
   \node[blue] at (-1.0,.30){$\scs f_2$};
   \node at (1,2) {$\l$};
\end{tikzpicture} }
\]
\begin{equation} \label{eq:FEtEF-beta}
\beta_{\l} \;\;
\hackcenter{\begin{tikzpicture}
  \draw[thick, <-] (-0.5,0) .. controls (-0.5,0.4) and (0.5,0.6) .. (0.5,1)
      node[pos=0.5, shape=coordinate](X){};
    \draw[thick, ->] (0.5,0) .. controls (0.5,0.4) and (-0.5,0.6) .. (-0.5,1);
  \draw[thick, ->] (-0.5,1) .. controls (-0.5,1.4) and (0.5,1.6) .. (0.5,2)
      node[pos=0.5, shape=coordinate](Y){};
    \draw[thick, <-] (0.5,1) .. controls (0.5,1.4) and (-0.5,1.6) .. (-0.5,2);
  \draw[color=blue,  thick, dashed] (X) -- (0,0);
  \draw[color=blue,  thick, dashed] (Y) -- (0,2);
  \node at (1,0.5) {$\l$};
\end{tikzpicture}} \quad = \quad
\hackcenter{\begin{tikzpicture}
  \draw[thick, ->] (-0.5,2) to (-0.5,0);
  \draw[thick, <-] (0.5,2) to (0.5,0);
  \draw[color=blue,  thick, dashed] (0,0) -- (0,2);
  \node at (1,0.5) {$\l$};
\end{tikzpicture}}
\end{equation}
We simplify the remaining relations omitting several relations that follow from those below using odd nilHecke relations. For all $0 \leq m < \l$
\begin{align} \label{eq:curldiep}
\hackcenter{\begin{tikzpicture}
  \draw[thick, <-] (-0.5,0) .. controls (-0.5,0.4) and (0.5,0.6) .. (0.5,1)
      node[pos=0.5, shape=coordinate](X){};
  \draw[thick] (0.5,0) .. controls (0.5,0.4) and (-0.5,0.6) .. (-0.5,1)
       node[pos=1, shape=coordinate](DOT){};;
  \draw[thick, ->] (-0.5,1) .. controls ++(0,.6) and ++(0,.6) .. (0.5,1)
      node[pos=0.5, shape=coordinate](Y){};
  \draw[color=blue,  thick, dashed] (X) -- (0,0);
  \draw[color=blue,  thick, double distance=1pt,dashed] (Y) -- (0,2);
   \draw[color=blue,  thick, double distance=1pt,dashed] (DOT) to[bend left](-1,2);
  \node at (1,0.5) {$\l$};
  \node[blue] at (-.7,1.8) {$\scs m$};
  \node at (DOT) {\bbullet};
\end{tikzpicture}} &\quad =\quad 0 &
\hackcenter{\begin{tikzpicture}
  \draw[thick] (-0.5,1) .. controls ++(0,-0.6) and ++(0,-0.6) .. (0.5,1)
      node[pos=0.5, shape=coordinate](X){}
      node[pos=0, shape=coordinate](DOT){};;
  \draw[thick, ->] (-0.5,1) .. controls (-0.5,1.4) and (0.5,1.6) .. (0.5,2)
      node[pos=0.5, shape=coordinate](Y){};
  \draw[thick, <-] (0.5,1) .. controls (0.5,1.4) and (-0.5,1.6) .. (-0.5,2);
  \draw[color=blue,  thick, dashed] (Y) -- (0,2);
  \draw[color=blue,  thick, double distance=1pt,dashed] (DOT) to[bend left](-1,2);
  \node[blue] at (-.7,1.8) {$\scs m$};
  \node at (1,0.5) {$\l$};
  \node at (DOT) {\bbullet};
\end{tikzpicture}} &\quad =\quad 0 .
\end{align}
Note that the two equations above already follow from Lemma~\ref{lem:E} using the adjunctions.

% - - - - - - - - - - - - - - - - - - - - - - - - - - - - - - - - - - - - - - - -
%
%
\subsubsection{Relations for $\lambda <0$}
%
% - - - - - - - - - - - - - - - - - - - - - - - - - - - - - - - - - - - - - - - -
%
In addition to the equations relating part of the inverse to fake bubbles in weights $\l<0$, we have the following relations.
\begin{equation} \label{eq:EFp}
\hackcenter{\begin{tikzpicture}
  \draw[thick, <-] (-0.5,0) to (-0.5,2);
  \draw[thick, ->] (0.5,0) to (0.5,2);
  \node at (1,0.5) {$\l$};
\end{tikzpicture}}
 \quad = \quad \beta_{\l} \;\;
\hackcenter{\begin{tikzpicture}
  \draw[thick, <-] (-0.5,0) .. controls (-0.5,0.4) and (0.5,0.6) .. (0.5,1)
      node[pos=0.5, shape=coordinate](X){};
    \draw[thick, ->] (0.5,0) .. controls (0.5,0.4) and (-0.5,0.6) .. (-0.5,1);
  \draw[thick, ->] (-0.5,1) .. controls (-0.5,1.4) and (0.5,1.6) .. (0.5,2)
      node[pos=0.5, shape=coordinate](Y){};
    \draw[thick, <-] (0.5,1) .. controls (0.5,1.4) and (-0.5,1.6) .. (-0.5,2);
  \draw[color=blue,  thick, dashed]
     (Y) .. controls ++(.1,.4) and ++(.1,.4)  .. (-.6,1.6)
     .. controls ++(0,-.3) and ++(0,.3) ..(0,1)
     .. controls ++(0,-.3) and ++(0,.3) .. (-.6,.4)
     .. controls ++(.1,-.4) and ++(.1,-.4)  .. (X);
\end{tikzpicture} }
\quad + \quad
\sum_{
\xy (0,2)*{\scs f_1+f_2+f_3}; (0,-1)*{\scs = -\l-1}; \endxy} (-1)^{f_3} \;\;
 \hackcenter{
\begin{tikzpicture}
% BOTTOM CAP
  \draw[thick, <-] (0.5,0) .. controls ++(0,0.8) and ++(0,0.8) .. (-0.5,0)
      node[pos=0.15, shape=coordinate](DOT){};
  % BUBBLE
  \draw[thick, ->] (1.1,.75) .. controls ++(-0,0.6) and ++(0,0.6) .. (1.9,.75)
      node[pos=0.5, shape=coordinate](X){}
      node[pos=0.1, shape=coordinate](Y){};
  \draw[thick] (1.1,.75) .. controls ++(0,-0.6) and ++(0,-0.6) .. (1.9,.75)
      node[pos=0.1, shape=coordinate](Z){};
  \draw[color=blue, thick, double distance=1pt, dashed]
    (X) .. controls++(0,.65) and ++(-.65,.3) .. (Y) node[pos=0.15,right]{$\scs \l-1$\;};
    %% TOP CUP
  \draw[thick, ->] (0.8,2.25) -- (0.8,2.5);
 \draw[thick] (-0.8,2.25) -- (-0.8,2.5);
 \draw[thick] (0.8,2.25) .. controls ++(0,-.8) and ++(0,-.8) .. (-0.8,2.25)
      node[pos=0.15, shape=coordinate](tDOT){}
      node[pos=0.42, shape=coordinate](RCUP){}
      node[pos=0.5, shape=coordinate](MCUP){}
      node[pos=0.58, shape=coordinate](LCUP){};
 \draw[color=blue, thick, double distance=1pt, dashed]
    (tDOT) ..controls ++(-.3,.3) and ++(0,.4) .. (RCUP) ;
 \draw[color=blue, thick, double distance=1pt, dashed]
    (DOT) .. controls ++(-.3,.2) and ++(0,-.5) .. (-1,1)
     .. controls ++(0,1.7) and ++(.1,.7) .. (MCUP) ;
 \draw[color=blue, thick, double distance=1pt, dashed]
    (Z) .. controls ++(-.3,.4) and ++(0,-.4) .. (-.75,1.5)
    .. controls ++(0,.5) and ++(0,.4) .. (LCUP) ;
 \node at (tDOT){\bbullet};
 \node at (DOT){\bbullet};
 \node at (Y) {\bbullet};
 \node at (Z) {\bbullet};
   \node[blue] at (.5,1.25){$\scs f_3$};
   \node[blue] at (.6,2.35){$\scs f_1$};
   \node[blue] at (-1.0,.40){$\scs f_2$};
   \node at (1,2) {$\l$};
\end{tikzpicture} }
\end{equation}

\begin{equation}
\beta_{\l} \;\;
\hackcenter{\begin{tikzpicture}
\draw[thick, <-] (0.5,0) .. controls (0.5,0.4) and (-0.5,0.6) .. (-0.5,1)
      node[pos=0.5, shape=coordinate](X){};
    \draw[thick, ->] (-0.5,0) .. controls (-0.5,0.4) and (0.5,0.6) .. (0.5,1);
  \draw[thick, ->] (0.5,1) .. controls (0.5,1.4) and (-0.5,1.6) .. (-0.5,2)
      node[pos=0.5, shape=coordinate](Y){};
    \draw[thick, <-] (-0.5,1) .. controls (-0.5,1.4) and (0.5,1.6) .. (0.5,2);
   \draw[color=blue,  thick, dashed]
     (Y) .. controls ++(.1,-.5) and ++(-.1,-.5)  .. (-.6,1.5)
     .. controls ++(0,.3) and ++(0,-.4) ..(0,2);
     \draw[color=blue,  thick, dashed]
     (X) .. controls ++(.1,.4) and ++(-.1,.4)  .. (-.6,.5)
     .. controls ++(0,-.3) and ++(0,.4) ..(0,0);
  \node at (1,0.5) {$\l$};
\end{tikzpicture}}
\quad = \quad
\hackcenter{\begin{tikzpicture}
  \draw[thick, ->] (-0.5,0) to (-0.5,2);
  \draw[thick, <-] (0.5,0) to (0.5,2);
  \draw[color=blue,  thick, dashed] (0,0) -- (0,2);
\end{tikzpicture}}
\end{equation}
For $0 \leq m < -\l$ the curl relations
\begin{align} \label{eq:curldien}
\hackcenter{\begin{tikzpicture}
  \draw[thick, <-] (0.5,0) .. controls (0.5,0.4) and (-0.5,0.6) .. (-0.5,1)
      node[pos=0.5, shape=coordinate](X){};
  \draw[thick] (-0.5,0) .. controls (-0.5,0.4) and (0.5,0.6) .. (0.5,1)
       node[pos=1, shape=coordinate](DOT){};;
  \draw[thick, ->] (0.5,1) .. controls ++(0,.6) and ++(0,.6) .. (-0.5,1)
      node[pos=0.5, shape=coordinate](Y){};
  \draw[color=blue,  thick, dashed]
     (X) .. controls ++(.1,.4) and ++(-.1,.4)  .. (-.6,.5)
     .. controls ++(0,-.3) and ++(0,.4) ..(0,0);
   \draw[color=blue,  thick, double distance=1pt,dashed] (DOT) to[bend right](1,2);
  \node at (-1,0.5) {$\l$};
  \node[blue] at (.7,1.8) {$\scs m$};
  \node at (DOT) {\bbullet};
\end{tikzpicture}} &\quad =\quad 0 &
\hackcenter{\begin{tikzpicture}
  \draw[thick] (-0.5,1) .. controls ++(0,-0.6) and ++(0,-0.6) .. (0.5,1)
      node[pos=0.5, shape=coordinate](X){}
      node[pos=0, shape=coordinate](DOT){};;
  \draw[thick, ->] (-0.5,1) .. controls (-0.5,1.4) and (0.5,1.6) .. (0.5,2)
      node[pos=0.5, shape=coordinate](Y){};
  \draw[thick, <-] (0.5,1) .. controls (0.5,1.4) and (-0.5,1.6) .. (-0.5,2);
  \draw[color=blue,  thick, double distance=1pt,dashed] (X) .. controls ++(0,1.25) and ++(0,-.75) .. (-1,2);
  \draw[color=blue,  thick, dashed] (Y) -- (0,2);
  \draw[color=blue,  thick, double distance=1pt,dashed] (DOT) .. controls ++(-.75,0) and ++(0,-1) ..(-1.5,2);
  \node[blue] at (-1.8,1.8) {$\scs m$};
  \node at (1,0.5) {$\l$};
  \node at (DOT) {\bbullet};
\end{tikzpicture}} &\quad =\quad 0 .
\end{align}
hold in any strong supercategorical action.

% - - - - - - - - - - - - - - - - - - - - - - - - - - - - - - - - - - - - - - - -
%
%
\subsubsection{Relations for $\lambda =0$}
%
% - - - - - - - - - - - - - - - - - - - - - - - - - - - - - - - - - - - - - - - -

Invertibility of $\zeta$ for $\l=0$ implies that in any strong supercategorical action the relations
\begin{equation} \label{eq:EFtoFEzero}
\hackcenter{\begin{tikzpicture}
  \draw[thick, ->] (-0.5,0) to (-0.5,2);
  \draw[thick, <-] (0.5,0) to (0.5,2);
  \node at (1,1.5) {$\l$};
\end{tikzpicture}}
\quad = \quad
\beta_{0} \;\;
\hackcenter{\begin{tikzpicture}
  \draw[thick, <-] (0.5,0) .. controls (0.5,0.4) and (-0.5,0.6) .. (-0.5,1)
      node[pos=0.5, shape=coordinate](X){};
    \draw[thick, ->] (-0.5,0) .. controls (-0.5,0.4) and (0.5,0.6) .. (0.5,1);
  \draw[thick, ->] (0.5,1) .. controls (0.5,1.4) and (-0.5,1.6) .. (-0.5,2)
      node[pos=0.5, shape=coordinate](Y){};
    \draw[thick, <-] (-0.5,1) .. controls (-0.5,1.4) and (0.5,1.6) .. (0.5,2);
  \draw[color=blue,  thick, dashed] (Y) -- (X);
  \node at (1,1.5) {$\l$};
\end{tikzpicture} }
\qquad
\hackcenter{\begin{tikzpicture}
  \draw[thick, ->] (-0.5,2) to (-0.5,0);
  \draw[thick, <-] (0.5,2) to (0.5,0);
  \draw[color=blue,  thick, dashed] (0,0) -- (0,2);
  \node at (1,0.5) {$\l$};
\end{tikzpicture}} \quad = \quad
\beta_{0} \;\;
\hackcenter{\begin{tikzpicture}
  \draw[thick, <-] (-0.5,0) .. controls (-0.5,0.4) and (0.5,0.6) .. (0.5,1)
      node[pos=0.5, shape=coordinate](X){};
    \draw[thick, ->] (0.5,0) .. controls (0.5,0.4) and (-0.5,0.6) .. (-0.5,1);
  \draw[thick, ->] (-0.5,1) .. controls (-0.5,1.4) and (0.5,1.6) .. (0.5,2)
      node[pos=0.5, shape=coordinate](Y){};
    \draw[thick, <-] (0.5,1) .. controls (0.5,1.4) and (-0.5,1.6) .. (-0.5,2);
  \draw[color=blue,  thick, dashed] (X) -- (0,0);
  \draw[color=blue,  thick, dashed] (Y) -- (0,2);
  \node at (1,0.5) {$\l$};
\end{tikzpicture}}
\end{equation}
holds.  Note this is equation is consistent with the alternative equation
\begin{equation}
\hackcenter{\begin{tikzpicture}
  \draw[thick, ->] (-0.5,0) to (-0.5,2);
  \draw[thick, <-] (0.5,0) to (0.5,2);
  \draw[color=blue,  thick, dashed] (0,0) -- (0,2);
\end{tikzpicture}}
\quad = \quad
\beta_{0} \;\;
\hackcenter{\begin{tikzpicture}
\draw[thick, <-] (0.5,0) .. controls (0.5,0.4) and (-0.5,0.6) .. (-0.5,1)
      node[pos=0.5, shape=coordinate](X){};
    \draw[thick, ->] (-0.5,0) .. controls (-0.5,0.4) and (0.5,0.6) .. (0.5,1);
  \draw[thick, ->] (0.5,1) .. controls (0.5,1.4) and (-0.5,1.6) .. (-0.5,2)
      node[pos=0.5, shape=coordinate](Y){};
    \draw[thick, <-] (-0.5,1) .. controls (-0.5,1.4) and (0.5,1.6) .. (0.5,2);
   \draw[color=blue,  thick, dashed]
     (Y) .. controls ++(.1,-.5) and ++(-.1,-.5)  .. (-.6,1.5)
     .. controls ++(0,.3) and ++(0,-.4) ..(0,2);
     \draw[color=blue,  thick, dashed]
     (X) .. controls ++(.1,.4) and ++(-.1,.4)  .. (-.6,.5)
     .. controls ++(0,-.3) and ++(0,.4) ..(0,0);
  \node at (1,0.5) {$\l$};
\end{tikzpicture}}
\qquad
\qquad\hackcenter{\begin{tikzpicture}
  \draw[thick, <-] (-0.5,0) to (-0.5,2);
  \draw[thick, ->] (0.5,0) to (0.5,2);
  \node at (1,0.5) {$\l$};
\end{tikzpicture}}
 \quad = \quad \beta_{0} \;\;
\hackcenter{\begin{tikzpicture}
  \draw[thick, <-] (-0.5,0) .. controls (-0.5,0.4) and (0.5,0.6) .. (0.5,1)
      node[pos=0.5, shape=coordinate](X){};
    \draw[thick, ->] (0.5,0) .. controls (0.5,0.4) and (-0.5,0.6) .. (-0.5,1);
  \draw[thick, ->] (-0.5,1) .. controls (-0.5,1.4) and (0.5,1.6) .. (0.5,2)
      node[pos=0.5, shape=coordinate](Y){};
    \draw[thick, <-] (0.5,1) .. controls (0.5,1.4) and (-0.5,1.6) .. (-0.5,2);
  \draw[color=blue,  thick, dashed]
     (Y) .. controls ++(.1,.4) and ++(.1,.4)  .. (-.6,1.6)
     .. controls ++(0,-.3) and ++(0,.3) ..(0,1)
     .. controls ++(0,-.3) and ++(0,.3) .. (-.6,.4)
     .. controls ++(.1,-.4) and ++(.1,-.4)  .. (X);
\end{tikzpicture} }
\end{equation}

% -------------------------------------------------------------------------------
%
\subsection{Finding the free parameters}
%
% -------------------------------------------------------------------------------

We can now solve for the remaining free parameters  using the relations derived so far.

\begin{prop} \label{prop_free_param}
In any strong supercategorical action the free parameters must be fixed as follows.
\begin{enumerate}
  \item The coefficient $c_{-1}$ from \eqref{eq_defcmone} is equal to 1.
  \item For all values of $\l$ where $\onebl$ is not zero the coefficients from Proposition ~\ref{prop_form-of-inv} satisfy $\beta_{\l}=-1$ for all $\l$.
  \item The coefficients $c_0^+$ and $c_0^-$ from \eqref{eq:c0} are both equal to one.
\end{enumerate}
\end{prop}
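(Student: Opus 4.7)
The plan is to extract scalar identities by composing the invertibility relations $\zeta\zeta^{-1} = \mathbf{1}$ and $\zeta^{-1}\zeta = \mathbf{1}$ (in the form of equations like \eqref{eq:FEtEF-beta} and \eqref{eq:EFp}) with carefully chosen boundary data such as caps, cups, and dotted bubbles. Each such composition will collapse to a scalar equation by the brick condition, the odd nilHecke relations, and the half-cyclicity relations of Corollary~\ref{cor-dot-cyclicity}, and matching the two sides will pin down each free parameter.

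First, to establish $\beta_\lambda = -1$ for $\lambda \neq 0, \pm 1$, I would start from the explicit form of $\zeta^{-1}$ in Proposition~\ref{prop_form-of-inv} and derive a dotted curl relation analogous to Proposition~\ref{prop-dotted-curl}. Concretely, one can compute a single-strand diagonal curl with $\l$ dots in two ways: directly via the half-cyclicity relations \eqref{eqn-dot-cyclicity-right-cap}--\eqref{eqn-dot-cyclicity-right-cup}, and via the equation \eqref{eq:FEtEF-beta} (resp.\ \eqref{eq:EFp} for $\lambda<0$) by closing off with a cup and cap. The first computation yields $-1$ times a certain fake bubble (after telescoping using the definition \eqref{eq:fake-bubble-last}), while the second yields $\beta_\lambda$ times that same bubble. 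Lemma~\ref{lem:E} guarantees the bubble is non-zero, so $\beta_\lambda = -1$.

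Next, to handle the weight $\l = 0$ case, I would plug the weight-$0$ version \eqref{eq:EFtoFEzero} of the EF/FE exchange into a small zig-zag closure involving the definitions \eqref{eq:c0} of $c_0^\pm$. On one side of the resulting equation we get multiples of $\oneb_{\Ett\oneb_{-2}}$ (or $\oneb_{\Ftt\oneb_{2}}$) with coefficient $\beta_0 c_0^\pm$; on the other side the zig-zag axiom reduces the expression to the identity. Combining this with the already-established $\beta_0 = -1$ (for non-trivial $\l=0$) forces $c_0^+ = c_0^- = 1$. The statement about $\beta_0$ itself can be extracted from the same zig-zag consistency: the curl version \eqref{eq:curldiep}--\eqref{eq:curldien} in adjacent weights gives an equation for $\beta_0$ independent of $c_0^\pm$.

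Finally, for $c_{-1}$ (the degree-zero positive bubble in weight $\l=-1$), I would exploit the fact that the rescaling procedure at the start of Section~6 has already normalized its neighboring bubble \eqref{eq_bub1} to $1$ in weight $\l=+1$. Inserting \eqref{eq:EFp} at $\l=-1$ and closing off with a cap on top and a dotted cup on bottom produces an expression involving $\overline{B_0}$ and the single degree-zero bubble in weight $-1$, both of which carry the $c_{-1}$ scaling through \eqref{eq:fake-bubble-n}. Comparing with the identity obtained from the zig-zag axiom, together with the previously established value $\beta_{-1} = -1$, yields $c_{-1} = 1$. The main obstacle I anticipate is the careful bookkeeping of signs and parity shifts when unravelling the two-sided composites, particularly at $\l = 0, \pm 1$ where the ranges of fake-bubble indices degenerate and one must be careful not to invoke bubbles that are not yet defined or that coincide with the very scalars one is trying to determine; this requires treating these low-rank cases separately and in the prescribed order $\beta_\lambda \to c_0^\pm \to c_{-1}$ so that each previous step's value can be substituted into the next.
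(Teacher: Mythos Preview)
Your overall strategy---closing off the invertibility relations with caps, cups, and dots and extracting scalar equations via the brick condition---is the paper's, and your treatment of $\beta_\lambda$ for generic $\lambda$ is essentially what the paper does. Two points of divergence are worth flagging. First, the paper does \emph{not} exclude $\lambda=\pm 1$: the closing-off of \eqref{eq:FEtEF-beta} with a $(\lambda+1)$-dotted cap/cup works uniformly for all $\lambda>0$ (and symmetrically for $\lambda<0$), giving $1/\beta_\lambda$ times a degree-zero bubble in weight $\lambda+2$ on one side and $-1$ on the other after an inductive dot slide plus the curl vanishing \eqref{eq:curldiep}. Second, the paper computes $c_{-1}$ \emph{first}, by a different and cleaner vanishing argument: a certain composite endomorphism of $\Ett\oneb_{-1}$ is shown to be zero (after rotating, it contains an upward double crossing, hence vanishes by the quadratic nilHecke relation), while expanding the same composite via \eqref{eq:FEtEF-beta} at $\lambda=1$ yields $\tfrac{1}{\beta_1}(c_{-1}-1)\cdot\oneb$. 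This needs only $\beta_1\neq 0$, so the circularity you worry about never arises.

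Your handling of $\lambda=0$ contains a genuine gap. The curl relations \eqref{eq:curldiep}--\eqref{eq:curldien} are vanishing statements in weights $\lambda\neq 0$ and do not by themselves yield an equation for $\beta_0$ independent of $c_0^\pm$; your sentence claiming this is unsubstantiated. What the paper does instead is run the same $\beta_\lambda$ computation at $\lambda=0$: the degree-zero bubble on the left is in weight $2$ (already normalized to $1$), but the curl appearing on the right is now the weight-$0$ curl from \eqref{eq:c0}, producing the coupled relation $1/\beta_0 = c_0^+$ (and by the $\lambda<0$ version, $1/\beta_0 = c_0^-$). Capping off \eqref{eq:EFtoFEzero} with no dots gives a third relation $-\beta_0 c_0^+ c_0^- = 1$, and the three together solve to $\beta_0=-1$, $c_0^\pm=1$. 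You should replace your ``adjacent weights'' claim with this system.
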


\begin{proof}
To find the coefficient $c_{-1}$ note that
\[
 0 \;\; =\;\;
 \hackcenter{\begin{tikzpicture}[scale=0.8]
  \draw[thick, ->] (-0.5,0) .. controls (-0.5,0.4) and (0.5,0.6) .. (0.5,1)
      node[pos=0.5, shape=coordinate](X){};
    \draw[thick, ->] (0.5,0) .. controls (0.5,0.4) and (-0.5,0.6) .. (-0.5,1);
  \draw[thick] (-0.5,1) .. controls (-0.5,1.4) and (0.5,1.6) .. (0.5,2)
      node[pos=0.5, shape=coordinate](Y){};
  \draw[thick, ->] (0.5,1) .. controls (0.5,1.4) and (-0.5,1.6) .. (-0.5,2);
  \draw[thick] (-0.5,2) .. controls ++(0,.4) and ++(0,.4) .. (-1.5,2)
    node[pos=0.5, shape=coordinate](CAP){};
  \draw[thick] (-0.5,0) .. controls ++(0,-.4) and ++(0,-.4) .. (-1.5,0);
  \draw[thick, ->-=0.5] (-1.5,2) -- (-1.5,0);
  \draw[thick, ->] (.5,2) -- (.5,2.5);
  \draw[thick] (.5,0) -- (.5,-.5);
  \draw[color=blue,  thick, dashed] (X) .. controls ++(-1.2,-.2) and ++(-1.2,.2) .. (Y);
  \node at (1,0.5) {$-1$};
\end{tikzpicture}} \quad = \quad
 \hackcenter{\begin{tikzpicture}[scale=0.8]
  \draw[thick, <-] (0.5,0) .. controls (0.5,0.4) and (-0.5,0.6) .. (-0.5,1)
      node[pos=0.5, shape=coordinate](X){};
    \draw[thick, ->] (-0.5,0) .. controls (-0.5,0.4) and (0.5,0.6) .. (0.5,1);
  \draw[thick] (0.5,1) .. controls (0.5,1.4) and (-0.5,1.6) .. (-0.5,2)
      node[pos=0.5, shape=coordinate](Y){};
  \draw[thick, <-] (-0.5,1) .. controls (-0.5,1.4) and (0.5,1.6) .. (0.5,2);
  \draw[thick] (0.5,2) .. controls ++(0,.4) and ++(0,.4) .. (1.5,2)
    node[pos=0.5, shape=coordinate](CAP){};
  \draw[thick] (0.5,0) .. controls ++(0,-.4) and ++(0,-.4) .. (1.5,0);
  \draw[thick, ->-=0.5] (1.5,0) -- (1.5,2)
     node[pos=0.85, shape=coordinate](DOT){};
  \draw[thick, ->] (-.5,2) -- (-.5,2.5);
  \draw[thick] (-.5,0) -- (-.5,-.5);
  \draw[color=blue,  thick, dashed] (X) -- (Y);
  \node at (-1,0.5) {$+1$};
\end{tikzpicture}}
\quad = \quad
\frac{1}{\beta_1} \;\; \left(
(c_- -1)
\hackcenter{\begin{tikzpicture}[scale=0.8]
  \draw[thick, ->] (0,-1) -- (0,2);
  \node at (-.4,0.5) {$+1$};
 \end{tikzpicture}} \quad
 \right)
\]
where the last equality follows from \eqref{eq:EFp}.

Using \eqref{eq:FEtEF-beta} and the relations from Section~\ref{subsec-half-sideways} we have
\begin{eqnarray}\label{eqn-one-over-betal}
\frac{1}{\beta_{\l}} \;
\hackcenter{\begin{tikzpicture}
  \draw[thick, ->] (1.5,-1) -- (1.5,1);
  \draw[color=blue,  thick, dashed] (1,-1) -- (1,1);
  \draw[thick, ->] (-0.4,0) .. controls ++(0,0.6) and ++(0,0.6) .. (0.4,0)
      node[pos=0.5, shape=coordinate](X){}
      node[pos=0.1, shape=coordinate](Y){};
  \draw[thick] (-0.4,0) .. controls ++(0,-0.6) and ++(0,-0.6) .. (0.4,0);
 %% Draw double blue curvy line
  \draw[color=blue, thick, double distance=1pt, dashed] (X) .. controls++(0,.65) and ++(-.65,.3) .. (Y)
         node[pos=0.15,right]{$\scs \l+1$\;};
  \node at (.25,-0.75) {$\lambda+2$};
  \node at (Y) {\bbullet};
\end{tikzpicture}}
\quad = \;\;
\hackcenter{\begin{tikzpicture}[scale=0.8]
  \draw[thick, <-] (-0.5,0) .. controls (-0.5,0.4) and (0.5,0.6) .. (0.5,1)
      node[pos=0.5, shape=coordinate](X){};
    \draw[thick, ->] (0.5,0) .. controls (0.5,0.4) and (-0.5,0.6) .. (-0.5,1);
  \draw[thick] (-0.5,1) .. controls (-0.5,1.4) and (0.5,1.6) .. (0.5,2)
      node[pos=0.5, shape=coordinate](Y){};
  \draw[thick, <-] (0.5,1) .. controls (0.5,1.4) and (-0.5,1.6) .. (-0.5,2);
  \draw[thick] (-0.5,2) .. controls ++(0,.4) and ++(0,.4) .. (-1.5,2)
    node[pos=0.5, shape=coordinate](CAP){};
  \draw[thick] (-0.5,0) .. controls ++(0,-.4) and ++(0,-.4) .. (-1.5,0);
  \draw[thick, ->-=0.5] (-1.5,0) -- (-1.5,2)
     node[pos=0.85, shape=coordinate](DOT){};
  \draw[thick, ->] (.5,2) -- (.5,2.5);
  \draw[thick] (.5,0) -- (.5,-.5);
  \draw[color=blue,  thick, dashed] (X) -- (0,-.5);
  \draw[color=blue,  thick, dashed] (Y) -- (0,2.5);
  \draw[color=blue, thick, double distance=1pt, dashed]
   (DOT) .. controls ++(-.9,.3) and ++(.1,.7) .. (CAP)
   node[pos=0.35,left]{$\scs \l+1$\;};
  \node at (1,0.5) {$\l$};
  \node at (DOT) {\bbullet};
\end{tikzpicture}} \quad = \quad
\hackcenter{\begin{tikzpicture}[scale=0.8]
  \draw[thick] (0.5,-.25) .. controls ++(0,0.5) and ++(0,-0.5) .. (-0.5,.75)
      node[pos=0.5, shape=coordinate](X){}
      node[pos=0.9, shape=coordinate](LDOT){};
  \draw[thick] (-0.5,-.25) .. controls ++(0,0.5) and ++(0,-0.5) .. (0.5,.75);
  \draw[thick] (0.5,1.25) .. controls ++(0,.5) and ++(0,-.5) .. (-0.5,2.25)
      node[pos=0.5, shape=coordinate](Y){};
  \draw[thick, ->] (-0.5,1.25) .. controls ++(0,.5) and ++(0,-.5) .. (0.5,2.25)
     node[pos=0.15, shape=coordinate](DOT){};
  \draw[thick] (0.5,2.25) .. controls ++(0,.4) and ++(0,.4) .. (1.5,2.25)
    node[pos=0.5, shape=coordinate](CAP){};
  \draw[thick] (0.5,-.25) .. controls ++(0,-.4) and ++(0,-.4) .. (1.5,-.25);
  \draw[thick, ->-=0.5] (1.5,2.25) -- (1.5,-.25);
  \draw[thick, ->] (-.5,2.25) -- (-.5,3);
  \draw[thick] (-.5,-.25) -- (-.5,-.5);
  \draw[thick] (-.5,.75) -- (-.5,1.25)
    node[pos=0.6, shape=coordinate](MDOT){};
  \draw[thick] (.5,.75) -- (.5,1.25);
  \draw[color=blue,  thick, dashed] (X) to[out=180, in=90] (-1,-.5);
  \draw[color=blue,  thick, dashed] (Y) to[out=190, in=-100] (-1,3);
  \draw[color=blue, thick, double distance=1pt, dashed]
   (DOT) .. controls ++(-1,.8) and ++(.1,.7) .. (CAP)
   node[pos=0.85,above]{$\scs \l-1$\;};
   \draw[color=blue, thick, dashed] (LDOT) .. controls ++(-.6,.4) and ++(-.6,.5) .. (MDOT);
  \node at (2,0.5) {$\l$};
  \node at (DOT) {\bbullet};\node at (LDOT) {\bbullet};\node at (MDOT) {\bbullet};
\end{tikzpicture}}
\end{eqnarray}
for all $\l \geq 0$.
Now carefully applying the inductive dot slide formula, we can slide all of the $\l+1$ dots through the top crossing.  The term in which all the dots slide through the crossing is zero by the quadratic odd nilHecke relation.  What remains is a symmetric sum of terms with $\l$ dots where the crossing has been resolved.  By (the adjoint of) \eqref{eq:curldiep}, all the terms in this sum are zero except for the term in which all of the $\l$ dots are in the curl.  Hence the rightmost diagram in \eqref{eqn-one-over-betal} equals
\begin{equation}
\hackcenter{\begin{tikzpicture}[scale=0.8]
  \draw[thick] (0.5,-.25) .. controls ++(0,0.5) and ++(0,-0.5) .. (-0.5,.75)
      node[pos=0.5, shape=coordinate](X){};
  \draw[thick] (-0.5,-.25) .. controls ++(0,0.5) and ++(0,-0.5) .. (0.5,.75);
  \draw[thick] (0.5,1.5) .. controls ++(0,.4) and ++(0,.4) .. (1.5,1.5)
    node[pos=0.5, shape=coordinate](CAP){};
  \draw[thick] (0.5,-.25) .. controls ++(0,-.4) and ++(0,-.4) .. (1.5,-.25);
  \draw[thick, ->-=0.5] (1.5,1.5) -- (1.5,-.25);
  \draw[thick, ->] (-.5,1.5) -- (-.5,3);
  \draw[thick] (-.5,-.25) -- (-.5,-.5);
  \draw[thick] (-.5,.75) -- (-.5,1.5);
  \draw[thick] (.5,.75) -- (.5,1.5)
     node[pos=0.65, shape=coordinate](DOT){}
     node[pos=0, shape=coordinate](LDOT){};
  \draw[color=blue,  thick, dashed] (X) to[out=180, in=90] (-1,-.5);
  \draw[color=blue, thick, double distance=1pt, dashed]
   (DOT) .. controls ++(-1,.8) and ++(.1,.7) .. (CAP)
   node[pos=0.85,above]{$\scs \l-1$\;};
  \draw[color=blue, thick, dashed]
    (LDOT) .. controls ++(-1,.4) and ++(.2,-.5) .. (-1.25,1.5)
    .. controls ++(0,.4) and ++(0,.4) .. (-1.85,1.75)
    .. controls ++(0,-.5) and ++(0,-1.5) .. (-1.25,3);
  \node at (2,0.5) {$\l$};
  \node at (DOT) {\bbullet};\node at (LDOT) {\bbullet};
\end{tikzpicture}}
\quad = \quad - \;\;
\hackcenter{\begin{tikzpicture}
  \draw[thick, ->] (1.75,-1) -- (1.75,1);
  \draw[color=blue,  thick, dashed] (1,-1) -- (1,1);
  \node at (2,-0) {$\lambda$};
\end{tikzpicture}},
\end{equation}
by the odd nilHecke dot slide and the fact that the curl with only $\l-1$ dots is zero.  Since the degree zero bubble is equal to multiplication by 1, this shows that $\beta_{\l}=-1$ for $\l>0$ and $1/\beta_{0}=c_0^+$.

A similar calculation for $\l<0$ implies that $\beta_{\l}=-1$ for $\l<0$ and that $1/\beta_0 = c_0^-$.  Capping off \eqref{eq:EFtoFEzero} with no dots and simplifying implies that $-\beta_0 c_0^+ c_0^- =1$, completing the proof.
\end{proof}

% -------------------------------------------------------------------------------
%
\subsection{Main theorem of 2-representations}
%
% -------------------------------------------------------------------------------

We summarize the results of this section with the following theorem.

\begin{thm} \label{thm-main2rep}
A strong supercategorical action of $\sltwo$ on $\Cc$ induces a 2-representation $\Udotc \to \Cc$.
\end{thm}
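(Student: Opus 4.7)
The plan is to assemble the results of Sections~\ref{sec-formal}--\ref{sec:proofsl2} into a super-2-functor $\Uc \to \Cc$ on the generating data, and then invoke the universal property of the Karoubi envelope to extend it to a 2-representation $\Udotc \to \Cc$. Since $\Cc$ is assumed to be idempotent complete (or may be replaced by its Karoubi envelope), this extension will be automatic once the 2-functor on $\Uc$ is established.

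First I would specify the 2-functor on generators: send the weight $\lambda$ to the object $\lambda$ of $\Cc$, send $\Ec\onebbl, \Fc\onebbl, \Pi\onebbl$ to $\Ett\onebl, \Ftt\onebl, \Pi\onebl$ respectively, and send the generating 2-morphisms of $\Uc$ in \eqref{eq_generators}--\eqref{eq_generators_cont} to their counterparts in $\Cc$. The dots $x_\Ec, x_\Fc$ and crossings $\partial_\Ec, \partial_\Fc$ come from the odd nilHecke action in Definition~\ref{def_strong}(\ref{co:oddNil}) (with the downward versions defined as in \eqref{eqn-x-fc-prime} and the following discussion). The four adjunction units and counits $\epsilon, \eta, \epsilon', \eta'$ are supplied by Theorem~\ref{thm:lradj}; they are a priori only defined up to scalar, but the rescaling procedure described at the start of Section~\ref{sec:proofcycbiadjoint} pins them down uniquely so that the degree-zero bubbles act as the identity.

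Next I would verify each family of defining relations of $\Uc$ in turn. The odd nilHecke relations \eqref{eq:oddnilquad}--\eqref{eq:onil-dot} hold by hypothesis in Definition~\ref{def_strong}(\ref{co:oddNil}). The biadjointness relations \eqref{eq_biadjoint1}--\eqref{eq_biadjoint2} are exactly the content of Proposition~\ref{prop:adjoints}, combined with the rescaling of Section~\ref{sec:proofcycbiadjoint} which forces the adjunction scalars to be $1$. The dot cyclicity \eqref{eqn-dot-cyclicity} is Lemma~\ref{lem-x-fc} reformulated via Corollary~\ref{cor-dot-cyclicity}. The crossing cyclicity relations \eqref{eqn-crossing-cyclicity} are \eqref{eq-half-crossing-cycl}. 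The positivity of bubbles \eqref{eq:positivity} is immediate from Lemma~\ref{lem:E} and the brick condition together with adjunction. The fake bubble defining equations \eqref{eq:fake-bubble} match \eqref{eq:fake-bubble-last} together with the determination $c_0^{\pm} = 1$ from Proposition~\ref{prop_free_param}. Finally, the extended $\sltwo$ mixed-$R(2)$ and curl relations for $\lambda > 0$, $\lambda < 0$, and $\lambda = 0$ are exactly the invertibility relations \eqref{eq:FEtEF-beta}--\eqref{eq:curldien} and \eqref{eq:EFtoFEzero} together with \eqref{eq:curldiep} and \eqref{eq:curldien}, after substituting $\beta_\lambda = -1$ and $c_{-1} = 1$ as established in Proposition~\ref{prop_free_param}. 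The super-2-category structure (dashed strands, pitchfork cyclicity, etc.) is automatic because we are working inside a super-2-category.

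The main obstacle has in fact already been overcome in the earlier sections: the delicate points were (i) constructing the second adjunction $\Ucuprm$, since it cannot be defined by purely formal adjunction arguments and required the inductive argument of Section~\ref{sec-formal} exploiting the integrability and brick conditions; and (ii) showing that the free parameters $\beta_\lambda$ and $c_{-1}$ are uniquely determined by the remaining relations, which is Proposition~\ref{prop_free_param}. With these in hand, the present theorem is the compilation of those results into a well-defined super-2-functor. To conclude, since $\Udotc$ is the Karoubi envelope of $\Uc$ and the 1-hom-categories of $\Cc$ (or those of the idempotent completion of $\Cc$) are idempotent complete, the 2-functor $\Uc \to \Cc$ extends uniquely up to isomorphism to the desired 2-representation $\Udotc \to \Cc$, completing the proof.
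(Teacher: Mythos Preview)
Your proposal is correct and follows the same approach as the paper: the theorem is presented there as a summary of the work in Sections~\ref{sec-formal}--\ref{sec:proofsl2}, with no separate proof given, and you have accurately itemized which result supplies each defining relation of $\Uc$ (adjunctions from Proposition~\ref{prop:adjoints} with the rescaling of Section~\ref{sec:proofcycbiadjoint}, dot and crossing cyclicity from Lemma~\ref{lem-x-fc}, Corollary~\ref{cor-dot-cyclicity}, and \eqref{eq-half-crossing-cycl}, bubble positivity from the brick condition, and the extended $\sltwo$ relations from Proposition~\ref{prop_form-of-inv} together with Proposition~\ref{prop_free_param}), followed by the Karoubi extension. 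Your write-up is in fact more explicit than the paper's own treatment.
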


%#####################################################################
%
\section{The action on cyclotomic quotients}
%
%#####################################################################

%---------------------------------------------------------------------
\subsection{Defining the action}
%---------------------------------------------------------------------

Let $R(n)$ be the odd nilHecke algebra on $n$ strands and, for a dominant integral weight $\Lambda$, let $R^\Lambda(n)$ be the corresponding cyclotomic quotient.  Let $\ONHc^\Lambda$ be the full sub-super-2-category of $\SBim$ whose objects are the algebras $R^\Lambda(n)$ for all $n\geq0$.  Set $\lambda=\Lambda-2n$; this is the weight corresponding to $R^\Lambda(n)$ when we view $R^\Lambda(n)$ as a categorified weight space.

There are three steps to the argument in this section:
\begin{enumerate}
\item Kang, Kashiwara, and Oh nearly prove in \cite{KKO,KKO2} that there is a strong supercategorical action of $\sltwo$ on $\ONHc^\Lambda$.  They omit only the brick condition \eqref{co:hom} of Definition~\ref{def_strong}.  The 1-morphisms $\Ett,\Ftt$ are the bimodule kernels of restriction and induction along the maps $\iota_{n,1}$ of \eqref{eqn-iota-onh-ab}, respectively.  The parity functor is the bimodule $\Pi R^\Lambda(n)$ as defined in Subsection \ref{subsec-conventions} (parity shift and twist the left action by the parity involution).
\item To verify condition \eqref{co:hom} of Definition~\ref{def_strong}, observe that in the action of \cite{KKO,KKO2}, a weight $\l$ is mapped to the cyclotomic quotient ring $R^\Lambda(n)$. Since $R^\Lambda(n)$ is graded Morita equivalent to an odd Grassmannian ring \cite{EKL} and odd Grassmannian rings are graded local,
\begin{equation*}
\dim_q\left(\Pi\HOM_{\ONHc^\Lambda}(R^\Lambda(n),R^\Lambda(n))\right)\in1+\N[\pi,q].
\end{equation*}
In other words, $R^\Lambda(n)$ is a brick.
\item By Theorem~\ref{thm-main2rep}, this gives a 2-representation $\Udotc\to\ONHc^\Lambda$.
\end{enumerate}

We spend the rest of this section writing down how the resulting 2-functor acts, mostly explicitly (right-oriented caps and cups are difficult to write down).  A subset of these details give an explicit description of the strong supercategorical action of \cite{KKO,KKO2}.  For short, we will sometimes abbreviate $R^\Lambda(n)$ by simply $n$ when there is no chance of confusion.  For instance, ``$(n,n+1)$-bimodule'' means ``$(R^\Lambda(n),R^\Lambda(n+1))$-bimodule.''

There is a morphism of super-2-categories mapping $\SBim$ to $\SCat$ sending a superalgebra to its supermodule category and a super bimodule to the superfunctor of tensoring with the super bimodule.
We will go between these two languages freely.
In particular, we will sometimes identify the $(A,B)$-bimodule $M$ with the superfunctor $M\otimes\mbox{--}$ from $B\text{-mod}$ to $A\text{-mod}$ (for us, $\Z$-graded supermodule categories).

\subsubsection{On objects}

The integral weight $\lambda$ is sent to the superalgebra $R^\Lambda(n)$, where $\lambda=\Lambda-2n$.  Note that $R^\Lambda(n)=0$ unless $0\leq n\leq\Lambda$.

\subsubsection{On 1-morphisms}

\begin{itemize}
\item $\Ec\onebbl\mapsto\Res^{n+1}_n$, or the $(n,n+1)$-bimodule $_nR^\Lambda(n+1)$
\item $\onebbl\Fc\mapsto\Ind^{n+1}_n$, or the $(n+1,n)$-bimodule $R^\Lambda(n+1)_n$
\item $\Pi\onebbl\mapsto\Pi_n$
\end{itemize}
In the above, $\Pi_n$ is short for $\Pi_{R^\Lambda(n)}$; the endofunctor $\Pi_A$ of $A\text{-mod}$ shifts the $\Zt$-grading by one and twists the left action by the parity involution $\iota_A$ (see \ref{eqn-parity-involution}).

\subsubsection{On 2-morphisms}

The super-2-category structure morphisms $\xi^{\pm1},\alpha_F^{\pm1}$ are sent to the corresponding structure morphisms in $\SBim$, but we will write all of them explicitly for completeness.
\begin{itemize}
\item On $\xi^{\pm1}$:
\begin{equation}\label{eqn-action-xi}\begin{split}
&\hackcenter{\begin{tikzpicture}
	\draw[thick, color=blue, dashed] (0,0) .. controls (0,.7) and (1,.7) .. (1,0);
\end{tikzpicture}}
\quad\lambda\quad\text{is sent to}\quad\xi:\Pi^2\onebl\to\onebl,\\
&\Pi^2R^\Lambda(n)\to R^\Lambda(n),\qquad x\mapsto x,\\
&\hackcenter{\begin{tikzpicture}
	\draw[thick, color=blue, dashed] (0,1) .. controls (0,.3) and (1,.3) .. (1,1);
\end{tikzpicture}}
\quad\lambda\quad\text{is sent to}\quad\xi^{-1}:\onebl\to\Pi^2\onebl,\\
&R^\Lambda(n)\to\Pi^2R^\Lambda(n),\qquad x\mapsto x,
\end{split}\end{equation}
\item On $\alpha_\Ett^{\pm1}$, $\alpha_\Ftt^{\pm1}$, and $\alpha_\Pi=\alpha_\Pi^{-1}$:
\begin{equation}\label{eqn-action-alpha}\begin{split}
\hackcenter{\begin{tikzpicture}[scale=0.5]
	\draw[thick, ->] (0,0) .. controls (0,1) and (1,1) .. (1,2)
		node[pos=.5, left](){\small$\lambda+2$\;\;}
		node[pos=.5, right](){\;\;\small$\lambda$};
	\draw[thick, color=blue, dashed] (1,0) .. controls (1,1) and (0,1) .. (0,2);
\end{tikzpicture}}
\quad\text{is sent to}\quad \alpha_\Ett:\Ett\Pi\onebl&\to\Pi\Ett\onebl,\\
_{n-1}R^\Lambda(n)\Pi&\to\Pi_{n-1}R^\Lambda(n),\qquad y\mapsto (-1)^{p(y)}y\\
\hackcenter{\begin{tikzpicture}[scale=0.5]
	\draw[thick, color=blue, dashed] (0,0) .. controls (0,1) and (1,1) .. (1,2);
	\draw[thick, ->] (1,0) .. controls (1,1) and (0,1) .. (0,2)
		node[pos=.5, left](){\small$\lambda+2$\;\;}
		node[pos=.5, right](){\;\;\small$\lambda$};
\end{tikzpicture}}
\quad\text{is sent to}\quad \alpha_\Ett^{-1}:\Pi\Ett\onebl&\to\Ett\Pi\onebl,\\
\Pi_{n-1}R^\Lambda(n)&\to_{n-1}R^\Lambda(n)\Pi,\qquad y\mapsto(-1)^{p(y)}y\\
\hackcenter{\begin{tikzpicture}[scale=0.5]
	\draw[thick, <-] (0,0) .. controls (0,1) and (1,1) .. (1,2)
		node[pos=.5, left](){\small$\lambda$\;\;}
		node[pos=.5, right](){\;\;\small$\lambda+2$};
	\draw[thick, color=blue, dashed] (1,0) .. controls (1,1) and (0,1) .. (0,2);
\end{tikzpicture}}
\quad\text{is sent to}\quad \alpha_\Ftt:\onebl\Ftt\Pi&\to\onebl\Pi\Ftt,\\
_nR^\Lambda(n-1)\Pi&\to\Pi_{n-1}R^\Lambda(n),\qquad y\mapsto (-1)^{p(y)}y\\
\hackcenter{\begin{tikzpicture}[scale=0.5]
	\draw[thick, color=blue, dashed] (0,0) .. controls (0,1) and (1,1) .. (1,2);
	\draw[thick, <-] (1,0) .. controls (1,1) and (0,1) .. (0,2)
		node[pos=.5, left](){\small$\lambda$\;\;}
		node[pos=.5, right](){\;\; \small$\lambda+2$};	
\end{tikzpicture}}
\quad\text{is sent to}\quad \alpha_\Ftt^{-1}:\onebl\Pi\Ftt&\to\onebl\Ftt\Pi,\\
\Pi R^\Lambda(n)_{n-1}&\to R^\Lambda(n)_{n-1}\Pi,\qquad y\mapsto(-1)^{p(y)}y.
\end{split}\end{equation}
\item On dots:
\begin{equation}\label{eqn-action-dots}\begin{split}
&\hackcenter{\begin{tikzpicture}
	\draw[thick, ->] (0,0) -- (0,1.5)
		node[pos=.5, shape=coordinate](DOT){}
		node[pos=.25, left](){\small$\lambda+2$\;\;}
		node[pos=.25, right](){\;\;\small$\lambda$};
	\draw[thick, color=blue, dashed] (DOT) [out=135, in=-90] to (-.5,1.5);
	\node at (DOT) {\bbullet};
\end{tikzpicture}}
\quad\text{is sent to}\quad x_\Ett:\Ett\onebl\to\Pi\Ett\onebl,\\
_{n-1}R^\Lambda(n)&\to\Pi_{n-1}R^\Lambda(n),\qquad y\mapsto x_ny,\\
&\hackcenter{\begin{tikzpicture}
	\draw[thick, <-] (0,0) -- (0,1.5)
		node[pos=.5, shape=coordinate](DOT){}
		node[pos=.25, left](){\small$\lambda$\;\;}
		node[pos=.25, right](){\;\;\small$\lambda+2$};
	\draw[thick, color=blue, dashed] (DOT) [out=135, in=-90] to (.5,1.5);
	\node at (DOT) {\bbullet};
\end{tikzpicture}}
\quad\text{is sent to}\quad x_\Ftt:\onebl\Ftt\to\onebl\Ftt\Pi,\\
R^\Lambda(n)_{n-1}&\to R^\Lambda(n)_{n-1}\Pi,\qquad y\mapsto yx_n.
\end{split}\end{equation}
\item On up- and down-crossings:
\begin{equation}\label{eqn-action-crossings}\begin{split}
&\hackcenter{\begin{tikzpicture}
	\draw[thick, ->] (0,0) .. controls (0,.5) and (.5,1) .. (.5,1.5)
		node[pos=.5, shape=coordinate](CROSSING){}
		node[pos=.25, left](){\small$\lambda+4$\;\;};
	\draw[thick, color=blue, dashed] (CROSSING) [out=135, in=-90] to (-.5,1.5);
	\draw[thick, ->] (.5,0) .. controls (.5,.5) and (0,1) .. (0,1.5)
		node[pos=.25, right](){\;\;\small$\lambda$};
\end{tikzpicture}}
\quad\text{is sent to}\quad \tau_\Ett:\Ett^2\onebl\to\Pi\Ett^2\onebl,\\
_{n-2}R^\Lambda(n)&\to\Pi_{n-2}R^\Lambda(n),\qquad y\mapsto \tau_{n-1}y\\
&\hackcenter{\begin{tikzpicture}
	\draw[thick, <-] (0,0) .. controls (0,.5) and (.5,1) .. (.5,1.5)
		node[pos=.5, shape=coordinate](CROSSING){}
		node[pos=.25, left](){\small$\lambda$\;\;};
	\draw[thick, color=blue, dashed] (CROSSING) [out=135, in=-90] to (-.5,1.5);
	\draw[thick, <-] (.5,0) .. controls (.5,.5) and (0,1) .. (0,1.5)
		node[pos=.25, right](){\;\;\small$\lambda+4$};
\end{tikzpicture}}
\quad\text{is sent to}\quad \tau_\Ftt:\onebl\Ftt^2\to\onebl\Ftt^2\Pi,\\
R^\Lambda(n)_{n-2}&\to R^\Lambda(n)_{n-2}\Pi,\qquad y\mapsto y\tau_{n-1}.
\end{split}\end{equation}
\item On the left cap and left cup:
\begin{equation}\label{eqn-action-left-cap-cup}\begin{split}
&\hackcenter{\begin{tikzpicture}
	\draw[thick, ->] (1,0) .. controls (1,.8) and (0,.8) .. (0,0)
		node[pos=.25, right](){\;\;\small$\lambda$};
\end{tikzpicture}}
\quad\text{is sent to}\quad\epsilon:\Ftt\Ett\onebl\to\onebl,\\
&R^\Lambda(n)\otimes_{n-1}R^\Lambda(n)\to R^\Lambda(n),\qquad w\otimes y\mapsto wy\\
&\hackcenter{\begin{tikzpicture}
	\draw[thick, ->] (1,1) .. controls (1,.2) and (0,.2) .. (0,1)
		node[pos=.25, right](){\;\;\small$\lambda$};
\end{tikzpicture}}
\quad\text{is sent to}\quad\eta:\onebl\to\Ett\Ftt\onebl,\\
&R^\Lambda(n)\to\hspace{.01in}_nR^\Lambda(n+1)_n,\qquad w\mapsto w.\\
\end{split}\end{equation}
\item The action of the right cap and right cup is determined, as described in Section \ref{sec-formal}, by the action of the left cap and left cup.  See Section 8 of \cite{KKO} for descriptions of these maps.
\end{itemize}

\subsubsection{Summary}

We summarize this subsection as follows.
\begin{thm} For each dominant integral weight $\Lambda$, there is a super-2-functor
\begin{equation}
\Udotc\to\ONHc^\Lambda
\end{equation}
defined by equations \eqref{eqn-action-xi}--\eqref{eqn-action-left-cap-cup}.  After taking Grothendieck groups, this action becomes the action of $\Udotpi$ on its integrable simple module $_{\Ac_\pi}V^\Lambda$.\end{thm}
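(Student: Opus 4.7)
The strategy is the three-step plan outlined in the excerpt: produce a strong supercategorical action on $\ONHc^\Lambda$, apply Theorem \ref{thm-main2rep} to promote it to a super-2-functor from $\Udotc$, and then match the induced map on Grothendieck groups with the $\AUdotpi$-action on $_{\Ac_\pi}V^\Lambda$. For the first step I would verify each axiom of Definition \ref{def_strong} with $\Ett,\Ftt$ the bimodule kernels of restriction and induction along $\iota_{n,1}$ and $\Pi$ the parity functor. Integrability is automatic since $R^\Lambda(n)=0$ outside $0\leq n\leq\Lambda$. The covering isomorphisms \eqref{eq:EF-rel}--\eqref{eq:FE-rel} and the odd nilHecke action \eqref{co:oddNil} are established in \cite{KKO,KKO2}; the latter is immediate since $R^\Lambda(n)$ is by definition a quotient of $R(n)$, so the generating dot and crossing 2-morphisms act by left multiplication by $x_n$ and $\tau_{n-1}$. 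The only condition requiring additional argument is the brick condition \eqref{co:hom}. For this I would invoke the graded Morita equivalence $R^\Lambda(n)\cong\Mat_{[n]!}(OH_{n,\Lambda})$ of \cite{EKL}: since $OH_{n,\Lambda}$ is graded local with one-dimensional degree-zero part, the bimodule $\HOM$-spaces $\HOM(\onebl,\Pi^k\onebl\la\ell\ra)$ reduce to the graded center of $OH_{n,\Lambda}$ and satisfy the required vanishing in negative degrees and one-dimensionality at $(k,\ell)=(0,0)$; finite-dimensionality of arbitrary 2-hom-spaces likewise reduces to finite-dimensionality of graded hom-spaces between finitely generated graded projectives over $OH_{n,\Lambda}$.

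Once the strong supercategorical action is verified, Theorem \ref{thm-main2rep} produces a super-2-functor $\Udotc\to\ONHc^\Lambda$. Unpacking the construction of Sections \ref{sec-formal}--\ref{sec:proofsl2}, the dots, crossings, left caps, and left cups of $\Udotc$ are sent to the bimodule maps listed in \eqref{eqn-action-dots}--\eqref{eqn-action-left-cap-cup}, while the dashed-strand 2-morphisms $\xi^{\pm 1}$ and $\alpha^{\pm 1}$ correspond to the $\SBim$ structure isomorphisms \eqref{eqn-action-xi}--\eqref{eqn-action-alpha}. The right caps and cups, although not written explicitly, are uniquely determined by the formal adjunction formula in Proposition \ref{prop:adjoints} together with the rescaling fixed by Proposition \ref{prop_free_param}; thus the resulting 2-functor is unique up to super-2-isomorphism.

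For the decategorification claim, I would combine Proposition \ref{prop-gamma}, which provides the $\Ac_\pi$-algebra homomorphism $\gamma\colon\AUdotpi\to K_0(\Udotc)$, with the Kang--Kashiwara--Oh theorem cited in the excerpt, which identifies $K_0^{\mathrm{super}}(\onh_\bullet^\Lambda)$ with $_{\Ac_\pi}V^\Lambda$ so that right-strand restriction and induction decategorify to $E$ and $F$. Since $\Ec\onebbl$ and $\onebbl\Fc$ are sent by our 2-functor to exactly these functors (up to the grading shifts built into \eqref{eq_generators}), and since the $\Ac_\pi$-module structure on Grothendieck groups uses the conventions $[M\la-1\ra]=q[M]$ and $[\Pi M]=\pi[M]$ compatible with Subsection \ref{subsec-covering-sl2}, the induced action on $K_0^{\mathrm{super}}(\onh_\bullet^\Lambda)$ coincides with the $\AUdotpi$-action on $_{\Ac_\pi}V^\Lambda$. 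The main obstacle is the brick condition, as everything else is either cited from \cite{KKO,KKO2,EKL} or is a direct consequence of Theorem \ref{thm-main2rep}; the delicacy lies in carefully translating the graded Morita equivalence into a statement about graded self-bimodule maps with the correct parity constraints for all weights in the range, and in confirming that the edge cases where $R^\Lambda(n)=0$ are absorbed by the convention following Definition \ref{def_strong}.
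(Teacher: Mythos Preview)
Your proposal is correct and follows essentially the same three-step argument that the paper uses: cite \cite{KKO,KKO2} for the strong supercategorical action modulo the brick condition, verify the brick condition via the graded Morita equivalence of $R^\Lambda(n)$ with the graded local odd Grassmannian ring from \cite{EKL}, and then invoke Theorem~\ref{thm-main2rep}. Your treatment of the decategorification claim via Proposition~\ref{prop-gamma} and the Kang--Kashiwara--Oh theorem is likewise what the paper intends, though the paper leaves this step implicit.
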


%---------------------------------------------------------------------
\subsection{Consequences of the action}\label{subsec-consequences}
%---------------------------------------------------------------------

We have constructed an upper bound for the space of Homs in the 2-category $\Uc$ in Section~\ref{subsec:upper}.   The existence of the 2-representation of $\Uc$ into cyclotomic quotients provides a lower bound for the space of Homs that can be explicitly computed in certain degrees.  For example, by varying the integral weight $\Lambda$ the fact that $R^\Lambda(n)$ is a brick in the cyclotomic 2-category immediately implies the following corollary.

\begin{cor}\label{cor-ef-bricks} In the 2-category $\Udotc$ the 1-morphisms $\onebbl$ are bricks. That is, \[
\Hom_{\Udotc}(\onebbl, \onebbl \la \ell\ra) \cong
\left\{\begin{array}{ll}
  0 & \ell<0 \\
  \Bbbk & \ell=0.
\end{array}\right.
\]
\end{cor}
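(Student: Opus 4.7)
The plan is to combine the upper bound on $\Pi\HOM_{\Uc}(\onebbl,\onebbl)$ coming from the surjection $\Xi \to \Pi\HOM_{\Uc}(\onebbl,\onebbl)$ of \eqref{eq_ltol} with a lower bound extracted from the cyclotomic 2-representations just constructed. First I would establish the upper bound. Since every generator $z_a$ of $\Xi$ has $q$-degree $|z_a| = 2a \geq 2$ and the only monomial of degree $0$ in $\Xi$ is the empty one (which maps to $\id_{\onebbl}$), the surjectivity of \eqref{eq_ltol} gives $\Pi\HOM_{\Uc}(\onebbl,\onebbl\la\ell\ra) = 0$ for $\ell < 0$ and $\dim_{\Bbbk}\Pi\HOM^0_{\Uc}(\onebbl,\onebbl) \leq 1$. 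In particular $\Hom_{\Uc}(\onebbl,\onebbl\la\ell\ra) = 0$ for $\ell < 0$ and $\Hom_{\Uc}(\onebbl,\onebbl) \subseteq \Bbbk\cdot\id_{\onebbl}$.

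Next I would pass to the Karoubi envelope: since $\onebbl$ is the identity 1-morphism of an object, it is its own indecomposable summand (with trivial idempotent), so $\Hom_{\Udotc}(\onebbl,\onebbl\la\ell\ra) = \Hom_{\Uc}(\onebbl,\onebbl\la\ell\ra)$. Consequently the same upper bounds persist in $\Udotc$, already giving the vanishing for $\ell < 0$.

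For the lower bound in degree zero I would exhibit a nonzero 2-representation on which $\id_{\onebbl}$ acts nontrivially. For any integer $\lambda$, choose a dominant integral weight $\Lambda \geq 0$ and an integer $n$ with $0 \leq n \leq \Lambda$ and $\lambda = \Lambda - 2n$; any $\Lambda \geq |\lambda|$ of the correct parity works. The super-2-functor $\Udotc \to \ONHc^\Lambda$ constructed in this section sends $\onebbl$ to the cyclotomic quotient $R^\Lambda(n)$. Since $0 \leq n \leq \Lambda$, the ring $R^\Lambda(n)$ is nonzero, and by its graded Morita equivalence with the odd Grassmannian ring $OH_{n,\Lambda}$ (which is graded local), $\id_{R^\Lambda(n)}$ is a nonzero degree-$0$ 2-morphism. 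Hence the 2-functor cannot send $\id_{\onebbl}$ to $0$, so $\id_{\onebbl} \neq 0$ in $\Udotc$, and the degree-$0$ hom-space has dimension at least $1$. Combined with the upper bound, this forces $\Hom_{\Udotc}(\onebbl,\onebbl) \cong \Bbbk$.

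The only subtle point to check is that the bound from Corollary \ref{cor:homonel} controls $\Hom$ and not merely $\Pi\HOM$: but the constant term $1$ of each power series $\xi$ in \eqref{eq-def-xi} lies in the parity-preserving summand, so the bound passes cleanly to $\Hom$. Everything else is bookkeeping, so I do not anticipate any real obstacle beyond ensuring the chosen $\Lambda$ is consistent with the convention that Hom-space statements presuppose the relevant objects are nonzero.
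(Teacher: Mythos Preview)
Your proposal is correct and follows exactly the approach the paper intends: the upper bound comes from the surjection $\Xi\to\Pi\HOM_{\Uc}(\onebbl,\onebbl)$ of \eqref{eq_ltol} (equivalently Corollary~\ref{cor:homonel}), and the lower bound comes from choosing $\Lambda$ so that $R^\Lambda(n)$ is nonzero and using the super-2-functor $\Udotc\to\ONHc^\Lambda$ to see that $\id_{\onebbl}\neq 0$. You have simply written out in detail what the paper summarizes in one sentence.
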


%#####################################################################
%
\section{(De)categorification}
%
%#####################################################################

%---------------------------------------------------------------------
\subsection{Indecomposables in $\Udotc$}
%---------------------------------------------------------------------

\begin{prop}
The divided power 1-morphisms $\Ec^{(a)}\onebbl$ and $\Fc^{(b)}\onebbl$ are bricks for all $a,b \geq 0$.
\end{prop}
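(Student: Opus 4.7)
The plan is to establish the brick property by bracketing the graded endomorphism algebra of $\Ec^{(a)}\onebbl$ both from above and from below, and then concluding indecomposability from graded locality.

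For the upper bound, recall that in the Karoubi envelope
\[
\END_{\Udotc}(\Ec^{(a)}\onebbl) \;=\; e_{+,a}\cdot\END_{\Udotc}(\Ec^a\onebbl\ads{\qbins{a}{2}})\cdot e_{+,a}.
\]
By the spanning set results of Section~\ref{subsec:upper}, $\END_{\Udotc}(\Ec^a\onebbl)$ is spanned by diagrams built from odd nilHecke generators on the $a$ upward strands, horizontally juxtaposed with bubble monomials in the rightmost region of weight $\lambda$. Conjugating on both sides by the divided power idempotent $e_{+,a}$ and applying the isomorphism $e_{+,a}\,\onh_a\, e_{+,a} \cong \osym_a\cdot e_{+,a}$ from \cite{EKL}, this spanning set collapses to products of odd symmetric polynomial decorations on the divided power strand with bubble monomials of non-negative degree. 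Since every such nontrivial monomial has strictly positive $q$-degree, the upper bound forces $\END^{\ell}_{\Udotc}(\Ec^{(a)}\onebbl)=0$ for $\ell<0$ and makes $\END^0_{\Udotc}(\Ec^{(a)}\onebbl)$ at most one-dimensional, spanned by the identity.

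For the lower bound, use the 2-representation on cyclotomic quotients $\ONHc^\Lambda$ constructed in the previous section, taking $\Lambda$ large enough so that $0\leq n$ and $0 \leq n+a \leq \Lambda$. Under this super-2-functor, $\Ec^{(a)}\onebbl$ is sent to the $(R^\Lambda(n),R^\Lambda(n+a))$-bimodule $e_{+,a}\cdot{}_nR^\Lambda(n+a)$ (with appropriate grading shift), which by the Kang-Kashiwara-Oh results and the Morita equivalence $R^\Lambda(n+a)\cong\Mat_{[n+a]!}(OH_{n+a,\Lambda})$ represents the divided power induction between indecomposable projectives. Its endomorphism ring is graded Morita equivalent to the odd Grassmannian algebra $OH_{a,\Lambda-n}$, which is graded local with one-dimensional degree zero part equal to $\Bbbk\cdot\mathrm{id}$. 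This forces $\END^0_{\Udotc}(\Ec^{(a)}\onebbl)$ to be at least $\Bbbk\cdot\mathrm{id}$, matching the upper bound.

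Combining the two bounds gives $\END^{\ell}_{\Udotc}(\Ec^{(a)}\onebbl)=0$ for $\ell<0$ and $\END^0_{\Udotc}(\Ec^{(a)}\onebbl)\cong\Bbbk$. The graded locality, together with the Krull--Schmidt property of $\Udotc$ coming from the finite-dimensionality of 2-hom-spaces, implies that $\Ec^{(a)}\onebbl$ is indecomposable, hence a brick. The statement for $\Fc^{(b)}\onebbl$ follows by an entirely parallel argument, applying the same strategy to downward-oriented strands with the idempotent $e_{-,b}$ and using the biadjoint structure to transport the cyclotomic lower bound. The main obstacle will be the careful identification, after conjugation by $e_{+,a}$, of the spanning set of $\END_{\Udotc}(\Ec^a\onebbl)$ with odd symmetric polynomial and bubble decorations, since the super-2-categorical signs arising from the dashed-strand crossings must be tracked consistently through the idempotent projection; this is where one uses the isotopy invariance afforded by the super-2-category framework of Section~\ref{subsec-super-2-categories} to justify the reduction to the non-nested bubble monomials of Corollary~\ref{cor:homonel}.
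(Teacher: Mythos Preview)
Your approach is correct but takes a genuinely different route from the paper. The paper argues by induction on $a$, following \cite[Lemma~4.9]{CKL2}: from the base case that $\onebbl$ is a brick (Corollary~\ref{cor-ef-bricks}) and the graded-locality of the bubble algebra (Corollary~\ref{cor:homonel}), one uses the adjunctions and the covering $\mathfrak{sl}_2$ isomorphisms to reduce $\Hom(\Ec^{(a)}\onebbl,\Ec^{(a)}\onebbl\ads{\ell})$ to Hom-spaces involving $\Ec^{(a-1)}\onebbl$ and $\onebbl$, and concludes inductively. You instead bypass the induction entirely: the spanning-set proposition for $\END_{\Uc}(\Ec^a\onebbl)$ together with the corner identification $e_{+,a}\,\onh_a\,e_{+,a}\cong\osym_a$ (which holds because $\onh_a$ is a matrix algebra over $\osym_a$) immediately forces non-negative grading concentration. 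The inductive method is portable to any strong supercategorical action satisfying the brick condition on identities, whereas your direct approach leans on the explicit spanning sets specific to $\Udotc$ but is more transparent.

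Two small points. First, your lower bound is simpler than you make it: once the upper bound gives $\END^0\leq\Bbbk$, the identity $2$-morphism already provides $\END^0\geq\Bbbk$; the cyclotomic action is needed only to confirm $\Ec^{(a)}\onebbl\neq 0$, and your specific identification of the bimodule endomorphism ring with $OH_{a,\Lambda-n}$ is neither obvious nor required. Second, your concern about super-signs resolves more cleanly than you indicate: the idempotent $e_{+,a}=\del_{w_0}x^{\delta}$ has both $q$-degree $0$ and parity $0$ (since $\del_{w_0}$ and $x^{\delta}$ each carry parity $\binom{a}{2}$), so commuting it past bubble monomials introduces no sign whatsoever.
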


\begin{proof}
This follows by induction from Corollaries~\ref{cor:homonel} and \ref{cor-ef-bricks} following arguments similar to those in \cite[Lemma 4.9]{CKL2}.
\end{proof}

\begin{prop} \label{prop_indecomp}
The 1-morphisms
\begin{enumerate}[(i)]
     \item $\Ec^{(a)}\Fc^{(b)}\onebbl\la s\ra \quad $ for $a$,$b\in \N$, $\l,s \in\Z$,
     $\l\leq b-a$,
     \item $\Fc^{(b)}\Ec^{(a)}\onebbl\la s\ra \quad$ for $a$,$b\in\N$, $\l,s \in\Z$, $\l\geq
     b-a$,
\end{enumerate}
are indecomposable. Furthermore, these indecomposables are not isomorphic unless
$\l=b-a$ in which case $ \Ec^{(a)}\Fc^{(b)}\onebb_{b-a}\la s\ra \cong
\Fc^{(b)}\Ec^{(a)}\onebb_{b-a}\la s\ra$.  In other words, the indecomposable projectives of the super-2-category $\Udotc$ categorify the canonical basis of \cite{ClarkWang}.
\end{prop}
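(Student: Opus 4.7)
The plan is to establish (i) that the graded endomorphism ring of each of these 1-morphisms is graded local and (ii) that their classes in $K_0(\Udotc)$ are pairwise distinct, save for the identification at $\l=b-a$. Together these yield indecomposability (by Krull-Schmidt, since graded locality rules out nontrivial idempotents) and pairwise non-isomorphism. I focus on $\Ec^{(a)}\Fc^{(b)}\onebbl$ with $\l\leq b-a$; the case of $\Fc^{(b)}\Ec^{(a)}\onebbl$ with $\l\geq b-a$ is analogous by the $E$--$F$ symmetry coming from the biadjoint structure.

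First I would exploit biadjointness to rewrite the endomorphism space, up to specified grading and parity shifts, in the form
\[
\END(\Ec^{(a)}\Fc^{(b)}\onebbl)\;\cong\;\Pi\HOM\bigl(\Fc^{(b)}\onebbl,\,\Fc^{(a)}\Ec^{(a)}\Fc^{(b)}\onebbl\bigr).
\]
Then I would apply the covering $\mf{sl}_2$ decomposition \eqref{eq_FaEb} (and its companion \eqref{eq_EaFb}) repeatedly to $\Fc^{(a)}\Ec^{(a)}\Fc^{(b)}\onebbl$, reducing it to a direct sum of terms $\Pi^{?}\Ec^{(c)}\Fc^{(d)}\onebbl\ads{*}$ for smaller $c,d$. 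When $\l\leq b-a$, exactly one of these summands is $\Fc^{(b)}\onebbl$ sitting in degree zero, and all others are $q$-shifted into strictly positive degree. Using the brick property of $\Fc^{(b)}\onebbl$ from the preceding proposition, together with the upper bound on $\HOM$-spaces from Section~\ref{subsec:upper} and the lower bound supplied by the 2-representation on cyclotomic quotients $\ONHc^{\Lambda}$ for $\Lambda$ sufficiently large, these bounds meet in degree zero and vanish in negative degree, forcing $\END(\Ec^{(a)}\Fc^{(b)}\onebbl)$ to be graded local.

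For pairwise non-isomorphism I would descend to $K_0(\Udotc)$. By Proposition~\ref{prop-gamma}, the classes of the 1-morphisms in (i) and (ii) agree with the images under $\gamma$ of the Clark--Wang canonical basis elements of $\AUdotpi$. Since these form an $\Ac_\pi$-basis (with the single identification at $\l=b-a$), distinct canonical basis labels yield distinct $K_0$-classes, hence non-isomorphic indecomposables. The remaining claim, that $\Ec^{(a)}\Fc^{(b)}\onebb_{b-a}\la s\ra\cong\Fc^{(b)}\Ec^{(a)}\onebb_{b-a}\la s\ra$, would follow by iteratively applying the covering relations \eqref{eq_FaEb}--\eqref{eq_EaFb} to both sides: both realize the same direct sum decomposition of a common ambient 1-morphism (the $[\l]$-term in \eqref{eqn-covering-sl2-relation} vanishes at $\l=b-a$), and the Krull--Schmidt theorem pairs up the distinguished summand on each side.

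The main obstacle is the graded-locality step, which requires careful bookkeeping of $q$-degree, $\pi$-parity, and $[\l]$-factor contributions at every stage of iterating the covering relation; the matching of upper and lower bounds on $\END$ is genuinely more delicate than in \cite{Lau1} because of the additional odd signs and parity shifts inherent to the covering case, and one must verify that the cyclotomic lower bound is sharp in every bidegree $(q,\pi)$ once $\Lambda$ is chosen large relative to $a,b,s$.
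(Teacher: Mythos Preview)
Your indecomposability argument is essentially the paper's: both prove these objects are bricks by using biadjointness and the brick property of the divided powers, which forces the degree-zero endomorphisms to be $\Bbbk$ and negative degrees to vanish. You add machinery (the upper bound of Section~\ref{subsec:upper} and a matching cyclotomic lower bound in every bidegree) that is not needed here; adjointness and the preceding proposition already suffice, as in \cite[Proposition~9.9]{Lau1}.

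The real problem is your non-isomorphism argument. You want to conclude that distinct canonical basis labels yield distinct classes in $K_0(\Udotc)$, but this requires $\gamma$ to be \emph{injective}, and Proposition~\ref{prop-gamma} only gives you a homomorphism. Injectivity of $\gamma$ is exactly what Theorem~\ref{thm_Groth} establishes, and that theorem is proved \emph{via} Proposition~\ref{prop_Uindec}, which in turn rests on the present proposition. So the $K_0$ route is circular as written. The paper sidesteps this entirely: it shows directly (again by adjointness and the covering relations, as in \cite[Proposition~9.9]{Lau1}) that $\Pi\HOM$ between any two \emph{distinct} elements of the list lies in $q\,\N[\pi,q]$, so no degree-zero isomorphism can exist. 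That argument is independent of any knowledge of $K_0$ and is what you should use instead.
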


\begin{proof}
It is straightforward to show that these 1-morphisms are bricks using adjointness and the fact that divided powers are bricks (see for example \cite[Proposition 9.9]{Lau1}).  This implies indecomposability.  Furthermore, one can show that the space of Homs between any two such elements is positively graded (in $q\N[\pi,q]$) implying that they are pairwise non-isomorphic (see for example \cite[Proposition 9.9]{Lau1}).
\end{proof}

\begin{prop} \label{prop_Uindec}\hspace{2in}
\begin{enumerate}[(i)]
  \item \label{item_indec1} Every 1-morphism $x$ in $\Hom_{\Udotc}(\l,\l')$ decomposes as a direct sum of indecomposable 1-morphisms of the form
  \begin{eqnarray}
     \onebb_{\l'}\Pi^s \Ec^{(a)}\Fc^{(b)}\onebbl\la t\ra &\quad&  \text{for $a$,$b\in \N$, $s,t \in\Z$,
     $\l\leq b-a$, } \nn\\
     \onebb_{\l'}\Pi^s \Fc^{(b)}\Ec^{(a)}\onebbl\la t\ra &\quad& \text{for $a$,$b\in\N$, $s,t \in\Z$, $\l \geq
     b-a$,} \label{eq_Bdot}
\end{eqnarray}
where $\lambda'=\lambda-2(b-a)$.
  \item \label{item_indec2} The direct sum decomposition of $x \in
 \Hom_{\Udotc}(\l,\l')$ is essentially unique, meaning that the indecomposables
 and their multiplicities are unique up to reordering the factors.
 \item \label{item_indec3}The morphisms in \eqref{item_indec1} \eqref{eq_Bdot} above are the only indecomposables in $\Udotc$ up to isomorphism.
 \item \label{item_indec4} The 1-morphisms $\Ec^{(a)}\Fc^{(b)}\onebb_{b-a}\la t \ra$ and
$\Fc^{(b)}\Ec^{(a)}\onebb_{b-a}\la t\ra$ are isomorphic in $\Udotc$.
\end{enumerate}
\end{prop}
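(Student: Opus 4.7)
The plan is to adapt the even case proof \cite[Proposition 9.10]{Lau1} to the super setting, using the super-2-category structure to manage parity shifts.

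For part (i), I would first observe that using the natural isomorphisms $\alpha_\Ett^{\pm 1}$ and $\alpha_\Ftt^{\pm 1}$ built into the super-2-category structure (together with $\xi^{\pm 1}$), every occurrence of $\Pi$ inside a composition of the generators $\Ec$, $\Fc$, $\Pi$ can be pulled to the outside up to sign. This reduces the problem to decomposing $\Ec_{\underline{\ep}}\onebbl$ for a signed sequence $\underline{\ep}$. Then, by induction on the length of $\underline{\ep}$, I would apply the covering $\mathfrak{sl}_2$ isomorphisms of Proposition~\ref{prop_coveringrelsU} to move all $\Ec$'s to the left of all $\Fc$'s (or vice versa), working in the appropriate weight range; each application produces a direct sum of shorter words plus $\Pi$-shifts of identity 1-morphisms. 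Finally I would apply \eqref{eq_EaEb}--\eqref{eq_FaFb} to replace $\Ec^a, \Fc^b$ by their divided-power summands, and use \eqref{eq_FaEb}--\eqref{eq_EaFb} (valid in the indicated weights) to split $\Ec^{(a)}\Fc^{(b)}\onebbl$ when $\lambda > b-a$ and $\Fc^{(b)}\Ec^{(a)}\onebbl$ when $\lambda < b-a$, reaching the canonical list in \eqref{eq_Bdot}.

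Part (ii) is the Krull--Schmidt property, which holds in $\Udotc$ because all 2-hom-spaces between generating 1-morphisms are finite-dimensional $\Bbbk$-modules (as noted in the ``cancellation property'' discussion after Definition~\ref{def_strong}). Part (iii) follows immediately from (i) and (ii), since any indecomposable 1-morphism must appear as a summand in the decomposition of some $\Ec_\ep\onebbl$ and the uniqueness in (ii) forces it to lie in the list of Proposition~\ref{prop_indecomp}.

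Part (iv) is the main obstacle, since at the boundary weight $\lambda = b-a$ neither \eqref{eq_FaEb} nor \eqref{eq_EaFb} applies directly. The plan is to use induction on $\min(a,b)$, the base case being trivial. For the inductive step I would compare two different decompositions of a single auxiliary 1-morphism---for instance $\Ec^{(a)}\Fc^{(b-1)}\Fc\onebb_{b-a}$, which we may re-bracket as $\Ec^{(a)}\Fc^{(b-1)}\onebb_{b-a+2}\Fc$ and decompose the left piece via \eqref{eq_EaFb} (now valid, since $\lambda$ has increased), and compare to the decomposition of $\Fc^{(b)}\Ec^{(a)}\onebb_{b-a}$ obtained by first splitting $\Fc\cdot \Fc^{(b-1)}$ using \eqref{eq_FaFb}. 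Each decomposition yields an indecomposable summand of the form we wish to identify with multiplicity one (by Proposition~\ref{prop_indecomp} (i)/(ii) and the coefficient $\qbins{a+b}{a}$ vs.\ the binomial identities in the covering relations), and Krull--Schmidt uniqueness from (ii) then forces $\Ec^{(a)}\Fc^{(b)}\onebb_{b-a}\cong\Fc^{(b)}\Ec^{(a)}\onebb_{b-a}$. As a consistency check, this is exactly the categorical shadow of the identity $E^{(a)}F^{(b)}\onebb_{b-a}=F^{(b)}E^{(a)}\onebb_{b-a}$ among Clark--Wang canonical basis elements in $\AUdotpi$ \cite{ClarkWang}, so the indecomposables we have listed categorify the canonical basis and there is no ambiguity about which indecomposables to identify.
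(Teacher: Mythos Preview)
Your treatment of parts (i)--(iii) matches the paper's proof exactly: pull all $\Pi$'s to the outside via the super-2-category isomorphisms $\alpha_\Ec^{\pm1},\alpha_\Fc^{\pm1},\xi^{\pm1}$, then follow \cite[Proposition~9.10]{Lau1}; Krull--Schmidt handles (ii) and hence (iii).

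For (iv) the paper simply cites \cite[Corollary~9.11]{Lau1}, and your induction-plus-Krull--Schmidt-comparison is the right shape of argument. But the specific step you propose does not go through: you want to decompose $\Ec^{(a)}\Fc^{(b-1)}\onebb_{b-a+2}$ via \eqref{eq_EaFb}, and that relation carries the hypothesis $\lambda>2b'-2$. With $b'=b-1$ and $\lambda=b-a+2$ this reads $b-a+2>2b-4$, i.e.\ $a+b<6$, so the appeal to \eqref{eq_EaFb} fails for all but finitely many $(a,b)$. The phrase ``now valid, since $\lambda$ has increased'' is misleading: increasing $\lambda$ by $2$ does not bring you into the range of \eqref{eq_EaFb} in general.

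The repair is to run the induction using only the rank-one commutator isomorphisms of Proposition~\ref{prop_coveringrelsU}, which hold for every $\lambda$, rather than the restrictive divided-power identities \eqref{eq_FaEb}--\eqref{eq_EaFb}. Peel off a single $\Ec$ (or $\Fc$) at a time: compare the two decompositions of an auxiliary such as $\Ec\,\Ec^{(a-1)}\Fc^{(b)}\onebb_{b-a}$, using $\Ec\Ec^{(a-1)}\cong\bigoplus_{[a]}\Ec^{(a)}$ on one side and, on the other, first applying the inductive hypothesis to $\Ec^{(a-1)}\Fc^{(b)}\onebb_{b-a}$ (whose weight now satisfies $\lambda<b-(a-1)$, so it lies in the $\Ec\Fc$ region and is already indecomposable by Proposition~\ref{prop_indecomp}) and then commuting the single $\Ec$ past via Proposition~\ref{prop_coveringrelsU}. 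Matching the unique top summand on each side via Krull--Schmidt then gives $\Ec^{(a)}\Fc^{(b)}\onebb_{b-a}\cong\Fc^{(b)}\Ec^{(a)}\onebb_{b-a}$.
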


\begin{proof}
To prove \eqref{item_indec1} it suffices to show that any element
$x=\onebb_{\l'} \Pi^{s_1}\Ec^{\alpha_1}\Fc^{\beta_1}\Pi^{s_2}\Ec^{\alpha_2} \cdots
 \Fc^{\beta_{k-1}}\Pi^{s_k}\Ec^{\alpha_k}\Fc^{\beta_k}\onebbl\la t \ra$ in $\Uc$
decomposes as a sum of elements in \eqref{eq_Bdot}.  Using the super-2-category structure the 1-morphism $x$ is isomorphic to a 1-morphism of the form
$x'=\onebb_{\l'} \Pi^{s'}\Ec^{\alpha_1}\Fc^{\beta_1}\Ec^{\alpha_2} \cdots
 \Fc^{\beta_{k-1}}\Ec^{\alpha_k}\Fc^{\beta_k}\onebbl\la t \ra$ for $s'=s_1+s_2+\dots + s_k$. Following the arguments in \cite[Proposition 9.10]{Lau1} completes the proof of (\ref{item_indec1}).

The Krull-Schmidt theorem then establishes \eqref{item_indec2},
and \eqref{item_indec3} (see Chapter I of \cite{Benson}).  The proof of \eqref{item_indec4} is identical to the proof of \cite[Corollary 9.11]{Lau1}.
\end{proof}

%---------------------------------------------------------------------
\subsection{Main theorem---categorification}
%---------------------------------------------------------------------

\begin{thm} \label{thm_Groth}
The split Grothendieck group $K_0(\Udotc)$ is isomorphic as an
$\Ac_\pi$-module to the integral covering algebra $\AUdotpi$ introduced by Clark and Wang \cite{ClarkWang}.
\end{thm}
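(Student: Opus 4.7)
The plan is to show that the $\Ac_\pi$-algebra homomorphism $\gamma \colon \AUdotpi \to K_0(\Udotc)$ of Proposition~\ref{prop-gamma} is an isomorphism of $\Ac_\pi$-modules.

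First I would establish surjectivity. By construction $\Udotc$ is the Karoubi envelope of $\Uc$, so the Krull-Schmidt property ensures $K_0(\Udotc)$ is a free $\Ac_\pi$-module with basis given by the isomorphism classes (up to $\Z$-grading and parity shift) of indecomposable 1-morphisms. Proposition~\ref{prop_Uindec} identifies this list of indecomposables as the divided-power 1-morphisms $\Ec^{(a)}\Fc^{(b)}\onebbl$ for $\l \le b-a$ and $\Fc^{(b)}\Ec^{(a)}\onebbl$ for $\l \ge b-a$. It then suffices to show each such class lies in the image of $\gamma$. This follows by induction on $a+b$ using the decomposition formulas \eqref{eq_FaEb} and \eqref{eq_EaFb}: the class $[\Ec^{(a)}\Fc^{(b)}\onebbl]$ equals $\gamma(E^{(a)}F^{(b)}1_\l)$ expressed as a sum of indecomposables with explicit $\Ac_\pi$-coefficients, and a standard upper-triangularity argument (as in~\cite{Lau1} Section~9) inverts this change of basis over $\Ac_\pi$.

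Next I would establish injectivity using the action on cyclotomic quotients constructed in the previous section. For each dominant integral weight $\Lambda \ge 0$, the super-2-functor $\Udotc \to \ONHc^\Lambda$ induces an $\Ac_\pi$-linear map on Grothendieck groups whose codomain is $K_0^{\text{super}}(\onh^\Lambda_\bullet) \cong {_{\Ac_\pi}}V^\Lambda$ by the Kang-Kashiwara-Oh theorem. Composing with $\gamma$ produces, by the compatibility in Section~8, precisely the action of $\AUdotpi$ on its integrable simple $_{\Ac_\pi}V^\Lambda$. Assembling all $\Lambda \ge 0$, we obtain the commutative diagram
\begin{equation*}
\xymatrix{
\AUdotpi \ar[r]^-{\gamma} \ar[dr]_{\bigoplus \text{action}} & K_0(\Udotc) \ar[d] \\
& \displaystyle \prod_{\Lambda \ge 0} {_{\Ac_\pi}}V^\Lambda.
}
\end{equation*}
Hence to conclude $\gamma$ is injective it is enough to know that the diagonal map is injective, i.e.\ that $\AUdotpi$ acts faithfully on the sum of its integrable simples of dominant highest weight.

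The main obstacle is this faithfulness. In the even case $(\pi = 1)$ it is a classical fact for $\AUdot(\sltwo)$. For the covering algebra one can argue as follows: the canonical basis for $\AUdotpi$ constructed by Clark and Wang \cite{ClarkWang} is compatible with the bases on the integrable simples, so any element of $\AUdotpi$ that annihilates every $_{\Ac_\pi}V^\Lambda$ must vanish when expanded in this basis. Alternatively one can specialize $\pi = \pm 1$ and use faithfulness of $\AUdot(\sltwo)$ and $\AUdot(\osp_{1|2})$ separately, combined with the fact that $\AUdotpi$ is a free $\Ac_\pi$-module whose specializations recover both halves. Either route closes the argument and, combined with the computation that the indecomposables map to the Clark-Wang canonical basis, upgrades the isomorphism statement to the categorification of canonical bases promised in the introduction.
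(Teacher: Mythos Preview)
Your proposal is correct but takes a longer route than the paper. The paper's proof is essentially one line: by Krull--Schmidt and Proposition~\ref{prop_Uindec}, $K_0(\Udotc)$ is a free $\Ac_\pi$-module with basis the classes of the indecomposables listed there, while $\AUdotpi$ is a free $\Ac_\pi$-module with basis the Clark--Wang canonical basis; since $\gamma$ sends the latter basis bijectively to the former (because $\gamma(E^{(a)}F^{(b)}1_\l) = [\Ec^{(a)}\Fc^{(b)}\onebbl]$ and similarly for $F^{(b)}E^{(a)}1_\l$), it is an isomorphism. No separate injectivity argument is needed once bases are matched.

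Your surjectivity step already contains the key observation that $\gamma$ hits every indecomposable; the upper-triangularity remark is unnecessary, since for $\l \le b-a$ the class $\gamma(E^{(a)}F^{(b)}1_\l)$ is directly $[\Ec^{(a)}\Fc^{(b)}\onebbl]$ with no lower terms. Your injectivity argument via the cyclotomic 2-representations and faithfulness of $\AUdotpi$ on the product of its integrable simples is valid but roundabout: your Route~1 appeals to the Clark--Wang canonical basis anyway, and your Route~2 (specialization at $\pi = \pm 1$) works but adds an extra layer. The cyclotomic action does appear in the paper's logic, but earlier---to establish that the $\onebbl$ are bricks (Corollary~\ref{cor-ef-bricks}), which feeds into the classification of indecomposables---not for the injectivity of $\gamma$ itself.
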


\begin{proof}
Since $\Udotc$ has the Krull-Schmidt property (Proposition~\ref{prop_Uindec}), its
Grothendieck group is freely generated as an $\Ac_\pi$-module by the
isomorphism classes of indecomposables with no shifts.  We have shown that these
isomorphism classes of indecomposables correspond bijectively to elements in the
canonical basis of the covering algebra introduced by Clark and Wang.   Therefore, the homomorphism $\gamma \maps \AUpi \to K_0(\Udotc)$ from Proposition~\ref{prop-gamma} is an isomorphism.
\end{proof}

% ====================================================================
% REFERENCES

%\bibliographystyle{plain}
%\bibliography{ellis-bib}

%
% ====================================================================

%
\end{document}